\numberwithin{equation}{section}
\numberwithin{figure}{section}
\theoremstyle{plain}
\newtheorem{thm}{\protect\theoremname}
  \theoremstyle{plain}
  \newtheorem{lem}[thm]{\protect\lemmaname}
  \theoremstyle{remark}
  \newtheorem{rem}[thm]{\protect\remarkname}
  \theoremstyle{plain}
  \newtheorem{cor}[thm]{\protect\corollaryname}
  \theoremstyle{plain}
  \newtheorem{prop}[thm]{\protect\propositionname}
  \providecommand{\corollaryname}{Corollary}
  \providecommand{\lemmaname}{Lemma}
  \providecommand{\propositionname}{Proposition}
  \providecommand{\remarkname}{Remark}
\providecommand{\theoremname}{Theorem}
\begin{document}
\global\long\def\AS{Assumption~M}
\global\long\def\CDTN{Condition~M}
\global\long\def\bs{\boldsymbol{\sigma}}
\global\long\def\bx{\mathbf{x}}
\global\long\def\by{\mathbf{y}}
\global\long\def\grad{\nabla_{{\rm sp}}}
\global\long\def\Hess{\nabla_{{\rm sp}}^{2}}
\global\long\def\ddq{\frac{d}{dR}}
\global\long\def\qs{q_{\star}}
\global\long\def\qss{q_{\star\star}}
\global\long\def\Es{E_{\star}}
\global\long\def\ds{d_{\star}}
\global\long\def\Cs{\mathscr{C}_{\star}}
\global\long\def\nh{\boldsymbol{\hat{\mathbf{n}}}}
\global\long\def\BN{\mathbb{B}^{N}(\sqrt{N})}
\global\long\def\SN{\mathbb{S}^{N-1}(\sqrt{N})}
\global\long\def\SNm{\mathbb{S}^{N-2}(\sqrt{N-1})}
\global\long\def\nd{\nu^{(\delta)}}
\global\long\def\nz{\nu^{(0)}}
\global\long\def\cls{c_{LS}}
\global\long\def\qls{q_{LS}}
\global\long\def\dls{\delta_{LS}}
\global\long\def\Els{E_{LS}}
\newcommand{\GS}{\mbox{\rm GS}}
\newcommand{\E}{\mathbb{E}}
\newcommand{\R}{\mathbb{R}}
\newcommand{\corO}{}
\newcommand{\corOU}{}
\newcommand{\corE}{}
\title[Mixed spherical models]{\corO{Geometry and temperature chaos in mixed spherical spin glasses at low temperature - the perturbative regime}}

\author{G\'{e}rard Ben Arous, Eliran Subag and Ofer Zeitouni}
\begin{abstract}
We study the Gibbs measure of mixed spherical $p$-spin 
glass models at low temperature,
in (part of) the 1-RSB regime, including, in particular, models close to pure in an appropriate sense. We show that the Gibbs measure concentrates on spherical bands around deep critical points of the (extended) Hamiltonian restricted to the sphere of radius  $\sqrt N\qs$, where $\qs^2$ is the rightmost point in the support of the overlap distribution.
We also show that the relevant critical points are pairwise orthogonal for two different low temperatures.
This allows us to explain why temperature chaos occurs for those models, in contrast to the pure spherical models.
\tableofcontents{}
\end{abstract}

\maketitle

\section{Introduction}
\label{sec-intro}

We study in this paper the Gibbs measure of the mixed spherical $p$-spin 
glass model in low temperature,
in (part of) the 1-RSB regime. The model, which is a variant
of that introduced in the seminal paper \cite{SK75},
is defined as follows. Let $\SN$ 
denote the (Euclidean) sphere of radius 
$\sqrt{N}$ in dimension $N$.
Let $J_{i_{1},...,i_{p}}^{(p)}$ denote i.i.d real
standard Gaussian random variables, and let $\{\gamma_p\}_{p\geq 2}$ be 
a sequence of non-negative deterministic constants. The \textit{Hamiltonian}
is defined as
\begin{equation}
H_{N}\left(\bs\right)=H_{N,\nu}\left(\bs\right):=\sum_{p=2}^{\infty}\frac{\gamma_{p}}{N^{\left(p-1\right)/2}}\sum_{i_{1},...,i_{p}=1}^{N}J_{i_{1},...,i_{p}}^{(p)}\sigma_{i_{1}}\cdots\sigma_{i_{p}},
\quad\boldsymbol{\sigma}=\left(\sigma_{1},...,\sigma_{N}\right)\in
\SN,\label{eq:Hamiltonian}
\end{equation}
with \textit{ground state} 
\begin{equation}
\label{eq-ground}
\GS_N=\min_{\boldsymbol{\sigma}\in \SN }
H_N(\boldsymbol{\sigma}),
\end{equation}
and the associated Gibbs measure 
is the random probability measure on 
$\SN$ given by
\begin{equation}
\label{eq-gibbsmeas}
\frac{dG_{\corE{N,\beta}}}{d\boldsymbol{\sigma}}
\left(\boldsymbol{\sigma}\right):=\frac{1}{Z_{\corO{N,\beta}}}e^{-\beta H_{N}\left(\boldsymbol{\sigma}\right)},
\end{equation}
where $Z_{\corO{N,\beta}}$ is a normalization constant and $d\boldsymbol{\sigma}$
denotes normalized Haar measure on $\SN$. 

Let  
\begin{equation}
\label{eq-nu}
\nu(x)=\sum_{p= 2}^\infty \gamma_{p}^{2}x^{p}.
\end{equation}
We refer to the model as \textit{pure} if 
$\nu(x)$ is a monomial, and \textit{mixed} otherwise.  
Throughout the paper, we assume that $\gamma_p$ decays exponentially, so that 
$\nu(\cdot )$ is defined on an open interval that includes $(0,1]$. 
\corO{Following \cite{ABA2}, we normalize $\nu$ by setting $\nu(1)=1$.}

For many models of spin glasses including the spherical models,
properties of the Gibbs measure in terms of their overlaps (i.e., the
distribution of the distance between two or more points sampled
independently from the Gibbs measure,) which serve here as order parameter,
are available through 
a version of the Parisi formula, see  \cite{Talag} and 
\cite{Parisi,GuerraBound,Talag2}, and through the
ultrametricity properties of $G_{\beta,N}$, see \cite{ultramet}. 
We
refer to \cite{TalagrandBookII,PanchenkoBook} for comprehensive
introductions to the mathematical theory of spin glasses.

Our goal in this paper is different: we aim at developing a 
\textit{geometric} description of the Gibbs measure, at low temperature. For
the pure model, this was achieved by one of us in \cite{geometryGibbs},
where it was shown that at low temperature $\beta\gg 1$,
$G_{N,\beta}$ concentrates 
in thin bands (or rings) centered at 
the locations of the deepest local minima of $H_N$\footnote{
	The value of the global minimum can be inferred from Parisi's formula. It
	was 
	evaluated 
	via a study of the limiting expected complexity
	in \cite{A-BA-C} (pure) and \cite{ABA2} (mixed),
	and complemented (for the pure $p$-spin spherical model) by a study of
	second moments in \cite{2nd}.}. 

Our  results apply to 
spherical mixed models that satisfy a
certain decoupling condition (\CDTN,  defined below)
related to critical points. 
As we shall see, \CDTN\ dictates 1-RSB at very low temperature.
A particularly important class of models that satisfy our conditions
are perturbations of pure $p$-spin spherical models, see
Section \ref{sec:CDTNandCt} below. 
We show that the geometric description of 
the support of the Gibbs measure in low temperature, 
developed in  \cite{geometryGibbs} for the pure $p$-spin model,
needs to be modified in the mixed case. 
The Gibbs measure in the mixed case is supported on
thin bands that are centered 
at critical points not of the Hamiltonian 
\eqref{eq:Hamiltonian}
but rather
of  its extension to
the sphere 
$\mathbb{S}^{N-1}(\qs\sqrt{N})$, for appropriate
$\qs=\qs(\beta)<1$, see Theorem \ref{thm:Geometry}.
(These centers are close to critical points of the Hamiltonian 
\eqref{eq:Hamiltonian} with
low, but not minimal, 
energy.)
As a byproduct,
we are able to show that states of asymptotic positive mass are
pure and nearly orthogonal, see Theorem \ref{thm:geometry1}, and
explain why those models
exhibit chaos in temperature while the pure models
do not, see Theorem \ref{thm:Chaos} below and Section \ref{sec:PfsGeometry}.

\subsection{Main results}
We turn to a detailed description of our results. 
Introduce the function
\begin{equation}
\label{eq-G}
G(\nu)  =\log\frac{\nu''(1)}{\nu'(1)}-\frac{(\nu''(1)+\nu'(1))(\nu''(1)\nu(1)+\nu'(1)^{2}-\nu'(1)\nu(1))}{\nu''(1)\nu'(1)^{2}}.
\end{equation}
Following \cite{ABA2},
we call the model   \textit{pure-like}, \textit{critical} or \textit{full}
according to whether $G(\nu)>0$,
$=0$ or $<0$, respectively. In the sequel, we deal exclusively
with pure-like models.

Next, consider the limiting expected complexity at level $u$ and 
radial derivative $x$, defined as
\begin{eqnarray}
\label{eq-gerardcom1}
\Theta_{\nu,1}(u,x)&=&\lim_{\delta\to 0} \lim_{N\to\infty}\frac1N
\log\left(\E \# \left\{\mbox{\rm critical points $\bs\in \SN$  with}
\nonumber \right.\right. \\
&& \quad \quad \quad \quad \quad \quad \quad \quad \left.\left.
\mbox{\rm $|\frac1N  H_N(\bs)-u|\leq \delta$ 
	and
	$| \frac{1}{\sqrt{N}}\frac{d}{dR}H_N(\bs)-x|\leq \delta$
}\right\}\right),
\end{eqnarray}
where $\frac{d}{dR} H_N(\bs):=\frac{1}{\sqrt{N}} \frac{d}{dq} H_N(q\bs)|_{q=1}$ is the radial 
derivative of $H_N(\bs)$ at $\bs\in \SN$.
Recapitulating one of the
main results in \cite{ABA2}, one has 
that the limit in \eqref{eq-gerardcom1} exists and is explicit, see
Theorem \ref{thm:1stmoment} below.
Let
\begin{equation}
\label{eq:E0}
 -E_{0}:  =-E_{0}(\nu)=\min\Big\{ E:\,\sup_{x\in \R}\Theta_{\nu,1}\left(E,x\right)=0\Big\}.
\end{equation}
The level $-E_{0}N$ is the threshold beyond which
the number of critical points decays exponentially in expectation, see
\cite{ABA2}.\footnote{The definition of $E_{0}$ given in this paper
	coincides with the definition in \cite{ABA2}
	for pure-like or critical models, but not for full models. See Sections
	4 and 5 of \cite{ABA2}.}  
In particular, by Markov's inequality,  for large $N$, 
\begin{equation}
\frac{1}{N}\min_{\bs\in \SN}H_{N}(\bs)\geq-E_{0}-o(1),
\quad
\mbox{\rm with high  probability}.\label{eq:GSlb}
\end{equation}
As we show below in  Lemma \ref{lem:x0},
there exists a unique maximizer 
\begin{equation}
-x_{0}:=-x_{0}(\nu)=\arg\max_{x\in\mathbb{R}}\Theta_{\nu,1}\left(-E_{0},x\right).\label{eq:x0}
\end{equation}

Define the \textit{overlap} between $\bs,\bs'\in \R^N$ as
\begin{equation}
\label{eq-defoverlap}
R(\bs,\bs'):=\langle\bs,\bs'\rangle/\|\bs\|\|\bs'\|,
\end{equation}
where $\langle \cdot,\cdot\rangle$ denotes the standard inner product in
$\R^N$.
We next introduce a function $\Psi_{\nu,1,1}(r,u,x)$ that will play a 
crucial role in second moment 
computations, and which we refer to as the pair complexity at level $u$, radial derivative $x$ and 
overlap $r$: 
\begin{eqnarray}
\Psi_{\nu,1,1}(r,u,x)&=&\lim_{\delta\to 0} \lim_{N\to\infty}\frac1N
\log\Big(\E \# \Big\{\mbox{\rm pairs of critical points $\bs,\bs'$  with
	$|R(\bs,\bs')-r|\leq \delta$, } \nonumber\\
&&
\Big|\frac1N H_N(\bs)-u\Big|\leq \delta,
\Big|\frac1N H_N(\bs')-u\Big|\leq \delta,
\Big|\frac{1}{\sqrt{N}} \frac{d}{dR}H_N(\bs)-x\Big|\leq \delta,
\Big| \frac{1}{\sqrt{N}}\frac{d}{dR}H_N(\bs')-x\Big|\leq \delta
\Big\}\Big).
\end{eqnarray}
An explicit expression for the function $\Psi_{\nu,1,1}$ appears in
\eqref{eq:Psishort}.
%
Finally set,
for $r\in(-1,1)$, 
\begin{equation}
\Psi_{\nu}^{0}(r):=\Psi_{\nu,1,1}(r,-E_{0},-x_{0}),\label{eq:0204-02}
\end{equation}
and for $r=\pm1$, let $\Psi_{\nu}^{0}(\pm1)$ be the corresponding
$r\to\pm1$ limit.\footnote{The $r\to1$ limit always exists and finite, see the proof of Lemma
	\ref{lem:conccentration} below. The $r\to-1$ limit is finite if and only
	if $\nu$ is even. Otherwise, it is $-\infty$.} 
The function $\Psi_{\nu}^{0}(\cdot)$, 
which is determined
by $\nu$,
is a continuous function
from $[-1,1]$ to $\mathbb{R}\cup\{-\infty\}$, and determines the pair 
complexity at level $-NE_0$ and radial derivative $x$ as function of the overlap. 

All our results will be under the following assumption.

\noindent\textbf{\CDTN.} Assume that $\nu$ is mixed and
pure-like, that $\frac{d^{2}}{dr^{2}}\Psi_{\nu}^{0}(0)<0$,
and that the maximum of $\Psi_{\nu}^{0}(r)$ on the interval $[-1,1]$
is obtained uniquely at $r=0$.

\CDTN\ implies that the pair complexity at the relevant levels $(-E_0,-x_0)$
is maximal at zero overlap. We will see below, see Theorem \ref{thm:geometry1},
that \CDTN\  implies in particular that the model 
belongs to the so-called 1-RSB class. We will also see that \CDTN\ is an 
open condition, and that small perturbations
of pure models satisfy \CDTN, see Proposition \ref{prop:CDTNp}.

We are ready to state our first result. Recall the ground state 
$\GS_N$, see \eqref{eq-ground}.
\begin{thm}
	\label{theo-ground}
	Assume \CDTN. Then,
	\begin{equation}
	\label{eq-groundeq}
	\lim_{N\to\infty} \frac{\GS_N}{N}=-E_0, \quad {\rm
		a.s.}.
	\end{equation}
\end{thm}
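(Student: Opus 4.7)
The plan is to combine a Kac--Rice second-moment bound on the number of critical points just above the threshold energy with Gaussian concentration for $\GS_N$. The lower bound $\liminf_N \GS_N/N\geq -E_0$ a.s.\ is already contained in \eqref{eq:GSlb}: Markov's inequality on the first moment formula of Theorem~\ref{thm:1stmoment} gives $\P(\GS_N<N(-E_0-\epsilon))\leq e^{-cN}$ for some $c=c(\epsilon)>0$, and Borel--Cantelli concludes. All the content lies in the matching upper bound $\limsup_N \GS_N/N\leq -E_0$ a.s.

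Fix $\epsilon>0$. By \eqref{eq:E0} and the continuity (and monotonicity in $E$) of $\Theta_{\nu,1}$, the quantity $\sup_x\Theta_{\nu,1}(-E_0+\epsilon,x)$ is strictly positive and is attained at some $-x_\epsilon$ with $x_\epsilon\to x_0$ as $\epsilon\downarrow 0$. Let $\mathcal{N}_N(\delta)$ denote the number of critical points $\bs\in \SN$ whose normalized energy lies in the $\delta$-neighborhood of $u_\epsilon:=-E_0+\epsilon$ and whose normalized radial derivative lies in the $\delta$-neighborhood of $-x_\epsilon$. The first-moment formula gives
\begin{equation*}
\E[\mathcal{N}_N(\delta)]\geq \exp\big(N(\Theta_{\nu,1}(u_\epsilon,-x_\epsilon)-o_\delta(1))\big),
\end{equation*}
while the pair Kac--Rice formula and the explicit expression \eqref{eq:Psishort} for $\Psi_{\nu,1,1}$ yield
\begin{equation*}
\E[\mathcal{N}_N(\delta)^2]\leq \exp\Big(N\big(\max_{r\in[-1,1]}\Psi_{\nu,1,1}(r,u_\epsilon,-x_\epsilon)+o_\delta(1)\big)\Big).
\end{equation*}
The substance of \CDTN\ is that at $(u,x)=(-E_0,-x_0)$ the map $r\mapsto \Psi_{\nu,1,1}(r,u,x)$ is maximized uniquely at $r=0$, where (by asymptotic independence of the field at orthogonal directions) it equals $2\Theta_{\nu,1}(u,x)=0$. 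By continuity of $\Psi_{\nu,1,1}$ in $(u,x)$ on $r\in(-1,1)$, together with the separate (finite) analysis of the boundary limits $r\to\pm 1$ alluded to in the footnote following \eqref{eq:0204-02}, this property persists at $(u_\epsilon,-x_\epsilon)$ for $\epsilon$ small. Hence $\E[\mathcal{N}_N(\delta)^2]\leq \exp(N(2\Theta_{\nu,1}(u_\epsilon,-x_\epsilon)+o_\delta(1)))$, and Paley--Zygmund gives $\P(\mathcal{N}_N(\delta)\geq 1)\geq e^{-N o_\delta(1)}\geq c_0>0$ for $\delta$ small. On this event $\GS_N\leq N(u_\epsilon+\delta)\leq N(-E_0+2\epsilon)$.

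To promote this to an almost-sure statement, note that $\bs\mapsto H_N(\bs)$ is a centered Gaussian field with variance $N\nu(1)=N$ on $\SN$, so $\GS_N$ is $O(\sqrt{N})$-Lipschitz in the disorder, and Borell--TIS gives $\P(|\GS_N-m_N|>s\sqrt{N})\leq 2e^{-s^2/C}$ for a median $m_N$. The previous step forces $m_N/N\leq -E_0+2\epsilon+o(1)$, and the Gaussian tail combined with Borel--Cantelli yields $\limsup_N \GS_N/N\leq -E_0+2\epsilon$ a.s. Sending $\epsilon\downarrow 0$ finishes the proof. The main obstacle is the second-moment step: one must show that the unique maximum of $\Psi_{\nu,1,1}(\cdot,u,-x)$ at $r=0$ is robust under the small perturbation $(-E_0,-x_0)\mapsto(u_\epsilon,-x_\epsilon)$, and rule out the boundary limits $r\to\pm 1$ (where Kac--Rice degenerates since pairs of critical points collapse or become antipodal) as spurious maximizers, presumably by an explicit expansion of $\Psi_{\nu,1,1}$ near $r=\pm 1$ using the Hessian conditioning built into the pair density.
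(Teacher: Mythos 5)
Your proposal follows essentially the same route as the paper: a two-moment computation for the number of critical points near the threshold (with the second moment matched to the first moment squared via \CDTN, exactly the content of Lemma \ref{lem:conccentration} and Corollary \ref{cor:matching}, whose $\delta$-neighborhood of $(-E_0,-x_0)$ covers your level $(-E_0+\epsilon,-x_\epsilon)$ for $\epsilon$ small), then Paley--Zygmund/Cauchy--Schwarz and Borell--TIS concentration with Borel--Cantelli; the only cosmetic difference is that you count at a fixed level $\epsilon$ above the threshold and send $\epsilon\downarrow 0$, whereas the paper uses shrinking windows at $(-E_0,-x_0)$. One intermediate claim is overstated: matching at exponential scale only gives $\mathbb{P}(\mathcal{N}_N(\delta)\geq 1)\geq e^{-o_\delta(1)N}$, not a uniform constant $c_0>0$ (the paper notes that a non-vanishing probability would require $O(1)$-scale matching as in the pure case); this is harmless, since Borell--TIS tails for $\GS_N/N$ decay at speed $N$, so a subexponential lower bound already pins the median down to $-E_0+2\epsilon+O(\sqrt{o_\delta(1)})$, and the conclusion follows as you state after letting $\delta\to0$ and $\epsilon\to0$.
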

Thus, expected 
complexity determines the ground state.
Our proof of Theorem \ref{theo-ground} relies on a two moments analysis
presented in Sections \ref{sec:CritPts}, \ref{sec:Moments} and \ref{sec:Matching}, and avoids the use of  Parisi's formula.

We next turn to the description of the support of the 
Gibbs measure $G_{\beta,N}$ 
for large $\beta$.
Let $F_{1},...,F_{N-1}$ be a piecewise smooth frame field on $\SN$,
and extend it to $\bx\in \mathbb{R}^{N}\setminus\{0\}$ by setting 
$F_{i}(\bx)=F_{i}(\sqrt{N} \bx/\|\bx\|)$
(under the usual identification of
tangent spaces with affine subspaces of $\mathbb{R}^{N}$). Denote
\begin{align}
\grad H_{N}\left(\boldsymbol{\sigma}\right) & :=\left\{ F_{i}H_{N}\left(\boldsymbol{\sigma}\right)\right\} _{i\leq N-1},\nonumber \\
\Hess H_{N}\left(\boldsymbol{\sigma}\right) & :=\left\{ F_{i}F_{j}H_{N}\left(\boldsymbol{\sigma}\right)\right\} _{i,j\leq N-1},\label{eq:derivatives}
\end{align}
We shall call the points $\boldsymbol{\sigma}\in\mathbb{S}^{N-1}(q\sqrt{N})$
with $\grad H_{N}\left(\boldsymbol{\sigma}\right)=0$, \textit{$q$-critical
	points}. For any set $B\subset\mathbb{R}$ let 
\begin{equation}
\mathscr{C}_{N,q}(B):=\left\{ \boldsymbol{\sigma}\in\mathbb{S}^{N-1}(q\sqrt{N}):\,\nabla_{{\rm sp}}H_{N}\left(\boldsymbol{\sigma}\right)=0,\,H_{N}\left(\boldsymbol{\sigma}\right)\in B
\right\} \label{eq:0204-01}
\end{equation}
denote the set of $q$-critical points with values in $B$.
As we are about to see, the support of the Gibbs measure at inverse
temperature $\beta\gg1$ is asymptotically contained in 
thin bands around $q$-critical points at level $-NE$, for particular 
values of 
$\qs=\qs(\beta)$ and $\Es=\Es(\beta)$ defined in \corO{\eqref{eq:qs} and \eqref{eq:Es}, respectively}. ($\corE{-\Es}$ is the 
normalized ground state of the Hamiltonian  \eqref{eq:Hamiltonian} on 
$\mathbb{S}^{N-1}(\qs\sqrt{N})$.)
To define 
what we mean by bands, set
\begin{equation}
{\rm Band}\left(\bs_{0},\epsilon\right):=\left\{ \boldsymbol{\sigma}\in\SN:
\,|R(\bs,\bs_{0})-\|\bs_{0}\|/\sqrt{N}|\leq\epsilon\right\}. \label{eq:band}
\end{equation}
Given a
sequence $\epsilon_{N}>0$, set $B=(-\Es-\epsilon_{N},-\Es+\epsilon_{N})$
and $\Cs=\Cs(\beta):=\mathscr{C}_{N,\qs}(NB)$. As the next theorem
shows,
the union (over $\Cs$) of these bands asymptotically supports 
$G_{N,\beta}$.
\begin{thm}
	\label{thm:Geometry}(Support of the Gibbs measure) 
	Assume that
	$\nu$ satisfies \CDTN. Then there
	exist positive $\epsilon_{N}\to0$
	such that for large enough $\beta$ the following hold.
	\begin{enumerate}
		\item \label{sub-exp}Subexponential number of critical points:
		\begin{equation}
		\label{eq-subsexp}
		\lim_{N\to\infty}\frac{1}{N}\log\mathbb{E}\left\{ |\Cs|\right\} =0.
		\end{equation}
		\item \label{enu:Geometry1}Asymptotic support: 
		\begin{equation}
		\lim_{N\to\infty}\mathbb{E}\left\{ G_{N,\beta}\left(\cup_{\bs_{0}\in\Cs}{\rm Band}\left(\bs_{0},\epsilon_{N}\right)\right)\right\} =1.\label{eq:thm1_1}
		\end{equation}
	\end{enumerate}
\end{thm}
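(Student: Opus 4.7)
My plan is to reduce both assertions to first-moment (Kac--Rice) complexity computations on the smaller sphere $\mathbb{S}^{N-1}(\qs\sqrt{N})$, combined with a Taylor expansion of $H_N$ around $\qs$-critical points and Gaussian concentration of $\log Z_{N,\beta}$. The key observation is that the restriction of $H_N$ to $\mathbb{S}^{N-1}(\qs\sqrt{N})$ is (after rescaling) again a mixed spherical Hamiltonian with a modified mixture function, so Theorem~\ref{thm:1stmoment} applies verbatim on that sphere; the threshold $-\Es$ plays there the role that $-E_0$ plays on $\SN$.

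\textbf{Part (1).} I would apply the first-moment Kac--Rice formula to $\grad H_N$ on $\mathbb{S}^{N-1}(\qs\sqrt{N})$, restricted to the window of width $\epsilon_N$ around $-\Es N$. By the very definition of $\Es$ (as the analog of $E_0$ for the restricted problem), the corresponding complexity vanishes at $-\Es$. Choosing $\epsilon_N\to 0$ slowly enough (for instance $\epsilon_N = N^{-\alpha}$ with small $\alpha>0$) then yields $\E|\Cs| = e^{o(N)}$, which is \eqref{eq-subsexp}.

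\textbf{Part (2).} I would prove matched lower and upper bounds on $Z_{N,\beta}$ and invoke Gaussian--Lipschitz concentration of $\log Z_{N,\beta}$ around its mean. For the \emph{lower bound}, expand $H_N$ around each $\bs_0\in\Cs$: since $\bs_0$ is a $\qs$-critical point and, for $\bs\in{\rm Band}(\bs_0,\epsilon_N)$, the increment $\bs-\bs_0$ is essentially tangent to $\mathbb{S}^{N-1}(\qs\sqrt{N})$, both the tangential gradient and the radial linear term in the Taylor expansion vanish, so a Gaussian integration yields
\begin{equation}
\int_{{\rm Band}(\bs_0,\epsilon_N)} e^{-\beta H_N(\bs)}\,d\bs \;\approx\; e^{-\beta H_N(\bs_0)} \, V_{\bs_0}, \nonumber
\end{equation}
with $V_{\bs_0}$ governed by the spectrum of $\Hess H_N(\bs_0)$. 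Summing over $\Cs$ and using standard estimates on the limiting empirical spectrum at low-energy critical points produces a lower bound on $\frac{1}{N}\log Z_{N,\beta}$ in terms of $(\qs,\Es)$ and $\beta$. For the \emph{upper bound}, I would stratify $\SN$ according to the overlap $r = R(\bs,\bs')$ with a nearest candidate center $\bs' \in \mathbb{S}^{N-1}(q\sqrt{N})$ over variable $q$, and by the value of $H_N(\bs')/N \approx u$, and apply Kac--Rice slice by slice. The resulting two-parameter variational problem in $(q,u)$ is maximized exactly at $(\qs,-\Es)$, so all other strata contribute exponentially less, matching the lower bound. Combined with concentration, this forces $\E[G_{N,\beta}(\text{complement of bands})]\to 0$, which is \eqref{eq:thm1_1}.

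\textbf{Main obstacle.} The delicate step is the upper bound: ruling out configurations $\bs\in\SN$ whose nearest candidate center lies near a non-critical low-energy configuration, or near a critical point at the wrong energy. This is precisely where \CDTN\ enters: the negative second derivative and unique maximum of $\Psi_\nu^0$ at $r=0$ ensure that the relevant low-energy $\qs$-critical points are asymptotically pairwise orthogonal, so that their bands are essentially disjoint and a second-moment bound of the form $\E|\Cs|^2 \leq (\E|\Cs|)^2 e^{o(N)}$ holds. Without \CDTN, multiple overlap values could simultaneously support Gibbs mass (as in full-RSB models), and the 1-RSB picture of concentration on disjoint bands would break down.
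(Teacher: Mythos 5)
Your Part (1) is essentially the paper's argument (scaling $\nu\mapsto\nu_{\qs}$ so that Theorem \ref{thm:1stmoment} applies on $\mathbb{S}^{N-1}(\qs\sqrt{N})$, vanishing of the complexity at $-\Es=-E_0(\qs)$, continuity over the shrinking window) and is fine. The gap is in your upper bound for Part (2). The ``slice by slice Kac--Rice'' computation you propose is an annealed (first-moment) bound: for the stratum indexed by $(q,E)$ it yields, at exponential scale, $\sup_x\Theta_{\nu,q}(E,x)+\Lambda_{Z,\beta}(E,q)$, with $\Lambda_{Z,\beta}(E,q)=-\beta E+\tfrac12\log(1-q^2)+\tfrac12\beta^2\sum_{k\ge2}\alpha_k^2(q)$ the annealed band weight. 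This functional is \emph{not} maximized at $(\qs,-\Es)$ over the range of $q$ you actually have to consider: covering the deep sub-level set forces $q$ down to $1-c\tau(\beta)$ with $\tau(\beta)$ of order $\log\beta/\beta$, whereas $1-\qs$ is of order $1/\beta$. Writing $q=1-t/\beta$, the optimized annealed value $\Lambda_{Z,\beta}(-E_0(q),q)$ has a local maximum at $t_-$ (i.e.\ at $\qs$) but grows like $\nu''(1)t^2$ for larger $t$ and exceeds $\Lambda_{Z,\beta}(-\Es,\qs)$ well inside the window $t\le c\log\beta$. The reason is that for $q<q_c$ the conditional model on the band is in the low-temperature phase of its $2$-spin component, so its annealed and quenched free energies differ at order $N$; the expectation of those strata is carried by exponentially rare disorder and Markov's inequality cannot close the argument. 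The paper needs precisely here a quenched bound: concentration of the conditional $2$-spin free energy around $\Lambda_{F,\beta}^{2-}(E,q)$, a large-deviation correction accounting for the exponentially many critical points, control of the $k\ge3$ interactions (Lemmas \ref{lem:bound2}, \ref{lem:bound3}, via Lemmas \ref{lem:LambdaF} and \ref{lem:apxest}), and the gap \eqref{eq:gap}. Your proposal contains no substitute for this, and the obstacle you single out (orthogonality of centers under \CDTN, disjointness of bands) is not where the difficulty of \eqref{eq:thm1_1} lies.

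Two further steps are glossed over. First, stratifying by a ``nearest candidate center'' presupposes that every Gibbs-relevant $\bs\in\SN$ lies in a band centered at a genuine $q$-critical point with controlled $(q,E)$; this is the content of Proposition \ref{prop:sublvl} (implicit-function-theorem paths $q\mapsto\bs_q$ of $q$-critical points through each deep local minimum, Hessian invertibility, caps) and is not automatic -- if the center is not critical, the conditional band model acquires a $1$-spin term whose large deviations ruin the bound, which is exactly the pure-vs-mixed issue the paper emphasizes. Second, your lower bound treats the band by a Laplace/Hessian-spectrum approximation, but the band has macroscopic width $\sim\sqrt{N(1-\qs^2)}$: the restriction is a full mixed model with coefficients $\alpha_k(\qs)$, $k\ge2$, and one must show its quenched free energy matches the annealed $\Lambda_{Z,\beta}$ (this uses $\qs>q_c$, i.e.\ replica symmetry of the band model, and the $2$-spin free-energy fluctuation input), rule out atypically light centers, and combine moment matching of ${\rm Crt}_{N,\qs}$ with Gaussian concentration of $\frac1N\log Z_{N,\beta}$ to upgrade the subexponential probability that $\Cs$ is nonempty to a statement holding with probability tending to one. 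These ingredients are stated or only hinted at in your sketch but are where most of the work lies.
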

We also obtain a detailed description of the states associated with 
the Gibbs measure $G_{N,\beta}$.
\begin{thm}
	\label{thm:geometry1}
	Assume that
	$\nu$ satisfies \CDTN. Let $\epsilon_{N}\to0$ as in Theorem 
	\ref{thm:Geometry} and $\beta$ large enough.
	Let $\bs$ and $\bs'$ be independent samples from the Gibbs measure
	$G_{N,\beta}$. Then,
	for any $\delta>0$, the following holds.
	\begin{enumerate}
		\item
		\label{enu:Geometry2}States are pure\footnote{Note that the second limit 
			in \eqref{eq:2601} is only relevant in case $\nu$ is an even
			polynomial, since otherwise $\Cs\cap-\Cs=\emptyset$ for  $\epsilon_{N}$
			small. }: 
		\begin{equation}
		\begin{aligned} & \lim_{N\to\infty}\mathbb{P}\left\{ 
		\bs,\,\bs'\in{\rm Band}(\bs_{0},\epsilon_{N})\,
		\mbox{\rm for some $\bs_{0}\in\Cs$, and} \,
		\left|R\left(\boldsymbol{\sigma},\boldsymbol{\sigma}^{\prime}\right)-\qs^{2}\right|>\delta\right\} =0,\\
		& \lim_{N\to\infty}\mathbb{P}\left\{
		\bs\in{\rm Band}(\bs_{0},\epsilon_{N}),
		\bs'\in{\rm Band}(-\bs_{0},\epsilon_{N})\,
		\mbox{\rm for some $\bs_{0}\in\Cs$, and} \,
		\left|R\left(\boldsymbol{\sigma},\boldsymbol{\sigma}^{\prime}\right)+\qs^{2}\right|>\delta\right\} =0.
		\end{aligned}
		\label{eq:2601}
		\end{equation}
		\item \label{enu:Geometry3}Orthogonality of states: 
		\begin{equation}
		\begin{aligned} & \lim_{N\to\infty}\mathbb{P}\left\{ 
		\bs\in{\rm Band}(\bs_{0},\epsilon_{N}),\,\bs'
		\in{\rm Band}(\bs_{0}',\epsilon_{N})\,
		\mbox{\rm for some $\bs_{0}\neq \pm \bs_0^{\prime}\in\Cs$, and} \,
		\left|R\left(\boldsymbol{\sigma},
		\boldsymbol{\sigma}^{\prime}\right)\right|>\delta
		\right\} =0.
		\end{aligned}
		\label{eq:1402}
		\end{equation}
	\end{enumerate}
\end{thm}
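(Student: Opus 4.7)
The plan is to combine Theorem~\ref{thm:Geometry} with two separate inputs: a pair-complexity argument for the orthogonality part \eqref{eq:1402}, and a local analysis of the Gibbs measure on a single band for the purity part \eqref{eq:2601}. By Theorem~\ref{thm:Geometry}, both $\bs$ and $\bs'$ lie, with Gibbs probability approaching $1$, in $\bigcup_{\bs_{0}\in\Cs}{\rm Band}(\bs_{0},\epsilon_{N})$, while $|\Cs|$ grows subexponentially in $N$; hence it suffices to control (i) overlaps between distinct centers of bands, and (ii) the within-band geometry of two independent Gibbs samples.

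For the orthogonality statement \eqref{eq:1402}, I would use \CDTN. Since $\Psi_{\nu}^{0}(r)$ attains its maximum on $[-1,1]$ uniquely at $r=0$, for every $\delta>0$ there exists $c=c(\delta)>0$ with $\sup_{\{r\,:\,\delta<|r|<1\}}\Psi_{\nu}^{0}(r)\leq\Psi_{\nu}^{0}(0)-c$. A first-moment (Markov) estimate, using the explicit formula for the pair complexity $\Psi_{\nu,1,1}$, then bounds the expected number of pairs in $\Cs\times\Cs$ with overlap of absolute value exceeding $\delta$ (and away from $\pm 1$) by $e^{N(\Psi_{\nu}^{0}(0)-c)+o(N)}$, whereas $\E|\Cs|^{2}$ is of order $e^{N\Psi_{\nu}^{0}(0)+o(N)}$. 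Combined with the subexponential bound \eqref{eq-subsexp}, this forces all such pairs to be absent with high probability; hence any two distinct non-antipodal centers $\bs_{0},\bs_{0}'\in\Cs$ satisfy $|R(\bs_{0},\bs_{0}')|<\delta/2$. Given this, if $\bs\in{\rm Band}(\bs_{0},\epsilon_{N})$ and $\bs'\in{\rm Band}(\bs_{0}',\epsilon_{N})$, decomposing $\bs,\bs'$ along ${\rm span}(\bs_{0},\bs_{0}')$ and its orthogonal complement yields $R(\bs,\bs')=\qs^{2}R(\bs_{0},\bs_{0}')+o(1)$ plus a contribution from the orthogonal-complement parts, the latter being $o(1)$ by Gaussian concentration on the sphere applied to the (nearly independent) tangential components of the two samples.

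For the purity statement \eqref{eq:2601}, I would study the Gibbs measure conditioned on a single band around $\bs_{0}\in\Cs$. Using $\grad H_{N}(\bs_{0})=0$ and the fact that the radial derivative at $\bs_{0}$ is $\approx -x_{0}$, a Taylor expansion around $\bs_{0}$ shows that for $\bx\perp\bs_{0}$ with $\|\bx\|^{2}\approx N(1-\qs^{2})$, the value $H_{N}(\bs_{0}+\bx)$ equals $H_{N}(\bs_{0})$ plus a quadratic form in $\bx$ determined by $\Hess H_{N}(\bs_{0})$, plus higher-order corrections. Up to reparametrization, the conditional law on the band is then that of an effective spherical mixed $p$-spin model on $\mathbb{S}^{N-2}(\sqrt{N(1-\qs^{2})})\subset\bs_{0}^{\perp}$. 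The defining property of $\qs=\qs(\beta)$, encoded in \eqref{eq:qs} via the 1-RSB prescription, ensures that this induced model is in its own \emph{replica-symmetric} regime at inverse temperature $\beta$, so two independent samples $\bs,\bs'$ in the same band decompose as $\bs_{0}+\bx$, $\bs_{0}+\bx'$ with $\langle\bx,\bx'\rangle/N\to0$. Since $R(\bs,\bs')=\qs^{2}+\langle\bx,\bx'\rangle/N+o(1)$, the first line of \eqref{eq:2601} follows. The second line is obtained by the antipodal symmetry $\bs_{0}\mapsto-\bs_{0}$, which is non-trivial only when $\nu$ is even.

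The main obstacle I anticipate is the within-band analysis underlying \eqref{eq:2601}: identifying the induced Gibbs measure as a spherical mixed $p$-spin in its replica-symmetric phase, and proving overlap concentration there at the relevant inverse temperature. This requires (a) characterizing the conditional law of $\Hess H_{N}(\bs_{0})$ at a deep critical point through a Kac-Rice-type conditioning, (b) controlling the cubic and higher-order corrections uniformly over the band, and (c) verifying that $\qs(\beta)$ places the induced model strictly inside its RS phase. A secondary but real difficulty in the orthogonality step is the uniform transfer from center overlaps to sample overlaps, which relies on \eqref{eq-subsexp} to take a union bound over the (subexponentially many) pairs of bands.
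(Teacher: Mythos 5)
Your overall architecture is close to the paper's: near-orthogonality of the centers does come from the pair complexity (this is Corollary \ref{cor:orth}, which rests on Lemma \ref{lem:conccentration}; note you also need the radial-derivative window, via Lemma \ref{lem:epsx0}, before the negativity of $\Psi$ applies), and the purity statement does rest on the conditional band model of Lemma \ref{cor:conditional} being effectively replica symmetric at $\qs$. But there are two genuine gaps. First, in the orthogonality step, the passage from $|R(\bs_{0},\bs_{0}')|$ small to $|R(\bs,\bs')|$ small is not a ``Gaussian concentration on the sphere'' statement. The conditional Gibbs measure on a band is not the uniform measure, the two samples are functions of the same disorder, and the tangential components have norm of order $\sqrt{N(1-\qs^{2})}$ while $\delta>0$ is arbitrary (in particular it may be much smaller than $\sqrt{1-\qs^{2}}=O(\beta^{-1/2})$), so the cross terms are not automatically negligible. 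The correct bridge --- and the one the paper uses --- is to feed the \emph{within-band} overlap concentration (your purity statement, i.e.\ \eqref{eq:3011-01}) into a deterministic linear-algebra lemma (Corollary \ref{cor:overlapcrt}, from Lemma 11 of \cite{geometryGibbs}), which converts self-overlap concentration on each band into concentration of the cross-band overlap at $q_{1}q_{2}R(\bs_{0},\bs_{0}')$. As written, your orthogonality argument does not use purity at all, and the step it substitutes for it would fail.

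Second, your purity argument is stated as a quenched claim at a single conditioned point (``the induced model is RS, so two samples decorrelate''), but the centers in $\Cs$ are selected by the disorder and there are subexponentially many of them; a per-band $o(1)$ concentration statement under $\mathbb{P}^{\qs}_{u,v}$ does not transfer to all bands simultaneously by a union bound, and conditioning at a fixed point is not the same as looking at the random critical points. The paper resolves this by an annealed computation: a Kac-Rice bound on $\mathbb{E}\sum_{\bs_{0}\in\Cs^{+}}Z_{N,\beta,\delta}^{\otimes2}(\bs_{0})$ (using the conditional model and the co-area identity \eqref{eq:1110-02}, with the high-temperature condition $2\beta^{2}\alpha_{2}^{2}(\qs)<1$ giving strict subdominance of atypical overlaps as in \eqref{eq:1110-04}), compared against the quenched lower bound $Z_{N,\beta}^{2}\geq e^{2N(\Lambda_{Z,\beta}(-E_{0}(\qs),\qs)-\delta/2)}$ from Proposition \ref{prop:Flb}, followed by Markov. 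You flag the Kac-Rice conditioning as an obstacle but do not supply this mechanism, and you never invoke the free-energy lower bound, which is what makes the comparison with $(Z_{N,\beta})^{2}$ legitimate. Filling these two points essentially reconstructs the paper's proof.
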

Theorem \ref{thm:geometry1} implies that at low temperature, 
the model is in the 1-RSB phase\footnote{Theorem \ref{thm:geometry1} per se 
	actually does not preclude the possibility that the model is in
	the replica 
	symmetric phase, with the Gibbs measure not giving mass to any region of
	vanishing normalized volume. However, the last possibility is easily ruled out by a use of Parisi's formula.}. Its free energy can 
be computed from Parisi's formula as a minimization over a manageable space
of measures. 
We obtain an alternative description of the free energy as
a simple maximization over a subinterval of $[0,1]$, 
see Remark \ref{corinintro} below.

We next turn to the temperature chaos. Recall that one of the main
results in \cite{geometryGibbs} is that pure spherical models do not exhibit
temperature chaos: the supports of $G_{\beta,N}$  are close to each other 
for 
different (large) $\beta$s. On the other hand, 
generically one expects to find temperature chaos in 
spin glasses, see e.g. \cite{PaCheChaos}, \cite{PaChaos}.
We can confirm that indeed, temperature chaos exists in the
mixed models we consider.
\begin{thm}
	\label{thm:Chaos}
	(Chaos in low temperature) Assume that $\nu$ satisfies \CDTN\ \corE{and let $\beta\neq \beta'$ be large enough. Let $\epsilon_N,\, \epsilon'\to0$ and $\Cs,\,\Cs'$ be the corresponding widths and sets of critical points as in Theorem \ref{thm:Geometry}. If $\bs$ and $\bs'$ are independent samples from $G_{N,\beta}$ and $G_{N,\beta'}$, respectively, then for any $\delta>0$,
		\begin{equation*}
	\lim_{N\to\infty}\mathbb{P}\left\{ 
		\bs\in{\rm Band}(\bs_{0},\epsilon_{N}),\,\bs'
		\in{\rm Band}(\bs_{0}',\epsilon_{N}')\,
		\mbox{\rm for some $ \bs_0\in\Cs,\,\bs_0'\in\Cs'$, and} \,
		\left|R\left(\boldsymbol{\sigma},
		\boldsymbol{\sigma}^{\prime}\right)\right|>\delta
		\right\} =0.
		\end{equation*}
}
\end{thm}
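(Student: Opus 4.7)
The plan is to combine Theorem~\ref{thm:Geometry} with a two-radii generalisation of the pair-complexity estimate underlying \CDTN: first reduce the overlap of two samples from $G_{N,\beta}$ and $G_{N,\beta'}$ to the overlap of the corresponding band centers, and then kill the latter by a first-moment bound that is a small perturbation of \CDTN.

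More precisely, Theorem~\ref{thm:Geometry}\eqref{enu:Geometry1} applied to each temperature gives, with probability tending to one, critical points $\bs_0\in\Cs$ and $\bs_0'\in\Cs'$ with $\bs\in{\rm Band}(\bs_0,\epsilon_N)$ and $\bs'\in{\rm Band}(\bs_0',\epsilon_N')$. Decompose $\bs=\alpha\,\bs_0/\qs+u$ and $\bs'=\alpha'\,\bs_0'/\qs'+u'$ with $u\perp\bs_0$ and $u'\perp\bs_0'$; the band conditions force $\alpha=\qs+o(1)$, $\alpha'=\qs'+o(1)$, $\|u\|^2=N(1-\qs^2)+o(N)$, $\|u'\|^2=N(1-(\qs')^2)+o(N)$. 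Arguing exactly as in the proof of Theorem~\ref{thm:geometry1}\eqref{enu:Geometry3}, the cross terms $\langle u,\bs_0'\rangle$, $\langle u',\bs_0\rangle$, $\langle u,u'\rangle$ contribute $o_{\mathbb{P}}(N)$ to $\langle\bs,\bs'\rangle$, so $R(\bs,\bs')=\qs\qs'\,R(\bs_0,\bs_0')+o_{\mathbb{P}}(1)$. Since $\qs,\qs'$ are bounded below by a positive constant for large $\beta,\beta'$, the theorem reduces to showing, for every fixed $\delta>0$, that
\[
\mathbb{P}\!\left\{\exists\,\bs_0\in\Cs,\;\bs_0'\in\Cs':\;|R(\bs_0,\bs_0')|>\delta\right\}\longrightarrow 0.
\]

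For this reduced statement I would introduce the two-radii/two-energy analogue $\Psi_{\nu,\qs,\qs'}(r,u,u',x,x')$ of $\Psi_{\nu,1,1}$, counting pairs $(\bs_0,\bs_0')$ with $\bs_0$ a $\qs$-critical point at energy $Nu$ and radial derivative $\sqrt{N}\,x$, $\bs_0'$ the analogue at $(\qs',u',x')$, and $R(\bs_0,\bs_0')=r$. Under the rescaling $\bs_0=\qs\tilde\bs_0$, $\bs_0'=\qs'\tilde\bs_0'$ with $\tilde\bs_0,\tilde\bs_0'\in\SN$, the restriction of $H_N$ to the two inner spheres becomes a pair of mixed spherical models on $\SN$ whose joint Gaussian structure depends only on $\langle\tilde\bs_0,\tilde\bs_0'\rangle/N=r$, and the Kac--Rice derivation of \eqref{eq:Psishort} extends verbatim to give an explicit formula for $\Psi_{\nu,\qs,\qs'}$ that is jointly continuous in its arguments for $r\in(-1,1)$. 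By Theorem~\ref{thm:Geometry}\eqref{sub-exp} the one-point complexities at $(\qs,-\Es,-x_\star)$ and $(\qs',-\Es',-x_\star')$ both vanish, so by decoupling $\Psi_{\nu,\qs,\qs'}(0,-\Es,-\Es',-x_\star,-x_\star')=0$. As $\beta,\beta'\to\infty$ the parameter blocks $(\qs,\Es,x_\star)$ and $(\qs',\Es',x_\star')$ tend to $(1,E_0,x_0)$, hence $\Psi_{\nu,\qs,\qs'}(\,\cdot\,,-\Es,-\Es',-x_\star,-x_\star')\to\Psi_\nu^0$ uniformly on compact subsets of $(-1,1)$. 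Since \CDTN\ forces $\Psi_\nu^0(r)<0$ for every $r\in[-1,1]\setminus\{0\}$, by compactness and continuity there is $\eta_\delta>0$ such that, for all sufficiently large $\beta,\beta'$,
\[
\sup_{|r|\geq\delta}\Psi_{\nu,\qs,\qs'}(r,-\Es,-\Es',-x_\star,-x_\star')\leq-\eta_\delta,
\]
and Markov's inequality then yields the reduced statement with exponential rate.

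The hard part will be the behaviour of $\Psi_{\nu,\qs,\qs'}$ near $r=\pm 1$: the Kac--Rice integrand becomes singular at the endpoints (the footnote to \eqref{eq:0204-02} already records $\Psi_\nu^0(-1)=-\infty$ when $\nu$ is not even), so the direct extension of the two-point complexity formula to two radii needs care there. The cleanest route is probably a soft preliminary estimate ruling out pairs of critical points with $|R(\bs_0,\bs_0')|$ within a small constant of $1$, after which one is free to take $r$ in a compact interior subset of $(-1,1)$ and the continuity-and-perturbation argument above applies directly.
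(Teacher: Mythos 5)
Your overall architecture matches the paper's: reduce the overlap of the two samples to the overlap of the two band centers (via purity of the states within each band and the linear-algebra band lemma used in the proof of Theorem \ref{thm:geometry1}, Part \ref{enu:Geometry3}, i.e.\ Corollary \ref{cor:overlapcrt}), and then kill pairs of deep critical points at radii $\qs\sqrt N$ and $\qs'\sqrt N$ with non-negligible overlap by a Kac--Rice/first-moment bound on the two-radii pair complexity. The reduction step is fine, and the two-radii complexity you propose to ``introduce'' already exists in the paper: $\Psi_{\nu,q_1,q_2}$ and Theorem \ref{thm:2ndmomUBBK-1-1} are stated for general $q_1,q_2$, and the reduced statement you need is exactly Corollary \ref{cor:orth} applied with $q_1=\qs$, $q_2=\qs'$.

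The genuine gap is precisely the part you defer: controlling overlaps near (and at) $\pm1$. Your interior argument (uniform convergence of $\Psi_{\nu,\qs,\qs'}$ to $\Psi_{\nu}^{0}$ on compact subsets of $(-1,1)$ as the parameters tend to their limits, plus \CDTN) only yields negativity on $\delta\leq|r|\leq1-\tau$ with $\tau$ fixed \emph{before} taking $\beta,\beta'$ large; the blow-up of the logarithmic term for fixed $\qs\neq\qs'$ only covers $|r|\geq 1-\tau_{0}$ with $\tau_{0}$ depending on $\nu'(\qs^2)\nu'(\qs'^2)-\nu'(\qs\qs')^2$, which is not bounded below uniformly in large $\beta\neq\beta'$ (it degenerates as $\qs'-\qs\to0$), and in the intermediate window neither the log term nor the crude quadratic-form bound is negative, while the continuity comparison to $\Psi_{\nu}^{0}$ is not uniform as $r\to\pm1$. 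This is not a ``soft'' estimate: in the paper it is the content of Lemma \ref{lem:conccentration} in the case $q_1\neq q_2$, proved through Lemma \ref{lem:minTheta} (which rests on the deterministic counting Lemma \ref{lem:2to1-1} reducing two-radii pairs at high overlap to same-radius pairs), Lemma \ref{lem:Th12=00003D0} (the gradient argument upgrading $\leq$ to $<$) and Point \ref{enu:PsiCt4} of Lemma \ref{lem:PhiCt}. Moreover, the exact endpoint $r=\pm1$ --- proportional critical points on the two spheres, which is precisely the mechanism by which chaos fails for pure models --- is invisible to a Kac--Rice count over $I\subset(-1,1)$ and needs the separate argument the paper gives (a Kac--Rice computation showing that a.s.\ no $\qs$-critical point and $\qs'$-critical point are proportional). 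As written, your proposal either needs to invoke Corollary \ref{cor:orth} (after which it coincides with the paper's proof) or to supply these missing estimates; without them the proof is incomplete at its decisive step.
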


\section{Outline of proofs}

At low temperature, the Gibbs measure concentrates on regions where
the Hamiltonian is very low. For some models, those regions are believed
to have fairly simple topology, sometimes referred to as `deep, separated
valleys' in the physics literature, at the bottom of which one finds
a local minimum of the landscape. A natural approach to analyze the
Gibbs measure is therefore to study the distribution of critical points, investigate
the local structure of the Hamiltonian $H_{N}(\bs)$ around them,
and use those to analyze the Gibbs weights of various regions around
the deep critical points. Below we outline how we use this approach in
our setting, and compare with \cite{geometryGibbs} where a similar
method was used for pure models.

\subsection{Critical points}

Before directly considering Gibbs weights of different regions of
the sphere, we need to investigate the distribution of $q$-critical
points of the Hamiltonian, \corO{whose analysis will}
be based on moment computations.
The first moment calculation was carried out in \cite{ABA2} for the
original sphere $\mathbb{S}^{N-1}(\sqrt{N})$, i.e., for $1$-critical
points. In Theorem \ref{thm:1stmoment} we generalize the latter to
general $q$. To establish the concentration of the number of $q$-critical
points at low enough energies, we carry out the corresponding second
moment computation. See Theorem \ref{thm:2ndmomUBBK-1-1} and Corollary
\ref{cor:matching}, which generalize to mixed models the second moment
computation of \cite{2nd} for the pure case at logarithmic scale. 

While our second moment calculation is valid for general mixed models,
its matching to the first moment squared is not guaranteed. In fact,
for deep levels, matching at exponential scale is equivalent to $\Psi_{\nu}^{0}(r)$
being maximized at $r=0$, an assumption we make in \CDTN.

Another consequence of the second moment calculation is that deep
$q$-critical points are approximately orthogonal, see Corollary \ref{cor:orth}.
Moreover, for different values $q_{1}$, $q_{2}$ close to $1$, the
corollary implies pairwise approximate orthogonality of the collection
of deep $q_{1}$-critical points and $q_{2}$-critical points. This
will be crucial to understanding chaos in temperature.

\subsection{\label{subsec:deep-sub-lvl}Deep sub-level sets, critical points
and bands}

Given a lower bound on the free energy, for an appropriate energy
level the Gibbs mass of the corresponding super-level is negligible.
Exploiting this, we will be able to restrict our attention to 
\corO{the}  temperature
dependent sub-level set
\begin{equation}
\{\bs\in\mathbb{S}^{N-1}(\sqrt{N}):\,H_{N}(\bs)\leq-N(E_{0}(\nu)-\tau(\beta))\},\label{eq:sublvl}
\end{equation}
with $\tau(\beta)\to0$ as $\beta\to\infty$, which asymptotically
carries all the mass; see Corollary \ref{cor:UB}.

The geometry of \corO{the set in
  (\ref{eq:sublvl}) and its relation} to $q$-critical
points will play an important role in our analysis. We shall see (Proposition
\ref{prop:sublvl}) that for large $\beta$, the sub-level set (\ref{eq:sublvl})
splits into \emph{exponentially many} connected components, each of
which contains exactly one local minimum of $H_{N}(\bs)$ on the sphere
$\mathbb{S}^{N-1}(\sqrt{N})$. By increasing $\beta$, we can make
those $1$-critical points be as close to orthogonal as we wish (Corollary
\ref{cor:orth}) and the diameter of the components as small as we
wish (Proposition \ref{prop:sublvl}).

\corO{Only a small  number of these many connected components 
 will significantly contribute to the partition function. To
 characterize which do and 
identify the relevant region inside those components responsible for
such contribution,} we will
`scan' them using bands  \corO{as in}  (\ref{eq:band}). More precisely, defining
for any $\bs_{0}$ in $\BN$, \corO{the  $N$ dimensional ball of radius $\sqrt{N}$,}
\begin{equation}
\mathcal{S}(\bs_{0}):=\left\{ \bs\in\mathbb{S}^{N-1}(\sqrt{N}):\,R(\bs,\bs_{0})=\|\bs_{0}\|/\sqrt{N}\right\} ,\label{eq:subsp}
\end{equation}
we will see that for some constant $c$, for each $1$-critical point
$\bs_{*}$ in the sub-level set \corO{\eqref{eq:sublvl}}, there is a differentiable path $\bs_{q}$,
$q\in[1-c\tau(\beta),1]$, such that $\bs_{1}=\bs_{*}$, each $\bs_{q}\corO{\in \mathbb{S}^{N-1}(q\sqrt{N})}$
is a $q$-critical point, and the union $\cup_{q}\mathcal{S}(\bs_{q})$
of \corO{sections} covers the corresponding connected component of (\ref{eq:sublvl})
(see Proposition \ref{prop:sublvl}, Lemma \ref{lem:43}). 

\corO{Recall the normalization constant $Z_{N,\beta}$, 
see \eqref{eq-gibbsmeas}, 
set $F_{N,\beta}=\frac1N \log Z_{N,\beta}$ and define
the \textit{free energy} $F_\beta$ as
\begin{equation}
  \label{eq-Fbeta}
  F_\beta=\lim_{N\to\infty} \frac1N \E F_{N,\beta}.
\end{equation}
(The existence of the limit in \eqref{eq-Fbeta} is well known, see e.g.
\cite{Talag}, and follows also from our analysis.)}
  For any point $\bs_{0}\corO{\in \BN,}$ we define \corO{the}  corresponding weight
\begin{equation}
Z_{N,\beta}(\bs_{0}):=(1-\|\bs_{0}\|/\sqrt{N})^{\frac{N}{2}}\int_{\mathcal{S}(\bs_{0})}e^{-\beta H_{N}(\bs)}d\bs,\label{eq:Zsigma}
\end{equation}
where the integration over $\corE{\mathcal{S}(\bs_0)}$ is with respect to  the uniform
\corO{probability measure on $\corE{\mathcal{S}(\bs_0)}$}, and where the factor before the
integral accounts for the volume of a thin band centered at $\bs$,
and \corO{logarithmically  scales like ${{\rm Vol_{N-2}}(\mathcal{S}(\bs_{0}))}/{{\rm Vol_{N-1}}(\mathbb{S}^{N-1}(\sqrt{N}))}$.}

\corE{Using the information above on the sub-level set in \eqref{eq:sublvl}, to prove Part \ref{enu:Geometry1} of Theorem \ref{thm:Geometry}, we will need to prove a lower bound and upper bounds on certain free energies. The lower bound is on the free energy related to the collection of sections around $\qs$-critical with energy $-\Es$. That is, we will need to show that 
\begin{equation}
\lim_{N\to\infty}\mathbb{P}\bigg\{ \frac{1}{N}\log\sum_{\bs_{0}\in\mathscr{C}_{N,\qs}(N[-\Es-\epsilon,-\Es+\epsilon])}Z_{N,\beta}(\bs_{0})\geq F_{\beta}-\delta\bigg\} =1,\label{eq:3011-02}
\end{equation}
for some $\epsilon$ and $\delta$. 
The upper bounds we shall need are of the form
\begin{equation}
\lim_{N\to\infty}\mathbb{P}\bigg\{ \frac{1}{N}\log\sum_{\bs_{0}\in\mathscr{C}_{N,q}(N[E-\epsilon,E+\epsilon])}Z_{N,\beta}(\bs_{0})\leq F_{\beta}-\delta\bigg\} =1,\label{eq:2709-01}
\end{equation}
and we will need to prove them for any  $q\in[1-c\tau(\beta),1]$ and $E\in[-E_{0}(\nu),-E_{0}(\nu)-c\tau(\beta)]$ such that $(E,q)\neq(-\Es,\qs)$ (and $\epsilon$ and $\delta$ that are allowed to depend on $(E,q)$).
}
\subsection{\label{subsec:Conditional-models}Conditional models on bands}

To obtain bounds of the form \eqref{eq:3011-02} and (\ref{eq:2709-01}), we shall compute
certain related expectations. By an application of the Kac-Rice formula,
those will be expressed through various probabilities or expectations
involving the restriction of $H_{N}(\bs)$ to \corO{sections} $\mathcal{S}(\bs_{0})$
for points $\bs_{0}\in\BN$, conditional on 
\begin{equation}
H_{N}(\bs_{0})=NE,\quad\grad H_{N}(\bs_{0})=0.\label{eq:2709-02}
\end{equation}

This restriction can be mapped from $\mathcal{S}(\bs_{0})$ to the
`standard' sphere of the same dimension, $\SNm$, and the random field
thus obtained should be thought of as a random spherical Hamiltonian.
In Section \ref{sec:models_on_bands}, we will extend the decomposition
obtained in \cite[Section 3]{geometryGibbs} for pure models to general
mixed models, and show that this spherical Hamiltonian is in fact
a mixed $p$-spin model, with mixture coefficients depending on $q=\|\bs_{0}\|/\sqrt{N}$.
The key to analyzing the model under the conditioning of (\ref{eq:2709-02})
is the observation that each of the different $p$-spin interactions
can be written in terms of the Euclidean derivatives of order $p$
of $H_{N}(\bx)$ at $\bs_{0}$. In particular, the conditioning only
affects the lowest two interactions, the $0$- and $1$-spins. Here,
by $0$-spin we mean a model $H_{N-1}(\bs)\equiv J_{0}$ which is
constant on $\SNm$ and is determined by a single Gaussian variable
$J_{0}\sim N(0,N)$, and by $1$-spin, a model of the from $H_{N-1}(\bs)=\sum\sigma_{i}J_{i}$
with $J_{i}\sim N(0,1)$. We will see that the conditioning amounts
to determining the constant value of the $0$-spin to be $NE$, and
removing the $1$-spin interaction term. The corresponding conditional
model is, therefore, a mixture of $p$-spins with $p\geq2$, `shifted'
by a factor of $NE$. In particular, we remark that for $\|\bs_{0}\|=\qs\sqrt{N}$,
this model is replica symmetric for large $\beta$.

\subsection{A comparison with the pure case}

In Section \ref{subsec:deep-sub-lvl} we explained how the connected
components of the sub-level set (\ref{eq:sublvl}) can be covered
by `moving' \corO{sections} $\mathcal{S}(\bs_{q})$. A simpler approach
would be to use concentric \corO{sections} centered at the $1$-critical
point $\bs_{1}$ and avoid altogether constructing the path $\bs_{q}$
and investigating it. In fact, this is exactly what was done for the
pure case in \cite{geometryGibbs}. 

However, the pure case is very special: since its Hamiltonian is a
homogeneous polynomial, $\bs_{q}=q\bs_{1}$ is a (degenerate) path
of $q$-critical points, and (\ref{eq:2709-02}) with $\bs_0\in\SN$ dictates the same
for any point $q\bs_{0}$ on the same `fiber', with $E$ scaled to
$q^{p}E$. In particular, in \cite{geometryGibbs} the derivation
of bounds of the form (\ref{eq:2709-01}) always (i.e., independently
of $q$) involved conditioning as in (\ref{eq:2709-02}) with $\bs_{0}=\bs_{1}$
being the corresponding $1$-critical point. 

The problem with applying the same approach in the mixed is that if
we work with $\bs_{q}=q\bs_{1}$, and thus do not impose the condition
that $\grad H_{N}(\bs_{q})=0$, the conditional models we have to
deal with involves a non-zero $1$-spin component, leading to a more
complicated analysis of the corresponding weights. In particular,
since we need to analyze the weights of exponentially many critical
points, we must understand their large deviation probabilities, which
for non-zero $1$-spin, have speed $N$ matching the complexity, i.e.,
the number of critical points at a given energy. 

By working with $q$-critical $\bs_{q}$, we manage to avoid the difficult
analysis of weights, and obtain a replica symmetric description for
the restriction of $H_{N}(\bs)$ to relevant bands. 

\subsection{Upper bounds on the free energies of the conditional models}

We now return to the bounds (\ref{eq:2709-01}). For any $q\in(0,1)$ and $E<0$, we will show that with
$B=B(E,\epsilon)=[E-\epsilon,E+\epsilon]$,
\begin{equation}
\frac{1}{N}\log\mathbb{E}\Big\{\sum_{\bs_{0}\in\mathscr{C}_{N,q}(NB)}Z_{N,\beta}(\bs_{0})\Big\}\leq\sup_{x\in\mathbb{R}}\Theta_{\nu,q}(E,x)+\Lambda_{Z,\beta}(E,q)+O(\epsilon),\label{eq:1410-02}
\end{equation}
where $\Lambda_{Z,\beta}(E,q)$ is $\frac{1}{N}\log$ the expectation
of a single weight $Z_{N,\beta}(\bs_{0})$, conditional on (\ref{eq:2709-02})
(see Corollary \ref{cor:smallbandZ} and Lemma \ref{lem:bound1}).
By Markov's inequality, asymptotically we have that (\ref{eq:1410-02})
holds without the expectation, with high probability. This bound,
however, will be useful only for large enough $q$ in $[1-c\tau(\beta),1]$
\textendash{} the range we used to cover the  sub-level set of \eqref{eq:sublvl}.

More precisely, we will define a critical value $q_{c}=q_{c}(\beta)$
(see (\ref{eq:qc})), such that for $q\geq q_{c}$ the conditional
model of Section \ref{subsec:Conditional-models} is replica symmetric
and typically the corresponding free energy matches the simple bound
we get from expectations, and for $q<q_{c}$ that free energy is typically
smaller at exponential scale.

For the latter range, we will use the fact that for large $\beta$
the conditional model is dominated, in an appropriate sense, by its
$2$-spin component (see Section \ref{subsec:Conditional-models}).
In Section \ref{sec:Upper-bounds} we will prove that for $q\in[1-c\tau(\beta),q_{c}]$
and large $\beta$, with high probability,
\begin{equation}
\frac{1}{N}\log\sum_{\bs_{0}\in\mathscr{C}_{N,q}(NB)}Z_{N,\beta}(\bs_{0})  \leq\sup_{x\in\mathbb{R}}\Theta_{\nu,q}(E,x)+\Lambda_{F,\beta}^{2-}(E,q)+O(\epsilon)
 +K_{\beta}(E,q)+T_{\beta}.
\label{eq:1410-03}
\end{equation}
The function $\Lambda_{F,\beta}^{2-}(E,q)$ is the asymptotic (normalized)
free energy corresponding to the $2$-spin component only, conditional
on (\ref{eq:2709-02}). The term $K_{\beta}(E,q)$ accounts for atypically
large weights $Z_{N,\beta}(\bs_{0})$,\footnote{Atypical weights for $Z_{N,\beta}(\bs_{0})$ with arbitrary fixed
$\bs_{0}$ under the conditioning, however, typical for the maximal
$Z_{N,\beta}(\bs_{0})$ over all $\bs_{0}\in\mathscr{C}_{N,q}(NB)$.} which may occur for some of the exponentially many points in $\mathscr{C}_{N,q}(NB)$
(for $E$ not too negative). $T_{\beta}$ bounds the error resulting
from using only the $2$-spin part in our computation, which for large
$\beta$ becomes negligible compared to the other terms. 

By an abuse of notation, let $-\Es(q)$ denote the limiting ground state of $H_{N}(\bs)$
restricted to $\mathbb{S}^{N-1}(\sqrt{N}q)$. Note that the complexity
$\Theta_{\nu,q}(E,x)$ does not scale with $\beta$. Therefore, from
the definition (\ref{eq:LambdaZ}) of $\Lambda_{Z,\beta}(E,q)$, one
has that for any $q\in[q_{c},1]$, for large $\beta$ the right-hand
side of (\ref{eq:1410-02}) is maximized over $(-\Es(q),\infty)$
with $E=-\Es(q)$ (where, of course, lower $E$ are not relevant
as there are typically no $q$-critical points with such energy).
For the range $q\in[1-c\tau(\beta),q_{c}]$ further analysis based
on the concentration of the free energy will be needed, but our conclusion
will be the same \textendash{} the right-hand side of (\ref{eq:1410-03})
is also maximized with $E=-\Es(q)$. 

For the ground state $E=-\Es(q)$, the complexity term $\sup_{x\in\mathbb{R}}\Theta_{\nu,q}(E,x)$
vanishes (see Remarks \ref{rem:scaling} and \ref{rem:GSq}). Moreover, if we take $\epsilon$
to be small, so that at exponential scale the number of points in
$\mathscr{C}_{N,q}(NB)$ is small, the large deviation term $K_{\beta}(E,q)$
becomes negligible. Combining the above, to prove (\ref{eq:2709-01})
roughly what we will have to show is that for $q\in[1-c\tau(\beta),1]\setminus\{\qs\}$,
\begin{equation}
\Lambda_{\beta}(\qs)=F_{\beta}>\Lambda_{\beta}(q):=\begin{cases}
\Lambda_{Z,\beta}(-\Es(q),q) & q\in[q_{c},1]\\
\Lambda_{F,\beta}^{2-}(-\Es(q),q) & q\in[1-c\tau(\beta),q_{c}],
\end{cases}\label{eq:1510-01}
\end{equation}
for large $\beta$. In fact, we will only need to prove the inequality,
but our proof will go through showing that $\Lambda_{\beta}(\qs)=F_{\beta}$.

\subsection{The lower bound on the free energy}

In order to lower bound the limiting free energy $F_{\beta}$, we
shall consider the collections of bands corresponding to $\qs$ and $-\Es=-\Es(\qs)$. From the moment matching
of the number of $\qs$-critical points in $\Cs$ (see Corollary \ref{cor:matching}),
we have that with probability that decays slower \corO{than}  exponentially
in $N$, $\Cs$ is non-empty.\footnote{This is what follows from matching at exponential scale.
\corO{Had  we} established matching at scale $O(1)$, as in the pure case
\cite{2nd}, the same probability would have gone to $1$.} To prove the lower bound we will show in Section \ref{sec:lower-bound}
that for any $\delta>0$, there are no points $\bs_{0}$ in $\Cs$
for which 
\[
Z_{N,\beta}(\bs_{0})<\Lambda_{Z,\beta}(-\Es,\qs)-\delta.
\]
Consequently, with probability that decays slower \corO{than} exponentially
in $N$, 
\begin{equation}
\label{eq-Nov12}
F_{\beta}>\Lambda_{Z,\beta}(-\Es,\qs)-\delta.
\end{equation}
From the concentration of the free energy around its mean, we \corO{then conclude that \eqref{eq-Nov12} occurs}
with probability that tends to $1$ as $N\to\infty$.

\section{\label{sec:CritPts}Critical points: main results and notation}
A crucial step \corO{in  the analysis of the Gibbs measure $G_{N,\beta}$
is the study of} the critical points of $H_{N}(\cdot)$ \corO{on $\mathbb{S}^{N-1}(q\sqrt{N})$.}
We carry out this analysis by applying the second moment
method to
\begin{equation}
\mbox{Crt}_{N,q}\left(B,D\right):=\left|\mathscr{C}_{N,q}\left(NB,\sqrt{N}D\right)\right|,\label{eq:Crt}
\end{equation}
where 
\[
\mathscr{C}_{N,q}(B,D):=\Big\{ \boldsymbol{\sigma}\in\corE{\mathbb{S}^{N-1}(q\sqrt{N})}:\,\nabla_{{\rm sp}}H_{N}\left(\boldsymbol{\sigma}\right)=0,\,H_{N}\left(\boldsymbol{\sigma}\right)\in B,\,\ddq H_{N}\left(\bs\right)\in D\Big\} 
\]
denotes the set of $q$-critical points with values in $B$ and `normal'
derivative 
\[
\ddq H_{N}\left(\boldsymbol{\sigma}\right):=\left.\frac{d}{dq}\right|_{q=\left\Vert \boldsymbol{\sigma}\right\Vert }H_{N}\left(q\bs/\|\bs\|\right)
\corO{=\frac{1}{\|\bs\|} \left.\frac{d}{dq} H_N(q\bs)\right|_{q=1}}
\]
in $D$. 

The logarithmic asymptotics of $\mathbb{E}\mbox{Crt}_{N,1}\left(B,\mathbb{R}\right)$
were calculated by Auffinger and Ben Arous \cite{ABA2} (see also
\cite{A-BA-C} for the pure case). We shall need the next theorem
which is a direct extension of the latter, accounting for general
$D$ and $q$.

Let $\mu^{*}$ denote the semicircle \corO{probability}  measure, the density of which
with respect to Lebesgue measure is 
\begin{equation}
\frac{d\mu^{*}}{dx}=\frac{1}{2\pi}\sqrt{4-x^{2}}\mathbf{1}_{\left|x\right|\leq2}.\label{eq:semicirc}
\end{equation}
Define the functions (see, e.g., \cite[Proposition II.1.2]{logpotential})
\begin{align}
\Omega(x) & \triangleq\int_{\mathbb{R}}\log\left|\lambda-x\right|d\mu^{*}\left(\lambda\right)\label{eq:Omega}\\
 & =\begin{cases}
\frac{x^{2}}{4}-\frac{1}{2} & \mbox{ if }0\leq\left|x\right|\leq2\\
\frac{x^{2}}{4}-\frac{1}{2}-\left[\frac{\left|x\right|}{4}\sqrt{x^{2}-4}-\log\left(\sqrt{\frac{x^{2}}{4}-1}+\frac{\left|x\right|}{2}\right)\right] & \mbox{ if }\left|x\right|>2,
\end{cases}\nonumber 
\end{align}
\begin{equation}
\Theta_{\nu,q}\left(u,x\right):=\frac{1}{2}+\frac{1}{2}\log\left(q^{2}\frac{\nu''(q^{2})}{\nu'(q^{2})}\right)-\frac{1}{2}(u,x)\Sigma_{q}^{-1}(u,x)^{T}+\Omega\left(\frac{x}{q\sqrt{\nu''(q^{2})}}\right),\label{eq:Theta}
\end{equation}
where 
\begin{equation}
\Sigma_{q}:=\left(\begin{array}{cc}
\nu(q^{2}) & q^{2}\nu'(q^{2})\\
q^{2}\nu'(q^{2}) & q^{4}\nu''(q^{2})+q^{2}\nu'(q^{2})
\end{array}\right)\label{eq:Sigma1}
\end{equation}
is \corO{the covariance matrix of the vector ($H_N(\bs)/\sqrt{N},\ddq H_N(\bs)$), and is}  invertible by Lemma \ref{lem:invertibility}, \corO{whose statement and proof are given at} the end
of this section. \corO{The next theorem, whose proof appears in Section \ref{sec:Moments}, is an evaluation of the exponential rate of growth of the expectation of
\eqref{eq:Crt}.}
\begin{thm}
\label{thm:1stmoment}(First moment) For any intervals $B$ and $D$,
with $\Theta_{\nu,q}\left(u,x\right)$ as defined in (\ref{eq:Theta}),
\begin{equation}
\lim_{N\to\infty}\frac{1}{N}\log\mathbb{E}{\rm Crt}_{N,q}\left(B,D\right)=\sup_{u\in B,\,x\in D}\Theta_{\nu,q}\left(u,x\right).\label{eq:1106-1-1}
\end{equation}
\end{thm}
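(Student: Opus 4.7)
The plan is to apply the Kac--Rice formula to $\mathrm{Crt}_{N,q}(B,D)$ and then perform an asymptotic evaluation of the resulting integral, following the scheme of Auffinger--Ben Arous \cite{ABA2} for $q=1$, $D=\mathbb{R}$, but carrying the additional parameter $q$ and the additional constraint on $\ddq H_N$ through the computation. Specifically, let $F_1,\dots,F_{N-1}$ be an orthonormal frame field on $\mathbb{S}^{N-1}(q\sqrt N)$; then Kac--Rice gives
\begin{equation*}
\mathbb{E}\,\mathrm{Crt}_{N,q}(B,D)
=\int_{\mathbb{S}^{N-1}(q\sqrt N)}\!\!\!\mathbb{E}\Bigl[\bigl|\det\Hess H_N(\bs)\bigr|\,\mathbf{1}_{H_N(\bs)\in NB,\;\ddq H_N(\bs)\in\sqrt N D}\,\Big|\,\grad H_N(\bs)=0\Bigr]\,\varphi_{\grad H_N(\bs)}(0)\,d\bs,
\end{equation*}
where $\varphi_{\grad H_N(\bs)}$ is the Gaussian density of $\grad H_N(\bs)$. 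By isotropy of the law of $H_N$ restricted to $\mathbb{S}^{N-1}(q\sqrt N)$, the integrand is independent of $\bs$, so it suffices to fix one point $\bs_q$ and multiply by $\mathrm{Vol}(\mathbb{S}^{N-1}(q\sqrt N))$.

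Next I would compute the joint Gaussian structure at $\bs_q$. A direct covariance computation using $\mathbb{E}[H_N(\bs)H_N(\bs')]=N\nu(\langle\bs,\bs'\rangle/N)$ shows that, at $\bs_q$, the random vector $(H_N(\bs_q)/\sqrt N,\ddq H_N(\bs_q))$ has covariance $\Sigma_q$ as in \eqref{eq:Sigma1}, is independent of $\grad H_N(\bs_q)$, and is independent of $\Hess H_N(\bs_q)$ up to a deterministic linear term in $(H_N,\ddq H_N)$. The gradient $\grad H_N(\bs_q)$ is a centered Gaussian vector in $\mathbb{R}^{N-1}$ whose covariance is a scalar multiple of the identity; a calculation gives $\mathrm{Cov}(\grad H_N(\bs_q))=q^2\nu'(q^2)\,I_{N-1}$, so
\begin{equation*}
\varphi_{\grad H_N(\bs_q)}(0)=\bigl(2\pi q^2\nu'(q^2)\bigr)^{-(N-1)/2}.
\end{equation*}
Conditioning on $\grad H_N(\bs_q)=0$ does not alter $(H_N(\bs_q),\ddq H_N(\bs_q))$, and the conditional law of $\Hess H_N(\bs_q)$ is the unconditional law (up to the deterministic shift inherited from conditioning on $(H_N,\ddq H_N)$). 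That shift contributes a rank-one correction, whose effect on $\log|\det|/N$ is negligible. Thus the Hessian conditional on all the constraints is, up to lower order corrections, distributed as $q\sqrt{\nu''(q^2)}$ times a GOE$(N-1)$ matrix, translated by $-x/(q\sqrt{\nu''(q^2)})$ times the identity, this translation arising from the projection of the radial Hessian component.

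With this structure in hand, the remaining step is the classical determinant asymptotic: for a GOE$(N-1)$ matrix $M$ and real $t$, $\frac{1}{N}\log\mathbb{E}|\det(M-tI)|\to\Omega(t)$ with $\Omega$ as in \eqref{eq:Omega}, via the convergence of the empirical spectral measure to the semicircle law $\mu^*$ and dominated convergence in $\log$. Gathering all factors\,---\,the volume, the Gaussian density of the gradient, the density of $(H_N/\sqrt N,\ddq H_N)$ evaluated at $(u,x)$ which yields the factor $-\tfrac12(u,x)\Sigma_q^{-1}(u,x)^T-\tfrac12\log\det(2\pi\Sigma_q)$, and the determinant asymptotic\,---\,and using the Stirling-type expansion $\frac{1}{N}\log\mathrm{Vol}(\mathbb{S}^{N-1}(q\sqrt N))=\tfrac12+\tfrac12\log(2\pi q^2)+o(1)$, the $\log(2\pi)$ and logarithmic-in-$q$ terms cancel precisely to leave
\begin{equation*}
\frac{1}{N}\log\mathbb{E}\,\mathrm{Crt}_{N,q}(\{u\},\{x\})=\Theta_{\nu,q}(u,x)+o(1).
\end{equation*}
Finally, the passage from a single point $(u,x)$ to intervals $B,D$ is Laplace's method: an upper bound by a grid argument using Markov's inequality over $O(N)$ cells and a lower bound by restricting to any small neighborhood of a near-maximizer of $\Theta_{\nu,q}$ on $B\times D$, exploiting continuity of $\Theta_{\nu,q}$.

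The main obstacle is justifying the claim about the conditional law of $\Hess H_N(\bs_q)$ and controlling the lower-order corrections due to the rank-one shift (coming from the non-trivial correlation of the Hessian with $(H_N(\bs_q),\ddq H_N(\bs_q))$) uniformly in $N$. This requires a careful decomposition of $\Hess H_N(\bs_q)$ into a GOE piece plus deterministic shifts, in the spirit of \cite{ABA2}, adapted to the sphere of radius $q\sqrt N$ and with the additional $\ddq H_N$ constraint. Once the decomposition is in place, the Wigner / log-potential asymptotics for $\Omega$ give the result routinely.
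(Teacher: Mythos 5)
Your route is the paper's route: Kac--Rice, reduction to a single point by isotropy/stationarity, an exact Gaussian decomposition of $(H_N,\ddq H_N,\grad H_N,\Hess H_N)$ at that point, GOE determinant asymptotics, and a Laplace argument over $B\times D$. Two points in your description of the Gaussian structure need repair, however. First, with an orthonormal tangent frame the gradient covariance on $\mathbb{S}^{N-1}(q\sqrt N)$ is $\nu'(q^{2})\mathbf{I}$, not $q^{2}\nu'(q^{2})\mathbf{I}$ (differentiate $N\nu(\langle\bs,\bs'\rangle/N)$ in two tangent directions and use $\langle v,\bs\rangle=0$); with your constant the advertised ``precise cancellation'' down to $\Theta_{\nu,q}$ of \eqref{eq:Theta} does not come out, so the bookkeeping has to be redone. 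Second, your account of the conditional Hessian is internally inconsistent: in the one-point computation there is no rank-one correction to dismiss. The exact statement (Lemma \ref{lem:cov} at $r=1$, equivalently the disorder-coefficient decomposition of Section \ref{sec:models_on_bands}) is that $\Hess H_{N}(\bs)+\frac{1}{\sqrt{N}q}\ddq H_{N}(\bs)\,\mathbf{I}$ is exactly a multiple of a GOE matrix, independent of $(H_N(\bs),\ddq H_N(\bs),\grad H_N(\bs))$; the entire effect of the conditioning is the full-rank shift $-\frac{x}{q}\mathbf{I}$, and that shift is precisely what produces the $\Omega\bigl(x/(q\sqrt{\nu''(q^{2})})\bigr)$ term. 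This is exactly the ``careful decomposition'' you defer as the main obstacle; the paper supplies it by the covariance computations of Appendix \ref{sec:Covariances} (rank-one terms only appear in the two-point, second-moment computation).

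The genuine gap is the determinant step. The convergence $\frac1N\log\mathbb{E}\bigl|\det(\mathbf{G}-t\mathbf{I})\bigr|\to\Omega(t)$ cannot be deduced from convergence of the empirical spectral measure plus dominated convergence: the expectation sits \emph{outside} the logarithm, and $\lambda\mapsto\log|\lambda-t|$ is unbounded both at infinity and near $t$, so a priori rare spectral configurations could dominate the expectation. For the upper bound the paper uses Varadhan's lemma together with the speed-$N^{2}$ large deviation principle for the GOE empirical measure \cite{BAG97} and a truncation via the top-eigenvalue bound \cite{BDG}; it is the superexponential speed that forces the annealed and quenched exponents to agree. For the lower bound the paper avoids a pointwise-in-$t$ estimate altogether (delicate when $t$ lies in the bulk, where the determinant can be atypically small) and proves only the integrated statement \eqref{1102-03-1}, by comparing the explicit pure $p$-spin Kac--Rice identity with the known asymptotics of \cite{A-BA-C} and reading off the limit. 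With this in place your Laplace step over $B\times D$ is fine (though the grid upper bound uses continuity of $\Theta_{\nu,q}$ and a union over cells, not Markov's inequality).
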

We shall also need an asymptotic upper bound on the corresponding
second moment. For any subsets $I\subset\left[-1,1\right]$, $B_{i},\,D_{i}\subset\mathbb{R}$,
define the `contribution' of pairs of points with overlap in $I$
to $\prod_{i=1,2}|\mathscr{C}_{N,q_{i}}(NB_{i},\sqrt{N}D_{i})|$ \corO{by}
\[ \negthickspace\negthickspace\negthickspace[\mbox{Crt}_{N,q_{1},q_{2}}(B_{1},B_{2},D_{1},D_{2},I)]_2:=
\#\Big\{\left(\boldsymbol{\sigma},\boldsymbol{\sigma}'\right)\in\prod_{i=1,2}\mathscr{C}_{N,q_{i}}\big(NB_{i},\sqrt{N}D_{i}\big):\;R(\boldsymbol{\sigma},\boldsymbol{\sigma}')\in I\,\Big\}.
\]

Define the function
\begin{align}
\Psi_{\nu,q_{1},q_{2}}\left(r,u_{1},u_{2},x_{1},x_{2}\right) & :=1+\frac{1}{2}\log\left((1-r^{2})\frac{q_{1}^{2}q_{2}^{2}\nu''(q_{1}^{2})\nu''(q_{2}^{2})}{\nu'(q_{1}^{2})\nu'(q_{2}^{2})-(\nu'(q_{1}q_{2}r))^{2}}\right)\nonumber \\
 & -\frac{1}{2}\left(u_{1},u_{2},x_{1},x_{2}\right)\Sigma_{U,X}^{-1}\left(r,q_{1},q_{2}\right)\left(u_{1},u_{2},x_{1},x_{2}\right)^{T}\label{eq:Psi}\\
 & +\Omega\Bigg(\frac{x_{1}}{q_{1}\sqrt{\nu''(q_{1}^{2})}}\Bigg)+\Omega\Bigg(\frac{x_{2}}{q_{2}\sqrt{\nu''(q_{2}^{2})}}\Bigg),\nonumber 
\end{align}
where $\Sigma_{U,X}\left(r,q_{1},q_{2}\right)$ is 
\corO{the covariance matrix of the vector 
\begin{equation}
\label{eq-rocknroll}
(H_N(\bs)/\sqrt{N},H_N(\bs')/\sqrt{N},\frac{d}{dR} H_N(\bs),\frac{d}{dR}H_N(\bs'))
\end{equation} 
 with $(\bs,\bs')\in \mathbb{S}^{N-1}(q_1\sqrt{N})
\times \mathbb{S}^{N-1}(q_2 \sqrt{N})$ with overlap $r$, conditioned on
$\nabla_{{\rm sp}}H_{N}(\bs),\nabla_{{\rm sp}}H_{N}(\bs')$, see Lemma
\ref{lem:condHamiltonian}, and is}
\corO{defined explicitly in}  (\ref{eq:SigmaUXbar}); \corO{ Lemma \ref{lem:invertibility} implies that this covariance matrix is}
invertible. \corO{With a slight} abuse of
notation, we  write 
\begin{equation}
\label{eq:Psishort}\Psi_{\nu,q_{1},q_{2}}\left(r,u,x\right)=\Psi_{\nu,q_{1},q_{2}}\left(r,u,u,x,x\right).
\end{equation}
We note for later use that if we substitute $r=0$, by simple algebra,
\begin{equation}
\Psi_{\nu,q_{1},q_{2}}\left(0,u_{1},u_{2},x_{1},x_{2}\right)=\Theta_{\nu,q_{1}}\left(u_{1},x_{1}\right)+\Theta_{\nu,q_{2}}\left(u_{2},x_{2}\right).\label{eq:1703-08}
\end{equation}

The following \corO{theorem and lemma are}   extensions of Theorem 5 of \cite{2nd} which
concerned the pure case with $q_{i}=1$. \corO{The proofs are given in Section \ref{sec:Moments}.}
\begin{thm}
\label{thm:2ndmomUBBK-1-1}(Second moment) For any intervals $I\subset(-1,1)$,
$B_{i},D_{i}\subset\mathbb{R}$, with $\Psi_{\nu,q_{1},q_{2}}$ as
defined in (\ref{eq:Psi}),

\begin{equation}
\limsup_{N\to\infty}\frac{1}{N}\log\left(\mathbb{E}\corO{\left[ {\rm Crt}_{N,q_{1},q_{2}}(B_{1},B_{2},D_{1},D_{2},I)\right]_2} \right)\leq\sup_{r\in I,u_{i}\in B_{i},x_{i}\in D_{i}}\Psi_{\nu,q_{1},q_{2}}\left(r,u_{1},u_{2},x_{1},x_{2}\right).\label{eq:2ndUB_BK-1-1-1}
\end{equation}
\end{thm}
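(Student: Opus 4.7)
The plan is to apply the Kac-Rice formula on the product manifold $\mathbb{S}^{N-1}(q_1\sqrt{N}) \times \mathbb{S}^{N-1}(q_2\sqrt{N})$, reduce the resulting double integral to a one-dimensional integral over the overlap $r \in I$ by spherical symmetry, identify its exponential rate as $\Psi_{\nu,q_1,q_2}$, and conclude by a Laplace-type argument. Concretely, Kac-Rice writes the left-hand side of \eqref{eq:2ndUB_BK-1-1-1} as
\[
\int\!\!\int_{R(\bs,\bs') \in I} \phi_{0,0}(\bs,\bs')\, \mathbb{E}\bigl[|\det \Hess H_N(\bs)|\cdot|\det \Hess H_N(\bs')|\cdot \mathbf{1}_{\mathcal{E}} \,\big|\, \grad H_N(\bs) = \grad H_N(\bs') = 0\bigr]\, d\bs\, d\bs',
\]
where $\phi_{0,0}(\bs,\bs')$ is the joint density of the two spherical gradients at the origin and $\mathcal{E}$ is the event that the energies and radial derivatives lie in $NB_1, NB_2, \sqrt{N}D_1, \sqrt{N}D_2$. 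Rotational invariance of the joint law of $H_N$ makes the integrand depend only on $r$, so up to polynomial prefactors the integral reduces to $\int_I e^{N \Phi_N(r)}\, dr$ for some $\Phi_N$ whose rate is to be identified.

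The joint law of $(H_N(\bs),H_N(\bs'),\grad H_N(\bs),\grad H_N(\bs'),\ddq H_N(\bs),\ddq H_N(\bs'))$ is Gaussian with covariances obtained by differentiating $N\nu(\langle\bs,\bs'\rangle/N)$. Conditioning on zero spherical gradients (using a version of Lemma \ref{lem:condHamiltonian} for pairs of points) leaves a four-dimensional Gaussian on the energies and radial derivatives with covariance $\Sigma_{U,X}(r,q_1,q_2)$, invertible by Lemma \ref{lem:invertibility}. The marginal density of the gradients at the origin and the conditional density of $(u_1,u_2,x_1,x_2)$, together with the Jacobian of the overlap reduction on the product of spheres, produce the non-$\Omega$ terms in $\Psi_{\nu,q_1,q_2}$: the constant $1$, the logarithmic prefactor $\frac{1}{2}\log\bigl((1-r^2)q_1^2q_2^2\nu''(q_1^2)\nu''(q_2^2)/(\nu'(q_1^2)\nu'(q_2^2)-\nu'(q_1q_2r)^2)\bigr)$, and the quadratic form $-\frac12(u_1,u_2,x_1,x_2)\Sigma_{U,X}^{-1}(r,q_1,q_2)(u_1,u_2,x_1,x_2)^T$.

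For the conditional expectation of the product of Hessian determinants, the approach is to further condition on the full Gaussian vector at the two points taking the prescribed values and then apply Cauchy-Schwarz,
\[
\mathbb{E}\bigl[|\det A\,\det B|\bigr] \leq \bigl(\mathbb{E}[\det A^2]\cdot\mathbb{E}[\det B^2]\bigr)^{1/2}.
\]
Under the full conditioning, each spherical Hessian $\Hess H_N(\bs_i)$ equals an independent GOE-like Gaussian matrix plus a deterministic multiple of the identity, with shift $x_i/(q_i\sqrt{\nu''(q_i^2)})$ arising from the conditioning on the radial derivative $\ddq H_N(\bs_i)$. The classical semicircle determinant asymptotic $\frac{1}{2N}\log \mathbb{E}[\det(xI - W)^2] \to \Omega(x)$ for GOE-normalized $W$ then yields the two terms $\Omega(x_i/(q_i\sqrt{\nu''(q_i^2)}))$ in $\Psi_{\nu,q_1,q_2}$. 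Combined with the Gaussian-density contributions, the Kac-Rice integrand has exponential rate $\Psi_{\nu,q_1,q_2}(r,u_1,u_2,x_1,x_2)$.

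The main obstacle will be the careful extension of the pure-case, $q=1$ computation of \cite{2nd} and \cite{ABA2} to mixed $\nu$ and arbitrary $(q_1,q_2)$: one must compute $\Sigma_{U,X}(r,q_1,q_2)$ explicitly by differentiating $\nu$ at the off-diagonal overlap $q_1q_2r$, verify its invertibility, and track the deterministic shifts of the conditioned Hessians arising from both the energy and radial-derivative conditioning. The Cauchy-Schwarz decoupling is lossy for $r \neq 0$ but is exactly what is needed for an upper bound; note that equality holds asymptotically as $r \to 0$, consistent with the factorization identity \eqref{eq:1703-08}. Taking $\limsup\frac{1}{N}\log$ of the integral and applying Laplace's method on the compact set $\bar I \times \bar B_1 \times \bar B_2 \times \bar D_1 \times \bar D_2$, using continuity of $\Psi_{\nu,q_1,q_2}$, produces the supremum claimed in \eqref{eq:2ndUB_BK-1-1-1}.
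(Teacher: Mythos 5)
Your opening steps coincide with the paper's: Kac--Rice on the product of spheres, isotropy to reduce to a one-dimensional integral in $r$, and the Gaussian conditioning that produces the gradient density, the logarithmic prefactor and the quadratic form in $\Sigma_{U,X}^{-1}(r,q_1,q_2)$. The genuine gap is in your treatment of the Hessian determinants. Conditional on $\grad H_N(\bs)=\grad H_N(\bs')=0$ and on the energies and radial derivatives, each spherical Hessian is \emph{not} ``an independent GOE-like matrix plus a deterministic multiple of the identity.'' The conditioning at the \emph{other} point distorts the Hessian in the direction of the geodesic joining the two points: as in Lemma \ref{lem:Hess_struct_2}, the conditional matrix consists of an $(N-2)\times(N-2)$ scaled GOE block (correlated between the two points through a shared GOE component), a last row and column with $r$-dependent variances $\Sigma_Z(r,q_1,q_2)$, $\Sigma_Q(r,q_1,q_2)$, and a corner shift $m_i(r,q_1,q_2)$ determined by the conditioned values. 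Cauchy--Schwarz removes the correlation but not this structure, so a ``classical semicircle determinant asymptotic'' cannot be applied to the full $(N-1)\times(N-1)$ matrix as you claim; one must split off the last row/column contribution (the quantities $W_i(r)$ in the paper) and show it is negligible at exponential scale, and apply the GOE empirical-measure LDP plus Varadhan's lemma, with truncations of $\log|\lambda-t|$ both near the singularity and at infinity, to the $(N-2)\times(N-2)$ block. Note also that even for a genuine shifted GOE, the asymptotic $\frac{1}{2N}\log\mathbb{E}\det(xI-W)^2\to\Omega(x)$ with $x$ inside the bulk is not off-the-shelf and, uniformly in the shift, requires exactly this truncation/LDP machinery (the constant-order comparison of $\mathbb{E}[\det^2]$ with $(\mathbb{E}|\det|)^2$ in \cite[Corollaries 22, 23]{2nd} is only available outside the bulk, while your $D_i$ are arbitrary intervals).

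The second, related omission is uniformity in $r$ up to the endpoints. The interval $I\subset(-1,1)$ may have closure touching $\pm1$ (this is exactly how the theorem is used, e.g.\ with $|r|\in[\epsilon,1)$ in Lemma \ref{lem:conccentration}), and near $|r|=1$ the Kac--Rice integrand is singular: when $q_1=q_2$ the gradient-density factor $\mathcal{F}(r)$ blows up as $r\to\pm1$, and the variances of the last row/column degenerate at rate $1-r$. Showing that $\mathcal{F}(r)^m$ times the moments of $W_i(r)$ remains bounded as $|r|\to1$ (and treating $q_1\neq q_2$ separately) is a necessary part of the argument that your appeal to ``Laplace's method on the compact set $\bar I\times\cdots$ using continuity of $\Psi_{\nu,q_1,q_2}$'' does not supply; indeed $\Psi_{\nu,q_1,q_2}$ is only defined for $r\in(-1,1)$ and the bound must come from controlling the integrand, not from continuity of the limiting rate function alone.
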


\begin{lem}
\label{lem:2ndLB}\corO{If}  
$D_{i}\subset(-\infty,\corO{-2\sqrt{\nu''(q_{i}^{2})}q_{i}}-\tau)$
for some $\tau>0$ \corO{then} (\ref{eq:2ndUB_BK-1-1-1}) holds \corO{with}  equality. 
\end{lem}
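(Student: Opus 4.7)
The plan is to revisit the Kac-Rice derivation that yields the upper bound in Theorem \ref{thm:2ndmomUBBK-1-1} and show that, under the assumption on $D_{1},D_{2}$, every inequality used in that derivation becomes an equality at the exponential scale, following the template of the pure case in \cite{2nd}. Starting from Kac-Rice, $\mathbb{E}\bigl[{\rm Crt}_{N,q_{1},q_{2}}(B_{1},B_{2},D_{1},D_{2},I)\bigr]_{2}$ is an integral over pairs $(\bs,\bs')\in\mathbb{S}^{N-1}(q_{1}\sqrt N)\times\mathbb{S}^{N-1}(q_{2}\sqrt N)$ with $R(\bs,\bs')\in I$ of the joint density of $(\grad H_{N}(\bs),\grad H_{N}(\bs'))$ at $0$ times the conditional expectation of $|\det\Hess H_{N}(\bs)|\,|\det\Hess H_{N}(\bs')|\,\mathbf{1}_{\mathcal{A}}$, where $\mathcal{A}$ collects the constraints on $H_{N}(\bs)/N,\,H_{N}(\bs')/N$ and on $\ddq H_{N}(\bs)/\sqrt N,\ddq H_{N}(\bs')/\sqrt N$. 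The gradient density and the Gaussian density of $(H_{N}(\bs)/\sqrt N,H_{N}(\bs')/\sqrt N,\ddq H_{N}(\bs),\ddq H_{N}(\bs'))$ conditional on the gradients vanishing produce, at exponential scale, exactly the $\tfrac{1}{2}\log(\cdot)$ prefactor and the $\Sigma_{U,X}^{-1}$ quadratic form appearing in $\Psi_{\nu,q_{1},q_{2}}$, so the only step that was bounded rather than evaluated in the proof of Theorem \ref{thm:2ndmomUBBK-1-1} is the residual conditional expectation of the product of the two $|\det\Hess|$ factors.

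Under the conditioning, and by the Gaussian decomposition of $\Hess H_{N}$ at a critical point developed in Section \ref{sec:Moments}, each conditional Hessian is a scaled GOE matrix of dimension $N-1$ shifted by $-(x_{i}/(q_{i}\sqrt{\nu''(q_{i}^{2})}))I$, and the two Hessians are coupled by a deterministic correction of rank $O(1)$ whose operator norm is controlled by $\nu^{(k)}(q_{1}q_{2}r)$ and stays $O(1)$ uniformly, since $r\in I$ is bounded away from $\pm 1$ and $\Sigma_{U,X}(r,q_{1},q_{2})$ is uniformly invertible by Lemma \ref{lem:invertibility}. The assumption $x_{i}<-2q_{i}\sqrt{\nu''(q_{i}^{2})}-\tau$ places the normalized shift $\mu_{i}=-x_{i}/(q_{i}\sqrt{\nu''(q_{i}^{2})})$ strictly beyond the right edge of the semicircle by a uniform amount $\tau'=\tau'(\tau)>0$. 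Standard large-deviation estimates for the top eigenvalue of GOE, combined with Weyl's inequality to absorb the rank-$O(1)$ coupling, then give that both shifted Hessians are strictly definite with probability at least $1-e^{-cN}$ for some $c>0$ uniform on the relevant compact parameter set. On this event, $|\det\Hess H_{N}(\bs)|=(-1)^{N-1}\det\Hess H_{N}(\bs)$ with deterministic sign, the absolute values may be dropped, and the conditional expectation of the signed product of determinants is computed exactly by Gaussian moment formulas, yielding $\exp\{N[\Omega(x_{1}/(q_{1}\sqrt{\nu''(q_{1}^{2})}))+\Omega(x_{2}/(q_{2}\sqrt{\nu''(q_{2}^{2})}))]+o(N)\}$ via the identity $N^{-1}\log\mathbb{E}|\det(\mathrm{GOE}+\lambda I)|\to\Omega(\lambda)$, which is tight (and not merely an upper bound) when $|\lambda|>2$.

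Combining the two ingredients above, the Kac-Rice integrand is exponentially equivalent to the bound used in the proof of Theorem \ref{thm:2ndmomUBBK-1-1}, and a Laplace-method evaluation of the resulting integral over $(r,u_{1},u_{2},x_{1},x_{2})\in I\times B_{1}\times B_{2}\times D_{1}\times D_{2}$ produces the matching lower bound $\sup\Psi_{\nu,q_{1},q_{2}}$. The principal technical obstacle is making the large-deviation estimate on the top eigenvalue of the coupled shifted Hessians uniform in $(r,q_{1},q_{2})$ and in the rank-$O(1)$ coupling, and verifying that the corresponding exceptional event is exponentially negligible relative to the leading $e^{N\sup\Psi_{\nu,q_{1},q_{2}}}$ asymptotics; both are routine once the uniform invertibility of $\Sigma_{U,X}(r,q_{1},q_{2})$ on $I$ is invoked, but they are the place where the hypothesis $D_{i}\subset(-\infty,-2q_{i}\sqrt{\nu''(q_{i}^{2})}-\tau)$ with a \emph{strict} gap $\tau>0$ is essential.
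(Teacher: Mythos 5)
Your overall strategy---return to the Kac--Rice identity (\ref{eq:KR_conditional}) and show that, when $x_{i}$ lies strictly below $-2q_{i}\sqrt{\nu''(q_{i}^{2})}$, the determinant term attains $\Omega\bigl(x_{1}/(q_{1}\sqrt{\nu''(q_{1}^{2})})\bigr)+\Omega\bigl(x_{2}/(q_{2}\sqrt{\nu''(q_{2}^{2})})\bigr)$ at exponential scale---is the same as the paper's. But there is a genuine gap where you handle the structure of the conditional Hessians. By Lemma \ref{lem:Hess_struct_2} together with (\ref{eq:1903-01}) and (\ref{eq:1903-03}), the conditional matrix $\mathbf{M}_{N-1}^{(i)}(r)$ is \emph{not} a shifted GOE plus a small deterministic correction: its upper-left $(N-2)\times(N-2)$ block is shifted GOE, but the last row and column consist of the random vector $Z^{(i)}(r)$, the entry $Q^{(i)}(r)$, and the conditioning-induced rank-one term proportional to $m_{i}e_{N-1,N-1}$, and these modifications have operator norm of order one (e.g. $\|Z^{(i)}\|\approx\sqrt{\Sigma_{Z,11}}$), not $o(1)$ and in no way dominated by the gap produced by $\tau$, which is arbitrary. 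Hence your Weyl-inequality step does not yield negative definiteness of the full $(N-1)\times(N-1)$ matrix with probability $1-e^{-cN}$: a rank-two perturbation of order-one norm can push an eigenvalue across, or exponentially close to, zero, and a single eigenvalue of size $e^{-cN}$ would destroy the lower bound on $\frac{1}{N}\log|\det|$. This is precisely the point the paper treats separately: it applies the top-eigenvalue estimate and Wigner's theorem only to the $(N-2)\times(N-2)$ submatrix $\tilde{\mathbf{M}}_{N-2}^{(i)}(r)$, and then controls the remaining factor $\det\mathbf{M}_{N-1}^{(i)}(r)/\det\tilde{\mathbf{M}}_{N-2}^{(i)}(r)=Z-V^{T}(\tilde{\mathbf{M}}_{N-2}^{(i)}(r))^{-1}V$ by a Schur-complement argument, using that the variance of $X_{i}(r)$ is bounded below uniformly in $r$ near $r_{0}$ to rule out this factor being exponentially small. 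Your proposal needs this (or an equivalent) step; without it, ``the absolute values may be dropped'' is unjustified.

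Two further inaccuracies, less central but worth fixing: the coupling between the two conditional Hessians is not a deterministic rank-$O(1)$ correction controlled by $\nu^{(k)}(q_{1}q_{2}r)$---by Lemma \ref{lem:Hess_struct_2} the bulk GOE blocks themselves are correlated through the common matrix $\bar{\mathbf{G}}$ with weight $|\nu''(q_{1}q_{2}r)|/\sqrt{\nu''(q_{1}^{2})\nu''(q_{2}^{2})}$ (harmless for a lower bound, but it invalidates the way you set up the definiteness argument); and the concluding appeal to the expectation identity $N^{-1}\log\mathbb{E}|\det(\mathrm{GOE}+\lambda\mathbf{I})|\to\Omega(\lambda)$ is not the statement you need, since inside the conditional expectation carrying the indicators of $E_{i}$ one needs a with-high-probability lower bound on $\frac{1}{N}\log|\det|$, which comes from the convergence of the empirical spectral measure together with the eigenvalues being bounded away from $0$---and, as above, this is available only for the $(N-2)\times(N-2)$ block.
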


\corO{We next turn to the study of consequences of \CDTN.} In addition to $E_0$ which was introduced in \corO{\eqref{eq:E0}}, the energy level 
\begin{equation}
E_{\infty}:  =E_{\infty}(\nu)=\frac{\nu''(1)\nu(1)+\nu'(1)^{2}-\nu'(1)\nu(1)}{\nu'(1)\sqrt{\nu''(1)}}
\label{eq:Einfty}
\end{equation}
was defined in  \cite{A-BA-C,ABA2}, \corO{as a}
threshold energy related to the spectrum
of the Hessian matrix $\Hess H_{N}(\bs)$ at critical points. \corO{As we will see, it plays a role in}
several large deviations problems and concentration
of statistics, and for pure-like models it is related to \corO{the function 
$G(\nu)$ defined in \eqref{eq-G}}  (see \cite[Eq. (1.22), (4.1)]{ABA2}) \corO{through the relation}
\begin{equation}
G(\nu)=\sup_{x\in\mathbb{R}}\Theta_{\nu,1}\left(-E_{\infty},x\right).\label{eq:Gpl}
\end{equation}

\corO{For $q\in (0,1]$, set} $\nu_{q}(x)=\sum_{p}\gamma_{p}^{2}q^{2p}x^{p}$ and define $E_{0}(q):=E_{0}(\nu,q)=E_{0}(\nu_{q})$ and similarly
define $E_{\infty}(q)$. Assuming $\nu_{q}$ is pure-like, also define
$x_{0}(q):=x_{0}(\nu,q)=\frac{1}{q}x_{0}(\nu_{q})$. \corO{The next remark summarizes scaling relations associated to these quantitites.}
\begin{rem} 
\label{rem:scaling}
\corO{Fix the disorder coefficients  $J_{i_{1},...,i_{p}}^{(p)}$  in \eqref{eq:Hamiltonian}, and let
$H_{N}^{\nu_{q}}(\bs)$ be the Hamiltonian
corresponding to the mixture $\nu_{q}(x)$.} Then $H_{N}^{\nu_{q}}(\bs)=H_{N}(q\bs)$
and $\ddq H_{N}\left(q\bs\right)=\frac{1}{q}\ddq H_{N}^{\nu_{q}}\left(\bs\right)$.
Therefore, $\Theta_{\nu,q}\left(u,x\right)=\Theta_{\nu_{q},1}\left(u,qx\right)$ and similarly to (\ref{eq:E0}) and (\ref{eq:x0}), we have
\begin{align}
-E_{0}(q)&=\min\Big\{ E:\,\sup_{x\in \R}\Theta_{\nu,q}\left(E,x\right)=0\Big\},\label{eq:E0q}\\
-x_{0}(q)&=\arg\max_{x\in\mathbb{R}}\Theta_{\nu,q}\left(-E_{0}(q),x\right).\label{eq:x0q}
\end{align}
\end{rem}

The following lemma implies the matching of the second and first moment
squared of (\ref{eq:Crt}) at exponential scale as $N\to\infty$,
for small $E$ and $D$ around $-E_{0}(q)$ and $-x_{0}(q)$. \corO{The proof is contained in Section \ref{sec:Matching}.} 
\begin{lem}
\label{lem:conccentration}For any $\nu$ satisfying \CDTN, there
exists some $\delta>0$ such that 
\corO{if  $|q_{i}-1|<\delta$, $B_{i}\subset-E_{0}(q_{i})+(-\delta,\delta)$ and 
$D_{i}\subset-x_{0}(q_{i})+(-\delta,\delta)$,}
then \corO{for any $\epsilon>0$},
\begin{equation}
\sup_{\corO{|r|\in[\epsilon,1)},u_{i}\in B_{i},x_{i}\in D_{i}}\Psi_{\nu,q_{1},q_{2}}\left(r,u_{1},u_{2},x_{1},x_{2}\right)<\sum_{i=1,2}\sup_{u_{i}\in B_{i},\,x_{i}\in D_{i}}\Theta_{\nu,q_{i}}\left(u_{i},x_{i}\right),\label{eq:1703-06}
\end{equation}
\corO{whenever both}  summands on the right-hand side of (\ref{eq:1703-06})
are nonnegative.
\end{lem}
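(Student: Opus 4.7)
I reduce \eqref{eq:1703-06} to a uniform pointwise inequality. Define
\[
h(r, u_1, u_2, x_1, x_2, q_1, q_2) := \Psi_{\nu, q_1, q_2}(r, u_1, u_2, x_1, x_2) - \Theta_{\nu, q_1}(u_1, x_1) - \Theta_{\nu, q_2}(u_2, x_2);
\]
by \eqref{eq:1703-08}, $h$ vanishes at $r = 0$. The goal is to produce a single $\delta > 0$ so that for every $\epsilon > 0$ there is $\rho(\epsilon) > 0$ with $h \leq -\rho(\epsilon)$ uniformly on $|r|\in[\epsilon,1)$, $u_i \in \bar{B}_i$, $x_i \in \bar{D}_i$, $|q_i - 1| \leq \delta$. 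Taking the supremum on both sides and using that the right-hand side separates as $\sup \Theta_{\nu, q_1} + \sup \Theta_{\nu, q_2}$ then yields \eqref{eq:1703-06}; the nonnegativity hypothesis on the summands ensures these suprema are meaningful (nonnegative, attained) in the regime to which the lemma is applied.

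At the base point $(q_i, u_i, x_i) = (1, -E_0, -x_0)$, the definitions of $E_0$ and $x_0$ give $\Theta_{\nu, 1}(-E_0, -x_0) = 0$, so $h$ reduces to $\Psi_\nu^0(r)$. \CDTN\ supplies: (i) $\Psi_\nu^0$ attains its unique maximum $0$ on $[-1, 1]$ at $r = 0$, with the extension to $r = \pm 1$ being finite in the even case and $\Psi_\nu^0(-1) = -\infty$ for non-even $\nu$; and (ii) $\partial_r^2 \Psi_\nu^0(0) < 0$. I split the range $|r|\in[\epsilon, 1)$ into (a) $|r|\leq r_0$ near the origin, (b) $|r|\in[r_0, 1 - \eta]$ the compact bulk, and (c) $|r|\in[1 - \eta, 1)\cup(-1, -1 + \eta]$ the boundary layer.

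Region (a) rests on the identity $\partial_r \Psi_{\nu, q_1, q_2}(0, u, x) = 0$ for all $(u, x, q)$. Inspecting \eqref{eq:Psi}, one has $\partial_r \log(1 - r^2)|_0 = 0$, $\partial_r \log[\nu'(q_1^2)\nu'(q_2^2) - \nu'(q_1 q_2 r)^2]|_0 \propto \nu'(0) = 0$, and every $r$-dependent cross-covariance entry of $\Sigma_{U, X}(r, q_1, q_2)$ is a combination of $r^k \nu^{(j)}(q_1 q_2 r)$ with $\nu^{(j)}(0) = 0$ for $j \leq 1$ (since $\nu$ has no $1$-spin term), making each such entry $O(r^2)$ and so $\partial_r \Sigma_{U, X}(0) = 0$. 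A second-order Taylor expansion then gives $h(r, \cdot) = \tfrac{r^2}{2}\partial_r^2 \Psi_{\nu, q_1, q_2}(0, \cdot) + O(r^3)$, and joint continuity together with $\partial_r^2 \Psi_\nu^0(0) < 0$ produces $\delta_a, r_0, c > 0$ so that $h \leq -c r^2 / 4$ on (a) throughout the $\delta_a$-neighborhood. For (b), compactness together with joint continuity of $\Psi$ and $\Theta$ (with invertibility of $\Sigma_{U, X}$ on the compact range by Lemma \ref{lem:invertibility}) extends the uniform base-point gap $\Psi_\nu^0 \leq -\rho_1 < 0$ to $h \leq -\rho_1 / 2$ on a $\delta_b$-neighborhood. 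For (c), the continuous extension of $\Psi_{\nu, q_1, q_2}$ at $r = \pm 1$ (finite at $r = 1$ with $\Psi_\nu^0(1) < 0$ at base, and $-\infty$ at $r = -1$ for non-even $\nu$ through the $\log$ term in \eqref{eq:Psi}) combined with \CDTN\ yields $h \leq -\rho_2 < 0$ on a $\delta_c$-neighborhood after possibly shrinking $\eta$.

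Setting $\delta := \min(\delta_a, \delta_b, \delta_c)$ and combining the three regions produces the uniform bound $h \leq -\min(c\epsilon^2/4, \rho_1/2, \rho_2) < 0$ on the prescribed domain, from which \eqref{eq:1703-06} follows by taking supremum. The principal obstacle is the uniform analysis of region (a): without the identity $\partial_r \Psi(0, \cdot) \equiv 0$, a linear correction in $r$ of magnitude proportional to the perturbation $\delta$ would dominate the quadratic bound for $\epsilon$ below the threshold $\delta / c$, forcing $\delta$ to depend on $\epsilon$ and breaking the uniform structure required by the lemma. A secondary concern is the even versus non-even distinction at $r = -1$, handled by direct inspection of \eqref{eq:Psi} and the blow-up of the $\log$ term there.
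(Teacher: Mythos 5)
Your regions (a) and (b) follow the paper's own route (vanishing of $\frac{d}{dr}\Psi$ at $r=0$, the second-derivative condition from \CDTN, compactness and continuity away from $|r|=1$; cf. Points \ref{enu:PsiCt1}--\ref{enu:PsiCt2} of Lemma \ref{lem:PhiCt}), but region (c) contains a genuine gap, and it is exactly the part of the lemma that is hard. Your argument there invokes "the continuous extension of $\Psi_{\nu,q_1,q_2}$ at $r=\pm1$". No such jointly continuous (or even uniformly upper-bounded) extension exists near the corner $(q_1,q_2,r)\approx(1,1,\pm1)$: for fixed $q_1\neq q_2$ the function tends to $-\infty$ as $|r|\to1$ (the denominator $\nu'(q_1^2)\nu'(q_2^2)-(\nu'(q_1q_2r))^2$ stays bounded away from $0$ while $\log(1-r^2)\to-\infty$), whereas on the diagonal $q_1=q_2$ both the numerator $(1-r^2)$ and that denominator vanish and $\Psi_{\nu,q,q}$ converges to a finite limit; simultaneously the quadratic form $\Sigma_{U,X}^{-1}$ degenerates. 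Even on the diagonal the paper only establishes an upper-semicontinuity statement (Point \ref{enu:PsiCt4} of Lemma \ref{lem:PhiCt}), and its proof hinges on the special block structure of $\Sigma_{U,X}(r,q,q)$ and the orthogonality of $(-E_0,-E_0,-x_0,-x_0)$ to the antisymmetric eigenvectors — structure that is destroyed when $q_1\neq q_2$. Because of this, the paper cannot treat $q_1\neq q_2$ near $|r|=1$ by any continuity argument at all: it proves Lemma \ref{lem:minTheta} via the deterministic pair-counting Lemma \ref{lem:2to1-1} (reducing mixed-$(q_1,q_2)$ pair complexity to equal-$q$ pair complexities) combined with the moment asymptotics of Theorem \ref{thm:1stmoment} and Lemma \ref{lem:2ndLB}, which yields only $\Psi_{\nu,q_1,q_2}\leq\min_i\Theta_{\nu,q_i}$ on $1-\tau\leq|r|<1$, and then upgrades this to strict inequality when both $\Theta$'s vanish through an abstract gradient-perturbation lemma resting on Lemmas \ref{lem:x0} and \ref{lem:ddE}.

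A related symptom of the overreach: your strategy asserts a uniform pointwise gap $h\leq-\rho(\epsilon)<0$ on the whole domain, which would make the hypothesis that both summands on the right of \eqref{eq:1703-06} are nonnegative superfluous. The paper needs that hypothesis precisely because near $|r|=1$ with $q_1\neq q_2$ it can only compare $\Psi$ to $\min_i\Theta_{\nu,q_i}$ (not to $\Theta_{\nu,q_1}+\Theta_{\nu,q_2}$ minus a fixed constant), and the strict inequality is then extracted by a case analysis on whether one of the $\Theta$'s is strictly positive or both vanish. So the proposal is not a complete proof: the boundary layer for $q_1\neq q_2$ requires an argument of an entirely different nature than the continuity/compactness scheme you describe.
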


As consequence of Lemma \ref{lem:conccentration} and
Theorem \ref{thm:1stmoment}, we have the matching of the moments
at exponential scale.
\begin{cor}
\label{cor:matching}(Matching of moments) \corO{With $\nu$ and $\delta$ as in Lemma \ref{lem:conccentration}, let $|q-1|<\delta$,
$B\subset -E_{0}(q)+(-\delta,\delta)$
and $D\subset  -x_0(q)+(-\delta,\delta)$. Then}
\begin{equation}
\lim_{N\to\infty}\frac{1}{N}\log\mathbb{E}\left({\rm Crt}_{N,q}\left(B,D\right)^{2}\right)=2\lim_{N\to\infty}\frac{1}{N}\log\mathbb{E}{\rm Crt}_{N,q}\left(B,D\right),\label{eq:1008-01}
\end{equation}
\corO{as soon} \corE{as} the expectation on the right-hand side tends to
$\infty$ as $N\to\infty$. \corO{The equality in \eqref{eq:1008-01} continues to hold}  if we let $B=B_{N}=u+(-\delta_{N},\delta_{N})$
and $D=D_{N}=x+(-\delta_{N},\delta_{N})$ \corO{with $|u+E_0(q)|<\delta$ and $|x+x_0(q)|<\delta$, as soon as} 
$\delta_{N}\to0$ slow enough so that $\frac{1}{N}\log\mathbb{E}{\rm Crt}_{N,q}(B_{N},D_{N})\to\Theta_{q}(u,x)$.
\end{cor}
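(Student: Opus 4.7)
The plan is: for the lower bound in \eqref{eq:1008-01}, apply Cauchy--Schwarz, $\mathbb{E}(\mathrm{Crt}_{N,q}(B,D)^{2})\geq(\mathbb{E}\mathrm{Crt}_{N,q}(B,D))^{2}$. For the matching upper bound, I would decompose
\[
\mathrm{Crt}_{N,q}(B,D)^{2}=\mathrm{Crt}_{N,q}(B,D)+\bigl[\mathrm{Crt}_{N,q,q}(B,B,D,D,[-1,1))\bigr]_{2},
\]
into diagonal pairs $\bs=\bs'$ and off-diagonal pairs of overlap $r\in[-1,1)$, and bound each piece at exponential scale. By Theorem~\ref{thm:1stmoment} the diagonal contributes at exponential rate $L:=\sup_{u\in B,x\in D}\Theta_{\nu,q}(u,x)$; the assumption $\mathbb{E}\mathrm{Crt}_{N,q}(B,D)\to\infty$ forces $L\geq 0$, so the diagonal rate is $\leq 2L$, strictly so when $L>0$.

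For the off-diagonal piece I would split the overlap range into $|r|<\epsilon$ and $|r|\in[\epsilon,1)$ for a small fixed $\epsilon>0$. On $|r|<\epsilon$, Theorem~\ref{thm:2ndmomUBBK-1-1} bounds the exponential rate by
\[
\sup_{|r|<\epsilon,\,u_{i}\in B,\,x_{i}\in D}\Psi_{\nu,q,q}(r,u_{1},u_{2},x_{1},x_{2}),
\]
and combining the identity \eqref{eq:1703-08}, $\Psi_{\nu,q,q}(0,u_{1},u_{2},x_{1},x_{2})=\Theta_{\nu,q}(u_{1},x_{1})+\Theta_{\nu,q}(u_{2},x_{2})$, with continuity of $\Psi_{\nu,q,q}$ at $r=0$ (read off from the explicit formula \eqref{eq:Psi} and the invertibility of $\Sigma_{U,X}$ near $r=0$) yields a bound of $2L+o_{\epsilon}(1)$. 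On $|r|\in[\epsilon,1)$, Theorem~\ref{thm:2ndmomUBBK-1-1} together with Lemma~\ref{lem:conccentration} will produce the strict inequality $\sup\Psi_{\nu,q,q}\leq 2L-\eta(\epsilon)$ for some $\eta(\epsilon)>0$, where the endpoint $r\to 1^{-}$ is handled by the finite $r\to 1$ limit of $\Psi$ noted in the footnote following \eqref{eq:0204-02}. Summing the three contributions and sending $\epsilon\to 0$ after $N\to\infty$ gives $\limsup_{N}\frac{1}{N}\log\mathbb{E}(\mathrm{Crt}_{N,q}(B,D)^{2})\leq 2L$, proving \eqref{eq:1008-01} when $L>0$. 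When $L=0$ but $\mathbb{E}\mathrm{Crt}\to\infty$, the same steps show that all three pieces are $e^{o(N)}$ (with the outer off-diagonal piece exponentially small), so the matching holds at subexponential scale as well.

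For the shrinking-window statement the same scheme goes through with $B,D$ replaced by $B_{N}=u+(-\delta_{N},\delta_{N})$ and $D_{N}=x+(-\delta_{N},\delta_{N})$: Theorem~\ref{thm:2ndmomUBBK-1-1} still bounds the off-diagonal rate by a supremum of $\Psi_{\nu,q,q}$ over the shrinking box and over $r$, which by uniform continuity in $(u_{1},u_{2},x_{1},x_{2})$ tends to $2\Theta_{\nu,q}(u,x)+o_{\epsilon}(1)$ on the inner overlap range and stays at most $2\Theta_{\nu,q}(u,x)-\eta(\epsilon)$ on the outer range, provided $\delta_{N}\to 0$. The hard part will be securing the uniform positive gap from Lemma~\ref{lem:conccentration} across the closed overlap set $[\epsilon,1)$ and across a fixed neighborhood of $(-E_{0}(q),-x_{0}(q))$; this reduces to verifying that the strict inequality in Lemma~\ref{lem:conccentration} is an open condition in all its arguments, which should be manifest from the explicit expression \eqref{eq:Psi} for $\Psi_{\nu,q,q}$ together with the non-degeneracy of $\Sigma_{U,X}(r,q_{1},q_{2})$ away from $|r|=1$.
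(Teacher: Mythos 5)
Your proposal is correct and takes essentially the same route as the paper: split the second moment into diagonal and off-diagonal pairs, get the lower bound from Jensen and Theorem~\ref{thm:1stmoment}, bound the off-diagonal exponential rate via Theorem~\ref{thm:2ndmomUBBK-1-1} combined with Lemma~\ref{lem:conccentration} on $|r|\geq\epsilon$ and with \eqref{eq:1703-08} plus continuity of $\Psi_{\nu,q,q}$ near $r=0$ on $|r|<\epsilon$, and handle the shrinking windows by the same diagonalization idea the paper invokes. The only slip is that your split of the overlap range $[-1,1)$ into $|r|<\epsilon$ and $\epsilon\leq|r|<1$ misses the antipodal pairs at $r=-1$ (which exist when $\nu$ is even); since each point has a unique antipode these number at most ${\rm Crt}_{N,q}(B,D)$ and are absorbed exactly like the diagonal, which is how the paper treats them through the bound $\mathbb{E}\left({\rm Crt}_{N,q}(B,D)^{2}\right)\leq 2\,\mathbb{E}{\rm Crt}_{N,q}(B,D)+\mathbb{E}\,{\rm Crt}_{N,q,q}\left(B,B,D,D,(-1,1)\right)$.
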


Another interesting consequence of Lemma \ref{lem:conccentration}
is the following \corE{corollary,} \corO{ whose proof appears in Section  \ref{sec:Matching}.}
\begin{cor}
\label{cor:orth}(Orthogonality of deep critical points) \corO{Assume \CDTN. Then there exists $\delta_0>0$ so that for any 
$0<\delta<\delta_0$,} \corE{$q_1,\,q_2\in (1-\delta,1+\delta)$ and $\epsilon>0$, there exist constants $\eta=\eta(\epsilon), c=c(\epsilon)>0$ so that, with
$B_i(\eta):=(-E_{0}(q_i)-\eta,-E_{0}(q_i)+\eta)$,
\begin{equation}
\mathbb{P}\Big\{\exists\bs_i\in\mathscr{C}_{N,q_i}(NB_i(\eta)),\,\bs_1\neq\pm\bs_2:\,|R(\bs_1,\bs_2)|\geq\epsilon\Big\}<e^{-cN},\label{eq:0707-01}
\end{equation}
 Moreover, for any
$\eta_{N}=o(1)$, setting $B_{N,i}:=(-\infty,-E_{0}(q_i)+\eta_{N})$,
there exists a sequence $\epsilon_{N}=o(1)$ such that
\begin{equation}
\lim_{N\to\infty}\mathbb{P}\Big\{\exists\bs_i\in\mathscr{C}_{N,q_i}(NB_{N,i}),\,\bs_1\neq\pm\bs_2:\,|R(\bs_1,\bs_2)|\geq\epsilon_{N}\Big\}=0.\label{eq:0707-02}
\end{equation}
}
\end{cor}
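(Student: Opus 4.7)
The proof is a Markov / second-moment argument whose heavy lifting is done by Theorem \ref{thm:2ndmomUBBK-1-1} and Lemma \ref{lem:conccentration}. Since neither $B_i(\eta)$ nor $B_{N,i}$ constrains the radial derivative $\ddq H_N$, I first confine it to a bounded window: combining Lemma \ref{lem:x0} with the explicit form \eqref{eq:Theta} of $\Theta_{\nu,q_i}$ and the quadratic decay of the Mahalanobis term past $\Omega(x/\cdot)$, one checks that for each small $\delta>0$ there is $c_1=c_1(\delta)>0$ with $\Theta_{\nu,q_i}(u,x)\le -c_1$ whenever $u\in -E_0(q_i)+(-\eta,\eta)$ and $x\notin -x_0(q_i)+(-\delta,\delta)$, for $\eta$ small. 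Theorem \ref{thm:1stmoment} plus Markov then gives, with probability $1-e^{-c_1N/2}$, that every $q_i$-critical point in $NB_i(\eta)$ has $\ddq H_N/\sqrt N\in D_i:=-x_0(q_i)+(-\delta,\delta)$, so we may restrict to pairs in these $D_i$.

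Next, cover $\{r:|r|\in[\epsilon,1-\delta']\}$ by finitely many intervals $I_k\subset(-1,1)$. Theorem \ref{thm:2ndmomUBBK-1-1} yields
\begin{equation*}
\frac{1}{N}\log\mathbb{E}\bigl[\mathrm{Crt}_{N,q_1,q_2}(B_1(\eta),B_2(\eta),D_1,D_2,I_k)\bigr]_{2}\le\sup_{r\in I_k,\,u_i\in B_i(\eta),\,x_i\in D_i}\Psi_{\nu,q_1,q_2}(r,u_1,u_2,x_1,x_2)+o(1).
\end{equation*}
Choose $\delta_0$ small enough that Lemma \ref{lem:conccentration} applies for all $|q_i-1|<\delta_0$. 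Its conclusion gives strict inequality versus $\sum_i\sup\Theta_{\nu,q_i}$, which vanishes because $-E_0(q_i)\in B_i(\eta)$ and $\Theta_{\nu,q_i}(-E_0(q_i),-x_0(q_i))=0$. A compactness argument produces a uniform bound $-c_2(\epsilon)<0$ on the supremum over $\bigcup_k I_k$, and Markov plus a finite union bounds the bad event restricted to $|r|\le 1-\delta'$ by $e^{-c_2N/2}$.

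The main obstacle is the range $|r|\in(1-\delta',1]$ (and symmetrically near $-1$), since Lemma \ref{lem:conccentration} is stated only for $|r|<1$. Here I invoke the continuous extension of $\Psi_\nu^0$ to $r=\pm 1$ (the footnote following \eqref{eq:0204-02}: finite limit at $r=1$; finite at $r=-1$ iff $\nu$ is even, otherwise $-\infty$). Condition~M forces $\sup_{|r|\in[\epsilon,1]}\Psi_\nu^0(r)<0$, and joint continuity of $\Psi_{\nu,q_1,q_2}$ in all its parameters extends the strict inequality to small neighborhoods of $(\pm 1,-E_0(q_i),-x_0(q_i))$, absorbing the bands near $\pm 1$ into the estimate of the previous step. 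The exact boundary $r=\pm 1$ with $\bs_1\neq\pm\bs_2$ is geometrically excluded for $q_1=q_2$ and defines a lower-dimensional subvariety otherwise, contributing nothing to the expected count; this proves \eqref{eq:0707-01}.

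For \eqref{eq:0707-02} I run the same argument with $B_i=B_{N,i}$: the ground-state bound \eqref{eq:GSlb} confines the effective $u_i$-range to $[-E_0(q_i)-o(1),-E_0(q_i)+\eta_N]$, and since $c_2(\epsilon)$ depends continuously on parameters (with $c_2(\epsilon)>0$ for all $\epsilon>0$ by the argument above), choosing $\epsilon_N\to 0$ slowly enough that $c_2(\epsilon_N)N\to\infty$ yields vanishing probability.
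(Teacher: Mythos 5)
Your proposal follows the paper's own route: you first localize the radial derivative of deep critical points near $-x_{0}(q_i)$ by a first-moment/Markov argument (this is exactly the content of Lemma \ref{lem:epsx0}, which the paper proves as a preliminary to the corollary, using Lemma \ref{lem:compact} and Lemma \ref{lem:x0}), then bound the expected number of pairs with overlap in $(-1,1)\setminus(-\epsilon,\epsilon)$ via Theorem \ref{thm:2ndmomUBBK-1-1} combined with Lemma \ref{lem:conccentration}, conclude \eqref{eq:0707-01} by Markov's inequality, and get \eqref{eq:0707-02} by letting $\epsilon_N\to 0$ slowly, which is the paper's ``standard diagonalization argument''. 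In its core this is correct and essentially identical to the paper's proof.

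One caveat: what you call the ``main obstacle'' near $|r|=1$ is not an obstacle, and your treatment of it would not stand on its own. Lemma \ref{lem:conccentration} takes the supremum over $|r|\in[\epsilon,1)$, so it already covers the range $(1-\delta',1)$ that you split off; no separate boundary analysis is needed. This is fortunate, because the justification you give there --- ``joint continuity of $\Psi_{\nu,q_1,q_2}$ in all its parameters'' near $r=\pm1$ --- is not valid: the covariance matrix $\Sigma_{U,X}(r,q_1,q_2)$ degenerates as $|r|\to 1$, the paper only establishes a one-sided (limsup) statement there (Lemma \ref{lem:PhiCt}, Point \ref{enu:PsiCt4}, and only along $q_1=q_2$), and for $q_1\neq q_2$ the $|r|\to1$ regime is handled inside the proof of Lemma \ref{lem:conccentration} by the deterministic counting Lemma \ref{lem:2to1-1} together with Lemma \ref{lem:minTheta}, not by continuity. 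Separately, the exact event $|R(\bs_1,\bs_2)|=1$ with $q_1\neq q_2$ is not captured by any pair count over $I\subset(-1,1)$; your ``lower-dimensional subvariety'' remark points at the right fact, but the paper's way of making it precise is the degenerate Kac--Rice computation invoked in the proof of Theorem \ref{thm:Chaos} (almost surely no collinear critical points on distinct spheres for mixed $\nu$). None of this breaks your argument, since the cited lemmas already do the work, but the reasoning in that paragraph should be replaced by a direct appeal to the full range of Lemma \ref{lem:conccentration}.
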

%
\corO{We finish the section with the statement and proof of the following
lemma concerning the invertibility of the matrices  $\Sigma_{U,X}\left(r,q_{1},q_{2}\right)$ and $\Sigma_{q}$ }.
\begin{lem}
\label{lem:invertibility}If $\nu$ is not a monomial, then for any
$r\in(-1,1)$, $\Sigma_{U,X}\left(r,q_{1},q_{2}\right)$ and $\Sigma_{q}$
are invertible (and therefore \corO{strictly} positive definite) for any $q_{1},\,q_{2},\,q\in(0,1]$. 
\end{lem}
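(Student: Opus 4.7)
My approach is to exploit the fact that both matrices arise as (conditional) covariance matrices of Gaussian vectors: $\Sigma_q$ is the covariance of $(H_N(\bs)/\sqrt{N},\ddq H_N(\bs))$, and $\Sigma_{U,X}(r,q_1,q_2)$ is the conditional covariance of the vector in \eqref{eq-rocknroll} given $\grad H_N(\bs), \grad H_N(\bs')$. As such, both are automatically positive semidefinite, so invertibility and strict positive definiteness coincide, and it suffices to show each has nonzero determinant.

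For $\Sigma_q$, I would compute the determinant directly. Setting $a_p := \gamma_p^2 q^{2p}$, so that $\nu(q^2) = \sum_p a_p$, $q^2\nu'(q^2) = \sum_p p\, a_p$ and $q^2\nu'(q^2) + q^4\nu''(q^2) = \sum_p p^2 a_p$, one finds
\begin{equation*}
\det \Sigma_q \;=\; \Big(\sum_p a_p\Big)\Big(\sum_p p^2 a_p\Big) - \Big(\sum_p p\, a_p\Big)^2.
\end{equation*}
The Cauchy--Schwarz inequality applied to $(\sqrt{a_p})_p$ and $(p\sqrt{a_p})_p$ shows this is non-negative, with equality only if $p\mapsto p$ is constant on $\{p : a_p > 0\}$. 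Since $q\in(0,1]$ we have $a_p>0 \iff \gamma_p>0$; since $\nu$ is not a monomial, at least two $\gamma_p$ are strictly positive, so the inequality is strict.

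For $\Sigma_{U,X}(r,q_1,q_2)$ I would argue by contradiction: if it were singular at some $(r,q_1,q_2)$ with $r\in(-1,1)$, there would be a nonzero $(\alpha_1,\alpha_2,\beta_1,\beta_2)$ so that $\alpha_1 H_N(\bs) + \alpha_2 H_N(\bs') + \beta_1 \ddq H_N(\bs) + \beta_2 \ddq H_N(\bs')$ is almost surely a deterministic linear function of the two spatial gradients. Using the explicit formula in \eqref{eq:SigmaUXbar} (whose entries are combinations of $\nu^{(k)}(q_i^2)$ and $\nu^{(k)}(q_1q_2 r)$ with $r,q_1,q_2$), this algebraic identity can be pushed to an equality case in a Cauchy--Schwarz-type inequality in the coefficients $\gamma_p$; the condition $|r|<1$ rules out the only trivial degeneracy (corresponding to $\bs = \pm\bs'$), and the non-monomial assumption on $\nu$ then forces strict inequality. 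As a sanity check, at $r=0$ the identity \eqref{eq:1703-08}, together with the decoupling of gradient cross-covariances at orthogonal points, yields $\Sigma_{U,X}(0,q_1,q_2) = \mathrm{diag}(\Sigma_{q_1},\Sigma_{q_2})$ up to a permutation of coordinates, whose invertibility is precisely the previous step.

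The main obstacle is the unpacking of $\Sigma_{U,X}$: the conditioning on the $2(N-1)$-dimensional gradient block introduces a Schur complement whose entries mix $\nu'(q_i^2)$ and $\nu'(q_1 q_2 r)$ terms, and the resulting determinant is not obviously factorable. Nevertheless, since positive semidefiniteness prevents cancellation, the whole problem should reduce to a strict Cauchy--Schwarz-type inequality in the $\gamma_p$'s, in direct analogy with the $\Sigma_q$ case.
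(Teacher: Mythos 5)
Your treatment of $\Sigma_{q}$ is correct and complete: writing $a_{p}=\gamma_{p}^{2}q^{2p}$ one indeed has $\det\Sigma_{q}=(\sum_{p}a_{p})(\sum_{p}p^{2}a_{p})-(\sum_{p}p\,a_{p})^{2}$, which is strictly positive by Cauchy--Schwarz precisely when $\nu$ is not a monomial. This is in fact more direct than the paper, which obtains invertibility of $\Sigma_{q}$ only as a by-product, as a principal submatrix of $\Sigma_{U,X}(0,q_{1},q_{2})$.

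For $\Sigma_{U,X}(r,q_{1},q_{2})$, however, there is a genuine gap: the core of the argument --- that singularity "can be pushed to an equality case in a Cauchy--Schwarz-type inequality in the $\gamma_{p}$'s" --- is never carried out, and you yourself note that the Schur complement in \eqref{eq:SigmaUXbar} is not obviously factorable; "should reduce to" is a plan, not a proof. Your heuristic about where the degeneracies live is also miscalibrated: in the pure case $\Sigma_{U,X}$ is singular for \emph{every} $r$, not only at $r=\pm1$, because by Euler's identity for a homogeneous $p$-spin the radial derivative is the deterministic multiple $p/q_{i}$ of the field; the role of $|r|<1$ is only to make the $U$-block nondegenerate, while the non-monomial hypothesis is what must kill the Euler-type degeneracy, and your sketch does not explain how. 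The paper's proof supplies exactly the missing structural idea: split $\nu=\nu_{1}+\nu_{2}$ into independent pure pieces of distinct degrees $p_{1}\neq p_{2}$; observe that conditioning on the sum of the two gradients (which is what defines $\Sigma_{U,X}^{\nu}$) gives a conditional covariance dominating, in the positive semidefinite order, the one obtained by conditioning on each gradient separately, which by independence equals $\Sigma_{U,X}^{\nu_{1}}+\Sigma_{U,X}^{\nu_{2}}$; then use the Euler identity to identify the kernel of each $\Sigma_{U,X}^{\nu_{i}}$ as $\{(x_{1},x_{2},y_{1},y_{2}):x_{j}+\tfrac{p_{i}}{q_{j}}y_{j}=0,\,j=1,2\}$ (this uses invertibility of $\Sigma_{U}^{\nu_{i}}$ for $|r|<1$), and note that these kernels for $p_{1}\neq p_{2}$ intersect trivially, so the sum is positive definite. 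Without this decomposition-and-kernel step (or an explicit determinant computation replacing it), your proposal does not establish the invertibility of $\Sigma_{U,X}$, which is the main content of the lemma.
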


\begin{proof} \corO{Recall that $\Sigma_{U,X}\left(r,q_{1},q_{2}\right)$
is the covariance matrix of the vector \eqref{eq-rocknroll} 
conditional
on the gradients at the two corresponding points.}
Suppose $H_{N}^{\nu}(\bs)=H_{N}^{\nu_{1}}(\bs)+H_{N}^{\nu_{2}}(\bs)$
is the Hamiltonian corresponding to $\nu=\nu_{1}+\nu_{2}$, where
$H_{N}^{\nu_{i}}(\bs)$ are independent. Using a similar notation
for $\Sigma_{U,X}^{\nu}\left(r,q_{1},q_{2}\right)$, we have that
if $\Sigma_{U,X}^{\nu_{1}}\left(r,q_{1},q_{2}\right)+\Sigma_{U,X}^{\nu_{2}}\left(r,q_{1},q_{2}\right)$
is invertible then so is $\Sigma_{U,X}^{\nu}\left(r,q_{1},q_{2}\right)$
(since the former corresponds to the distribution obtained by conditioning
on each the gradients corresponding to $H_{N}^{\nu_{i}}(\bs)$, $i=1,2$,
and the latter corresponds to conditioning on the sum of
those gradients being $0$). Thus, since $\Sigma_{U,X}^{\nu}\left(r,q_{1},q_{2}\right)$
is positive semi-definite, to prove invertibility for general non-pure
mixture, it is enough to prove that $\Sigma_{U,X}^{\nu_{1}}\left(r,q_{1},q_{2}\right)+\Sigma_{U,X}^{\nu_{2}}\left(r,q_{1},q_{2}\right)$
is invertible for any $\nu_{1}(x)=\gamma x^{p}$ and $\nu_{2}(x)=\gamma'x^{p'}$
with $p\neq p'$ and $\gamma,\,\gamma'>0$. 

From the formula (\ref{eq:SigmaUXbar}) for 
$\Sigma_{U,X}\left(r,q_{1},q_{2}\right)$, 
\corO{we have that if
\corE{$\nu_i(x)=x^{p_i}$} and
\begin{align*}
\mathfrak{a}(r,q,p): & =q^{2(p-1)}\left[1-\frac{pr^{2(p-1)}\left(1-r^{2}\right)}{1-\left(r^{p}-(p-1)r^{p-2}(1-r^{2})\right)^{2}}\right],\\
\mathfrak{b}(r,p): & =r^{p}\left(q_{1}q_{2}\right)^{p-1}\left[1-pr^{p-2}\left(1-r^{2}\right)\frac{r^{p}-(p-1)r^{p-2}(1-r^{2})}{1-\left(r^{p}-(p-1)r^{p-2}(1-r^{2})\right)^{2}}\right],
\end{align*}
then}
\[
  \Sigma_{U,X}^{\nu_i}\left(r,q_{1},q_{2}\right)=\left(\begin{array}{cccc}
q_{1}^{2}\mathfrak{a}(r,q_{1},p_i) & q_{1}q_{2}\mathfrak{b}(r,p_i) & p_iq_{1}\mathfrak{a}(r,q_{1},p_i) & p_iq_{1}\mathfrak{b}(r,p_i)\\
q_{1}q_{2}\mathfrak{b}(r,p_i) & q_{2}^{2}\mathfrak{a}(r,q_{2},p_i) & p_iq_{2}\mathfrak{b}(r,p_i) & p_iq_{2}\mathfrak{a}(r,q_{2},p_i)\\
p_iq_{1}\mathfrak{a}(r,q_{1},p_i) & pq_{2}\mathfrak{b}(r,p_i) & p_i^{2}\mathfrak{a}(r,q_{1},p_i) & p^{2}\mathfrak{b}(r,p_i)\\
p_iq_{1}\mathfrak{b}(r,p_i) & pq_{2}\mathfrak{a}(r,q_{2},p_i) & p^{2}\mathfrak{b}(r,p_i) & p^{2}\mathfrak{a}(r,q_{2},p_i)
\end{array}\right).
\]
\corO{Therefore,
  \corE{if} $(U_{1},U_{2})\sim N(0,\Sigma_{U}^{\nu_i}\left(r,q_{1},q_{2}\right))$, \corE{where $\Sigma_{U}^{\nu_i}(r,q_{1},q_{2})$ is the upper-left $2\times 2$ sub-matrix of $\Sigma_{U,X}^{\nu_i}(r,q_{1},q_{2})$,}
then 
\[
  (U_{1},U_{2},\frac{p_i}{q_{1}}U_{1},\frac{p_i}{q_{2}}U_{2})\sim N(0,\Sigma_{U,X}^{\nu_i}\left(r,q_{1},q_{2}\right)).
\]
Since $\Sigma_U^{\nu_i}(r,q_1,q_2)$ is invertible whenever $|r|\neq 1$, we 
have that 
$(x_{1},x_{2},y_{1},y_{2})\Sigma_{U,X}^{\nu_i}\left(r,q_{1},q_{2}\right)=0$
if and only if $x_{1}+\frac{p_i}{q_{1}}y_{1}=0$ and 
$x_{2}+\frac{p_i}{q_{2}}y_{2}=0$. 
Using the positive definiteness of 
  $\Sigma_{U,X}^{\nu_i}$ we deduce that
$(x_{1},x_{2},y_{1},y_{2})\Sigma_{U,X}^{\nu}\left(r,q_{1},q_{2}\right)=0$
  iff $(x_{1},x_{2},y_{1},y_{2})=0$.
This proves the invertibility of $\Sigma_{U,X}\left(r,q_{1},q_{2}\right)$
for general mixtures. }
%
%

Noting that $\Sigma_{q_{1}}$ is the $2\times2$ sub-matrix obtained
from $\Sigma_{U,X}\left(0,q_{1},q_{2}\right)$ by deleting the second
and fourth rows and columns, we \corO{conclude the invertibility of}  $\Sigma_{q}$. 

\corO{Strict} positive definiteness follows from invertibility, since both matrices
are covariance matrices.
\end{proof}

\section{\label{sec:Moments}Moments computations: proofs of Theorems \ref{thm:1stmoment},
\ref{thm:2ndmomUBBK-1-1} and Lemma \ref{lem:2ndLB}}

This section is devoted to the proofs of the results in its title.
\corO{The proofs rely on tedious computations of certain covariance matrices, which
are contained in Appendix \ref{sec:Covariances}.}

\subsection{\label{subsec:pf_1stmoment}Proof of Theorem \ref{thm:1stmoment}}

By an application of the Kac-Rice formula \cite[Theorem 12.1.1]{RFG},
using the stationarity of $(H_{N}\left(\boldsymbol{\sigma}\right),\,\ddq H_{N}\left(\boldsymbol{\sigma}\right))$
on $\mathbb{S}^{N-1}(\sqrt{N}q)$,
\begin{align*}
\mathbb{E}\mbox{Crt}_{N,q}\left(B,D\right) & =q^{N-1}\omega_{N}\varphi_{\grad H_{N}\left(\boldsymbol{\sigma}\right)}(0)\\
&\times \mathbb{E}\Big\{\left|\det(\Hess H_{N}\left(\boldsymbol{\sigma}\right))\right|
  \mathbf{1}\{H_{N}\left(\boldsymbol{\sigma}\right)\in NB,\,\ddq H_{N}\left(\boldsymbol{\sigma}\right)\in\sqrt{N}D\}\,\Big|\,\grad H_{N}\left(\boldsymbol{\sigma}\right)=0\Big\},
\end{align*}
where \corO{$\bs\in \mathbb{S}^{N-1}(\sqrt{N}q)$ is arbitrary},
$\varphi_{\grad H_{N}(\boldsymbol{\sigma})}(0)$ is the
density of $\grad H_{N}(\boldsymbol{\sigma})$ at $0$,
and $\omega_{N}={2\pi^{N/2}}/{\Gamma(N/2)}$
is the surface area of the $N-1$-dimensional unit sphere. \corO{By a covariance computation contained in   Lemma
\ref{lem:cov} of Appendix \ref{sec:Covariances}} (applied with $r=1$), the three variables 
\[
\left(H_{N}\left(\boldsymbol{\sigma}\right),\,\ddq H_{N}\left(\boldsymbol{\sigma}\right)\right),\,\Hess H_{N}\left(\boldsymbol{\sigma}\right)+\frac{1}{\sqrt{N}q}\ddq H_{N}\left(\boldsymbol{\sigma}\right)\mathbf{I},\,\grad H_{N}\left(\boldsymbol{\sigma}\right)
\]
are independent, $\grad H_{N}\left(\boldsymbol{\sigma}\right)\sim N(0,\nu'(q^{2})\mathbf{I})$,
\begin{equation}
\Big(\frac{1}{\sqrt{N}}H_{N}\left(\boldsymbol{\sigma}\right),\,\ddq H_{N}\left(\boldsymbol{\sigma}\right)\Big)\sim N\left(0,\Sigma_{q}\right).\label{eq:0503-02}
\end{equation} 
 with $\Sigma_{q}$ as defined in (\ref{eq:Sigma1}), and
\[
\mathbf{G}=\sqrt{\frac{N}{(N-1)\nu''(q^{2})}}\Big(\Hess H_{N}\left(\boldsymbol{\sigma}\right)+\frac{1}{\sqrt{N}q}\ddq H_{N}\left(\boldsymbol{\sigma}\right)\mathbf{I}\Big)
\]
is a (normalized) GOE matrix, \corO{that is, 
 a real, symmetric \corE{$N-1\times N-1$}  matrix such that all elements are centered Gaussian
variables which, up to symmetry, are independent with variance given
by
\[
\mathbb{E}\left\{ \mathbf{G}_{ij}^{2}\right\} =\begin{cases}
1/\corE{(N-1)}, & \,i\neq j\\
2/\corE{(N-1)}, & \,i=j.
\end{cases}
\]
}
Combining the above, after some algebra, we have that as $N\to\infty$,
\begin{equation}
\begin{aligned}\mathbb{E}\mbox{Crt}_{N,q}\left(B,D\right) & =e^{\frac{N}{2}+o(N)}\left(\frac{\nu''(q^{2})}{q^{2}\nu'(q^{2})}\right)^{\frac{N}{2}}\\
&\times \int_{\sqrt{N}B}du\int_{\sqrt{N}D}dx
 \exp\left\{ -\frac{1}{2}(u,x)\Sigma_{q}^{-1}(u,x)^{T}\right\} \mathbb{E}\left\{ \left|\det\left(\mathbf{G}-\frac{x}{q\sqrt{(N-1)\nu''(q^{2})}}\mathbf{I}\right)\right|\right\} .
\end{aligned}
\label{eq:1102-01}
\end{equation}

\corO{The determinant in \eqref{eq:1102-01} can be written as 
$\exp(\sum\log|\lambda_{i}|)$,} where $\lambda_{i}$ are the corresponding
eigenvalues. \corO{An upper bound on the right hand  side of  \eqref{eq:1102-01}, which gives the inequality $\leq$ in  (\ref{eq:1106-1-1}),
is obtained by combining}
Varadhan's
integral lemma \cite[Theorem 4.3.1, Exercise 4.3.11]{LDbook} and
the large deviation principle satisfied by the empirical measure
of eigenvalues of GOE matrices \cite[Theorem 2.1.1]{BAG97} (together
with a truncation argument based on the upper bound for top eigenvalue
\cite[Lemma 6.3]{BDG} of GOE matrices). We will discuss a similar
argument in the much more complicated case of bounding the expectation
of \corE{${[\rm Crt}_{N,q_{1},q_{2}}(B_{1},B_{2},D_{1},D_{2},I)]_2$} in the
proof of Theorem \ref{thm:2ndmomUBBK-1-1}. \corO{Therefore, we refrain from going
into the details here.}

To \corO{obtain the reverse inequality $\geq$ in} (\ref{eq:1106-1-1}),
\corO{it is enough} to show that for any $t\in\mathbb{R}$,
\begin{equation}
\lim_{\epsilon\to0}\lim_{N\to\infty}\frac{1}{N}\log\int_{t_{0}-\epsilon}^{t_{0}+\epsilon}\mathbb{E}\left\{ \left|\det\left(\mathbf{G}-t\mathbf{I}\right)\right|\right\} dt=\Omega(t),\label{1102-03-1}
\end{equation}
\corO{where $\Omega$ is as in \eqref{eq:Omega}.}
For the pure case $\nu_{p}(x)=x^{p}$, it was shown in \cite[(3.21)]{A-BA-C} that
\[
\mathbb{E}\mbox{Crt}_{N,1}\left(B,\mathbb{R}\right)=e^{\frac{N}{2}+o(N)}\left(p-1\right)^{\frac{N}{2}}\int_{\sqrt{N}B}e^{-\frac{u^{2}}{2}}\mathbb{E}\left\{ \left|\det\left(\mathbf{G}-u\sqrt{\frac{p}{(N-1)(p-1)}}\mathbf{I}\right)\right|\right\} du .
\]
On the other hand, it is proved in \cite[Theorem 2.8]{A-BA-C} that
\[
\lim_{N\to\infty}\frac{1}{N}\log\mathbb{E}\mbox{Crt}_{N,1}\left(B,\mathbb{R}\right)=\sup_{u\in B}\left\{ \frac{1}{2}+\frac{1}{2}\log\left(p-1\right)-\frac{u^{2}}{2}+\Omega\left(u\sqrt{\frac{p}{p-1}}\right)\right\} .
\]
By considering the  intervals $B=(t_{0}-\epsilon,t_{0}+\epsilon)$, the
above implies (\ref{1102-03-1}) and completes the proof.\qed

\subsection{\label{subsec:pf_2ndmoment}Proof of Theorem \ref{thm:2ndmomUBBK-1-1}}
\corO{Throughout the proof we fix the intervals
$B_{i},\,D_{i}\subset\mathbb{R}$ and $I\subset\left(-1,1\right)$
and the numbers $q_{1},\,q_{2}\in(0,1]$.}
The proof follows closely that of \cite[Theorem 5]{2nd} (see Section
5.4 there) and requires, in particular, variants of the auxiliary
Lemmas 11-16 of \cite{2nd}.  An application of the Kac-Rice
formula \cite[Theorem 12.1.1]{RFG} and isotropy yields the integral formula 
\begin{align}
 & \mathbb{E}\left\{ \left[\mbox{Crt}_{N,q_{1},q_{2}}(B_{1},B_{2},D_{1},D_{2},I)\right]_{2}\right\} =\nonumber \\
 & \omega_{N}\omega_{N-1}\left(N-1\right)^{N-1}\left(q_{1}^{2}q_{2}^{2}\nu''(q_{1}^{2})\nu''(q_{1}^{2})\right)^{\frac{N-1}{2}}\int_{I_{R}}dr\cdot\left(1-r^{2}\right)^{\frac{N-3}{2}}\varphi_{\grad H_{N}\left(q_{1}\hat{\mathbf{n}}\right),\grad H_{N}\left(q_{2}\boldsymbol{\sigma}\left(r\right)\right)}\left(0,0\right)\nonumber \\
 & \mathbb{E}\Bigg\{\Big|\det\Big(\sqrt{\frac{N}{\left(N-1\right)\nu''(q_{1}^{2})}}\Hess H_{N}(q_{1}\hat{\mathbf{n}})\Big)\Big|\cdot\Big|\det\Big(\sqrt{\frac{N}{\left(N-1\right)\nu''(q_{2}^{2})}}\Hess H_{N}(q_{2}\boldsymbol{\sigma}\left(r\right))\Big)\Big|\label{eq:KR-2nd}\\
 & \mathbf{1}\Big\{\big(H_{N}(q_{1}\hat{\mathbf{n}}),\,H_{N}(q_{2}\boldsymbol{\sigma}(r))\big)\in NB_{1}\times NB_{2},\,\big(\ddq H_{N}(q_{1}\hat{\mathbf{n}}),\,\ddq H_{N}(q_{2}\boldsymbol{\sigma}(r))\big)\in\sqrt{N}D_{1}\times\sqrt{N}D_{2}\Big\}\nonumber \\
 & \,\quad\quad\quad
\;\;\;\;\;\;\;\;\;\;\;\;\;\;\;\;\;\;\;\;\;\;\;\;\;\;\;\;\;\;\;\;\;\;\;\;\;\;\;\;\;\;\;\;\;\;\;\;\;\;\;\;\;\;\;\;\;\;\;\;\;\;\;\;\;\;\;\;\;\;\;\;\;\;\;\;\;\;\;\;\;\;\;\;\;
\Bigg|\,\grad H_{N}(q_{1}\hat{\mathbf{n}})=\grad H_{N}(q_{2}\boldsymbol{\sigma}(r))=0\Bigg\},\nonumber 
\end{align}
where $\hat{\mathbf{n}}=\left(0,...0,\sqrt{N}\right)$, 
\begin{equation}
\boldsymbol{\sigma}\left(r\right):=\sqrt{N}\left(0,...,0,\sqrt{1-r^{2}},r\right),\label{eq:sig_r-1}
\end{equation}
and $\varphi_{\grad H_{N}\left(q_{1}\hat{\mathbf{n}}\right),\grad H_{N}\left(q_{2}\boldsymbol{\sigma}\left(r\right)\right)}$
is the joint Gaussian density of $\grad H_{N}\left(q_{1}\hat{\mathbf{n}}\right)$
and $\grad H_{N}\left(q_{2}\boldsymbol{\sigma}\left(r\right)\right)$.
This has been worked out in \cite[Lemma 11]{2nd} for pure spherical
models and $q_{1}=q_{2}=1$, $B_{1}=B_{2}$ and $D_{1}=D_{2}=\mathbb{R}$,
but the proof in the mixed case is similar.

The following three lemmas, generalizing \cite[Lemmas 12 and 13]{2nd}
to the mixed case, are concerned with the joint law of the random
variables appearing in (\ref{eq:KR-2nd}). Their computationally heavy
proof is postponed to Appendix \ref{sec:Covariances}.
\begin{lem}
\label{lem:dens}(Density of gradients) For any $r\in\left(-1,1\right)$
and $q_{1},q_{2}\in(0,1]$, there exists a choice of the orthonormal
frame field  $F=\left(F_{i}\right)_{i=1}^{N-1}$, such that the density
of $\left(\grad H_{N}\left(q_{1}\hat{\mathbf{n}}\right),\grad H_{N}\left(q_{2}\boldsymbol{\sigma}\left(r\right)\right)\right)$
at $\left(0,0\right)\in\mathbb{R}^{N-1}\times\mathbb{R}^{N-1}$ is
given by 
\begin{align}
 & \varphi_{\grad H_{N}\left(q_{1}\hat{\mathbf{n}}\right),\grad H_{N}\left(q_{2}\boldsymbol{\sigma}\left(r\right)\right)}\left(0,0\right)\label{eq:grad_dens}\\
 &= \left(2\pi\right)^{-\left(N-1\right)}\left[\nu'(q_{1}^{2})\nu'(q_{2}^{2})-(\nu'(q_{1}q_{2}r))^{2}\right]^{-\frac{N-2}{2}}
 \left[\nu'(q_{1}^{2})\nu'(q_{2}^{2})-\left(r\nu'(q_{1}q_{2}r)-q_{1}q_{2}\nu''(q_{1}q_{2}r)(1-r^{2})\right)^{2}\right]^{-\frac{1}{2}}\,. \nonumber
\end{align}
\end{lem}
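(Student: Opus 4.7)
The plan is to compute the $(2N-2)\times(2N-2)$ covariance matrix of the gradient vector $(\grad H_N(q_1\hat{\mathbf{n}}), \grad H_N(q_2\bs(r)))$ explicitly in a cleverly chosen frame, and show that in this frame the matrix has an essentially diagonal block structure, from which the determinant (and hence the density at $0$) factorizes into a bulk contribution and a single special contribution. The key building block is the standard identity, valid for Gaussian fields with covariance $\mathbb{E}[H_N(\bx)H_N(\by)]=N\nu(\langle\bx,\by\rangle/N)$, namely
\begin{equation*}
\mathrm{Cov}\bigl(\partial_i H_N(\bx),\partial_j H_N(\by)\bigr)
=\frac{1}{N}\nu''\bigl(\langle\bx,\by\rangle/N\bigr)\,x_j y_i+\nu'\bigl(\langle\bx,\by\rangle/N\bigr)\delta_{ij}.
\end{equation*}

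The frame I would choose is the following: at $q_1\hat{\mathbf{n}}$, take $F_i=\partial_i$ for $i=1,\ldots,N-1$; at $q_2\bs(r)$, take $F_i=\partial_i$ for $i=1,\ldots,N-2$, and $F_{N-1}=-r\partial_{N-1}+\sqrt{1-r^2}\,\partial_N$ (a unit tangent vector to the sphere of radius $q_2\sqrt{N}$ orthogonal to $\partial_1,\ldots,\partial_{N-2}$). With this choice, one may plug in $\bx=q_1\hat{\mathbf{n}}=(0,\ldots,0,q_1\sqrt{N})$ and $\by=q_2\bs(r)=q_2\sqrt{N}(0,\ldots,0,\sqrt{1-r^2},r)$ and use that all but the last two coordinates of $\bx$ and $\by$ vanish. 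A direct but routine computation then shows that for each $i\in\{1,\ldots,N-2\}$ the pair $(F_i H_N(q_1\hat{\mathbf{n}}),F_i H_N(q_2\bs(r)))$ is independent of all other pairs, and has $2\times 2$ covariance matrix
\begin{equation*}
\begin{pmatrix}\nu'(q_1^2)&\nu'(q_1q_2 r)\\ \nu'(q_1q_2 r)&\nu'(q_2^2)\end{pmatrix}.
\end{equation*}

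The remaining coordinates, $F_{N-1}H_N(q_1\hat{\mathbf{n}})$ and $F_{N-1}H_N(q_2\bs(r))$, are independent of the bulk block but couple to each other nontrivially. A computation using the identity above, together with $x_N=q_1\sqrt{N}$, $y_{N-1}=q_2\sqrt{N}\sqrt{1-r^2}$, $y_N=q_2\sqrt{N}r$, gives $\mathrm{Var}(F_{N-1}H_N(q_1\hat{\mathbf{n}}))=\nu'(q_1^2)$, $\mathrm{Var}(F_{N-1}H_N(q_2\bs(r)))=\nu'(q_2^2)$ (the $\nu''$ contributions cancel in the latter because $r^2+(1-r^2)=1$ and the cross term has opposite sign), while
\begin{equation*}
\mathrm{Cov}\bigl(F_{N-1}H_N(q_1\hat{\mathbf{n}}),F_{N-1}H_N(q_2\bs(r))\bigr)=-r\nu'(q_1q_2 r)+q_1 q_2\nu''(q_1q_2 r)(1-r^2).
\end{equation*}
Taking the determinant of the $(2N-2)\times(2N-2)$ covariance matrix as the product of the $N-2$ identical $2\times 2$ bulk determinants and the one special $2\times 2$ determinant, and then using the standard Gaussian density formula $\varphi(0)=(2\pi)^{-(N-1)}(\det\Sigma)^{-1/2}$, yields exactly \eqref{eq:grad_dens} (note that the sign of the special off-diagonal entry is irrelevant since it is squared).

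The main obstacle is less conceptual than bookkeeping: one must verify that with the chosen frame the cross covariances $\mathrm{Cov}(F_i H_N(q_1\hat{\mathbf{n}}),F_j H_N(q_2\bs(r)))$ vanish whenever $i\neq j$ or one of them equals $N-1$ while the other does not, which is what produces the clean block decomposition. This relies crucially on the fact that $x_i=0$ for $i\leq N-1$ at $\bx=q_1\hat{\mathbf{n}}$ and $y_i=0$ for $i\leq N-2$ at $\by=q_2\bs(r)$, together with the specific tilt $-r\partial_{N-1}+\sqrt{1-r^2}\partial_N$ chosen for $F_{N-1}$ at the second point, which is exactly the choice that makes the $\nu'$-piece and the $\nu''$-piece combine correctly so that only the $(N-1,N-1)$ entry retains a $\nu''$ contribution.
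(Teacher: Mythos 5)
Your proof is correct and follows essentially the same route as the paper: the paper extracts exactly these covariances (your identical $2\times2$ bulk blocks and the special block with off-diagonal entry $\pm\left(r\nu'(q_1q_2r)-q_1q_2\nu''(q_1q_2r)(1-r^2)\right)$, the sign depending only on the orientation of $F_{N-1}$ at $\bs(r)$) from the frame-adapted computation in Lemma \ref{lem:cov} of Appendix \ref{sec:Covariances}, and reads off \eqref{eq:grad_dens} from the resulting block-diagonal determinant. The only cosmetic difference is that you obtain the entries by differentiating $\E\{H_N(\bx)H_N(\by)\}=N\nu(\langle\bx,\by\rangle/N)$ directly, rather than by summing the pure-case covariances of \cite{2nd} as the paper does.
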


\begin{lem}
\label{lem:condHamiltonian}(Conditional law of Hamiltonians and normal
derivatives) \corO{With notation as in Lemma \ref{lem:dens}, }
 conditional on 
\begin{equation}
\left(\grad H_{N}\left(q_{1}\hat{\mathbf{n}}\right),\grad H_{N}\left(q_{2}\boldsymbol{\sigma}\left(r\right)\right)\right)=\left(0,0\right),\label{eq:grad=00003D0}
\end{equation}
the vector 
\begin{equation}
\left(\frac{1}{\sqrt{N}}H_{N}\left(q_{1}\hat{\mathbf{n}}\right),\frac{1}{\sqrt{N}}H_{N}\left(q_{2}\boldsymbol{\sigma}\left(r\right)\right),\frac{d}{dR}H_{N}\left(q_{1}\hat{\mathbf{n}}\right),\frac{d}{dR}H_{N}\left(q_{2}\boldsymbol{\sigma}\left(r\right)\right)\right)\label{eq:0702-01}
\end{equation}
is a centered Gaussian vector with covariance matrix $\Sigma_{U,X}\left(r,q_{1},q_{2}\right)$
(see (\ref{eq:SigmaUXbar})).
\end{lem}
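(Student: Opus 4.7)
My plan is to exploit the fact that the extended Hamiltonian $H_N$ and all its partial derivatives of any order form a jointly Gaussian field, whose covariance is determined by the master identity $\mathbb{E}\{H_N(\mathbf{x})H_N(\mathbf{y})\}=N\nu(\langle\mathbf{x},\mathbf{y}\rangle/N)$. Since conditional distributions of Gaussian vectors remain Gaussian, the only thing to verify is (i) that the conditional mean is zero, and (ii) that the conditional covariance equals $\Sigma_{U,X}(r,q_1,q_2)$. For (i), the standard Gaussian regression formula gives conditional mean $\Sigma_{UV}\Sigma_{VV}^{-1}\mathbf{v}$ evaluated at $\mathbf{v}=(0,0)$, which is zero. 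So the real content is a covariance computation.

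The concrete steps are as follows. First, I would compute the full $10\times 10$ joint covariance of the vector
\[
\Big(\tfrac{1}{\sqrt N}H_N(q_1\hat{\mathbf{n}}),\tfrac{1}{\sqrt N}H_N(q_2\boldsymbol{\sigma}(r)),\tfrac{d}{dR}H_N(q_1\hat{\mathbf{n}}),\tfrac{d}{dR}H_N(q_2\boldsymbol{\sigma}(r)),\grad H_N(q_1\hat{\mathbf{n}}),\grad H_N(q_2\boldsymbol{\sigma}(r))\Big),
\]
split into the $4\times4$ block $\Sigma_{UU}$ (the target), the $(2(N-1))\times(2(N-1))$ block $\Sigma_{VV}$ (which was already essentially computed in the proof of Lemma \ref{lem:dens}), and the cross block $\Sigma_{UV}$. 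All entries are obtained by differentiating $N\nu(\langle\mathbf{x},\mathbf{y}\rangle/N)$ in the appropriate coordinates. Concretely, the radial derivative is an Euclidean directional derivative in the direction $\mathbf{x}/\|\mathbf{x}\|$, while $\grad$ is expressed through the orthonormal frame $F=(F_i)$ chosen in Lemma \ref{lem:dens}, for which $F_i(q_1\hat{\mathbf{n}})$ and $F_i(q_2\boldsymbol{\sigma}(r))$ lie in a common low-dimensional coordinate plane (essentially the plane containing $\hat{\mathbf{n}}$ and $\boldsymbol{\sigma}(r)$) plus the orthogonal complement.

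Second, I would use symmetry to eliminate many cross-terms. At a single point, the value $H_N(\mathbf{x})$, the radial derivative, and the tangential derivatives $F_i H_N(\mathbf{x})$ are uncorrelated: the first two depend only on radial data while $F_iH_N$ is a tangential directional derivative, and the relevant mixed partials of $\nu(\|\mathbf{x}\|^2/N)$ vanish by orthogonality of radial and tangential directions. The same reasoning, together with the choice of frame in Lemma \ref{lem:dens}, forces the cross-point blocks of $\Sigma_{UV}$ and the off-diagonal blocks of $\Sigma_{VV}$ to be supported on the two-dimensional tangential subspace spanned by the directions linking $\hat{\mathbf{n}}$ to $\boldsymbol{\sigma}(r)$. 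This reduces the Schur-complement calculation $\Sigma_{UU}-\Sigma_{UV}\Sigma_{VV}^{-1}\Sigma_{VU}$ to an effectively $2\times 2$ inversion problem whose entries are simple rational combinations of $\nu(q_1q_2r)$, $\nu'(q_1q_2r)$, $\nu''(q_1q_2r)$ and of $\nu',\nu''$ at $q_i^2$.

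Third and last, I would carry out the Schur-complement computation, collect the resulting scalar coefficients, and match them against the explicit $\mathfrak{a}(r,q,p)$, $\mathfrak{b}(r,p)$ pattern in \eqref{eq:SigmaUXbar}, taking advantage of the additivity of the covariance over $p$-spin components (a single monomial contributes $r^p$-type terms, and $\nu$ enters only through the evaluation of derivatives at $q_1q_2r$ and $q_i^2$). The main obstacle is purely bookkeeping: keeping track of the different factors of $q_1, q_2, r$ and of the $(1-r^2)$ denominators that appear in the Schur complement, and verifying that the final expression coincides with the matrix $\Sigma_{U,X}(r,q_1,q_2)$ written in closed form in \eqref{eq:SigmaUXbar}. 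Because of this computational weight, I would, as the authors do, relegate the explicit entry-by-entry verification to the covariance appendix, presenting only the structural reduction described above in the main text.
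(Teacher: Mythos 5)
Your proposal follows essentially the same route as the paper's proof in Appendix \ref{sec:Covariances}: compute the joint Gaussian covariances of the values, radial derivatives and frame-field gradients at the two points (the paper's Lemma \ref{lem:cov}), observe that with the chosen frame the cross-covariances with the gradients live only in the $e_{N-1}$ and $e_{2N-2}$ directions, and then apply the Gaussian regression (Schur-complement) formula, whose entry-by-entry match with \eqref{eq:SigmaUXbar} is straightforward algebra. The only blemishes are cosmetic — the joint covariance is $(2N+2)\times(2N+2)$, not $10\times10$, and the closed form in \eqref{eq:SigmaUXbar} is stated in terms of $\nu$ and the $a_i,\upsilon_i$ rather than the monomial $\mathfrak{a},\mathfrak{b}$ pattern — neither of which affects the argument.
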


\begin{lem}
\label{lem:Hess_struct_2}(Conditional law of Hessians) 
\corO{With notation as in Lemma \ref{lem:dens}, }
conditional on (\ref{eq:grad=00003D0})
the joint distribution of the matrices 
\begin{align*}
 & \left(\sqrt{\frac{N}{\left(N-1\right)\nu''(q_{1}^{2})}}\left(\Hess H_{N}\left(q_{1}\hat{\mathbf{n}}\right)+\frac{1}{\sqrt{N}q_{1}}\ddq H_{N}\left(q_{1}\hat{\mathbf{n}}\right)\mathbf{I}\right),\right.\\
 & \left.\sqrt{\frac{N}{\left(N-1\right)\nu''(q_{2}^{2})}}\left(\Hess H_{N}\left(q_{2}\boldsymbol{\sigma}\left(r\right)\right)+\frac{1}{\sqrt{N}q_{2}}\ddq H_{N}\left(q_{2}\boldsymbol{\sigma}\left(r\right)\right)\mathbf{I}\right)\right)
\end{align*}
is the same as that of
\[
\left(\mathbf{A}_{N-1}^{\left(1\right)}\left(r,q_{1},q_{2}\right),\,\mathbf{A}_{N-1}^{\left(2\right)}\left(r,q_{1},q_{2}\right)\right),
\]
 where 
\begin{align}
&\mathbf{A}_{N-1}^{\left(i\right)}\left(r,q_{1},q_{2}\right) =\hat{\mathbf{M}}_{N-1}^{\left(i\right)}\left(r,q_{1},q_{2}\right)+\sqrt{\frac{N}{\left(N-1\right)\nu''(q_{i}^{2})}}m_{i}\left(r,q_{1},q_{2}\right)e_{N-1,N-1},\label{eq:100}\\
&m_{i}\left(r,q_{1},q_{2}\right) =\frac{1}{N}\left(1-r^{2}\right)
\left(H_{N}\left(q_{1}\hat{\mathbf{n}}\right),H_{N}\left(q_{2}\boldsymbol{\sigma}\left(r\right)\right),\ddq H_{N}\left(q_{1}\hat{\mathbf{n}}\right),\ddq H_{N}\left(q_{2}\boldsymbol{\sigma}\left(r\right)\right)\right)\Sigma_{U,X}^{-1}\left(r,q_{1},q_{2}\right)\boldsymbol{\varsigma}_{i}\left(r,q_{1},q_{2}\right),\nonumber 
\end{align}
$e_{N-1,N-1}$ is \corE{the} $N-1\times N-1$ matrix whose $N-1,N-1$ entry
is equal to $1$ and all other entries \corE{are $0$}, $\Sigma_{U,X}\left(r,q_{1},q_{2}\right)$
and $\boldsymbol{\varsigma}_{i}\left(r,q_{1},q_{2}\right)$ are given
by (\ref{eq:SigmaUXbar}) and (\ref{eq:varsigma}), and $\hat{\mathbf{M}}_{N-1}^{\left(i\right)}\left(r,q_{1},q_{2}\right)$
are $N-1\times N-1$ Gaussian random matrices independent of (\ref{eq:0702-01})
with block structure
\begin{align}
\hat{\mathbf{M}}_{N-1}^{\left(i\right)}\left(r,q_{1},q_{2}\right) & =\left(\begin{array}{cc}
\hat{\mathbf{G}}_{N-2}^{\left(i\right)}\left(r\right) & Z^{\left(i\right)}\left(r\right)\\
\left(Z^{\left(i\right)}\left(r\right)\right)^{T} & Q^{\left(i\right)}\left(r\right)
\end{array}\right),\label{eq:Mhat}
\end{align}
satisfying the following: 

\begin{enumerate}
\item $\big(\hat{\mathbf{G}}_{N-2}^{\left(1\right)}\left(r\right),\hat{\mathbf{G}}_{N-2}^{\left(2\right)}\left(r\right)\big)$,
$\left(Z^{\left(1\right)}\left(r\right),Z^{\left(2\right)}\left(r\right)\right)$,
and $\left(Q^{\left(1\right)}\left(r\right),Q^{\left(2\right)}\left(r\right)\right)$
are independent.
\item $\hat{\mathbf{G}}^{\left(i\right)}\left(r\right)=\hat{\mathbf{G}}_{N-2}^{\left(i\right)}\left(r\right)$
are $N-2\times N-2$ random matrices such that $\sqrt{\frac{N-1}{N-2}}\hat{\mathbf{G}}^{\left(i\right)}\left(r\right)$
is a GOE matrix and, in distribution,
\[
\left(\begin{array}{c}
\vphantom{\left(1-\frac{\left|\nu''\left(q_{1}q_{2}r\right)\right|}{\sqrt{\nu''\left(q_{1}^{2}\right)\nu''\left(q_{2}^{2}\right)}}\right)^{1/2}}\hat{\mathbf{G}}^{\left(1\right)}\left(r\right)\\
\vphantom{\left(1-\frac{\left|\nu''\left(q_{1}q_{2}r\right)\right|}{\sqrt{\nu''\left(q_{1}^{2}\right)\nu''\left(q_{2}^{2}\right)}}\right)^{1/2}}\hat{\mathbf{G}}^{\left(2\right)}\left(r\right)
\end{array}\right)=\left(\begin{array}{c}
\left(1-\frac{\left|\nu''\left(q_{1}q_{2}r\right)\right|}{\sqrt{\nu''\left(q_{1}^{2}\right)\nu''\left(q_{2}^{2}\right)}}\right)^{1/2}\bar{\mathbf{G}}^{\left(1\right)}+{\rm sgn}\left(\nu''\left(q_{1}q_{2}r\right)\right)\left(\frac{\left|\nu''\left(q_{1}q_{2}r\right)\right|}{\sqrt{\nu''\left(q_{1}^{2}\right)\nu''\left(q_{2}^{2}\right)}}\right)^{1/2}\bar{\mathbf{G}}\\
\left(1-\frac{\left|\nu''\left(q_{1}q_{2}r\right)\right|}{\sqrt{\nu''\left(q_{1}^{2}\right)\nu''\left(q_{2}^{2}\right)}}\right)^{1/2}\bar{\mathbf{G}}^{\left(2\right)}+\left(\frac{\left|\nu''\left(q_{1}q_{2}r\right)\right|}{\sqrt{\nu''\left(q_{1}^{2}\right)\nu''\left(q_{2}^{2}\right)}}\right)^{1/2}\bar{\mathbf{G}}
\end{array}\right),
\]
where $\bar{\mathbf{G}}=\bar{\mathbf{G}}_{N-2}$, $\bar{\mathbf{G}}^{\left(1\right)}=\bar{\mathbf{G}}_{N-2}^{\left(1\right)}$,
and $\bar{\mathbf{G}}^{\left(2\right)}=\bar{\mathcal{\mathbf{G}}}_{N-2}^{\left(2\right)}$
are independent \corO{of each other} and have the same law as $\hat{\mathbf{G}}^{\left(i\right)}\left(r\right)$, \corO{that is, scaled GOE}. 
\item $Z^{\left(i\right)}\left(r\right)=\left(Z_{j}^{\left(i\right)}\left(r\right)\right)_{j=1}^{N-2}$
are Gaussian vectors such that $\left(Z_{j}^{\left(1\right)}\left(r\right),\,Z_{j}^{\left(2\right)}\left(r\right)\right)$
are independent for different $j$ and 
\[
\left(Z_{j}^{\left(1\right)}\left(r\right),\,Z_{j}^{\left(2\right)}\left(r\right)\right)\sim N\left(0,\,\frac{1}{\left(N-1\right)}\Sigma_{Z}\left(r,q_{1},q_{2}\right)\right),
\]
where $\Sigma_{Z}\left(r\right)$ is given in (\ref{eq:SigmaZ}).
\item With $\Sigma_{Q}\left(r\right)$ given by (\ref{eq:SigmaQ}),
\[
\left(Q^{\left(1\right)}\left(r\right),\,Q^{\left(2\right)}\left(r\right)\right)\sim N\left(0,\,\frac{1}{\left(N-1\right)}\Sigma_{Q}\left(r,q_{1},q_{2}\right)\right).
\]
\end{enumerate}
\end{lem}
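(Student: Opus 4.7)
The proof is a Gaussian conditioning calculation that extends \cite[Lemma 13]{2nd} from the pure case with $q_1=q_2=1$ to general mixtures and arbitrary $q_1,q_2\in(0,1]$. Since $H_N$ is Gaussian, the joint vector built from $H_N$, $\grad H_N$, $\ddq H_N$, and all entries of $\Hess H_N$ at the two points $q_1\hat{\mathbf{n}}$ and $q_2\bs(r)$ is jointly Gaussian, so its law conditional on $\grad H_N=0$ at both points is again Gaussian and fully determined by its conditional mean and covariance. The task is to compute these explicitly and to recognize the structure as a rank-one mean shift plus a mean-zero matrix with the block form \eqref{eq:Mhat}.

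The starting point is the covariance kernel of the extended Hamiltonian, $\E H_N(\bx)H_N(\by)=N\nu(\langle\bx,\by\rangle/N)$, from which every covariance needed is obtained by direct differentiation in $\bx$ and $\by$. I would use the orthonormal frame of Lemma \ref{lem:dens}, designed so that the $(N-1)$-st tangent vector at each of the two points lies in the plane spanned by $\hat{\mathbf{n}}$ and $\bs(r)$ while the first $N-2$ tangent vectors are orthogonal to that plane. In this frame the entries of the two Hessians split into three groups: the ``bulk'' block $(a,b)$ with $a,b\leq N-2$, the off-diagonal row/column $(a,N-1)$ with $a\leq N-2$, and the scalar corner $(N-1,N-1)$. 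A direct computation then shows that, after conditioning on the gradients, these three groups are mutually independent and jointly independent of the longitudinal vector \eqref{eq:0702-01}, yielding items (1), (3), (4) and the block structure \eqref{eq:Mhat}.

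The conditional mean given the gradients is then obtained as the linear regression on \eqref{eq:0702-01}. The orthogonality used above shows that only the corner entry $(N-1,N-1)$ acquires a non-zero mean shift; its regression coefficients are the entries of the vector $\boldsymbol{\varsigma}_i(r,q_1,q_2)$ defined in \eqref{eq:varsigma}, and once the natural radial correction $-\frac{1}{\sqrt{N}q_i}\ddq H_N\,\mathbf{I}$ is subtracted (the same shift that produces a genuine GOE matrix in the $q_1=q_2=1$ first-moment computation entering the proof of Theorem \ref{thm:1stmoment}), one arrives at the decomposition \eqref{eq:100}, with $\hat{\mathbf{M}}_{N-1}^{(i)}$ the mean-zero residual.

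The remaining point is the joint GOE representation of $(\hat{\mathbf{G}}^{(1)},\hat{\mathbf{G}}^{(2)})$ in item (2). Computing the cross-covariance of a bulk entry at $q_1\hat{\mathbf{n}}$ with the corresponding bulk entry at $q_2\bs(r)$ yields the GOE scale times $\nu''(q_1q_2r)/\sqrt{\nu''(q_1^2)\nu''(q_2^2)}$, and this is reproduced by the stated convex combination of three independent scaled GOE matrices $\bar{\mathbf{G}},\bar{\mathbf{G}}^{(1)},\bar{\mathbf{G}}^{(2)}$, with the sign of $\nu''(q_1q_2r)$ entering to match the sign of the cross-correlation. The main obstacle is the sheer volume of covariance bookkeeping: one must take up to fourth-order derivatives of $\nu(\langle\bx,\by\rangle/N)$, track them in the specific frame of Lemma \ref{lem:dens}, and invert the Schur complements arising from conditioning on two gradients simultaneously in order to obtain $\Sigma_Z$, $\Sigma_Q$ and $\boldsymbol{\varsigma}_i$ explicitly; these computations are naturally deferred to Appendix \ref{sec:Covariances}.
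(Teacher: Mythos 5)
Your proposal is correct and takes essentially the same approach as the paper: a Gaussian regression computation in the frame of Lemma \ref{lem:dens}, extending \cite[Lemmas 12 and 13]{2nd}, with the heavy covariance bookkeeping relegated to Appendix \ref{sec:Covariances}. The only imprecision is the claim that all three groups of Hessian entries are jointly independent of \eqref{eq:0702-01} after conditioning on the gradients: the corner entry is not (its conditional covariance with \eqref{eq:0702-01} is $\left(1-r^{2}\right)\boldsymbol{\varsigma}_{i}$), which is precisely why the regression producing $m_{i}$ in \eqref{eq:100} and the Schur-complement correction appearing in $\Sigma_{Q}$ are needed, a step you then carry out in the next paragraph, so the argument stands.
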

By  Lemma \ref{lem:condHamiltonian}, conditional on (\ref{eq:grad=00003D0}),
the vector (\ref{eq:0702-01}) has the same distribution as 
\begin{equation}
\left(U_{1}\left(r\right),U_{2}\left(r\right),X_{1}\left(r\right),X_{2}\left(r\right)\right)\sim N\left(0,\Sigma_{U,X}\left(r,q_{1},q_{2}\right)\right).\label{eq:2102-01}
\end{equation}
From (\ref{eq:KR-2nd}), Lemmas \ref{lem:dens} and \ref{lem:Hess_struct_2}
and some calculus, 
\begin{equation}
\mathbb{E} \left[\mbox{Crt}_{N,q_{1},q_{2}}(B_{1},B_{2},D_{1},D_{2},I)\right]_{2}
=C_{N}\int_{I}dr\cdot\left(\corO{\mathcal{D}}\left(r\right)\right)^{N-3}\mathcal{F}\left(r\right)\mathbb{E}\left\{ \prod_{i=1,2}\left|\det\left(\mathcal{\mathbf{M}}_{N-1}^{\left(i\right)}\left(r\right)\right)\right|\mathbf{1}_{E_{i}}\right\} ,\label{eq:KR_conditional}
\end{equation}
where 
\begin{align}
E_{i} & =\Big\{ U_{i}\left(r\right)\in\sqrt{N}B_{i},\,X_{i}\left(r\right)\in\sqrt{N}D_{i}\Big\},\nonumber \\
\mathcal{\mathbf{M}}_{N-1}^{\left(i\right)}\left(r\right) & =\mathcal{\mathbf{A}}_{N-1}^{\left(i\right)}\left(r\right)-\sqrt{\frac{1}{\left(N-1\right)\nu''(q_{i}^{2})}}\frac{1}{q_{i}}X_{i}\left(r\right)\mathbf{I},\label{eq:1903-01}
\end{align}
 $\mathcal{\mathbf{A}}_{N-1}^{\left(i\right)}\left(r\right):=\mathcal{\mathbf{A}}_{N-1}^{\left(i\right)}\left(r,q_{1},q_{2}\right)$
are defined by (\ref{eq:100}) and assumed to be independent of (\ref{eq:2102-01}),
and
\begin{align}
C_{N} & =\omega_{N}\omega_{N-1}\left(\frac{1}{2\pi}\left(N-1\right)q_{1}q_{2}\sqrt{\frac{\nu''(q_{1}^{2})\nu''(q_{2}^{2})}{\nu'(q_{1}^{2})\nu'(q_{2}^{2})}}\right)^{N-1},\nonumber \\
\corO{\mathcal{D}}\left(r\right) & =\left(1-r^{2}\right)^{\frac{1}{2}}\left(1-\frac{(\nu'(q_{1}q_{2}r))^{2}}{\nu'(q_{1}^{2})\nu'(q_{2}^{2})}\right)^{-\frac{1}{2}},\label{eq:cgf}\\
\mathcal{F}\left(r\right) & =\left(1-\frac{(\nu'(q_{1}q_{2}r))^{2}}{\nu'(q_{1}^{2})\nu'(q_{2}^{2})}\right)^{-\frac{1}{2}}\left(1-\left(\frac{r\nu'(q_{1}q_{2}r)-q_{1}q_{2}\nu''(q_{1}q_{2}r)(1-r^{2})}{\sqrt{\nu'(q_{1}^{2})\nu'(q_{2}^{2})}}\right)^{2}\right)^{-\frac{1}{2}}.\nonumber 
\end{align}

Next, we relate the determinant of $\mathcal{\mathbf{M}}_{N-1}^{\left(i\right)}\left(r\right)$
to that of its $N-2\times N-2$ upper-left submatrix, which we denote
by $\mathbf{G}_{N-2}^{\left(i\right)}\left(r\right)$. With $\hat{\mathbf{G}}_{N-2}^{\left(i\right)}\left(r\right)$
as  in (\ref{eq:Mhat}) we have 
\begin{equation}
\mathbf{G}_{N-2}^{\left(i\right)}\left(r\right)=\hat{\mathbf{G}}_{N-2}^{\left(i\right)}\left(r\right)-\sqrt{\frac{1}{\left(N-1\right)\nu''(q_{i}^{2})}}\frac{1}{q_{i}}X_{i}\left(r\right)\mathbf{I}.\label{eq:M}
\end{equation}
Set 
\begin{equation}
W_{i}\left(r\right)=W_{i,N}\left(r\right):=\left(2\sum_{j=1}^{N-2}\left(\left(\mathbf{M}_{N-1}^{\left(i\right)}\left(r\right)\right)_{j,N-1}\right)^{2}+\left(\left(\mathbf{M}_{N-1}^{\left(i\right)}\left(r\right)\right)_{N-1,N-1}\right)^{2}\right)^{1/2}.\label{eq:W}
\end{equation}

For any $\kappa>\epsilon>0$ define
\[
h_{\epsilon}\left(x\right)=\max\left\{ \epsilon,x\right\} ,
\]
and
\begin{equation}
h_{\epsilon}^{\kappa}\left(x\right)=\begin{cases}
\epsilon & \,\,\mbox{if }x<\epsilon,\\
x & \,\,\mbox{if }x\in\left[\epsilon,\kappa\right],\\
\kappa & \,\,\mbox{if }x>\kappa,
\end{cases}\,\,\,\mbox{and}\,\,\,h_{\kappa}^{\infty}\left(x\right)=\begin{cases}
1 & \mbox{if }x\leq\kappa,\\
x/\kappa & \mbox{if }x>\kappa,
\end{cases}\label{eq:h_trunc}
\end{equation}
so that $h_{\epsilon}^{\kappa}\left(x\right)h_{\kappa}^{\infty}\left(x\right)=h_{\epsilon}\left(x\right)$.
Lastly, define 
\begin{align}
\log_{\epsilon}^{\kappa}\left(x\right) & =\log\left(h_{\epsilon}^{\kappa}\left(x\right)\right).\label{eq:log trunc}
\end{align}
For a general real symmetric matrix $\mathbf{C}$ let $\lambda_{j}\left(\mathbf{C}\right)$
denote the eigenvalues of $\mathbf{C}$.

By exactly the same proof as for Lemma 14 of \cite{2nd} (which does
not involve probabilistic arguments) we have that for any $\epsilon>0$,
$r\in\left(-1,1\right)$, almost surely, 
\begin{align}
\left|\det\left(\mathcal{\mathbf{M}}_{N-1}^{\left(i\right)}\left(r\right)\right)\right| & \leq\frac{W_{i}\left(r\right)\left(W_{i}\left(r\right)+\epsilon\right)}{\epsilon}\prod_{j=1}^{N-2}h_{\epsilon}\left(\left|\lambda_{j}\left(\mathbf{G}_{N-2}^{\left(i\right)}\left(r\right)\right)\right|\right).\label{eq:0908-1}
\end{align}

\corO{With  $\kappa>\epsilon$ and $2\leq m\in\mathbb{N}$ arbitrary, set
 $t=t\left(m\right):=m/\left(m-1\right)$, and }
\begin{align}
\mathcal{E}_{\epsilon,\kappa}^{\left(1\right)}\left(r\right) & =\mathbb{E}\left\{ \prod_{i=1,2}\prod_{j=1}^{N-2}\left(h_{\epsilon}^{\kappa}\left(\left|\lambda_{j}\left(\mathbf{G}_{N-2}^{\left(i\right)}\left(r\right)\right)\right|\right)\right)^{t}\cdot\mathbf{1}_{E_{i}}\right\} ,\nonumber \\
\mathcal{E}_{\epsilon,\kappa}^{\left(2\right)}\left(r\right) & =\mathbb{E}\left\{ \prod_{i=1,2}\prod_{j=1}^{N-2}\left(h_{\kappa}^{\infty}\left(\left|\lambda_{j}\left(\mathbf{G}_{N-2}^{\left(i\right)}\left(r\right)\right)\right|\right)\right)^{2m}\right\} ,\label{eq:84}\\
\mathcal{E}_{\epsilon,\kappa}^{\left(3\right)}\left(r\right) & =\mathbb{E}\left\{ \left(\frac{W_{1}\left(r\right)\left(W_{1}\left(r\right)+\epsilon\right)}{\epsilon}\right)^{4m}\right\} \mathbb{E}\left\{ \left(\frac{W_{2}\left(r\right)\left(W_{2}\left(r\right)+\epsilon\right)}{\epsilon}\right)^{4m}\right\} .\nonumber 
\end{align}
Then, from (\ref{eq:KR_conditional}), (\ref{eq:0908-1}) and H{\"{o}}lder's
inequality we have that
\[
\mathbb{E}\left\{ \prod_{i=1,2}\left|\det\left(\mathcal{\mathbf{M}}_{N-1}^{\left(i\right)}\left(r\right)\right)\right|\mathbf{1}_{E_{i}}\right\} \leq\left(\mathcal{E}_{\epsilon,\kappa}^{\left(1\right)}\left(r\right)\right)^{\nicefrac{1}{t}}\left(\mathcal{E}_{\epsilon,\kappa}^{\left(2\right)}\left(r\right)\right)^{\nicefrac{1}{2m}}\left(\mathcal{E}_{\epsilon,\kappa}^{\left(3\right)}\left(r\right)\right)^{\nicefrac{1}{4m}}
\]
and
\begin{align}
 & \negthickspace\negthickspace\negthickspace\negthickspace\limsup_{N\to\infty}\frac{1}{N}\log\left(\mathbb{E}\left\{ \left[\mbox{Crt}_{N,q_{1},q_{2}}(B_{1},B_{2},D_{1},D_{2},I)\right]_{2}\right\} \right)\leq\limsup_{N\to\infty}\frac{1}{N}\log\left(C_{N}\right)\nonumber \\
 & +\limsup_{N\to\infty}\frac{1}{mN}\log\left(\int_{I}\left(\mathcal{F}\left(r\right)\right)^{m}\left(\mathcal{E}_{\epsilon,\kappa}^{\left(2\right)}\left(r\right)\right)^{\nicefrac{1}{2}}\left(\mathcal{E}_{\epsilon,\kappa}^{\left(3\right)}\left(r\right)\right)^{\nicefrac{1}{4}}dr\right)\label{eq:45}\\
 & +\limsup_{N\to\infty}\frac{1}{tN}\log\left(\int_{I}\left(\corO{\mathcal{D}}\left(r\right)\right)^{t(N-3)}\mathcal{E}_{\epsilon,\kappa}^{\left(1\right)}\left(r\right)dr\right)=:\Delta_{I}+\Delta_{II}+\Delta_{III}.\nonumber 
\end{align}

We note that $\Delta_{I}=1+\log\left(q_{1}q_{2}\sqrt{\frac{\nu''(q_{1}^{2})\nu''(q_{2}^{2})}{\nu'(q_{1}^{2})\nu'(q_{2}^{2})}}\right)$.
To complete \corO{the proof}  of Theorem \ref{thm:2ndmomUBBK-1-1} we will show that
$\Delta_{II}\leq0$, if $\kappa$ is large enough, and that 
\begin{equation}
\lim_{\epsilon\to0}\lim_{\kappa\to\infty}\lim_{m\to\infty}\Delta_{III}=\sup_{r\in I,u_{i}\in B_{i},x_{i}\in D_{i}}\Psi_{\nu,q_{1},q_{2}}\left(r,u_{1},u_{2},x_{1},x_{2}\right)-\Delta_{I}.\label{eq:0403-01}
\end{equation}
By a similar proof to that of Lemma 16 of \cite{2nd} (essentially
all that is needed is to replace $\bar{U}_{i}\left(r\right)$ by $\sqrt{\frac{1}{\left(N-1\right)\nu''(q_{i}^{2})}}\frac{1}{q_{i}}X_{i}\left(r\right)$
everywhere in the proof), using the large deviation principle satisfied
by the empirical measure of eigenvalues \cite[Theorem 2.1.1]{BAG97}
and the upper bound for top eigenvalue \cite[Lemma 6.3]{BDG} of GOE
matrices,  we have
the following two inequalities:
\begin{enumerate}
\item For all
  $\corE{t,\epsilon>0}$ and $\kappa>\max\left\{ \epsilon,1\right\} $
there exists a constant $c=c\left(\epsilon,\kappa\right)>0$, such
that for 
\corE{any intervals $B_i\subset\mathbb{R}$ and}
large enough $N$, uniformly in $r\in\left(-1,1\right)$,
\begin{align}
 & \mathbb{E}\left\{ \prod_{i=1,2}\prod_{j=1}^{N-2}\left(h_{\epsilon}^{\kappa}\left(\left|\lambda_{j}\left(\mathbf{G}_{N-2}^{\left(i\right)}\left(r\right)\right)\right|\right)\right)^{t}\cdot\mathbf{1}_{E_{i}}\right\} \leq\exp\left\{ -cN^{2}\right\}\label{eq:40}\\
& +\exp\left\{ 2t\epsilon N\right\}
\mathbb{E}\left\{ \mathbf{1}_{E_{1}}\mathbf{1}_{E_{2}}\exp\left\{ \sum_{i=1,2}tN\int\log_{\epsilon}^{\kappa}\left(\left|\lambda-\sqrt{\frac{1}{\left(N-1\right)\nu''(q_{i}^{2})}}\frac{1}{q_{i}}X_{i}\left(r\right)\right|\right)d\mu^{*}\corO{(\lambda)}\right\} \right\} ,\nonumber 
\end{align}
where $\mu^{*}$ is the semicircle law, given by (\ref{eq:semicirc}).
\item \corE{For any $m\geq 1$, there exists some large  $\kappa>0$, so that uniformly in $r\in\left(-1,1\right)$ and $N$},
\begin{equation}
\mathbb{E}\left\{ \prod_{i=1,2}\prod_{j=1}^{N-2}\left(h_{\kappa}^{\infty}\left(\left|\lambda_{j}\left(\mathbf{G}_{N-2}^{\left(i\right)}\left(r\right)\right)\right|\right)\right)^{\corO{2m}}\right\} \leq \corE{2}.\label{eq:44}
\end{equation}
\end{enumerate}
The equality \corO{in} (\ref{eq:0403-01}) \corO{follows from} (\ref{eq:40})
and Varadhan's integral lemma \cite[Theorem 4.3.1, Exercise 4.3.11]{LDbook}.
(See the proof of \cite[Theorem 5]{2nd} for details.)

\corO{It remains to consider $\Delta_{II}$.} By (\ref{eq:44}), for fixed $m$ and large enough $\kappa$,
\begin{equation}
\Delta_{II}\leq\limsup_{N\to\infty}\frac{1}{mN}\log\left(\int_{\corO{I}}\left(\mathcal{F}\left(r\right)\right)^{m}\left(\mathcal{E}_{\epsilon,\kappa}^{\left(3\right)}\left(r\right)\right)^{\nicefrac{1}{4}}dr\right).\label{eq:0304-02}
\end{equation}
\corO{We control the right hand side of \eqref{eq:0304-02} differently according to whether $q_1=q_2$ or not.} 

\corO{Assume first that
  $q_1=q_2=q$. In that case, we will show that the integrand
in (\ref{eq:0304-02}) is bounded uniformly in $r\in(-1,1)$ by some \corO{(possibly $m$-dependent)} 
constant independent of $N$.} 
\corO{We have to be particularly careful with the limit as $|r|\to 1$ since
$\left(\mathcal{F}\left(r\right)\right)^{m}$ can
explode as $|r|\to1$}. 
Note that
\[
\lim_{r\to-1}(\nu'(q^{2}r))^{2}=(\nu'(q^{2}))^{2}
\]
if and only if $\nu$ is either an even or odd polynomial. Thus, only
in this case $\lim_{r\to-1}\mathcal{F}\left(r\right)=\infty$. Therefore,
to complete the proof it is sufficient to show that for $q_{1}=q_{2}=q$,
\begin{equation}
\limsup_{r\nearrow1}\left(\mathcal{F}\left(r\right)\right)^{m}\left(\mathcal{E}_{\epsilon,\kappa}^{\left(3\right)}\left(r\right)\right)^{\nicefrac{1}{4}}<\infty,\label{0304-03}
\end{equation}
and in the case that $|\nu(r)|=|\nu(-r)|$, the same holds for the
$r\searrow-1$ limit.

By the same proof as in Lemma 15 of \cite{2nd}, if $\chi_{N-1}=\sum_{i=1}^{N-1}X_{i}^{2}$,
$X_{i}\sim N(0,1)$ i.i.d., is a Chi-squared variable of $N-1$ degrees
of freedom, then
\begin{align}
\mathbb{E}\left\{ \left(W_{i}\left(r\right)\right)^{2m}\right\}  & \leq\left(2V(r)\right)^{m}\mathbb{E}\left\{ \chi_{N-1}^{m}\right\} \label{eq:0304-04}\\
 & =\left(2V(r)\right)^{m}(N-1)(N+1)\cdots(N-3+2m),\nonumber 
\end{align}
\corE{where $V(r)$ is the maximum of the  variance of $(\mathbf{M}_{N-1}^{\left(i\right)}\left(r\right))_{1,N-1}$
and of $(\mathbf{M}_{N-1}^{\left(i\right)}\left(r\right))_{N-1,N-1}$. By (\ref{eq:cov1}), the former variance is equal to $(N-1)^{-1}\Sigma_{Z,11}\left(r,q,q\right)$ (see (\ref{eq:SigmaZ})).
By Lemmas \ref{lem:condHamiltonian} and \ref{lem:Hess_struct_2}, the latter variance is equal to the conditional variance of the $N-1,\,N-1$ entry of  $\sqrt{\frac{N}{\left(N-1\right)\nu''(q_{2}^{2})}}\Hess H_{N}\left(q_{2}\boldsymbol{\sigma}\left(r\right)\right)$ conditioned on  $\grad H_{N}\left(q\hat{\mathbf{n}}\right)$ and $\grad H_{N}\left(q\boldsymbol{\sigma}\left(r\right)\right)$.
By the same calculation by which we arrive to (\ref{eq:cov1}), this
conditional variance is equal to}  
\begin{align*}
 & (N-1)^{-1}\left[q^{4}\nu''''\left(q^{2}r\right)\left(1-r^{2}\right)^{2}-6q^{2}\nu'''\left(q^{2}r\right)r\left(1-r^{2}\right)\right.\\
 & +\nu''\left(q^{2}r\right)\left(3r^{2}-4\left(1-r^{2}\right)\right)+rq^{-2}\nu'\left(q^{2}r\right)-a_{2}(r,q,q)\left(1-r^{2}\right)\\
 & \left.\times\left(-q^{3}\nu'''\left(q^{2}r\right)\left(1-r^{2}\right)+3rq\nu''\left(q^{2}r\right)+q^{-1}\nu'\left(q^{2}r\right)\right)^{2}\right].
\end{align*}
By some calculus, one can verify that each of those variances multiplied
by $(N-1)(1-r)^{-1}$ converges as $r\nearrow1$ to a constant. On
the other hand, as $r\nearrow1$, $\mathcal{F}\left(r\right)(1-r)$
converges to a positive constant. Similarly, when $|\nu(r)|=|\nu(-r)|$
the same convergences hold $r\searrow-1$ with $1-r$ replaced by
$1+r$. Combined with (\ref{eq:0304-04}) this implies (\ref{0304-03})
and the corresponding $r\searrow-1$ limit, when needed, and completes
the proof \corO{in case $q_1=q_2$}.

Assume next that $q_1\neq q_2$. In that case, \corO{the argument 
  is simpler, since}
$\mathcal{F}(r)$ is uniformly bounded in $r$, 
while $W_i(r)^2$ \corOU{has  the
law of a sum of $N-1$ squares 
of Gaussian variables, conditioned on 
the vanishing of the spherical gradient, compare with
\eqref{eq:0304-04}. Before the conditioning, these variables are independent
and 
each of them  has variance uniformly bounded
by a multiple of $1/(N-1)$, and the variance of the sum
after the conditioning is not larger than the variance before conditioning.}
\corO{Therefore, $\mathcal{E}_{\epsilon,\kappa}^{\left(3\right)}
\leq \corE{C(m)}$, and thus $\Delta_{II}\leq 0$ if $q_1\neq q_2$. This completes
the proof in the case $q_1\neq q_2$, and
thus the proof of Theorem \ref{subsec:pf_2ndmoment} is complete.}
\qed

\subsection{Proof of Lemma \ref{lem:2ndLB}}

The lemma will follow from (\ref{eq:KR_conditional}) if we can show
that for $x_{i}\in D_{i}$, $r_{0}\in(-1,1)$, and $\epsilon_{N}\to0$
slowly enough, say $\epsilon_{N}=N^{-1}$, uniformly in $r\in(r_{0}-\epsilon_{N},r_{0}+\epsilon_{N})$,
\begin{align}
\liminf_{N\to\infty}\frac{1}{N}\log\mathbb{E}\left\{ \left.\prod_{i=1,2}\left|\det\left(\mathcal{\mathbf{M}}_{N-1}^{\left(i\right)}\left(r\right)\right)\right|\,\right|\,E^{\prime}\right\}  & \geq\Omega\left(\frac{x_{1}}{q_{1}\sqrt{\nu''(q_{1}^{2})}}\right)+\Omega\left(\frac{x_{2}}{q_{2}\sqrt{\nu''(q_{2}^{2})}}\right),\label{eq:1903-02}
\end{align}
where
\[
E^{\prime}=\left\{ \forall i=1,2:\,U_{i}\left(r\right)\in\sqrt{N}(u_{i}-\epsilon_{N},u_{i}+\epsilon_{N}),\,X_{i}\left(r\right)\in\sqrt{N}(x_{i}-\epsilon_{N},x_{i}+\epsilon_{N})\right\} ,
\]
since by (\ref{eq:2102-01}),
\[
\liminf_{N\to\infty}\frac{1}{N}\log\mathbb{P}\left\{ E_{i}^{\prime}\right\} \geq-\frac{1}{2}\left(u_{1},u_{2},x_{1},x_{2}\right)\Sigma_{U,X}^{-1}\left(r,q_{1},q_{2}\right)\left(u_{1},u_{2},x_{1},x_{2}\right)^{T}.
\]
By (\ref{eq:1903-01}), (\ref{eq:100}) and (\ref{eq:Mhat}), the
matrix $\mathcal{\mathbf{M}}_{N-1}^{\left(i\right)}\left(r\right)$
is of the form 
\begin{equation}
\mathcal{\mathbf{M}}_{N-1}^{\left(i\right)}\left(r\right)=\left(\begin{array}{cc}
\hat{\mathbf{G}}_{N-2}^{\left(i\right)}\left(r\right) & 0\\
0 & 0
\end{array}\right)-\sqrt{\frac{1}{\left(N-1\right)\nu''(q_{i}^{2})}}\frac{1}{q_{i}}X_{i}\left(r\right)\mathbf{I}+\mathbf{T}^{(i)},\label{eq:1903-03}
\end{equation}
where $\hat{\mathbf{G}}_{N-2}^{\left(i\right)}\left(r\right)$ is
a GOE matrix and $\mathbf{T}^{(i)}$ is a matrix whose only nonzero
elements are in the last column and row. 

Let $\tilde{\mathcal{\mathbf{M}}}_{N-2}^{\left(i\right)}\left(r\right)$
be the upper-left $N-2\times N-2$ submatrix of $\mathcal{\mathbf{M}}_{N-1}^{\left(i\right)}\left(r\right)$
and let  $\delta>0$ be arbitrary. 
From our assumption on $D_i$ and \corO{the convergence of the
      top eigenvalue of Wigner matrices, see \cite[Theorem 2.1.22]{Matrices},} 
      the eigenvalues
$\lambda_{j}^{(i)}$ of $\tilde{\mathcal{\mathbf{M}}}_{N-2}^{\left(i\right)}\left(r\right)$
are smaller than some $-\tau'<0$, independent of $N$, \corO{with
probability approaching $1$ as $N\to\infty$. Therefore,}
  by \corO{Wigner's theorem, see \cite[Theorem 2.1.1]{Matrices}, 
again with probability approaching $1$ as $N\to\infty$,}
\[
\frac{1}{N}\log\left|
\det\tilde{\mathcal{\mathbf{M}}}_{N-2}^{\left(i\right)}\left(r\right)
\right|=F^{(i)}\geq\Omega\Bigg(\frac{x_{i}}{q_{i}\sqrt{\nu''(q_{1}^{2})}}\Bigg)-\delta,
\]
where $F^{(i)}=\frac{1}{N}\sum_{j=1}^{N-2}\log|\lambda_{j}^{(i)}|$.
\corO{Finally},
since the variance of $X_{i}\left(r\right)$ is bounded \corE{from}
below by some $c=c(r_{0})>0$ uniformly in 
$r\in(r_{0}-\epsilon_{N},r_{0}+\epsilon_{N})$,
\[ \frac{1}{N}\log\left(\det\mathcal{\mathbf{M}}_{N-1}^{\left(i\right)}\left(r\right)/\det\tilde{\mathcal{\mathbf{M}}}_{N-2}^{\left(i\right)}\left(r\right)\right)=\frac{1}{N}\log\left(Z-V^{T}(\tilde{\mathcal{\mathbf{M}}}_{N-2}^{\left(i\right)}\left(r\right))^{-1}V\right)\in(-\delta,\delta),\]
\corO{again with probability approaching $1$ as $N\to\infty$,}
where $V$ is the vector composed of the first $N-2$ elements of
the last column of $\mathcal{\mathbf{M}}_{N-1}^{\left(i\right)}\left(r\right)$
and $Z$ is the $N-1,\,N-1$ element of $\mathcal{\mathbf{M}}_{N-1}^{\left(i\right)}\left(r\right)$.
This completes the proof.\qed
\section{\label{sec:Matching}Matching of moments and orthogonality: proofs
of Theorem \ref{theo-ground}, Lemma \ref{lem:conccentration} and Corollaries \ref{cor:matching}
and \ref{cor:orth}}
\corO{In this section we use the second moment computations of Section \ref{sec:Moments} to show that the ground state is 
determined by a first moment computation. We also show that deep $q$-critical points are nearly orthogonal.
We begin with stating and proving several consequences of \CDTN. This is then followed by the proofs of the statements in the title of the section.}

 \subsection{\label{sec:CDTNconseq} Consequences of \CDTN} 
For pure-like models, $-x_{0}(q)$ was defined as the maximizer of
the complexity $\Theta_{\nu,q}(-E_{0}(q),x)$. \corO{As the next lemma shows,} this definition makes
sense.
\begin{lem}
\label{lem:x0}If $\nu_{q}$ is pure-like, then there exists a unique
$x_{0}(q)$ such that 
\[
-x_{0}(q)=\arg\max_{x\in\mathbb{R}}\Theta_{\nu,q}(-E_{0}(q),x),
\]
and it satisfies $-\frac{x_{0}(q)}{q\sqrt{\nu''(q)}}<-2$.
\end{lem}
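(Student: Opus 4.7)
By Remark \ref{rem:scaling}, it suffices to treat the case $q=1$, since $\Theta_{\nu,q}(u,x)=\Theta_{\nu_{q},1}(u,qx)$ reduces the general statement (via (\ref{eq:E0q})--(\ref{eq:x0q})) to the $q=1$ claim applied to the pure-like mixture $\nu$. Set $\alpha:=\sqrt{\nu''(1)}$ and $f(x):=\Theta_{\nu,1}(-E_{0},x)$.

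First I would record the shape of $f$. From (\ref{eq:Omega}) we read off that $\Omega$ is $C^{1}$ on $\mathbb{R}$ and $C^{\infty}$ off $\{\pm 2\}$, with $\Omega''(y)=1/2$ for $|y|<2$ and $\Omega''(y)<0$ for $|y|>2$ (this last is immediate from $\Omega'(y)=(y-\mathrm{sgn}(y)\sqrt{y^{2}-4})/2$ outside the bulk). Combined with the strict positivity of $(\Sigma_{1}^{-1})_{22}$ furnished by Lemma \ref{lem:invertibility}, this shows $f$ is a genuine quadratic on the bulk $[-2\alpha,2\alpha]$ and is strictly concave on each outer interval $(-\infty,-2\alpha)$ and $(2\alpha,\infty)$, with $f(x)\to-\infty$ as $|x|\to\infty$. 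Hence a maximizer exists, and there is at most one critical point in each of the three regions.

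Next I would use the pure-like hypothesis to push the maximizer onto the left outer branch. By (\ref{eq:Gpl}) the assumption $G(\nu)>0$ reads $\sup_{x}\Theta_{\nu,1}(-E_{\infty},x)>0$, so the definition (\ref{eq:E0}) of $E_{0}$ gives $E_{0}>E_{\infty}$. I would track the maximizer $x^{*}(u)$ of $\Theta_{\nu,1}(u,\cdot)$ as $u$ varies: for $u$ sufficiently close to $0$, $x^{*}(u)$ is the unique stationary point of the bulk quadratic, and a short calculation with the explicit $\Sigma_{1}^{-1}$ and the formula (\ref{eq:Einfty}) shows that this stationary point equals $-2\alpha$ precisely at $u=-E_{\infty}$ (this is the content of the threshold interpretation of $E_{\infty}$). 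For $u<-E_{\infty}$ the unconstrained bulk critical point lies to the left of $-2\alpha$, so the bulk quadratic has no interior maximizer; the maximum of $f$ must then be attained on one of the two strictly concave outer branches. A direct comparison of $\Omega$ on the two branches (equivalently, of the complexity at local minima versus at index-one saddles) rules out the right branch $(2\alpha,\infty)$ at energies below $-E_{\infty}$. Applied to $u=-E_{0}<-E_{\infty}$, this places the maximizer $-x_{0}$ strictly in $(-\infty,-2\alpha)$, giving $-x_{0}/\alpha<-2$. Uniqueness then follows from the strict concavity of $f$ on $(-\infty,-2\alpha)$.

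The main obstacle I expect is the tracking argument in the middle paragraph: one has to compute the bulk stationary point explicitly in $u$, verify that it reaches the edge $-2\alpha$ exactly at $u=-E_{\infty}$ (using the precise form of $\Sigma_{1}$ and $E_{\infty}$), and rule out a competing maximizer on the right branch. Each of these is an explicit calculus computation with the formulas (\ref{eq:Theta})--(\ref{eq:Sigma1}) and (\ref{eq:Einfty}), but some care is needed at the edge of the bulk since $\Omega$ is only $C^{1}$ there, so the global derivative $f'$ must be handled piecewise before concluding.
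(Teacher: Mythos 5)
Your plan is correct and ends at the same place as the paper, but the final step is organized differently, in a way that is more self-contained. Like the paper's proof, you reduce to $q=1$ via Remark \ref{rem:scaling}, work from the explicit representation \eqref{eq:0503-07-1}, use \eqref{eq:Gpl} to get $E_{0}>E_{\infty}$, eliminate the right-hand region, and localize the maximizer to $x\le-2\sqrt{\nu''(1)}$. At that point the paper rewrites the left-tail supremum in terms of the GOE top-eigenvalue rate function $I_{1}$ (see \eqref{eq:1108-1-1-1}) and quotes the proof of \cite[Theorem 1.1]{ABA2} for existence and uniqueness of a maximizer strictly inside $x<-2\sqrt{\nu''(1)}$; you instead obtain uniqueness from strict concavity of $\Omega$ outside $[-2,2]$ (hence of $\Theta_{\nu,1}(-E_{0},\cdot)$ on the left branch, in the spirit of Lemma \ref{lem:ddE}), and strict interiority from the identity that the stationary point of the bulk quadratic reaches the edge $-2\sqrt{\nu''(1)}$ exactly at $u=-E_{\infty}$, so that at $u=-E_{0}<-E_{\infty}$ the ($C^{1}$) derivative at the edge is strictly negative and the maximum is pushed off the edge. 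This trades the external citation for an elementary calculus argument, which is a perfectly good exchange; your vertex-location computation is also essentially equivalent to the paper's displayed sign computation of $\frac{d}{dx}\Theta_{\nu,1}(-E_{0},x)$ on $(-2\sqrt{\nu''(1)},-\nu'(1)E_{0})$.

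Two details to state explicitly when writing this out. First, the exclusion of the right branch is not a comparison of $\Omega$ (which is symmetric), nor of minima versus index-one saddles (the branch $x>2\sqrt{\nu''(1)}$ corresponds to local maxima); it is the Gaussian quadratic term, centered at $\nu'(1)u<0$, that breaks the symmetry: reflecting $x\mapsto-x$ for $x>0$ strictly decreases that term, which is exactly the paper's opening step. Second, your edge argument needs the bulk quadratic to be strictly concave, i.e. (with the paper's normalization $\nu(1)=1$) $\nu'(1)-\nu'(1)^{2}<\nu''(1)$, which holds since $\nu'(1)\ge2$; this is the sign check the paper performs when it notes that the coefficient of $x$ in the bulk derivative is negative. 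With these two points spelled out, the piecewise handling of $f'$ at the bulk edge that you already flag is all that remains, and the argument closes.
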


Next we state several auxiliary lemmas 
regarding $E_{0}(q)$ and $x_{0}(q)$, which will \corE{be} needed later.
\begin{lem}
	\label{lem:ddE}For  $q$  such that $\nu_{q}$ is pure-like, $(E,x)\mapsto\Theta_{\nu,q}(E,x)$
	is strictly concave on $\{(E,x):\,x<-2q\sqrt{\nu''(q^{2})}\}$. Moreover,
	for any $E$ and $x<-2q\sqrt{\nu''(q^{2})}$ ,  \corO{if $\Theta_{\nu,q}(E,x)=0$ then $\frac{d}{dE}\Theta_{\nu,q}(E,x)\neq0$.}
\end{lem}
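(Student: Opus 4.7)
The plan is to handle the two claims separately, both resting on the additive decomposition $\Theta_{\nu,q}(E,x) = Q(E,x) + \Omega(x/\alpha)$, where $\alpha := q\sqrt{\nu''(q^{2})}$ and $Q$ collects the constant and the quadratic term $-\frac{1}{2}(E,x)\Sigma_{q}^{-1}(E,x)^{T}$.

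For part (a), $Q$ is strictly concave on $\mathbb{R}^{2}$ because $\Sigma_{q}$ is positive definite (Lemma~\ref{lem:invertibility}; the required non-monomiality is implicit, as a pure $p$-spin would produce a degenerate covariance). Using the integral representation $\Omega(y)=\int \log|\lambda-y|\,d\mu^{*}(\lambda)$, one computes $\Omega''(y)=-\int(\lambda-y)^{-2}\,d\mu^{*}(\lambda)<0$ for $|y|>2$, so $\Omega(x/\alpha)$ is strictly concave in $x$ on $\{x<-2\alpha\}$. The sum of a strictly concave and a concave function is strictly concave, which gives (a).

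For part (b) I would argue by contradiction. Suppose $\Theta(E_{*},x_{*})=0$ and $\partial_{E}\Theta(E_{*},x_{*})=0$ for some $x_{*}<-2\alpha$. By (a) the one-dimensional slice $E\mapsto\Theta(E,x_{*})$ is strictly concave, so $E_{*}$ is the unique maximizer and the partial maximum $\Theta^{*}(x):=\max_{E}\Theta(E,x)$ satisfies $\Theta^{*}(x_{*})=0$. Since partial maximization of a jointly strictly concave function is strictly concave, $\Theta^{*}$ is strictly concave on $(-\infty,-2\alpha)$.

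The main obstacle, and the heart of the argument, is turning $\Theta^{*}(x_{*})=0$ into a contradiction with pure-likeness. My plan is to apply the envelope identity $g'(E)=\partial_{E}\Theta(E,x^{*}(E))$ for $g(E):=\sup_{x}\Theta_{\nu,q}(E,x)$ together with the characterization of $-E_{0}(q)$ in \eqref{eq:E0q} and Lemma~\ref{lem:x0}: pure-likeness ensures that $g$ is locally strictly concave near $-E_{0}(q)$ (because the maximizing $x^{*}(E)$ lies in $\{x<-2\alpha\}$ where (a) applies) and takes strictly positive values at some energy, together forcing $g'(-E_{0}(q))>0$ and hence $\partial_{E}\Theta(-E_{0}(q),-x_{0}(q))>0$. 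Extending the contradiction to a general $(E_{*},x_{*})$ then uses the strict concavity of $\Theta^{*}$ together with the fact that $-E_{0}(q)$ is the leftmost zero of $g$, to rule out any additional interior zero of $\Theta^{*}$ below $-x_{0}(q)$.
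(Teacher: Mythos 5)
Your part~(a) is correct and coincides with the paper's argument: positive definiteness of $\Sigma_{q}$ (Lemma~\ref{lem:invertibility}) makes the quadratic term strictly concave, and $\Omega$ is concave beyond the edge. The problem is part~(b). Your reduction to $\Theta^{*}(x_{*})=0$, with $\Theta^{*}(x):=\max_{E}\Theta_{\nu,q}(E,x)$, is correct, but the contradiction you then hope to extract does not exist. Pure-likeness forces $\sup_{D}\Theta_{\nu,q}>0$, where $D=\{(E,x):x<-2\alpha\}$ and $\alpha:=q\sqrt{\nu''(q^{2})}$ (this is exactly how the paper uses pure-likeness: by \eqref{eq:Gpl} and Remark~\ref{rem:scaling}, $\sup_{x}\Theta_{\nu,q}(-E_{\infty}(q),x)=G(\nu_{q})>0$, and by the argument in the proof of Lemma~\ref{lem:x0} the maximizer in $x$ for $E\in(-E_{0}(q),-E_{\infty}(q))$ lies in $D$). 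On the other hand $\Theta^{*}(x)=c_{0}-x^{2}/(2\Sigma_{q,22})+\Omega(x/\alpha)\to-\infty$ as $x\to-\infty$, since $\Omega$ grows only logarithmically. Since $\Theta^{*}$ is continuous and $\sup_{x<-2\alpha}\Theta^{*}(x)=\sup_{D}\Theta_{\nu,q}>0$, the intermediate value theorem produces a zero of $\Theta^{*}$ at some $x_{*}<-2\alpha$, and at the corresponding $E$-maximizer one has $\Theta_{\nu,q}=0$ and $\frac{d}{dE}\Theta_{\nu,q}=0$. So the program of ``ruling out any interior zero of $\Theta^{*}$'' cannot be completed; neither the strict concavity of $\Theta^{*}$ nor the envelope identity for $g(E)=\sup_{x}\Theta_{\nu,q}(E,x)$ (which at best yields $\frac{d}{dE}\Theta_{\nu,q}(-E_{0}(q),-x_{0}(q))>0$ at that one special point) bridges this.

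The ingredient you are missing --- used implicitly in the paper's proof and present in every application of the lemma (the point $(-E_{0}(q),-x_{0}(q))$ in Lemma~\ref{lem:Exsmooth}, and zeros of $\Theta_{\nu,q}$ at which the $x$-derivative also vanishes) --- is that the zero under consideration is in addition a maximizer in $x$, i.e.\ $\frac{d}{dx}\Theta_{\nu,q}(E_{*},x_{*})=0$. With that extra information both partial derivatives vanish at an interior point of the convex set $D$, so by the strict concavity of part~(a) the point is the global maximum of $\Theta_{\nu,q}$ over $D$, giving $\sup_{D}\Theta_{\nu,q}=0$, which contradicts $\sup_{D}\Theta_{\nu,q}>0$ obtained from pure-likeness as above. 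That is the paper's argument: its step ``the maximum over $D$ equals $0$'' is precisely where the vanishing of the $x$-derivative enters. Your ingredients (strict concavity, $g$, Lemma~\ref{lem:x0}) can be reassembled into that proof, but as written your part~(b) does not establish the statement: the final ``extension to a general $(E_{*},x_{*})$'' is not merely unproven --- your own construction of $\Theta^{*}$ shows that without the condition on $\frac{d}{dx}\Theta_{\nu,q}$ the intended contradiction is unattainable.
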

\corO{Note that the conclusion of Lemma \ref{lem:ddE} holds if  $E=-E_{0}(q)$ and
$x=-x_{0}(q)$. A useful corollary of Lemma \ref{lem:ddE} is the following.} 
\begin{lem}
	\label{lem:Exsmooth}For any $q$ such that $\nu_q$ is pure-like, $E_0(q)$ and $x_0(q)$ are smooth functions at $q$.
\end{lem}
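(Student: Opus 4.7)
The plan is to apply the implicit function theorem to a system of two equations in two unknowns that characterizes the pair $(-E_0(q),-x_0(q))$. By definition and the strict concavity of $\Theta_{\nu,q}$ in its second argument on $\{x<-2q\sqrt{\nu''(q^2)}\}$ granted by Lemma \ref{lem:ddE}, together with Lemma \ref{lem:x0} which guarantees that $-x_0(q)$ lies in this region, the pair $(-E_0(q),-x_0(q))$ is the unique solution near $(-E_0(q),-x_0(q))$ to the system
\begin{equation*}
\Theta_{\nu,q}(E,x)=0,\qquad \partial_x\Theta_{\nu,q}(E,x)=0.
\end{equation*}
Indeed, the second equation identifies $-x_0(q)$ as the maximizer (by strict concavity in $x$), and the first then matches the defining condition $\sup_x\Theta_{\nu,q}(-E_0(q),x)=0$.

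Next I would observe that $(E,x,q)\mapsto\Theta_{\nu,q}(E,x)$ is jointly smooth in a neighborhood of $(-E_0(q),-x_0(q),q)$. From the explicit formula \eqref{eq:Theta}, the only possible obstruction to smoothness is the term $\Omega(x/(q\sqrt{\nu''(q^2)}))$, since $\Omega$ is smooth off $\{|\cdot|=2\}$; but by Lemma \ref{lem:x0} we have $-x_0(q)/(q\sqrt{\nu''(q^2)})<-2$, so the argument of $\Omega$ stays strictly less than $-2$ for $(E,x,q')$ close to $(-E_0(q),-x_0(q),q)$, ensuring smoothness.

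The key step is to verify that the Jacobian of the system with respect to $(E,x)$ at the solution,
\begin{equation*}
J=\begin{pmatrix}\partial_E\Theta_{\nu,q} & \partial_x\Theta_{\nu,q}\\ \partial^2_{E,x}\Theta_{\nu,q} & \partial^2_x\Theta_{\nu,q}\end{pmatrix},
\end{equation*}
is nonsingular. Since $\partial_x\Theta_{\nu,q}=0$ at $(-E_0(q),-x_0(q))$, we get $\det J=\partial_E\Theta_{\nu,q}\cdot\partial^2_x\Theta_{\nu,q}$. Lemma \ref{lem:ddE} supplies both ingredients: strict concavity in the admissible region gives $\partial^2_x\Theta_{\nu,q}<0$, while the second assertion of that lemma (applied at a point where $\Theta_{\nu,q}=0$ and $x<-2q\sqrt{\nu''(q^2)}$) yields $\partial_E\Theta_{\nu,q}\neq 0$. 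Hence $\det J\neq 0$, and the implicit function theorem produces smooth functions $q'\mapsto E_0(q')$ and $q'\mapsto x_0(q')$ in a neighborhood of $q$. The main (mild) obstacle is only bookkeeping: verifying that the solution provided by the implicit function theorem indeed coincides with the global minimizer $-E_0(q')$ and the global maximizer $-x_0(q')$ for all $q'$ close to $q$; this follows from the uniqueness in Lemma \ref{lem:x0} applied to $\nu_{q'}$ (pure-like being an open condition on the coefficients) together with continuity.
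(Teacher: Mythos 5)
Your proof is correct and follows essentially the same route as the paper: both apply the implicit function theorem to the system $\Theta_{\nu,q}(E,x)=0$, $\partial_x\Theta_{\nu,q}(E,x)=0$, verify smoothness of $\Theta$ near the solution via Lemma \ref{lem:x0} (which places $-x_0(q)/(q\sqrt{\nu''(q^2)})$ strictly below $-2$), and obtain invertibility of the same Jacobian from $\partial_x\Theta=0$ together with the strict concavity and the nonvanishing of $\partial_E\Theta$ supplied by Lemma \ref{lem:ddE}, identifying the implicit solution with $(-E_0(q),-x_0(q))$ by the uniqueness from Lemmas \ref{lem:ddE} and \ref{lem:x0}.
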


The operator $\|\nu\|=\sum_{p=2}^{\infty}\gamma_{p}p^{4}$
defines a norm on the space of (possibly infinite) polynomials $\nu(x)=\sum_{p=2}^{\infty}\gamma_{p}x^{p}$
such that $\|\nu\|<\infty$. We will denote $\nd\to\nz$ whenever
$\|\nd-\nz\|\to0$, as $\delta\to0$.
\begin{rem}
	\label{rem:norm}If $\|\bar{\nu}-\nu\|<\delta$, then $\sup_{r\in[-1,1]}|\frac{d^{i}}{dr^{i}}\bar{\nu}(r)-\frac{d^{i}}{dr^{i}}\nu(r)|<\delta$
	for $i=0,1,...,4$.
\end{rem}

\corO{We will  need the following stability results with respect to the norm $\|\cdot\|$.}
\begin{lem}
\label{lem:Excts}If $\nd\to\nz$ for some pure-like or pure mixture
$\nz$, then $E_{0}(\nd)\to E_{0}(\nz)$ and $x_{0}(\nd)\to x_{0}(\nz)$,
as $\delta\to0$.
\end{lem}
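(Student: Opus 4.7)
The key observation is that $\Theta_{\nu,1}(u,x)$ defined by \eqref{eq:Theta} depends on $\nu$ only through the three numbers $\nu(1),\nu'(1),\nu''(1)$, each continuous in $\nu$ under $\|\cdot\|$ by Remark~\ref{rem:norm}. Hence $\Theta_{\nu^{(\delta)},1}(u,x)\to \Theta_{\nu^{(0)},1}(u,x)$ as $\delta\to 0$, uniformly on compact subsets of $\mathbb{R}^2$. The plan is a standard semi-continuity argument once the tails are controlled uniformly.

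First I would establish a uniform tail bound. Inspecting \eqref{eq:Theta}, the quadratic form $\frac{1}{2}(u,x)\Sigma_{1}^{-1}(u,x)^T$ is positive definite with smallest eigenvalue bounded below uniformly for $\nu$ in a small $\|\cdot\|$-neighborhood of $\nu^{(0)}$ (using Lemma~\ref{lem:invertibility} together with continuity of $\Sigma_{1}$ in $\nu$), while $\Omega(y)\sim y^{2}/4$ at infinity. A direct comparison shows that the quadratic form strictly dominates the $\Omega$ contribution, so there exist a compact $K\subset\mathbb{R}^2$ and $\delta_0>0$ with $\Theta_{\nu^{(\delta)},1}(u,x)<-1$ for $(u,x)\notin K$ and all $\delta<\delta_0$. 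Setting $\varphi_\delta(E):=\sup_x \Theta_{\nu^{(\delta)},1}(E,x)$, the supremum is attained in a fixed compact $x$-range, and $\varphi_\delta\to\varphi_0$ uniformly on bounded intervals of $E$.

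For the convergence of $E_0$: Lemma~\ref{lem:ddE} together with the definition of $x_0$ yields $\varphi_0(-E_0(\nu^{(0)}))=0$ and $\frac{d}{dE}\Theta_{\nu^{(0)},1}(-E_0(\nu^{(0)}),-x_0(\nu^{(0)}))\neq 0$, so the crossing of $\varphi_0$ through $0$ at $E=-E_0(\nu^{(0)})$ is transversal; that is, for every $\eta>0$,
\[
\varphi_0(-E_0(\nu^{(0)})+\eta)>0>\varphi_0(-E_0(\nu^{(0)})-\eta).
\]
(For pure $\nu^{(0)}$, the analogous transversal crossing follows directly from the explicit complexity computation of \cite{A-BA-C}.) Uniform convergence preserves these strict inequalities for $\varphi_\delta$ with small $\delta$, forcing $|E_0(\nu^{(\delta)})-E_0(\nu^{(0)})|<\eta$.

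For the convergence of $x_0$: the maximizer $-x_0(\nu^{(\delta)})$ lies in $K$, so any sequence $\delta_k\to 0$ admits a subsequence along which $x_0(\nu^{(\delta_k)})\to x^{*}$ for some $x^{*}$. Uniform convergence of $\Theta_{\nu^{(\delta)},1}$ together with the continuity of $E_0$ just established imply that $-x^{*}$ maximizes $\Theta_{\nu^{(0)},1}(-E_0(\nu^{(0)}),\cdot)$; by the uniqueness in Lemma~\ref{lem:x0}, $x^{*}=x_0(\nu^{(0)})$, so $x_0(\nu^{(\delta)})\to x_0(\nu^{(0)})$. The only nontrivial ingredient is the uniform quadratic tail bound that keeps maximizers in a fixed compact set; the pure case of $\nu^{(0)}$, where Lemma~\ref{lem:ddE} is not stated verbatim, is handled by direct appeal to the classical pure-model formulas.
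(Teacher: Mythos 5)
Your argument is fine, and essentially the paper's own argument, in the case where $\nz$ is a genuine (non-pure) mixture: there $\Sigma_{1}(\nz)$ is invertible, $\Theta_{\nd,1}\to\Theta_{\nz,1}$ locally uniformly, maximizers stay in a compact set, and uniqueness of $E_{0}(\nz)$ and $x_{0}(\nz)$ finishes the proof. (Two small slips there: $\Omega(y)$ grows like $\log|y|$, not like $y^{2}/4$, at infinity -- harmless, since logarithmic growth is dominated a fortiori; and the uniform lower bound you need is on the smallest eigenvalue of $\Sigma_{1}^{-1}(\nd)$, i.e.\ an upper bound on $\lambda_{\max}(\Sigma_{1}(\nd))$, which follows from continuity of the entries alone, not from Lemma \ref{lem:invertibility}.)

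The genuine gap is the case where $\nz=\nu_{p}(x)=x^{p}$ is pure, which is the substantive half of the lemma (it is exactly what is needed for Proposition \ref{prop:CDTNp}). For pure $\nz$ the matrix $\Sigma_{1}(\nz)$ is singular ($\nu''(1)+\nu'(1)-\nu'(1)^{2}=0$ when $\nu(1)=1$), so $\Theta_{\nz,1}(u,x)$ is not given by the formula \eqref{eq:Theta}, and your opening claim of locally uniform convergence of $\Theta_{\nd,1}$ to $\Theta_{\nz,1}$ on compact subsets of $\mathbb{R}^{2}$ is false: writing $\alpha_{\nd}^{2}=\nd''(1)+\nd'(1)-\nd'(1)^{2}\to0$, the term $-(x+\nd'(1)E)^{2}/(2\alpha_{\nd}^{2})$ sends $\Theta_{\nd,1}(u,x)\to-\infty$ off the line $x=\nu_p'(1)u$, so there is no finite two-variable limit function. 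Consequently your compactness-plus-uniform-convergence scheme for $E_{0}$, and in particular your subsequence argument for $x_{0}$ ("any subsequential limit maximizes $\Theta_{\nz,1}(-E_{0}(\nz),\cdot)$, hence equals $x_{0}(\nz)$ by Lemma \ref{lem:x0}"), has nothing to converge to, and Lemmas \ref{lem:x0} and \ref{lem:ddE} are not available for pure $\nz$. The parenthetical appeal to "classical pure-model formulas" concerns the limiting model $\nu_{p}$ itself and does not supply what is actually needed, namely quantitative control of the \emph{near-pure} models $\nd$: one must show that the $x$-maximizer of $\Theta_{\nd,1}(-E,\cdot)$ lies within $O(\alpha_{\nd})$ of $-\nd'(1)E$ and that $\sup_{x}\Theta_{\nd,1}(-E,x)$ differs from the on-line value $\Theta_{\nd,1}(-E,-\nd'(1)E)$ by $O(\alpha_{\nd}^{2})$ (this is the content of \eqref{eq:0904-02-1} and \eqref{eq:0210-01} in the paper). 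These estimates are what give $x_{0}(\nd)\to x_{0}(\nz)=\nu_p'(1)E_{0}(p)$ and reduce the $E_{0}$-convergence to a one-variable statement; without them, your proof does not cover the pure limit.
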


In the next lemma  we implicitly assume that $q$ and $q_{i}$ are
positive numbers for which $\Psi_{\nu,q,q}$ and $\Psi_{\nu,q_{1},q_{2}}$
are well-defined.\footnote{That is, $q$ and $q_{i}$ are such that expressions like $\nu(q^{2})$
or its derivatives, which appear in the definitions of $\Psi_{\nu,q,q}$
and $\Psi_{\nu,q_{1},q_{2}}$, are finite. From our assumption that
$\varlimsup p^{-1}\log\gamma_{p}<0$, the range that $q$ and $q_{i}$
are allowed to be in contains an open interval containing $(0,1]$.}
\begin{lem}
\label{lem:PhiCt} \corO{Assume $\nu$ is non-pure. Then the following hold.}
\begin{enumerate}
\item \label{enu:PsiCt1}$\Psi_{\nu,q_{1},q_{2}}\left(r,u_{1},u_{2},x_{1},x_{2}\right)$
and its first and second derivatives in $r$ are continuous functions
of $q_{1},\,q_{2}$, $r\in(-1,1)$, $u_{i}\in\mathbb{R}$, $x_{i}\in\mathbb{R}$
and $\nu$ (w.r.t. the norm $\|\cdot\|)$. 
\item \label{enu:PsiCt2} \corO{If $\nu$ satisfies \CDTN\  then,} for any $\tau>0$,
for small enough $\delta=\delta(\tau)>0$ and any $\epsilon>0$, the following holds.
 If for some mixture $\bar{\nu}$, $\|\bar{\nu}-\nu\|$,
$|1-q_{i}|$, $|u_{i}+E_{0}(1)|$ and $|x_{i}+x_{0}(1)|$ are all
smaller than $\delta$, then
\[
\sup_{\epsilon\leq|r|\leq1-\tau}\Psi_{\bar{\nu},q_{1},q_{2}}\left(r,u_{1},u_{2},x_{1},x_{2}\right)<\Psi_{\bar{\nu},q_{1},q_{2}}\left(0,u_{1},u_{2},x_{1},x_{2}\right).
\]
\item \label{enu:PsiCt4}With $E=-E_{0}(\nu)$ and $x=-x_{0}(\nu)$,
\[ \limsup_{\left(\bar{\nu},r,q,u_{1},u_{2},x_{1},x_{2}\right)\to\left(\nu,1,1,E,E,x,x\right)}\Psi_{\bar{\nu},q,q}\left(r,u_{1},u_{2},x_{1},x_{2}\right)
  \leq\limsup_{r\to1}\Psi_{\nu,1,1}\left(r,E,E,x,x\right)=\Psi_{\nu}^{0}(1).
\]
The same also holds with the $r\to1$ limit replaced by $r\to-1$
and $\Psi_{\nu}^{0}(1)$ by $\Psi_{\nu}^{0}(-1)$.
\end{enumerate}
\end{lem}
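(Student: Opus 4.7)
The proof treats the three parts in turn, all based on the explicit formula \eqref{eq:Psi} and the structural information in Lemmas \ref{lem:invertibility} and \ref{lem:dens}.

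For part \ref{enu:PsiCt1}, I would verify joint continuity factor by factor in \eqref{eq:Psi}. By Remark \ref{rem:norm}, the maps $\bar\nu \mapsto \bar\nu^{(k)}(q^2)$ and $\bar\nu \mapsto \bar\nu^{(k)}(q_1 q_2 r)$ for $k=0,1,2$ are continuous in $\|\cdot\|$ and smooth in $(r,q_1,q_2)$. The argument of the first $\log$ in \eqref{eq:Psi} is strictly positive for $r\in(-1,1)$, since $\nu'(q_1^2)\nu'(q_2^2)-\nu'(q_1q_2r)^2$ is, up to a positive factor, a $2\times 2$ principal sub-determinant of the (strictly positive definite) gradient covariance of Lemma \ref{lem:dens}. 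By Lemma \ref{lem:invertibility}, $\Sigma_{U,X}(r,q_1,q_2)$ is strictly positive definite for $r\in(-1,1)$, so $\Sigma_{U,X}^{-1}$ depends smoothly on its arguments, and $\Omega$ is $C^2$ by \eqref{eq:Omega}. Together these give joint continuity of $\Psi_{\bar\nu,q_1,q_2}$ and its first two $r$-derivatives on the full parameter range.

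For part \ref{enu:PsiCt2}, the plan is to combine \CDTN\ with the uniform continuity from part \ref{enu:PsiCt1}. \CDTN\ gives that $\Psi_\nu^0$ attains its maximum on $[-1,1]$ uniquely at $r=0$, so for any $\epsilon,\tau>0$ the gap
\[
c:=\Psi_\nu^0(0)-\sup_{\epsilon\le|r|\le 1-\tau}\Psi_\nu^0(r)
\]
is strictly positive. On the compact set $\{|r|\le 1-\tau\}$ times a bounded neighborhood of the nominal parameters, part \ref{enu:PsiCt1} gives uniform continuity of $\Psi$. Choosing $\delta$ small enough that the oscillation of $\Psi$ over any $\delta$-ball is at most $c/4$, the strict inequality $\Psi_{\bar\nu,q_1,q_2}(0,u_1,u_2,x_1,x_2)-\Psi_{\bar\nu,q_1,q_2}(r,u_1,u_2,x_1,x_2)\ge c/2$ is preserved uniformly over $\epsilon\le |r|\le 1-\tau$, yielding the claim.

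Part \ref{enu:PsiCt4} is the delicate one, since $(r,q_1,q_2)=(1,1,1)$ lies on the boundary where $\Sigma_{U,X}$ degenerates. The plan is to work in the orthogonal coordinates $u_\pm=(u_1\pm u_2)/\sqrt 2$, $x_\pm=(x_1\pm x_2)/\sqrt 2$. Using \eqref{eq:SigmaUXbar}, a direct Taylor expansion in $(1-r)$ and $(q_i-1)$ shows that under $q_1=q_2=q$ and $r\to 1$, $\Sigma_{U,X}$ decouples to leading order into a ``$+$'' block converging to a positive definite $2\times 2$ matrix and a ``$-$'' block vanishing linearly in $(1-r)$. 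Correspondingly, the quadratic form $-\tfrac12(u_1,u_2,x_1,x_2)\Sigma_{U,X}^{-1}(u_1,u_2,x_1,x_2)^T$ splits into a ``$+$'' part converging to a finite value and a nonpositive ``$-$'' part, so the full quadratic form is bounded above by the ``$+$'' contribution. Moreover, the factor $(1-r^2)$ in the first $\log$ of \eqref{eq:Psi} cancels the linear vanishing of $\nu'(q^2)^2-\nu'(q^2r)^2$, yielding a continuous finite limit, and the $\Omega$ terms are continuous. Hence along any sequence $(\bar\nu^{(n)},r^{(n)},q^{(n)},u_i^{(n)},x_i^{(n)})\to(\nu,1,1,E,E,x,x)$,
\[
\limsup_n \Psi_{\bar\nu^{(n)},q^{(n)},q^{(n)}}(r^{(n)},u_1^{(n)},u_2^{(n)},x_1^{(n)},x_2^{(n)})\le \lim_{r\to 1}\Psi_{\nu,1,1}(r,E,E,x,x)=\Psi_\nu^0(1).
\]
For the $r\to -1$ case with $\nu$ not even, $\nu'(-q^2)^2\ne\nu'(q^2)^2$ so $\nu'(q^2)^2-\nu'(q^2r)^2$ stays bounded away from $0$ while $(1-r^2)\to 0$; the first $\log$ then diverges to $-\infty$, the rest stays bounded, and $\Psi\to -\infty=\Psi_\nu^0(-1)$. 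The main obstacle is the detailed verification of the block decomposition and matching vanishing rates via the explicit formula \eqref{eq:SigmaUXbar}, but this reduces to an elementary (if tedious) Taylor expansion once the coordinate change is in place.
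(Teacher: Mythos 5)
Your Part \ref{enu:PsiCt1} and your treatment of the $r\to 1$ limit in Part \ref{enu:PsiCt4} (drop the nonpositive antisymmetric part of the quadratic form and use continuity of the symmetric block at $(r,q,\bar{\nu})=(1,1,\nu)$) are essentially the paper's arguments. The first genuine gap is in Part \ref{enu:PsiCt2}: you choose $\delta$ from the gap $c=\Psi_{\nu}^{0}(0)-\sup_{\epsilon\le|r|\le 1-\tau}\Psi_{\nu}^{0}(r)$, which depends on $\epsilon$ and tends to $0$ as $\epsilon\to 0$ (by continuity of $\Psi_{\nu}^{0}$ at $0$), so your $\delta$ degenerates with $\epsilon$. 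The lemma requires a single $\delta=\delta(\tau)$ valid for \emph{every} $\epsilon>0$, and this quantifier order is exactly what is used in the proof of Lemma \ref{lem:conccentration}, where $\delta$ is fixed before $\epsilon$. Uniform continuity cannot control the region of small $|r|$; there the paper uses the hypothesis $\frac{d^{2}}{dr^{2}}\Psi_{\nu}^{0}(0)<0$ from \CDTN, the fact that every entry of $\Sigma_{U,X}(r,q_{1},q_{2})$ has vanishing $r$-derivative at $r=0$ (so $\frac{d}{dr}\Psi_{\bar{\nu},q_{1},q_{2}}(0,u_{1},u_{2},x_{1},x_{2})=0$ for all parameters), and the continuity of the second $r$-derivative from Part \ref{enu:PsiCt1} --- which you prove but never use --- to get $\frac{d^{2}}{dr^{2}}\Psi_{\bar{\nu},q_{1},q_{2}}<0$ uniformly on $\{|r|\le\rho\}$ for small $\rho$ and $\delta$. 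This yields $\Psi(r)<\Psi(0)$ for all $0<|r|\le\rho$ independently of $\epsilon$, and your gap-plus-uniform-continuity argument then handles $\rho\le|r|\le 1-\tau$ with the $\epsilon$-independent gap $\eta(\rho)$.

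The second gap is your $r\to-1$ case in Part \ref{enu:PsiCt4}. The implication ``$\nu$ not even $\Rightarrow\nu'(-q^{2})^{2}\neq\nu'(q^{2})^{2}$'' is false when $\nu$ is an odd mixture (only odd powers $p$), since then $\nu'$ is an even function; in that case the logarithmic term remains finite as $r\to-1$, yet $\Psi_{\nu}^{0}(-1)=-\infty$, the divergence coming instead from the quadratic form: at $r=-1$ the values at antipodal points are perfectly anticorrelated, the covariance degenerates in the symmetric directions, and the penalty at $(E,E,x,x)$ with $E\neq 0$ blows up. One must establish this divergence uniformly for nearby, not necessarily odd, mixtures $\bar{\nu}$; the paper treats the even and odd cases by the same symmetric/antisymmetric eigenvalue-continuity argument as for $r\to1$, and reserves the divergent-logarithm argument for $\nu$ neither even nor odd. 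You also leave the even case at $r\to-1$ unaddressed, though that one is symmetric to $r\to 1$.
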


The rest of the subsection is devoted to proofs.
\begin{proof}[Proof of Lemma \ref{lem:Excts}]\corO{We begin with an auxiliary computation.
 Let \corO{$\Sigma=\Sigma_1$, see}
 (\ref{eq:Sigma1}), and denote
its elements by $\Sigma_{ij}$. By Lemma \ref{lem:invertibility},
$\Sigma$ is invertible. Note that, \corO{by a direct computation or}
from  the standard formula
for conditional Gaussian distribution \cite[p. 10-11]{RFG},
\begin{equation}
(u,x)\Sigma^{-1}(u,x)^{T}=u^{2}\Sigma_{11}^{-1}+(x-\Sigma_{12}\Sigma_{11}^{-1}u)^{2}(\Sigma_{22}-\Sigma_{12}^{2}\Sigma_{11}^{-1})^{-1}.\label{eq:0503-06-1}
\end{equation}
By the definition (\ref{eq:Theta}) of $\Theta_{\nu,q}$, (\ref{eq:0503-06-1})
and substitution of the values of $\Sigma_{ij}$,
\begin{equation}
\Theta_{\nu,1}\left(-E,x\right)=\frac{1}{2}+\frac{1}{2}\log\left(\frac{\nu''(1)}{\nu'(1)}\right)-\frac{E^{2}}{2}-\frac{(x+\nu'(1)E)^{2}}{2(\nu''(1)+\nu'(1)-\nu'(1)^{2})}+\Omega\left(\frac{x}{\sqrt{\nu''(1)}}\right).\label{eq:0503-07-1}
\end{equation}}

\corO{Turning to the proof of the lemma, assume first} that the limiting polynomial $\nz$ is not a pure mixture.
Since $\Omega(x)$ \corO{from}  (\ref{eq:Omega}) is a Lipschitz \corO{function} and $\Sigma_{1}$
is positive definite, for small $\delta$, $E_{0}(\nd)$ and $x_{0}(\nd)$
belong to some compact set $[-T,T]$, and the same for $\nz$. On
$[-T,T]^{2}$, $\Theta_{\nd,1}(E,x)$ converges uniformly to $\Theta_{\nz,1}(E,x)$,
as $\delta\to0$. Since $-E_{0}(\nz)<-E_{\infty}(\nz)$ and $-x_{0}(\nz)$
are unique\footnote{\label{fn:EUniq}\corOU{Uniqueness of $E_0(\nz)$ follows from \cite[Proposition 1, Theorem 1.4]{ABA2};  Uniqueness of $x_0$ follows
from  Lemma \ref{lem:x0} below.}},
this proves the lemma in the current case.

Next, assume that \corO{$\nz(x)=\nu_p(x)=x^{p}$} is pure.
We may and will assume that any of the $\nd$
is not a pure mixture. 
Setting $\alpha_{\nu}^{2}=\nu''(1)+\nu'(1)-\nu'(1)^{2}$
and denoting by $C>0$ the Lipschitz constant of $\Omega(x)$, for
any $\nu$, {we obtain from \eqref{eq:0503-07-1} that}
\begin{equation}
\Theta_{\nu}\left(-E,x-\nu'(1)E\right)-\Theta_{\nu}\left(-E,-\nu'(1)E\right)\leq-\frac{x^{2}}{2\alpha_{\nu}^{2}}+\frac{C|x|}{\sqrt{\nu''\left(1\right)}},\label{eq:0904-02-1}
\end{equation}
and the left-hand side of (\ref{eq:0904-02-1}) is negative whenever
$|x|>2C\alpha_{\nu}/\sqrt{\nu''\left(1\right)}.$ Moreover,
\begin{equation}
\max_{x\in\mathbb{R}}\Theta_{\nu}\left(-E,x-\nu'(1)E\right)-\Theta_{\nu}\left(-E,-\nu'(1)E\right)\leq C^{2}\alpha_{\nu}^{2}/\nu''\left(1\right),\label{eq:0210-01}
\end{equation}
with the maximum above being obtained with some $|x|\leq2C\alpha_{\nu}^{2}/\sqrt{\nu''\left(1\right)}$.
Since $\alpha_{\nd}^{2}\to0$, this proves that $x_{0}(\nd)\to x_{0}(\nz)$. 

Since $\Theta_{\nd}\left(-E,-(\nd)'(1)E\right)$ converges to $\Theta_{p}\left(-E\right)$
uniformly on compacts, the convergence $E_{0}(\nd)\to E_{0}(\nz)$
can be deduced by first restricting to a compact range $E\in[-T,T]$,
as we did for the mixed case.
\end{proof}
\begin{proof}[Proof of Lemma \ref{lem:x0}]

By remark \ref{rem:scaling} it is enough to prove the lemma assuming
that $q=1$ and $\nu(1)=1$, which we will.
Since $\Omega$ \corO{from  (\ref{eq:Omega}) is a symmetric function satisfying}
 ${d}\Omega(x)/dx=0$ if and only if $x=0$, \corO{we deduce  from \eqref{eq:0503-07-1} that}
\[
\sup_{x<-\nu'(1)E_{0}}\Theta_{\nu,1}\left(-E_{0},x\right)>\sup_{x\geq-\nu'(1)E_{0}}\Theta_{\nu,1}\left(-E_{0},x\right).
\]
For any $-2\sqrt{\nu''(1)}<x<-\nu'(1)E_{0}$, since $\Omega(x)=x^{2}/4-1/2$
for $|x|\leq2$,
\begin{align*}
\frac{d}{dx}\Theta_{\nu,1}\left(-E_{0},x\right) & =x\left(\frac{1}{2\nu''(1)}-\frac{1}{\nu''(1)+\nu'(1)-\nu'(1)^{2}}\right)-\frac{\nu'(1)E_{0}}{\nu''(1)+\nu'(1)-\nu'(1)^{2}}\\
 & <-\frac{1}{\sqrt{\nu''(1)}}+\frac{2\sqrt{\nu''(1)}}{\nu''(1)+\nu'(1)-\nu'(1)^{2}}-\frac{\nu'(1)E_{0}}{\nu''(1)+\nu'(1)-\nu'(1)^{2}}\leq0,
\end{align*}
where the first inequality follows since the expression in the parentheses
is negative \corO{as can be checked using that $\nu(1)=1$}, and the second inequality follows by calculus since for
pure-like or critical models $E_{0}\geq E_{\infty}$ from (\ref{eq:Gpl}). 
Therefore, 
\[
\sup_{x\leq-2\sqrt{\nu''(1)}}\Theta_{\nu,1}\left(-E_{0},x\right)\geq\sup_{x>-2\sqrt{\nu''(1)}}\Theta_{\nu,1}\left(-E_{0},x\right),
\]
with strict inequality if the supremum of the left-hand side is obtained
at some $x<-2\sqrt{\nu''(1)}$.
\corO{We conclude that}
\begin{align}
\sup_{x\leq-2\sqrt{\nu''(1)}}\Theta_{\nu,1}\left(-E_{0},x\right) & =\sup_{x\leq-\sqrt{2}}\Theta_{\nu,1}\left(-E_{0},x\sqrt{2\nu''(1)}\right)\label{eq:1108-1-1-1}\\
 & =\sup_{x\leq-\sqrt{2}}\frac{1}{2}\log\left(\frac{\nu''(1)}{\nu'(1)}\right)-\frac{u^{2}}{2}+\frac{x^{2}}{2}-\frac{(\sqrt{2\nu''(1)}x+\nu'(1)E_{0})^{2}}{2(\nu''(1)+\nu'(1)-\nu'(1)^{2})}-I_{1}\left(|x|\right),\nonumber 
\end{align}
where 
\[
I_{1}\left(x\right)=\frac{1}{2}\left(x\sqrt{x^{2}-2}+\log2-2\log\left(x+\sqrt{x^{2}-2})\right)\right),\quad \corO{x\geq 2}
\]
is the rate function for the top eigenvalue of a GOE matrix (with
a different normalization than we use), see \cite[Eq. (2.9)]{ABA2}.
In the proof of \cite[Theorem 1.1]{ABA2} it is shown that replacing
$-E_{0}$ with some $u<-E_{\infty}$ in (\ref{eq:1108-1-1-1}), the
supremum in (\ref{eq:1108-1-1-1}) is obtained at a unique point $x_{*}<-\sqrt{2}$.
This completes the proof since, for pure-like models, $-E_{0}<-E_{\infty}$
from (\ref{eq:Gpl}).
\end{proof}

\begin{proof}[Proof of Lemma \ref{lem:ddE}]
On $D:=\{(E,x):\,x<-2q\sqrt{\nu''(q^{2})}\}$, the term involving
$\Omega$ in the definition of $\Theta_{\nu,q}\left(E,x\right)$ is
strictly concave. Since $\Sigma_{q}$ is positive-definite (see Lemma
\ref{lem:invertibility}), $(E,x)\mapsto\Theta_{\nu,q}(E,x)$  is
strictly concave \corO{on $D$}  as well. 

\corO{To see the second part of the lemma, assume} towards contradiction that
\corO{$\Theta_{\nu,q}(E,x)=0$ and $\frac{d}{dE}\Theta_{\nu,q}(E,x)=0$ for some $(E,x)\in D$.}
\corO{Then, by the concavity of $\Theta_{\nu,q}$ on $D$, it follows that}
the maximum of $\Theta_{\nu,q}\left(E,x\right)$ over
$D$ is equal to $0$. Observe that in the proof of Lemma \ref{lem:x0}
the only information used on $E_{0}(q)$ was that $E_{0}(q)\geq E_{\infty}(q)$.
Therefore, following the same proof we have that for any $E\in(-E_{0}(q),-E_{\infty}(q))$,
there exists a unique $x$ such that 
\begin{equation}
\Theta_{\nu,q}(E,x)=\max_{x\in\mathbb{R}}\Theta_{\nu,q}(E,x),\label{eq:2707-01}
\end{equation}
and for that $x$, $(E,x)\in D$. Also, since $\nu_{q}$ is pure-like,
by (\ref{eq:Gpl}) and Remark \ref{rem:scaling}, the right-hand side
of (\ref{eq:2707-01}) with $E=-E_{\infty}(q)$ is strictly positive.
By continuity, we also have some $E\in(-E_{0}(q),-E_{\infty}(q))$
for which the right-hand side of (\ref{eq:2707-01}) is strictly positive,
which implies that the maximum of $\Theta_{\nu,q}\left(E,x\right)$
over $D$ is strictly positive.  Since we arrived at a contradiction,
the proof is completed.
\end{proof}

\begin{proof}[Proof of Lemma \ref{lem:Exsmooth}.]
The proof is an application of the implicit function
theorem. Let $\bar{q}$ be a positive number such that $\nu_{\bar{q}}$
is pure-like. Define the function
\[
F(q,(E,x))=\left(\Theta_{\nu,q}(E,x),\,\frac{d}{dx}\Theta_{\nu,q}(E,x)\right).
\]
By Remark \ref{rem:scaling}, $-x_{0}(\bar{q})$ is the maximum point
of $x\mapsto\Theta_{\nu,\bar{q}}(-E_{0}(\bar{q}),x)$, and thus $F(\bar{q},g(\bar{q}))=0$.
By Lemma \ref{lem:x0}, $-x_{0}(\bar{q})\neq-2\bar{q}\sqrt{\nu''(\bar{q})}$.
From the definition (\ref{eq:Theta}) of $\Theta_{\nu,\bar{q}}(E,x)$
one can verify that $F$ is a smooth function of $q$, $E$ and $x$
on a neighborhood of $(\bar{q},-E_{0}(\bar{q}),-x_{0}(\bar{q}))$.
Therefore, by the implicit function theorem, if we show that the Jacobian
\[
\left(\begin{array}{cc}
\frac{d}{dE}\Theta_{\nu,q}(E,x) & \frac{d}{dx}\Theta_{\nu,q}(E,x)\\
\frac{d}{dx}\frac{d}{dE}\Theta_{\nu,q}(E,x) & \frac{d^{2}}{dx^{2}}\Theta_{\nu,q}(E,x)
\end{array}\right)
\]
is invertible at $(E,x)=(-E_{0}(\bar{q}),-x_{0}(\bar{q}))$, then
there exists a smooth function $g(q)=(E(q),x(q))$ on the neighborhood
of $\bar{q}$, such that $F(q,(E(q),x(q)))=0$. Hence, $(E(q),x(q))=(-E_{0}(q),-x_{0}(q))$
(since by Lemmas \ref{lem:ddE} and \ref{lem:x0} there is a
unique $-E<0$ such that $\sup_{x}\Theta_{\nu,q}(-E,x)=0$).

What \corO{remains} is to prove the invertibility of the matrix above. Since
$\frac{d}{dx}\Theta_{\nu,q}(-E_{0}(q),-x_{0}(q))=0$, the proof is
completed by \corO{invoking}  Lemma \ref{lem:ddE}. 
\end{proof}

\begin{proof}[Proof of Lemma \ref{lem:PhiCt}]
Using Remark \ref{rem:norm}, Point \ref{enu:PsiCt1} follows directly
from the formula (\ref{eq:Psi}) for $\Psi_{\nu,q_{1},q_{2}}$ and
the definitions (\ref{eq:Omega}) and (\ref{eq:SigmaUXbar}) of $\Omega(x)$
and $\Sigma_{U,X}\left(r,q_{1},q_{2}\right)$. 

\corO{Turning to the proof of Point  \ref{enu:PsiCt2}, for any $\rho>0$, define} 
\[
\eta(\rho)=\Psi_{\nu}^{0}(0)-\sup_{|r|\in[\rho,1-\tau]}\Psi_{\nu}^{0}(r)  \corE{>0},
\]
\corO{where the inequality is due to \CDTN.} From continuity, for small enough $\rho$
and $\delta=\delta(\tau,\rho)$, uniformly in $q_{i}$, $u_{i}$,
$x_{i}$ and $\bar{\nu}$ as in Point \ref{enu:PsiCt2},
\begin{align*}
\sup_{|r|\leq\rho}\frac{d^{2}}{dr^{2}}\Psi_{\bar{\nu},q_{1},q_{2}}\left(r,u_{1},u_{2},x_{1},x_{2}\right) & <0,\\
\sup_{|r|\leq1-\tau}\left|\Psi_{\bar{\nu},q_{1},q_{2}}\left(r,u_{1},u_{2},x_{1},x_{2}\right)-\Psi_{\nu}^{0}(r)\right| & <\frac{\eta(\rho)}{2}.
\end{align*}
Since the derivative in $r$ at $r=0$ of any of the entries of $\Sigma_{U,X}\left(r,q_{1},q_{2}\right)$
(see (\ref{eq:SigmaUXbar})) is $0$, 
\[
\corO{\left.\frac{d}{dr}\Psi_{\bar{\nu},q_{1},q_{2}}\left(r,u_{1},u_{2},x_{1},x_{2}\right)\right|_{r=0}}=0.
\]
The above imply Point \ref{enu:PsiCt2}.

\corO{We next prove  Point \ref{enu:PsiCt4} for the $r\to1$ limit}. Since
$\Omega(x)$ is continuous, it is enough to prove that 
\begin{equation}
\begin{aligned} & \liminf_{\left(\bar{\nu},r,q,u_{1},u_{2},x_{1},x_{2}\right)\to\left(\nu,1,1,-E_{0},-E_{0},-x_{0},-x_{0}\right)}\left(u_{1},u_{2},x_{1},x_{2}\right)\Sigma_{U,X,\bar{\nu}}^{-1}\left(r,q,q\right)\left(u_{1},u_{2},x_{1},x_{2}\right)^{T}\\
 & \geq\liminf_{r\to1}\left(r,-E_{0},-E_{0},-x_{0},-x_{0}\right)\Sigma_{U,X,\nu}^{-1}\left(r,1,1\right)\left(r,-E_{0},-E_{0},-x_{0},-x_{0}\right)^{T}
\end{aligned}
\label{eq:1609-01}
\end{equation}
(where the dependence of $\Sigma_{U,X}$ in $\bar{\nu}$ is expressed
by its addition to the subscript). 

\corE{In Appendix \ref{sec:Covariances} we define, see (\ref{eq:SigmaUXbar}), (\ref{eq:26}), (\ref{eq:SigmaX}),
	(\ref{eq:Sigmab}),}
\begin{equation}
\Sigma_{U,X}\left(r,q,q\right)=\left(\begin{array}{cc}
\Sigma_{U}\left(r,q,q\right) & \Sigma_{b}\left(r,q,q\right)\\
\Sigma_{b}^{T}\left(r,q,q\right) & \Sigma_{X}\left(r,q,q\right)
\end{array}\right).\label{eq:1003-01}
\end{equation}
By setting 
\[
\Sigma_{U,X}\left(1,q,q\right)=\left(\begin{array}{cc}
z_{1}(q)\mathbf{1}_{2\times2} & z_{2}(q)\mathbf{1}_{2\times2}\\
z_{2}(q)\mathbf{1}_{2\times2} & z_{3}(q)\mathbf{1}_{2\times2}
\end{array}\right),
\]
where $\mathbf{1}_{2\times2}$ is the $2\times2$ matrix whose all
entries are $1$ and 
\begin{align*}
z_{1}(q) & =\bar{\nu}(q^{2})-q^{2}\frac{\bar{\nu}'(q^{2})^{2}}{\bar{\nu}'(q^{2})+3q^{2}\bar{\nu}''(q^{2})},\\
z_{2}(q) & =q\bar{\nu}'\left(q^{2}\right)-q\bar{\nu}'\left(q^{2}\right)\frac{q^{2}\bar{\nu}''\left(q^{2}\right)+\bar{\nu}'\left(q^{2}\right)}{\bar{\nu}'(q^{2})+3q^{2}\bar{\nu}''(q^{2})},\\
z_{3}(q) & =q^{2}\bar{\nu}''\left(q^{2}\right)+\bar{\nu}'\left(q^{2}\right)-\frac{\left(q^{2}\bar{\nu}''\left(q^{2}\right)+\bar{\nu}'\left(q^{2}\right)\right)^{2}}{\bar{\nu}'(q^{2})+3q^{2}\bar{\nu}''(q^{2})},
\end{align*}
we continuously extend the elements of (\ref{eq:1003-01}) for $r=1$. 

For any $r\in(-1,1]$, each of the blocks of (\ref{eq:1003-01}) is
a $2\times2$ matrix of the from
\[
\left(\begin{array}{cc}
a & b\\
b & a
\end{array}\right).
\]
Therefore, the covariance matrix $\Sigma_{U,X}\left(r,q,q\right)$
has two orthogonal unit eigenvectors, say $v_{i}(r,q)$, $i=1,2$,
of the form $(u,u,x,x)$ and two orthogonal unit eigenvectors of the
form $(u,-u,x,-x)$, say $v_{i}(r,q)$, $i=3,4$. By \cite[P. 106-108]{Kato},
from the continuity of the elements of $\Sigma_{U,X}\left(r,q,q\right)$,
we can choose the eigenvectors $v_{i}(r,q)$ and the corresponding
eigenvalues $\lambda_{i}(r,q)$ so that they are continuous in $r$,$q$
and $\bar{\nu}$ at the point $(r,q,\bar{\nu})=(1,1,\nu)$ (though
we do not necessarily have continuity on a neighborhood of this point).

For a general vector $w\in\mathbb{R}^{4}$, 
\[
w^{T}\Sigma_{U,X}^{-1}\left(r,q,q\right)w\geq\sum_{i=1,2}\frac{1}{\lambda_{i}(r,q)}\langle w,v_{i}\left(r,q\right)\rangle^{2},
\]
and for $w_{0}=\left(-E_{0},-E_{0},-x_{0},-x_{0}\right)$ we have
an equality, since it is orthogonal to $v_{i}(r,q)$, $i=3,4$. This
implies (\ref{eq:1609-01}), and therefore Point \ref{enu:PsiCt4}
for the $r\to1$ limit.

Finally, we prove Point \ref{enu:PsiCt4} for the $r\to-1$ limit.
If $\nu(x)$ is an even or an odd function, a similar argument can
be applied using eigen-values and -vectors. Lastly, if $\nu(x)$ is
neither even nor odd, then the logarithmic term in the definition
of $\Psi_{\nu,1,1}\left(r,u_{1},u_{2},x_{1},x_{2}\right)$ goes to
$-\infty$ as $r\to-1$, and so do both the limits in Point \ref{enu:PsiCt4}.
\end{proof}

\subsection{\label{subsec:pf}Proof of Lemma \ref{lem:conccentration}}

Recall that at $r=0$, (\ref{eq:1703-08}) and that $E_{0}(q)$ and
$x_{0}(q)$ are continuous in $q$ by Lemma \ref{lem:Excts}. Thus,
to prove Lemma \ref{lem:conccentration} we need to show that for
small enough $\delta>0$, if $|1-q_{i}|<\delta$ and 
\[
B_{i}\times D_{i}\subset A(\delta)=\left\{ (u,x):\,|u+E_{0}(\nu)|,\,|x+x_{0}(\nu)|<\delta\right\} ,
\]
then, \corO{with $I(\epsilon)=\{x: |x|\in [\epsilon,1)\}$,} 
\begin{equation}
\sup_{r\in I(\epsilon),\,u_{i}\in B_{i},x_{i}\in D_{i}}\Psi_{\nu,q_{1},q_{2}}\left(r,u_{1},u_{2},x_{1},x_{2}\right)<\sup_{u_{i}\in B_{i},x_{i}\in D_{i}}\Psi_{\nu,q_{1},q_{2}}\left(0,u_{1},u_{2},x_{1},x_{2}\right).\label{eq:1909-02}
\end{equation}
We proceed by treating the cases $q_{1}=q_{2}$ and $q_{1}\neq q_{2}$
separately. 

\subsection*{The case $q_{1}=q_{2}=q$ }

From the continuity \corO{stated in}  Point \ref{enu:PsiCt1} of Lemma \ref{lem:PhiCt},
\[
\lim_{(q,u_{i},x_{i})\to(1,-E_{0}(\nu),-x_{0}(\nu))}\Psi_{\nu,q,q}\left(0,u_{1},u_{2},x_{1},x_{2}\right)=\Psi_{\nu}^{0}(0)>\max\{\Psi_{\nu}^{0}(1),\,\Psi_{\nu}^{0}(-1)\}.
\]
Thus, from Point \ref{enu:PsiCt4} of Lemma \ref{lem:PhiCt}, for
small enough $\delta$, it is enough to prove (\ref{eq:1909-02})
with $I(\epsilon)$ replaced by $I(\epsilon,\tau)=[-1+\tau,-\epsilon]\cup[\epsilon,1-\tau]$,
for some small $\tau$. Moreover, from \corO{the} continuity of $\Psi_{\nu,q,q}$
in all its variables,
\begin{equation}
\sup_{r\in I(\epsilon,\tau),\,u_{i}\in B_{i},x_{i}\in D_{i}}\Psi_{\nu,q,q}\left(r,u_{1},u_{2},x_{1},x_{2}\right)=\Psi_{\nu,q,q}\left(r',u_{1}',u_{2}',x_{1}',x_{2}'\right)\label{eq:1909-03}
\end{equation}
for some $r'\in I(\epsilon,\tau)$ and $u_{i}'$ and $x_{i}'$ in
the closure of $B_{i}$ and $D_{i}$, respectively. (\corO{Recall that the latter sets are bounded by assumption.})
Therefore, the \corO{proof of Lemma \ref{lem:conccentration} in the case $q_1=q_2$ follows from}
Point \ref{enu:PsiCt2} of
Lemma \ref{lem:PhiCt}.

\subsection*{The case $q_{1}\protect\neq q_{2}$ }

The main ingredient in the proof of the current case is the following
lemma, the proof of which is deferred to the end of the \corO{subsection}.
\begin{lem}
\label{lem:minTheta}For small enough $\tau,\,\delta>0$, for any
$q_{1}\neq q_{2}$ such that $|1-q_{i}|<\delta$ and any $(u_{i},x_{i})\in A(\delta)$,
\begin{equation}
\sup_{1-\tau\leq|r|<1}\Psi_{\nu,q_{1},q_{2}}\left(r,u_{1},u_{2},x_{1},x_{2}\right)\leq\min_{i=1,2}\Theta_{\nu,q_{i}}(u_{i},x_{i}).\label{eq:ineq1}
\end{equation}
\end{lem}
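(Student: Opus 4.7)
The plan is to analyze the explicit formula (\ref{eq:Psi}) for $\Psi_{\nu,q_1,q_2}$ as $r\to\pm1$, exploiting the non-pureness of $\nu$ together with \CDTN.

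The key observation is that for non-pure $\nu$, the Cauchy-Schwarz inequality applied to the atomic measure $\sum_{p\ge 2} p\gamma_p^2 \delta_p$ yields the strict inequality $\nu'(q_1^2)\nu'(q_2^2) > \nu'(q_1 q_2)^2$ whenever $q_1 \neq q_2$. Combined with a Taylor expansion around $(q_1,q_2,r)=(q,q,1)$, one obtains the uniform lower bound $\nu'(q_1^2)\nu'(q_2^2) - \nu'(q_1 q_2 r)^2 \ge c\bigl[(q_1-q_2)^2 + (1-r^2)\bigr]$ on the parameter set. Consequently, the logarithmic term in (\ref{eq:Psi}) is bounded above by $\tfrac12\log\bigl(C(1-r^2)/[(q_1-q_2)^2+(1-r^2)]\bigr)$, which is nonpositive, diverges to $-\infty$ whenever $(q_1-q_2)^2/(1-r^2)\to\infty$, and stays bounded along the diagonal $q_1=q_2$.

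I would then split into two regimes. In the off-diagonal regime $(q_1-q_2)^2 > (1-r^2)$, the logarithmic term is strongly negative, while the quadratic form and $\Omega$ terms in (\ref{eq:Psi}) remain uniformly bounded by Lemma \ref{lem:invertibility} and a spectral analysis analogous to the one used in Point 4 of Lemma \ref{lem:PhiCt}; hence choosing $\tau$ small enough forces $\Psi \leq \min_i \Theta_{\nu,q_i}(u_i,x_i)$. In the near-diagonal regime $(q_1-q_2)^2 \leq (1-r^2)$, the continuity stated in Point 1 of Lemma \ref{lem:PhiCt} allows a comparison of $\Psi_{\nu,q_1,q_2}(r,u_1,u_2,x_1,x_2)$ with its diagonal counterpart $\Psi_{\nu,q,q}(r,u,u,x,x)$ (for $q$ near $q_i$ and $(u,x)$ near $(u_i,x_i)$) up to $o(1)$ error. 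Point 4 of Lemma \ref{lem:PhiCt} combined with \CDTN\ then gives $\limsup \Psi_{\nu,q,q}(r,u,u,x,x)\le\Psi_\nu^0(\pm 1) < \Psi_\nu^0(0)=0=\Theta_{\nu,1}(-E_0,-x_0)$, which translates to $\Psi \leq \min_i\Theta_{\nu,q_i}(u_i,x_i)$ by the continuity of $\Theta$ provided $\tau,\delta$ are small.

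For $r$ near $-1$ the argument is identical when $\nu$ is even (treating $r=-1$ as a symmetric degeneracy boundary), and even easier when $\nu$ is not even: in that case $|\nu'(-q_1q_2)|<\nu'(q_1 q_2)$ uniformly, so the denominator of the log stays bounded away from zero and $\Psi\to-\infty$ uniformly. The principal obstacle is the uniformity in the near-diagonal regime, where the logarithmic divergence fails; here one must rely on the strict inequality $\Psi_\nu^0(\pm1)<0$ supplied by \CDTN\ and propagate it through a compactness/continuity argument, carefully accounting for the fact that $\min_i\Theta_{\nu,q_i}(u_i,x_i)$ may itself be slightly negative when $u_i<-E_0(\nu)$ in $A(\delta)$.
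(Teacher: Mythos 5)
Your overall split into an ``off-diagonal'' and a ``near-diagonal'' regime does not close the argument at the one place where the lemma is genuinely delicate, namely when $0<|q_1-q_2|\lesssim\sqrt{1-r^2}$ and $r$ is close to $\pm1$. In that regime the logarithmic term in (\ref{eq:Psi}) is only $O(1)$ (your claim that it is ``strongly negative'' already when $(q_1-q_2)^2>(1-r^2)$ is too optimistic: the ratio bound only gives a constant, and bounding the quadratic form and $\Omega$ terms by constants cannot yield $\Psi\leq\min_i\Theta_{\nu,q_i}(u_i,x_i)\approx 0$, since the strict negativity of $\Psi_\nu^0(\pm1)$ comes precisely from the interplay of all three terms). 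Your fallback is a comparison with the diagonal value $\Psi_{\nu,q,q}$ ``up to $o(1)$'' via Point \ref{enu:PsiCt1} of Lemma \ref{lem:PhiCt}, but that point gives continuity only for $r$ in compact subsets of $(-1,1)$; it says nothing uniform as $r\to\pm1$, which is exactly where $\Sigma_{U,X}$ degenerates and $\Sigma_{U,X}^{-1}$ blows up like $(1-r^2)^{-1}$. An entrywise perturbation of size $|q_1-q_2|\sim\sqrt{1-r^2}$ of a matrix whose inverse has norm $\sim(1-r^2)^{-1}$ can shift the quadratic form $-\tfrac12 v\,\Sigma_{U,X}^{-1}v^T$ by an amount that does not vanish, so the $o(1)$ comparison is unjustified. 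Likewise, ``a spectral analysis analogous to Point \ref{enu:PsiCt4}'' is not available off the diagonal: the proof of Point \ref{enu:PsiCt4} hinges on the block-symmetric structure of $\Sigma_{U,X}(r,q,q)$, whose eigenvectors split into the forms $(u,u,x,x)$ and $(u,-u,x,-x)$ and whose eigenpairs can be chosen continuously up to $(r,q)=(1,1)$; this symmetry is destroyed when $q_1\neq q_2$, and controlling the degenerating quadratic form there is precisely the missing analytic input. So as written the proposal has a genuine gap in the intermediate regime $(q_1-q_2)^2\asymp(1-r^2)$.

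It is worth noting that the paper avoids this degeneracy altogether by a different mechanism: a deterministic counting inequality (Lemma \ref{lem:2to1-1}) shows that pairs of critical points at radii $q_1\neq q_2$ with overlap near $\pm1$ are dominated by equal-radius pairs at overlap at least $r_0$ plus twice the number of single critical points. Taking expectations and exponential rates, and using Lemma \ref{lem:2ndLB} (applicable since $D_i\subset(-\infty,-2q_i\sqrt{\nu''(q_i^2)}-\tau)$ by Lemma \ref{lem:x0}) so that the left-hand rate is exactly $\sup\Psi_{\nu,q_1,q_2}$, one gets (\ref{eq:2304-01}); the equal-radius terms near $|r|=1$ are then killed by \CDTN\ through Points \ref{enu:PsiCt1} and \ref{enu:PsiCt4} of Lemma \ref{lem:PhiCt}, and this is how the term $\min_{i}\Theta_{\nu,q_i}(u_i,x_i)$ (which cannot be improved to a uniformly negative constant) appears on the right of (\ref{eq:ineq1}). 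If you want to keep a purely analytic route, you would have to prove a genuine off-diagonal extension of Point \ref{enu:PsiCt4}, i.e.\ uniform upper semicontinuity of $\Psi_{\nu,q_1,q_2}$ as $(q_1,q_2,r)\to(1,1,\pm1)$ jointly, which requires a careful perturbation analysis of the near-kernel of $\Sigma_{U,X}$ rather than a citation of the existing continuity statements.
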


Note that for $q_{1}\neq q_{2}$, as $|r|\to1$ the logarithmic term
in the definition of $\Psi_{\nu,q_{1},q_{2}}$ (\ref{eq:Psi}) goes
to $-\infty$. Since the \corO{quadratic} term involving $\Sigma_{U,X}^{-1}\left(r,q_{1},q_{2}\right)$
in (\ref{eq:Psi}) is nonpositive, we conclude \corO{that, for fixed $q_1\neq q_2$, }
\begin{equation}
\label{cor:q1neqq2:eq}
\lim_{|r|\to1}\Psi_{\nu,q_{1},q_{2}}\left(r,u_{1},u_{2},x_{1},x_{2}\right)=-\infty,
\end{equation}
\corO{and the convergence is uniform in  $u_{1},u_{2},x_{1},x_{2}$ in compact sets (but not in $q_1,q_2$).}
%
%
Combining \corO{\eqref{cor:q1neqq2:eq}} with Point \ref{enu:PsiCt1} of Lemma \ref{lem:PhiCt},
we have that
\[
\sup_{r\in I(\epsilon),\,u_{i}\in B_{i},x_{i}\in D_{i}}\Psi_{\nu,q_{1},q_{2}}\left(r,u_{1},u_{2},x_{1},x_{2}\right)=\Psi_{\nu,q_{1},q_{2}}\left(r',u_{1}',u_{2}',x_{1}',x_{2}'\right)
\]
for some $r\in I(\epsilon)$ and $u_{i}$ and $x_{i}$ in the closure
of $B_{i}$ and $D_{i}$, respectively, which are assumed here to
be bounded sets. Thus, in light of Point \ref{enu:PsiCt2} of Lemma
\ref{lem:PhiCt}, to prove (\ref{eq:1909-02}) it is enough to show
that for small enough $\delta>0$, if $|1-q_{i}|<\delta$ and $(u_{i},x_{i})\in A(\delta)$,
then 
\begin{equation}
\sup_{1-\tau\leq|r|<1}\Psi_{\nu,q_{1},q_{2}}\left(r,u_{1},u_{2},x_{1},x_{2}\right)  <\Psi_{\nu,q_{1},q_{2}}\left(0,u_{1},u_{2},x_{1},x_{2}\right)
\corO{(
  =\Theta_{\nu,q_{1}}\left(u_{1},x_{1}\right)+\Theta_{\nu,q_{2}}\left(u_{2},x_{2}\right)),}
\label{eq:1909-02-1}
\end{equation}
where the equality follows from (\ref{eq:1703-08}) and $\tau$ is
a fixed number which can be assumed to small. In fact, in light of
our assumption that the summands in (\ref{eq:1703-06}) are nonnegative,
it will be enough to prove (\ref{eq:1909-02-1}) only for $q_{i}$
, $u_{i}$ and $x_{i}$ such that $\Theta_{\nu,q_{i}}(u_{i},x_{i})\geq0$.
In this case, if one of $\Theta_{\nu,q_{i}}(u_{i},x_{i})$ is strictly
positive, then (\ref{eq:1909-02-1})  follows from (\ref{eq:ineq1}).
Hence, to complete the proof of Lemma \ref{lem:conccentration} it
remains to prove Lemma \ref{lem:minTheta} and the following one.
\begin{lem}
\label{lem:Th12=00003D0}
For small
enough $\tau,\,\delta>0$, if $|1-q_{i}|<\delta$,
$(u_{i},x_{i})\in A(\delta)$
and
$\Theta_{\nu,q_{1}}(u_{1},x_{1})=\Theta_{\nu,q_{2}}(u_{2},x_{2})=0,$
then
\[
\sup_{1-\tau\leq|r|<1}\Psi_{\nu,q_{1},q_{2}}\left(r,u_{1},u_{2},x_{1},x_{2}\right)<0.
\]
\end{lem}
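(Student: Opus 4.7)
The plan is a contradiction argument combined with compactness, which reduces the claim to an extension to $q_{1}\neq q_{2}$ of Point \ref{enu:PsiCt4} of Lemma \ref{lem:PhiCt}. Suppose the conclusion fails: then for every $n\geq 1$ there exist $\tau_{n},\delta_{n}<1/n$ and parameters $(q_{1}^{n},q_{2}^{n},u_{1}^{n},u_{2}^{n},x_{1}^{n},x_{2}^{n},r^{n})$ satisfying the hypotheses of the lemma with these $\tau=\tau_{n},\delta=\delta_{n}$, such that
\[
\Psi_{n}:=\Psi_{\nu,q_{1}^{n},q_{2}^{n}}(r^{n},u_{1}^{n},u_{2}^{n},x_{1}^{n},x_{2}^{n})\geq 0.
\]
On the other hand, Lemma \ref{lem:minTheta} gives $\Psi_{n}\leq \min_{i}\Theta_{\nu,q_{i}^{n}}(u_{i}^{n},x_{i}^{n})=0$, so $\Psi_{n}=0$ for every $n$. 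Since $\tau_{n},\delta_{n}\to 0$, we have $|r^{n}|\to 1$ and $q_{i}^{n}\to 1$, while Lemmas \ref{lem:x0}, \ref{lem:ddE}, \ref{lem:Excts} and \ref{lem:Exsmooth} force $(u_{i}^{n},x_{i}^{n})\to(-E_{0},-x_{0})$. Passing to a subsequence, $r^{n}\to r^{\star}\in\{-1,1\}$; we treat $r^{\star}=1$, the case $r^{\star}=-1$ being entirely analogous and moreover immediate when $\nu$ is neither even nor odd, since then $\nu'(q_{1}^{2})\nu'(q_{2}^{2})-\nu'(q_{1}q_{2}r)^{2}$ stays bounded away from $0$ as $r\to -1$ and the log term in $\Psi_{n}$ already forces $\Psi_{n}\to -\infty$.

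It therefore suffices to prove the following extension of Point \ref{enu:PsiCt4} of Lemma \ref{lem:PhiCt} to $q_{1}\neq q_{2}$:
\[
\limsup_{\substack{(r,q_{1},q_{2})\to(1,1,1)\\(u_{i},x_{i})\to(-E_{0},-x_{0})}}\Psi_{\nu,q_{1},q_{2}}(r,u_{1},u_{2},x_{1},x_{2})\leq \Psi_{\nu}^{0}(1),
\]
since by \CDTN{} we have $\Psi_{\nu}^{0}(1)<\Psi_{\nu}^{0}(0)=2\Theta_{\nu,1}(-E_{0},-x_{0})=0$ (allowing $\Psi_{\nu}^{0}(1)=-\infty$). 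The quadratic-form term $-\tfrac{1}{2}(u_{1},u_{2},x_{1},x_{2})\Sigma_{U,X}^{-1}(r,q_{1},q_{2})(u_{1},u_{2},x_{1},x_{2})^{T}$ is handled exactly as in the proof of Point \ref{enu:PsiCt4}: using \cite[p.~106-108]{Kato} one selects a continuous-at-$(1,1,1)$ eigen-decomposition of $\Sigma_{U,X}(r,q_{1},q_{2})$, lower bounds the full form by its contribution along the two symmetric eigenvectors (whose eigenvalues remain bounded away from $0$), and uses the fact that $(u_{i}^{n},x_{i}^{n})\to(-E_{0},-x_{0})$ is asymptotically symmetric to make this bound tight in the limit. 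The two $\Omega$ terms contribute their $(1,1)$-limit by continuity.

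The new feature, and main technical obstacle, is the logarithmic term
\[
\tfrac{1}{2}\log\frac{(1-r^{2})q_{1}^{2}q_{2}^{2}\nu''(q_{1}^{2})\nu''(q_{2}^{2})}{\nu'(q_{1}^{2})\nu'(q_{2}^{2})-\nu'(q_{1}q_{2}r)^{2}},
\]
which for $q_{1}\neq q_{2}$ is of $0/0$ type at $(r,q_{1},q_{2})=(1,1,1)$. A second-order Taylor expansion at $(q_{1},q_{2},r)=(1,1,1)$ gives
\[
\nu'(q_{1}^{2})\nu'(q_{2}^{2})-\nu'(q_{1}q_{2}r)^{2}=c_{0}(q_{1}-q_{2})^{2}+2(1-r)q_{1}q_{2}\nu'(q_{1}q_{2})\nu''(q_{1}q_{2})+\mbox{higher order},
\]
with $c_{0}>0$ coming from strict Cauchy--Schwarz $\nu'(q_{1}q_{2})<\sqrt{\nu'(q_{1}^{2})\nu'(q_{2}^{2})}$, valid because $\nu$ is non-pure by \CDTN. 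The numerator behaves as $2(1-r)(\nu''(1))^{2}$, so the log argument is uniformly bounded above by its $q_{1}=q_{2}\to 1$ value $\nu''(1)/\nu'(1)$, with equality only in the regime $(q_{1}-q_{2})^{2}=o(1-r)$; in the complementary regime the log term is strictly smaller and yields an additional negative contribution to $\Psi_{n}$. Combining this with the quadratic-form and $\Omega$ analyses gives the claimed $\limsup$ bound, contradicting $\Psi_{n}=0$.
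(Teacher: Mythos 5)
Your reduction by contradiction/compactness is fine in outline, and the expansion of the logarithmic term is essentially correct, but the proof has a genuine gap at its central step: the claimed extension of Point \ref{enu:PsiCt4} of Lemma \ref{lem:PhiCt} to $q_{1}\neq q_{2}$ is not established, and the quadratic form cannot be ``handled exactly as in the proof of Point \ref{enu:PsiCt4}''. That proof uses two features that are special to $q_{1}=q_{2}$: each $2\times2$ block of $\Sigma_{U,X}(r,q,q)$ has the form $\left(\begin{array}{cc}a & b\\ b & a\end{array}\right)$, so the eigenvectors are of the fixed symmetric/antisymmetric type and $(-E_{0},-E_{0},-x_{0},-x_{0})$ is \emph{exactly} orthogonal to the degenerating directions; and the entries of $\Sigma_{U,X}(r,q,q)$ extend continuously to $r=1$ (the explicit $z_{i}(q)$), which is what allows the Kato continuity-of-eigenpairs argument at the limit point. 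For $q_{1}\neq q_{2}$ neither holds: the entries contain factors such as $a_{2}(r,q_{1},q_{2})(1-r^{2})$ whose denominators are, as in your own expansion, of size $c_{0}(q_{1}-q_{2})^{2}+c_{1}(1-r)$, so $\Sigma_{U,X}(r,q_{1},q_{2})$ has \emph{no} limit at $(r,q_{1},q_{2})=(1,1,1)$ --- its limit points depend on $\lim (q_{1}-q_{2})^{2}/(1-r)$ --- and there is no continuous-at-$(1,1,1)$ eigen-decomposition to invoke. In the only dangerous regime, $(q_{1}-q_{2})^{2}\asymp 1-r$, both the log term and the quadratic form have regime-dependent limits, and you give no argument that the possible loss in the quadratic form (the degenerating eigendirections are no longer orthogonal to the limiting vector, and the surviving eigenpairs are no longer those of the diagonal extension) cannot outweigh the definite gain in the log term; asserting the bound is ``tight in the limit'' because $(u_{i},x_{i})$ is ``asymptotically symmetric'' is precisely what needs proof. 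This is exactly the estimate the paper deliberately avoids: nowhere does it prove an analytic bound on $\Psi_{\nu,q_{1},q_{2}}$ near $|r|=1$ for $q_{1}\neq q_{2}$.

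The paper's route is different and softer. The off-diagonal regime $|r|\to1$ is controlled not analytically but through the deterministic pair-counting Lemma \ref{lem:2to1-1} combined with Theorem \ref{thm:1stmoment} and Lemma \ref{lem:2ndLB}, which yields Lemma \ref{lem:minTheta}: $\sup_{1-\tau\le|r|<1}\Psi_{\nu,q_{1},q_{2}}\le\min_{i}\Theta_{\nu,q_{i}}(u_{i},x_{i})$, reducing everything to the diagonal case where Point \ref{enu:PsiCt4} applies. Lemma \ref{lem:Th12=00003D0} is then deduced by a first-order perturbation argument: for $q_1\neq q_2$ the supremum is attained at some $r^{*}$ by \eqref{cor:q1neqq2:eq}, and if it equaled $\min_{i}\Theta_{i}=0$, then since $\nabla\Theta_{\nu,q_{1}}(u_{1},x_{1})\neq0$ (Lemmas \ref{lem:x0} and \ref{lem:ddE}) one could perturb $(u_{1},x_{1})$ and contradict the inequality of Lemma \ref{lem:minTheta}; no asymptotics of $\Sigma_{U,X}$ near $|r|=1$ are needed. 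If you wish to keep your compactness frame, replace the unproved limsup extension by this perturbation step, or else actually carry out the regime analysis of $\Sigma_{U,X}$ when $(q_{1}-q_{2})^{2}\asymp1-r$ and verify the resulting inequality --- a substantial computation not present in your sketch. Two minor points: the negation of the statement only gives $\sup\ge0$, so you should take $\Psi_{n}\ge-1/n$ rather than $\Psi_{n}\ge0$ (the sup need not be attained when $q_1=q_2$), and $(u_{i}^{n},x_{i}^{n})\to(-E_{0},-x_{0})$ follows directly from $(u_{i},x_{i})\in A(\delta_{n})$, without appealing to Lemmas \ref{lem:Excts} and \ref{lem:Exsmooth}.
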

Lemma \ref{lem:Th12=00003D0} is a 
direct consequence of \corO{Lemma \ref{lem:minTheta} and} the following
two short lemmas applied with $g_{i}=\Theta_{\nu,q_{i}}$, $g=\Psi_{\nu,q_{1},q_{2}}$
and $B=(-1,1)\setminus(-1+\tau,1-\tau)$.
\begin{lem}
Let $B\subset \mathbb{R}$, let 
$g_{i}(t)$, $i=1,2$, be real
functions defined on open sets $T_{i}\subset\mathbb{R}^{k}$, and
let 
$g(s,t_{1},t_{2})$ be a real function defined on $B\times T_{1}\times T_{2}$.
Suppose that for any $t_{1}$, $t_{2}$,
\begin{equation}
\sup_{s\in B}g\left(s,t_{1},t_{2}\right)\leq\min_{i=1,2}g_{i}(t_{i}).\label{eq:0105-02}
\end{equation}
If for some $t_{1}^{*}$, $t_{2}^{*}$ and $s^{*}\in B$,
\begin{enumerate}
  \item  $g(s^{*},t_{1}^{*},t_{2}^{*})=\sup_{s\in B}g(s,t_{1}^{*},t_{2}^{*})$,
  \item $g_{1}(t_{1}^{*})=g_{2}(t_{2}^{*})$, 
  \item  $\corO{(\nabla g_{1})}(t_{1}^{*})\neq0$
(in particular, the gradient exists), 
\item the gradient of $g$ in the
  coordinate $t_{1}$ only, $\corO{(\nabla_{t_{1}}g)}(s^{*},t_{1}^{*},t_{2}^{*})$,
exists,
\end{enumerate}
then
\[
\sup_{s\in B}g\left(s,t_{1}^{*},t_{2}^{*}\right)<g_{1}(t_{1}^{*})=g_{2}(t_{2}^{*}).
\]
\end{lem}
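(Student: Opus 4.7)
The plan is a short argument by contradiction that extracts a vanishing gradient of $g_1$ at $t_1^*$ from two different first-order conditions at the same point, contradicting hypothesis (3).

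First I will suppose, toward a contradiction, that the asserted strict inequality fails. Since the main hypothesis of the lemma evaluated at $(t_1^*,t_2^*)$ already gives $\sup_{s\in B}g(s,t_1^*,t_2^*)\le\min_i g_i(t_i^*)=g_1(t_1^*)=g_2(t_2^*)$, where the last equality uses (2), the negation of the conclusion is
\[
\sup_{s\in B}g(s,t_1^*,t_2^*)=g_1(t_1^*)=g_2(t_2^*).
\]
Combined with hypothesis (1), which states that this supremum is attained at $s^*$, this forces $g(s^*,t_1^*,t_2^*)=g_1(t_1^*)=g_2(t_2^*)=:V$.

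The core of the argument is to apply the main hypothesis with $s=s^*$ and $t_2=t_2^*$ fixed, but $t_1\in T_1$ varying, in two different ways. First, using only the $g_2(t_2^*)=V$ branch of the minimum, I obtain $g(s^*,t_1,t_2^*)\le V=g(s^*,t_1^*,t_2^*)$ for every $t_1\in T_1$, so $t_1^*$ is an interior maximizer of the function $t_1\mapsto g(s^*,t_1,t_2^*)$ on the open set $T_1$. Hypothesis (4) guarantees the gradient exists at that point, and Fermat's theorem yields $\nabla_{t_1}g(s^*,t_1^*,t_2^*)=0$. Second, using the other branch, the function $\phi(t_1):=g(s^*,t_1,t_2^*)-g_1(t_1)$ satisfies $\phi(t_1)\le 0$ on $T_1$ with equality at $t_1^*$, so $t_1^*$ is again an interior maximizer of $\phi$. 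By (3) and (4) both $\nabla g_1(t_1^*)$ and $\nabla_{t_1}g(s^*,t_1^*,t_2^*)$ exist, so Fermat's theorem yields $\nabla\phi(t_1^*)=0$, i.e., $\nabla_{t_1}g(s^*,t_1^*,t_2^*)=\nabla g_1(t_1^*)$.

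Combining the two first-order identities gives $\nabla g_1(t_1^*)=0$, contradicting hypothesis (3) and completing the proof. No genuine obstacle is expected: the argument is a direct application of the first derivative test, and the only point requiring care is that $T_1$ is open, so that $t_1^*$ is an interior maximizer where the necessary condition applies.
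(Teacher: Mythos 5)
Your proof is correct and is essentially the paper's argument reorganized: both exploit, at the contact point $t_1^*$, the tangency of $t_1\mapsto g(s^*,t_1,t_2^*)$ with $g_1$ (forcing $\nabla_{t_1}g(s^*,t_1^*,t_2^*)=\nabla g_1(t_1^*)$) and the bound by the constant $g_2(t_2^*)$ together with attainment at $s^*$ (forcing $\nabla_{t_1}g(s^*,t_1^*,t_2^*)=0$), yielding a contradiction with $\nabla g_1(t_1^*)\neq 0$. The paper packages the same first-order information by choosing one direction $v$ with $\langle\nabla g_1(t_1^*),v\rangle<0$ and deriving a sign contradiction, rather than via your two applications of Fermat's rule, but the substance is identical.
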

\begin{proof}
Assume towards contradiction that $g\left(s^{*},t_{1}^{*},t_{2}^{*}\right)=\min_{i=1,2}g_{i}(t_{i}^{*})$.
Let $v\in\mathbb{R}^{k}$ be a vector for which $\langle\nabla g_{1}(t_{1}^{*}),v\rangle<0$.
From (\ref{eq:0105-02}) we also must have that 
$\langle\nabla_{t_{1}}g(s^{*},t_{1}^{*},t_{2}^{*})v\rangle<0$.
However, then, for small $\epsilon>0$, we have that
$g(s^{*},t_{1}^{*}-\epsilon v,t_{2}^{*})>g(s^{*},t_{1}^{*},t_{2}^{*})=\min\{g_{1}(t_{1}^{*}-\epsilon v),\,g_{2}(t_{2}^{*})\}$,
in contradiction to (\ref{eq:0105-02}).
\end{proof}
\begin{lem}
  Assume that $q_{1}\neq q_{2}$. \corO{Then,} the supremum 
\corO{in the left hand side of} (\ref{eq:ineq1}) is
obtained at some point $r^{*}\in(-1,1)\setminus(-1+\tau,1-\tau)$,
and the gradient in $(u_{1},x_{1})$ only of $\Psi_{\nu,q_{1},q_{2}}$
at $(r^{*},u_{1},u_{2},x_{1},x_{2})$ exists. Further,
for small enough $\delta$,
if $|1-q|<\delta$, $(u,x)\in A(\delta)$ and $\Theta_{\nu,q}\left(u,x\right)=0$,
then $\nabla\Theta_{\nu,q}\left(u,x\right)\neq0$.
\end{lem}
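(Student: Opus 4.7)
The lemma has three assertions, and I would handle them in turn.

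For the \emph{first} assertion (attainment of the supremum on the boundary strip), the plan is to exploit the blow-down $\Psi_{\nu,q_{1},q_{2}}(r,u_{1},u_{2},x_{1},x_{2})\to -\infty$ as $|r|\to 1$ already recorded in \eqref{cor:q1neqq2:eq}; this is exactly the place where $q_{1}\neq q_{2}$ matters, since the logarithmic term in \eqref{eq:Psi} then diverges (one has $\nu'(q_{1}^{2})\nu'(q_{2}^{2})-\nu'(q_{1}q_{2})^{2}>0$ by strict Cauchy--Schwarz applied coefficient-wise to the mixture $\nu'$) while the quadratic form $-\tfrac12(\cdot)\Sigma_{U,X}^{-1}(r,q_{1},q_{2})(\cdot)^{T}$ remains nonpositive by positive-definiteness (Lemma \ref{lem:invertibility}). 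Extending $\Psi$ continuously to $\pm 1$ by the value $-\infty$, the compactified set $[-1,-(1-\tau)]\cup[1-\tau,1]$ supports a continuous (extended-real) function bounded above and attaining its supremum; the supremum cannot occur at $r=\pm 1$, so it is realized at some $r^{*}\in(-1,-(1-\tau)]\cup[1-\tau,1)$, which is precisely $(-1,1)\setminus(-1+\tau,1-\tau)$.

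For the \emph{second} assertion (existence of the gradient in $(u_{1},x_{1})$ at $(r^{*},\ldots)$), I would scan the summands of \eqref{eq:Psi} and note that the logarithmic and $\Omega$-in-$x_{2}$ terms do not depend on $(u_{1},x_{1})$, while the quadratic form is smooth in $(u_{1},x_{1})$ because $\Sigma_{U,X}(r^{*},q_{1},q_{2})$ is invertible (Lemma \ref{lem:invertibility}, applicable since $|r^{*}|<1$). The only potentially non-smooth piece is $\Omega\bigl(x_{1}/(q_{1}\sqrt{\nu''(q_{1}^{2})})\bigr)$, since $\Omega$ is $C^{\infty}$ off $\{\pm 2\}$. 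Here the key input is Lemma \ref{lem:x0}, which yields $-x_{0}(\nu)/\sqrt{\nu''(1)}<-2$ strictly; by continuity in $(q_{1},x_{1})$, for $\delta$ small enough the argument stays strictly below $-2$ uniformly over $|q_{1}-1|<\delta$ and $(u_{1},x_{1})\in A(\delta)$, hence $\Omega$ is differentiable there and the gradient exists.

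For the \emph{third} assertion (non-vanishing of $\nabla\Theta_{\nu,q}$ on its zero set inside $A(\delta)$), the plan is a one-line appeal to Lemma \ref{lem:ddE}, which gives $\partial_{E}\Theta_{\nu,q}(E,x)\neq 0$ whenever $\Theta_{\nu,q}(E,x)=0$ and $x<-2q\sqrt{\nu''(q^{2})}$. The hypothesis $x<-2q\sqrt{\nu''(q^{2})}$ is again ensured by Lemma \ref{lem:x0} together with continuity: at the base point $(q,x)=(1,-x_{0}(\nu))$ the inequality is strict, and it therefore persists throughout $\{|q-1|<\delta,(u,x)\in A(\delta)\}$ for $\delta$ small. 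This forces $\partial_{E}\Theta_{\nu,q}(u,x)\neq 0$, a fortiori $\nabla\Theta_{\nu,q}(u,x)\neq 0$.

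I do not anticipate a genuine obstacle: each part reduces to a short continuity or smoothness argument once the prior results (\eqref{cor:q1neqq2:eq}, Lemma \ref{lem:invertibility}, Lemma \ref{lem:x0}, Lemma \ref{lem:ddE}) are identified. The only thing to be careful with is the threshold $-2$ for $\Omega$: both of the last two assertions hinge on the strict inequality $-x_{0}(\nu)/\sqrt{\nu''(1)}<-2$ furnished by Lemma \ref{lem:x0} being preserved under small perturbations of $q$, $u$, $x$.
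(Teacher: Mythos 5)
Your proposal is correct and follows essentially the same route as the paper: attainment of the supremum via continuity of $\Psi_{\nu,q_{1},q_{2}}$ in $r$ (Point \ref{enu:PsiCt1} of Lemma \ref{lem:PhiCt}) together with the divergence \eqref{cor:q1neqq2:eq} for $q_1\neq q_2$, and the nonvanishing of $\nabla\Theta_{\nu,q}$ via Lemmas \ref{lem:x0} and \ref{lem:ddE} with a continuity argument in $(q,u,x)$. Your extra care with $\Omega$ near $\pm 2$ is harmless but not needed, since $\Omega$ is in fact $C^{1}$ on all of $\mathbb{R}$ (its derivative is the Stieltjes transform of the semicircle law, which is continuous at $\pm 2$), so differentiability of $\Psi$ in $(u_1,x_1)$ holds without invoking the strict inequality from Lemma \ref{lem:x0}.
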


\begin{proof}
The existence of $r^{*}$ as in the lemma 
follows from Point \ref{enu:PsiCt1} of Lemma \ref{lem:PhiCt} and
\corO{\eqref{cor:q1neqq2:eq}}. The fact that $\nabla\Theta_{\nu,q}\left(u,x\right)\neq0$
follows from Lemmas \ref{lem:x0} and \ref{lem:ddE}.
\end{proof}

\corO{It thus remains to prove  Lemma \ref{lem:minTheta}. For the proof,
we need the following deterministic inequality.}
\begin{lem}
\label{lem:2to1-1}For any $r\in(\cos\pi/8,1)$ setting $r_{0}:=\cos(4\cos^{-1}(r))\in(0,1)$,
we have that, deterministically, 
 \[\left[\mbox{\rm Crt}_{N,q_{1},q_{2}}(B_{1},B_{2},D_{1},D_{2},I(r))\right]_{2}
 \leq\sum_{i=1,2}\left[\mbox{\rm Crt}_{N,q_{i},q_{i}}(B_{i},B_{i},D_{i},D_{i},I(r_{0}))\right]_{2}+2\mbox{\rm Crt}_{N,q_{1}}(B_{1},D_{1}),\]
where $I(r)=(-1,1)\setminus[-r,r]$. 
\end{lem}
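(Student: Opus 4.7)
My plan is to give a purely combinatorial, deterministic proof built on a spherical triangle inequality for angles. Set $\theta := \cos^{-1}(r) \in (0, \pi/8)$, so that $r_0 = \cos(4\theta)\in (0,1)$; the hypothesis $r > \cos(\pi/8)$ is there precisely to ensure $4\theta < \pi/2$. I will write $V_i = \mathscr{C}_{N,q_i}(NB_i, \sqrt{N}D_i)$ and, for $\bs_1 \in V_1$, $A^{\pm}(\bs_1) := \{\bs_2 \in V_2 : \pm R(\bs_1, \bs_2) > r\}$. Since $r>0$ the sets $A^{\pm}(\bs_1)$ are disjoint, so the left-hand side is exactly $\sum_{\bs_1 \in V_1}(|A^+(\bs_1)| + |A^-(\bs_1)|)$, and the analogous decomposition holds with the roles of $V_1$ and $V_2$ interchanged using $B^{\pm}(\bs_2)$.

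The first step is the geometric input. Using that $R>r$ translates to angle $<\theta$ between normalized vectors, the angular triangle inequality gives: any two distinct elements of $A^+(\bs_1)$ (or of $A^-(\bs_1)$) form an ordered $V_2^2$-pair with overlap $>\cos(2\theta)\geq r_0$, and any two distinct $\bs_1, \bs_1'\in V_1$ lying in a common $B^+(\bs_2)$ form an ordered $V_1^2$-pair with overlap $>\cos(2\theta)\geq r_0$. Iterating once more along a chain $\bs_2 \in A^+(\bs_1)$, $\bs_1$ and $\bs_1'$ sharing a common $V_2$-neighbor, $\bs_2' \in A^+(\bs_1')$, one gets $\angle(\hat{\bs}_2, \hat{\bs}_2') \leq \theta + 2\theta + \theta = 4\theta$, which is where the factor $4$ in $r_0 = \cos(4\theta)$ first enters.

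My charging scheme then assigns each pair counted on the left to one of three buckets matching the three terms on the right. After fixing arbitrary total orderings on $V_1$ and $V_2$ and setting $\sigma^{\pm}(\bs_1) := \min A^{\pm}(\bs_1)$ whenever non-empty, the ``reference'' pairs $(\bs_1, \sigma^{\pm}(\bs_1))$, at most two per $\bs_1$, account for the term $2\,\mathrm{Crt}_{N,q_1}(B_1,D_1)$. Each remaining pair $(\bs_1, \bs_2)$ with, say, $\bs_2 \in A^+(\bs_1)\setminus\{\sigma^+(\bs_1)\}$ I first try to inject into $V_2^2$ via $(\bs_1,\bs_2)\mapsto (\bs_2, \sigma^+(\bs_1))$; the image is an ordered $V_2^2$-pair of overlap $>\cos(2\theta)\geq r_0$. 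Whenever two such pairs collide at the same target (so that their $\bs_1$'s both lie in $B^+(\bs_2)$ and share the common reference $\sigma^+$), I reroute the latecomer in the $V_1$-ordering into the ordered $V_1^2$-pair $(\bs_1, \bs_1')$, whose overlap lies in $I(r_0)$ thanks to the chaining observation; the identical scheme is run for $A^-$.

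The hard part will be verifying that the combined map, over both $A^+$ and $A^-$ and across the two reroute directions, is genuinely injective into the disjoint union of the three buckets. This amounts to carefully tracking the fibers of each intermediate map and using the two fixed orderings consistently to break ties so that no single $V_1^2$- or $V_2^2$-pair is charged twice and no $V_1$-point is charged more than twice; it is precisely at this stage that the stronger constant $\cos(4\theta)$, rather than the naive $\cos(2\theta)$, matters, since doubly-chained coincidences between the $A^+$ and $A^-$ streams and between the two reroute directions would otherwise fail to be captured. Once injectivity is in hand, summing the cardinalities of the three buckets immediately yields the claimed inequality.
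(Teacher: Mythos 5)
Your outline assembles the right ingredients (the angle $\theta=\cos^{-1}(r)$, the spherical triangle inequality, the three buckets on the right-hand side), but the proof is not complete, and the step you yourself defer --- injectivity of the combined charging map --- is not a technicality: the scheme as described fails, and cannot be repaired by any choice of orderings or tie-breaks. Concretely, take $a\neq b$ in $\mathscr{C}_{N,q_1}(NB_1,\sqrt N D_1)$ whose directions are at angle $\theta/100$, and a cluster $c_1,\dots,c_m$ in $\mathscr{C}_{N,q_2}(NB_2,\sqrt N D_2)$ all within angle $\theta/2$ of $a$ (hence within $\theta$ of both $a$ and $b$). Then $A^{+}(a)=A^{+}(b)=\{c_1,\dots,c_m\}$ and $\sigma^{+}(a)=\sigma^{+}(b)=c_1$, so the left-hand side counts $2m$ pairs, while the buckets your scheme can ever reach hold at most $2$ reference pairs, $m-1$ primary targets of the form $(c_i,c_1)$, and $2$ ordered $V_1\times V_1$ pairs, i.e.\ $m+3$ pairs in total; for $m\geq 4$ this is strictly smaller than $2m$, so no injection with the prescribed image exists. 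The defect is structural: your primary map only hits $V_2\times V_2$ pairs whose second coordinate is a reference point $\sigma^{\pm}(\cdot)$, and your reroute only hits $V_1\times V_1$ pairs, whereas in such configurations the overflow must be absorbed by generic $V_2\times V_2$ pairs (here the $m(m-1)$ pairs $(c_i,c_j)$), which your scheme never uses.

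The paper's proof handles exactly this by a different decomposition. It splits the $q_1$-critical points according to whether the number of $q_2$-points within angle $\theta$ exceeds the number of $q_1$-points within angle $4\theta$ by more than $2$. For the non-exceptional points, the entire $\theta$-fiber in $V_2$ is charged to that point's own $4\theta$-fiber in $V_1$ (plus the slack of $2$); for the exceptional set it constructs, inductively and greedily, an injection $\mathfrak X$ into $\mathscr{C}_{N,q_2}(NB_2,\sqrt N D_2)$ that moves each base point by at most $3\theta$, so that its $\theta$-fiber embeds, up to two points lost to the puncturing at $\pm\bs$, into the $4\theta$-fiber of its image --- this is where $r_0=\cos(4\theta)$ is genuinely needed, and where the term $2\,\mbox{\rm Crt}_{N,q_1}(B_1,D_1)$ actually comes from (a per-point slack in fiber-size comparisons, not a count of reference pairs). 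In short: one must charge fibers, not individual colliding pairs; making your pair-by-pair scheme work would force you to rebuild essentially this fiber comparison, so as it stands the proposal has a genuine gap at its central step.
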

\begin{proof}
In the current proof, for a point $\bs\in\mathbb{S}^{N-1}(1)$ denote
by $\mathcal{S}_{\pm}^{*}(\bs,\delta)$ the set of points in $\mathbb{S}^{N-1}\left(1\right)$
which are different from both $\bs$ and $-\bs$ and which have minimal
distance from $\bs$ or $-\bs$ less than $\delta$, under the usual
metric on the sphere. For any point $\bs\in\mathbb{R}^{N}\setminus\{0\}$
define the cone 
\[
\mathcal{B}_{\pm}^{*}(\bs,\delta):=\{c\bs':\,c\in\mathbb{R},\,\bs'\in\mathcal{S}_{\pm}^{*}(\bs/\|\bs\|,\delta)\}.
\]
The overlap $r$ defines the distance 
\[
\epsilon:=\cos^{-1}(r)\in[0,\pi]
\]
on the sphere $\mathbb{S}^{N-1}(1)$. Assuming that $r\in(\cos\pi/8,1)$,
we define $r_{0}$ as the overlap that corresponds to $4$ times that
distance, $r_{0}:=\cos(4\cos^{-1}(r))\in(0,1)$. 

Note that 
\begin{equation}
 \left[\mbox{Crt}_{N,q_{1},q_{2}}(B_{1},B_{2},D_{1},D_{2},I(r))\right]_{2}
 =\sum_{\bs\in\mathscr{C}_{N,q_{1}}(NB_{1},\sqrt{N}D_{1})}|\mathscr{C}_{N,q_{2}}(NB_{2},\sqrt{N}D_{2})\cap\mathcal{B}_{\pm}^{*}(\bs,\epsilon)|.\label{eq:1104-02-1}
\end{equation}

Denote by $A_{0}$ the set of points $\bs\in\mathscr{C}_{N,q_{1}}(NB_{1},\sqrt{N}D_{1})$
for which 
\begin{equation}
|\mathscr{C}_{N,q_{2}}(NB_{2},\sqrt{N}D_{2})\cap\mathcal{\mathcal{B}}_{\pm}^{*}(\bs,\epsilon)|>|\mathscr{C}_{N,q_{1}}(NB_{1},\sqrt{N}D_{1})\cap\mathcal{B}_{\pm}^{*}(\bs,4\epsilon)|+2.\label{eq:1104-03-1}
\end{equation}
Denoting $a_{c}=|\mathscr{C}_{N,q_{1}}(NB_{1},\sqrt{N}D_{1})\setminus A_{0}|$, \corO{we have that}
\begin{align*}
 & \sum_{\bs\in\mathscr{C}_{N,q_{1}}(NB_{1},\sqrt{N}D_{1})\setminus A_{0}}|\mathscr{C}_{N,q_{2}}(NB_{2},\sqrt{N}D_{2})\cap\mathcal{B}_{\pm}^{*}(\bs,\epsilon)|\\
 & \leq2a_{c}+\sum_{\bs\in\mathscr{C}_{N,q_{1}}(NB_{1},\sqrt{N}D_{1})\setminus A_{0}}|\mathscr{C}_{N,q_{1}}(NB_{1},\sqrt{N}D_{1})\cap\mathcal{B}_{\pm}^{*}(\bs,4\epsilon)|\\
 & \leq2a_{c}+\left[\mbox{Crt}_{N,q_{1},q_{1}}(B_{1},B_{1},D_{1},D_{1},I(r_{0}))\right]_{2}.
\end{align*}

To complete the proof we will show that there exists an injective
mapping 
\[
\mathfrak{X}:\,A_{0}\to\mathscr{C}_{N,q_{2}}(NB_{2},\sqrt{N}D_{2}),
\]
such that for any $\bs\in A_{0}$,
\begin{equation}
|\mathscr{C}_{N,q_{2}}(NB_{2},\sqrt{N}D_{2})\cap\mathcal{B}_{\pm}^{*}(\bs,\epsilon)|\leq|\mathscr{C}_{N,q_{2}}(NB_{2},\sqrt{N}D_{2})\cap\mathcal{B}_{\pm}^{*}(\mathfrak{X}(\bs),4\epsilon)|+2.\label{eq:1703-05-1}
\end{equation}
This will imply that the sum in (\ref{eq:1104-02-1}) over $A_{0}$
only is bounded from above by 
\[
\left[\mbox{Crt}_{N,q_{2},q_{2}}(B_{2},B_{2},D_{2},D_{2},I(r_{0}))\right]_{2}+2|A_{0}|.
\]

Our definition is inductive, starting with an arbitrary point $\bs_{0}\in A_{0}$.
From the definition of $A_{0}$, the number of points in $A_{0}\cap\mathcal{B}_{\pm}^{*}(\bs_{0},2\epsilon)$
is smaller than $|\mathscr{C}_{N,q_{1}}(NB_{1},\sqrt{N}D_{1})\cap\mathcal{B}_{\pm}^{*}(\bs,2\epsilon)|$
and smaller than the number of points in
\begin{equation}
\mathscr{C}_{N,q_{2}}(NB_{2},\sqrt{N}D_{2})\cap\mathcal{B}_{\pm}^{*}(\bs_{0},\epsilon),\label{eq:1703-04-1}
\end{equation}
so we can define $\mathfrak{X}$ injectively on $A_{0}\cap\mathcal{B}_{\pm}^{*}(\bs_{0},2\epsilon)$
such that any point is mapped to a point in (\ref{eq:1703-04-1}).
Doing so we have that for any $\bs\in A_{0}\cap\mathcal{B}_{\pm}^{*}(\bs_{0},2\epsilon)$,
by the triangle inequality, $\bs\in\mathcal{B}_{\pm}^{*}(\mathfrak{X}(\bs),3\epsilon)$.
Therefore (\ref{eq:1703-05-1}) holds, since the set on the left-hand
side of (\ref{eq:1703-05-1}) is contained in the one on the right-hand
side, and up to the $2$ points removed by the `puncturing' of $\mathcal{B}_{\pm}^{*}(\bs,\epsilon)$.

We continue defining $\mathfrak{X}$ as follows. At step $k$ let
$A_{k}$ be defined as $A_{0}$ minus the set of points for which
$\mathfrak{X}$ was already defined, choose $\bs_{k}\in A_{k}$ arbitrarily,
and define $\mathfrak{X}$ injectively from $A_{k}\cap\mathcal{B}_{\pm}^{*}(\bs_{k},2\epsilon)$
to 
\[
\mathscr{C}_{N,q_{2}}(NB_{2},\sqrt{N}D_{2})\cap\mathcal{B}_{\pm}^{*}(\bs_{k},\epsilon).
\]
As before, this implies that (\ref{eq:1703-05-1}) for the points
in $A_{k}\cap\mathcal{B}_{\pm}^{*}(\bs_{k},2\epsilon)$. We continue
defining $\mathfrak{X}$ in the same manner until no points are left,
i.e., until $A_{k}=\varnothing$. 

So far we made sure that (the restriction of) $\mathfrak{X}$ is injective
as a function on $A_{k}\cap\mathcal{B}_{\pm}^{*}(\bs_{k},2\epsilon)$
for all $k$. However, since $\bs_{k}\notin\cup_{j<k}\mathcal{B}_{\pm}^{*}(\bs_{j},2\epsilon)$,
the image of $A_{k}\cap\mathcal{B}_{\pm}^{*}(\bs_{k},2\epsilon)$,
which is contained in $\mathcal{B}_{\pm}^{*}(\bs_{k},\epsilon)$,
is disjoint from $\cup_{j<k}\mathcal{B}_{\pm}^{*}(\bs_{j},\epsilon)$.
Hence, $\mathfrak{X}$ is injective on $A_{0}$ and the proof is completed.
\end{proof}
\corO{We have completed all preparatory steps and can proceed to
  the proof of Lemma \ref{lem:minTheta}.}
  \begin{proof}[Proof of Lemma \ref{lem:minTheta}.]
    By Lemma \ref{lem:x0},
$-x_{0}/\sqrt{\nu''(1)}<-2$. Therefore for small enough $\delta$
if $q_{i}$ and $D_{i}$ are as in Lemma \ref{lem:conccentration},
then $D_{i}$ are as in Lemma \ref{lem:2ndLB}. Hence, by letting
$B_{i}$ and $D_{i}$ shrink to a point as $N\to\infty$, we obtain
from Lemma \ref{lem:2to1-1}, Theorem \ref{thm:1stmoment}, Point
\ref{enu:PsiCt1} of Lemma \ref{lem:PhiCt} and \corO{\eqref{cor:q1neqq2:eq}}
that
\begin{align}
 & \sup_{|r|\in[1-\tau,1)}\Psi_{\nu,q_{1},q_{2}}\left(r,u_{1},u_{2},x_{1},x_{2}\right)\leq\label{eq:2304-01}\\
&\quad\quad\max\Big\{\min_{i=1,2}\Theta_{\nu,q_{i}}\left(u_{i},x_{i}\right)
  \sup_{|r|\in[1-\tau_{0},1)}\Psi_{\nu,q_{1},q_{1}}\left(r,u_{1},u_{1},x_{1},x_{1}\right),\,\sup_{|r|\in[1-\tau_{0},1)}\Psi_{\nu,q_{2},q_{2}}\left(r,u_{2},u_{2},x_{2},x_{2}\right)\Big\},\nonumber 
\end{align}
for $q_{i}\in(1-\delta,1+\delta)$ and $(u_{i},x_{i})\in A(\delta)$,
where $1-\tau_{0}$ is related to $1-\tau$ as $r_{0}$ is related
to $r$ in Lemma \ref{lem:2to1-1}. By \CDTN, $\Psi_{\nu}^{0}(\pm1)<\Psi_{\nu}^{0}(0)=0$.
Hence, from Points \ref{enu:PsiCt1} and \ref{enu:PsiCt4} of Lemma
\ref{lem:PhiCt}, uniformly in $q_{i}\in(1-\delta,1+\delta)$ and
$(u_{i},x_{i})\in A(\delta)$, 
\begin{align*}
\sup_{|r|\in[1-\tau_{0},1)}\Psi_{\nu,q_{i},q_{i}}\left(r,u_{i},u_{i},x_{i},x_{i}\right) & \leq-c,
\end{align*}
for some constant $c>0$, assuming $\delta$ and $\tau$, and therefore
$\tau_{0}$, are small enough. Since $\Theta_{\nu,q}\left(u,x\right)$
is continuous in $q$, $u$ and $x$, and $\Theta_{\nu,1}\left(-E_{0},-x_{0}\right)=0$,
the maximum in (\ref{eq:2304-01}) is equal to the first term, for
small $\delta$. Namely, we proved (\ref{eq:ineq1}).\end{proof}

\subsection{Proof of Corollary \ref{cor:matching}}

Note that
\[
\mathbb{E}\left({\rm Crt}_{N,q}\left(B,D\right)^{2}\right)\leq2\mathbb{E}{\rm Crt}_{N,q}\left(B,D\right)+\mathbb{E}{\rm Crt}_{N,q,q}\left(B,B,D,D,(-1,1)\right),
\]
so that, since we assume that $\mathbb{E}{\rm Crt}_{N,q}\left(B,D\right)\to\infty$,
\begin{align*}
\limsup_{N\to\infty}\frac{1}{N}\log\mathbb{E}\left({\rm Crt}_{N,q}\left(B,D\right)^{2}\right) & \leq\lim_{N\to\infty}\frac{1}{N}\log\mathbb{E}{\rm Crt}_{N,q,q}\left(B,B,D,D,(-1,1)\right)\\
 & \leq2\sup_{u\in B,\,x\in D}\Theta_{\nu,q}\left(u,x\right),
\end{align*}
where the second inequality follows from Theorem \ref{thm:2ndmomUBBK-1-1},
Lemma \ref{lem:conccentration} and (\ref{eq:1703-08}). The fact
that
\[
\liminf_{N\to\infty}\frac{1}{N}\log\mathbb{E}\left({\rm Crt}_{N,q}\left(B,D\right)^{2}\right)\geq2\liminf_{N\to\infty}\frac{1}{N}\log\mathbb{E}{\rm Crt}\left(B,D\right)=2\sup_{u\in B,\,x\in D}\Theta_{\nu,q}\left(u,x\right)
\]
follows from Theorem \ref{thm:1stmoment}. The $N$-dependent case
follows from a standard \corO{diagonalization} argument.\qed

\subsection{\label{subsec:pfGS}Proof of Theorem \ref{theo-ground}}

From Theorem \ref{thm:1stmoment} and the definition of $-E_{0}$ (combined with Markov's inequality and the Borel-Cantelli Lemma), 
\begin{equation}
\label{eq:GSlbas}\liminf_{N\to\infty}\frac{1}{N}\GS_N\geq -E_0,\quad \mbox{almost surely}.
\end{equation}

From Corollary\ref{cor:matching}, for some $\delta_{N}=o(1)$, with $B=B_{N}=-E_{0}+(-\delta_{N},\delta_{N})$
and $D=D_{N}=-x_{0}+(-\delta_{N},\delta_{N})$, 
\begin{equation}
\lim_{N\to\infty}\frac{1}{N}\log\mathbb{E}\left(\mbox{Crt}_{N,1}\left(B,D\right)\right)^{2}=2\lim_{N\to\infty}\frac{1}{N}\log\mathbb{E}\mbox{Crt}_{N,1}\left(B,D\right).\label{eq:0304-05-1}
\end{equation}
By appealing to the \corO{Cauchy-Schwarz} inequality, this implies that
\[
\mathbb P(\frac{1}{N}\GS_N\leq -E_0+\frac{\delta_N}{N})
\geq \mathbb P(\mbox{Crt}_{N,1}(B,D) > 0)
\geq \corE{\frac{\big(\E(\mbox{Crt}_{N,1}\left(B,D\right))\big)^2}{\E\big(\mbox{Crt}_{N,1}(B,D)\big)^2}}
\]
does not decay exponentially in $N$. 

Using the Borell-TIS inequality \cite{Borell,TIS}
(see also \cite[Theorem 2.1.1]{RFG}), which implies that the $\GS_N/N$ has exponential in $N$ tails, this is in fact sufficient to conclude the matching upper bound to \eqref{eq:GSlbas}. For the full argument, see Appendix IV of \cite{2nd}, where this is \corO{carried out} in the pure setting.
\qed

\begin{rem} \label{rem:GSq}
  By Proposition \ref{prop:openCDTN} \corO{below}, 
  for $q$ close enough to $1$, 
	$\nu_q(x)=\sum_{p=2}^\infty\gamma_p^2q^{2p}x^p$ satisfies \CDTN. Thus, by Theorem \ref{theo-ground},
	\[
	\lim_{N\to\infty} \frac{1}{N}\min_{\boldsymbol{\sigma}\in \SN }
	H_N(q\boldsymbol{\sigma})=-E_0(q), \quad {\rm
		a.s.}.
	\] 
\end{rem}

\subsection{Proof of Corollary \ref{cor:orth}}
\corO{We begin with two preparatory lemmas.}
\begin{lem}
\label{lem:compact}For any compact set $K\subset(0,\infty)$, there
exists some large $T>0$, such that $\Theta_{\nu,q}(E,x)<0$ uniformly
in $q\in K$ and $(E,x)\in\{(E,x):\,\max(|E|,|x|)>T\}$.
\end{lem}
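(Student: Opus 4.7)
The plan is to use the explicit formula \eqref{eq:Theta} for $\Theta_{\nu,q}(u,x)$ to show that it is dominated at infinity by a negative-definite quadratic form in $(u,x)$, overwhelming the only competing growth, which comes from the $\Omega$-term. The formula decomposes $\Theta_{\nu,q}(u,x)$ into: (i) a $q$-dependent constant $\tfrac{1}{2}+\tfrac{1}{2}\log(q^{2}\nu''(q^{2})/\nu'(q^{2}))$, continuous (hence bounded) on $K$; (ii) the quadratic term $-\tfrac{1}{2}(u,x)\Sigma_{q}^{-1}(u,x)^{T}$; and (iii) the shift $\Omega(x/(q\sqrt{\nu''(q^{2})}))$. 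The key observation is that (ii) provides true quadratic decay at a rate uniform in $q\in K$, while (iii) grows only logarithmically in $|x|$; this relies on $\nu$ being non-pure so that $\Sigma_{q}$ is strictly positive definite by Lemma \ref{lem:invertibility}, which is consistent with the lemma's usage inside the proof of Corollary \ref{cor:orth} under \CDTN.

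For step (ii), the entries of $\Sigma_{q}$ depend continuously on $q$ (through $\nu(q^{2})$, $\nu'(q^{2})$, $\nu''(q^{2})$, which are defined on the open interval of convergence of $\nu$ that contains $K$), so $q\mapsto \lambda_{\max}(\Sigma_{q})$ is continuous, hence bounded on $K$ by some $M<\infty$. Combined with the strict positive definiteness of $\Sigma_{q}$ from Lemma \ref{lem:invertibility} (via continuity of the smallest eigenvalue on the compact set $K$, which is bounded below by a positive constant), this yields
\[
-\tfrac{1}{2}(u,x)\Sigma_{q}^{-1}(u,x)^{T}\leq -\frac{u^{2}+x^{2}}{2M},\qquad q\in K,\ (u,x)\in \R^{2}.
\]
Moreover, $\inf_{q\in K} q^{2}\nu''(q^{2})>0$ by continuity and compactness, so the argument of $\Omega$ in (iii) remains comparable to $x$ uniformly in $q\in K$.

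For step (iii), a direct expansion of \eqref{eq:Omega} as $|t|\to\infty$ gives $\Omega(t)=\log|t|+o(1)$: the leading contribution $t^{2}/4-1/2$ cancels against $\frac{|t|}{4}\sqrt{t^{2}-4}$, leaving only the logarithmic term $\log(\sqrt{t^{2}/4-1}+|t|/2)\sim \log|t|$. Since $\Omega$ is continuous and hence bounded on any compact interval of $t$, we obtain a global bound $\Omega(t)\leq C_{1}+C_{2}\log(1+|t|)$, and rescaling by the uniform lower bound from step (ii) gives
\[
\Omega\!\left(\frac{x}{q\sqrt{\nu''(q^{2})}}\right)\leq C_{1}'+C_{2}'\log(1+|x|),\qquad q\in K.
\]

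Combining the three bounds yields
\[
\Theta_{\nu,q}(u,x)\leq C_{3}-\frac{u^{2}+x^{2}}{2M}+C_{2}'\log(1+|x|),\qquad q\in K,
\]
whose right-hand side tends to $-\infty$ as $\max(|u|,|x|)\to \infty$. Choosing $T$ so large that the right-hand side is strictly negative whenever $\max(|u|,|x|)>T$ completes the proof. I do not expect any substantial obstacle here; the only subtle point is the uniform strict positive definiteness of $\Sigma_{q}$, which follows immediately from Lemma \ref{lem:invertibility} together with continuity of eigenvalues and compactness of $K$.
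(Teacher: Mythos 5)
Your proof is correct and follows essentially the same route as the paper: uniform strict positive definiteness of $\Sigma_{q}$ on the compact set $K$ (via Lemma \ref{lem:invertibility} and continuity of its entries) makes the quadratic term dominate, while the $\Omega$-term grows too slowly to compete. The only cosmetic difference is that you derive the sharper asymptotics $\Omega(t)=\log|t|+o(1)$, whereas the paper simply invokes Lipschitz continuity of $\Omega$ (a linear bound), which already suffices against the quadratic decay.
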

\begin{proof}
Since the matrix $\Sigma_{q}$ (appearing in the definition (\ref{eq:Theta})
of $\Theta_{\nu,q}(E,x)$)  is positive definite and its elements are
continuous in $q$, the eigenvalues of $\Sigma_{q}$ are bounded from
below by some positive constant uniformly in $q\in K$. The lemma
therefore follows from the definition of $\Theta_{\nu,q}(E,x)$ and
the fact that $\Omega$ is Lipschitz continuous. 
\end{proof}
\begin{lem}
  \label{lem:epsx0}\corO{Assume that $\nu$ is pure-like.}
 \corO{Then there exist  $\delta>0$ so that for
 any $q\in1+(-\delta,\delta)$, any $\epsilon'>0$, there exists}
 \corOU{$c=c(\epsilon')>0$} so that 
 for \corO{small enough $\epsilon>0$ and large enough $N$,}
setting $B(\epsilon):=-E_{0}(q)+(-\epsilon,\epsilon)$
and $D(\epsilon'):=-x_{0}(q)+(-\epsilon',\epsilon')$,
\begin{equation}
\mathbb{P}\left\{ \mbox{\rm Crt}_{N,q}\left(B(\epsilon),\mathbb{R}\setminus D(\epsilon')\right)>0\right\} \leq e^{-cN},\label{eq:1507-05}
\end{equation}
where $B(\epsilon):=-E_{0}(q)+(-\epsilon,\epsilon)$
and $D(\epsilon'):=-x_{0}(q)+(-\epsilon',\epsilon')$.
\end{lem}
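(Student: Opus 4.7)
The plan is to combine Markov's inequality with the first moment estimate of Theorem \ref{thm:1stmoment}. By Markov,
\[
\mathbb{P}\{\mbox{Crt}_{N,q}(B(\epsilon),\mathbb{R}\setminus D(\epsilon'))>0\}\leq \mathbb{E}\,\mbox{Crt}_{N,q}(B(\epsilon),\mathbb{R}\setminus D(\epsilon')),
\]
so the task is to show that the logarithmic rate of the right-hand side is bounded above by $-c(\epsilon')<0$, uniformly for $q$ in a small neighborhood of $1$ and $\epsilon$ sufficiently small.

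The only genuine obstacle is that $\mathbb{R}\setminus D(\epsilon')$ is unbounded, which prevents a direct application of Theorem \ref{thm:1stmoment}. To handle this, I would first invoke Lemma \ref{lem:compact} to produce $T>0$ (uniform in $q$ close to $1$) such that $\Theta_{\nu,q}(u,x)<-c_0$ whenever $\max(|u|,|x|)>T$. Combining the integral formula \eqref{eq:1102-01} for $\mathbb{E}\,\mbox{Crt}_{N,q}$ with the Gaussian decay of the density prefactor in $x$ and the standard Varadhan-type tail bound used in the proof of Theorem \ref{thm:1stmoment}, one deduces
\[
\limsup_{N\to\infty}\tfrac{1}{N}\log\mathbb{E}\,\mbox{Crt}_{N,q}\bigl(B(\epsilon),(-\infty,-T)\cup(T,\infty)\bigr)\leq -c_0,
\]
uniformly for $q$ near $1$. (An alternative is to apply the Borell--TIS inequality to the Gaussian field $\bs\mapsto N^{-1/2}\frac{d}{dR}H_N(\bs)$, whose pointwise variance is uniformly bounded for $q$ near $1$, and deduce that the event $\{\sup_\bs |N^{-1/2}\frac{d}{dR}H_N(\bs)|>T\}$ has probability $\leq e^{-c_0N}$, so that no critical point with $|x|>T$ exists on the complementary event.) This reduces matters to the compact ``punctured'' window $[-T,T]\setminus D(\epsilon')$, to which Theorem \ref{thm:1stmoment} applies directly after splitting into its two sub-intervals.

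On the compact region it remains to show that
\[
\sup_{u\in\overline{B(\epsilon)},\;x\in[-T,T]\setminus D(\epsilon')}\Theta_{\nu,q}(u,x)<-c(\epsilon')
\]
for $\epsilon$ sufficiently small, uniformly in $q\in 1+(-\delta,\delta)$. At $\epsilon=0$, the supremum is over the univariate function $x\mapsto\Theta_{\nu,q}(-E_0(q),x)$ restricted to the compact set $\{x\in[-T,T]:|x+x_0(q)|\geq\epsilon'\}$. By definition of $-E_0(q)$ (cf.\ Remark \ref{rem:scaling}), $\sup_x\Theta_{\nu,q}(-E_0(q),x)=0$, and by Lemma \ref{lem:x0} the supremum is attained uniquely at $-x_0(q)$; continuity and compactness therefore force the restricted supremum to be strictly negative. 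Joint continuity of $(q,u,x)\mapsto\Theta_{\nu,q}(u,x)$, together with the continuity of $q\mapsto(E_0(q),x_0(q))$ from Lemma \ref{lem:Excts}, then promotes this to a uniform strict negativity for $q\in 1+(-\delta,\delta)$ and $\epsilon$ small, after shrinking $\delta$ so that the pure-like property persists. Combining the bounds on the two regions delivers the claimed $e^{-cN}$ bound.
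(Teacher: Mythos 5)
Your proposal is correct and follows essentially the same route as the paper: Markov's inequality together with Theorem \ref{thm:1stmoment} and Lemma \ref{lem:compact} to reduce to the compact punctured window $B(\epsilon)\times\bigl([-T,T]\setminus D(\epsilon')\bigr)$, and then strict negativity of $\Theta_{\nu,q}$ there from the uniqueness of the maximizer in Lemma \ref{lem:x0} plus continuity (with Lemma \ref{lem:Excts} giving the uniformity in $q$). Your extra care with the unbounded range of $x$ (the Varadhan-type tail bound or the Borell--TIS alternative) only makes explicit what the paper leaves implicit in its one-line reduction.
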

\begin{proof}
From Theorem \ref{thm:1stmoment} and Lemma \ref{lem:compact}, to
prove (\ref{eq:1507-05}) it will be enough to show that for some
$c>0$ and large enough $T$,
\[
(E,x)\in B(\epsilon)\times[-T,T]\setminus D(\epsilon')\implies\Theta_{\nu,q}\left(-E,-x\right)<-2c.
\]
Assume $\delta$ is small enough so that $\nu_{q}$ is pure-like,
and from Lemma \ref{lem:x0}, $\Theta_{\nu,q}(-E_{0}(q),x)<0$ for
any $x\neq-x_{0}(q)$. Lemma \ref{lem:epsx0} therefore follows from
the continuity of $\Theta_{\nu,q}(u,x)$ in $u$ and $x$.
\end{proof}
\corO{We can now provide the proof of 
Corollary \ref{cor:orth}.}
\begin{proof}[Proof of Corollary \ref{cor:orth}.]
From Lemma \ref{lem:epsx0}, to conclude the proof it will be enough
to show that for any $\epsilon>0$, if $\eta,\,c>0$ are small enough
then\corE{
\begin{equation}
\mathbb{P}\Big\{\exists\bs_i\in\mathscr{C}_{N,q_i}(NB_i(\eta),\mathbb{R}\setminus\sqrt{N}D_i(\eta)),\,\bs_1\neq\pm\bs_2:\,|R(\bs_1,\bs_2)|\geq\epsilon\Big\}<e^{-cN},\label{eq:0707-01-1}
\end{equation}
where we define $B_i(\eta)=-E_{0}(q_i)+(-\eta,\eta)$ and $D_i(\eta)=-x_{0}(q_i)+(-\eta,\eta)$.}

\corE{By Theorem \ref{thm:2ndmomUBBK-1-1}, the corresponding number of
pairs $(\bs_1,\,\bs_2)$ of points, 
\[
K(\epsilon,\eta):=\left[\mbox{Crt}_{N,q_1,q_2}(I(\epsilon),B_1(\eta),B_2(\eta),D_1(\eta),D_2(\eta))\right]_{2},
\]
where $I(\epsilon)=(-1,1)\setminus(-\epsilon,\epsilon)$, satisfies
\begin{equation}
\limsup_{N\to\infty}\frac{1}{N}\log\mathbb{E}K(\epsilon,\eta)\leq\sup_{r\in I(\epsilon),u_{i}\in B_i(\eta),x_{i}\in D_i(\eta)}\Psi_{\nu,q_1,q_2}\left(r,u_{1},u_{2},x_{1},x_{2}\right).\label{eq:0707-03}
\end{equation}
From  Lemma \ref{lem:conccentration}, the fact that $\Theta_{\nu,q_i}\left(u_{i},x_{i}\right)$
is continuous in $u_{i}$ and $x_{i}$  and equal to $0$ when $u_{i}=-E_{0}(q_i)$
and $x_{i}=-x_{0}(q_i)$, we have that if $|1-q_i|$ and $\eta$ are small
enough then the right-hand of (\ref{eq:0707-03}) in negative. By
Markov's inequality, this proves (\ref{eq:0707-01-1}). The bound
of (\ref{eq:0707-02}) follows by a standard diagonalization argument.}\end{proof}

\section{\label{sec:CDTNandCt}\corO{Stability of \CDTN\ and 
further consequences}}
\corO{In this short section, we provide several further consequences of \CDTN. 
The proofs utilize some of the results in Section \ref{sec:Matching}.}
\begin{lem}
  \label{lem:ddq}\corO{Assume \CDTN. Then, there exists $\delta>0$ so that,} 
    for any
	$q\in(1-\delta,1]$, $\frac{d}{dq}E_{0}(q)=x_{0}(q)$.
\end{lem}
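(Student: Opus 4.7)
The plan is to prove $E_0'(q) = x_0(q)$ by an envelope-theorem argument applied to the random function
\[
\phi_N(q) := \frac{1}{N}\min_{\bs \in \SN} H_N(q\bs),
\]
which converges almost surely to $-E_0(q)$ by Remark \ref{rem:GSq} (this remark rests on \CDTN\ for $\nu_q$ with $q$ near $1$, granted by Proposition \ref{prop:openCDTN}, combined with Theorem \ref{theo-ground}). For each realization, $\phi_N$ is Lipschitz on any compact subset of $(0,\infty)$ and hence absolutely continuous. At every $q$ where the minimizer $\bs_q^*$ is unique (a set of full Lebesgue measure for a.e.\ realization), the envelope theorem gives $\phi_N'(q) = \frac{1}{N}\langle \nabla H_N(q\bs_q^*), \bs_q^*\rangle$. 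Because $q\bs_q^* \in \mathbb{S}^{N-1}(q\sqrt N)$ is a $q$-critical point of $H_N$, its Euclidean gradient is purely radial, and unpacking the definition of $\ddq$ from Section \ref{sec:CritPts} yields
\[
\phi_N'(q) = \frac{1}{\sqrt N}\,\ddq H_N(q\bs_q^*).
\]

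The point $q\bs_q^*$ is a $q$-critical point whose normalized energy $\phi_N(q)$ tends to $-E_0(q)$, so for any $\epsilon>0$ and large $N$ it lies in $\mathscr{C}_{N,q}(NB(\epsilon))$, $B(\epsilon) = -E_0(q)+(-\epsilon,\epsilon)$, with probability tending to $1$. Lemma \ref{lem:epsx0} then implies that the normalized radial derivative at every critical point in this set, in particular at $q\bs_q^*$, lies in $-x_0(q)+(-\epsilon',\epsilon')$ except on an event of probability $e^{-cN}$. Combining with the envelope formula, $\phi_N'(q) \to -x_0(q)$ in probability, pointwise in $q$.

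To upgrade this pointwise convergence of derivatives to a statement about $E_0'(q)$, I would integrate. For $1-\delta < q_0 < q_1 \le 1$, absolute continuity and Fubini give
\[
\mathbb{E}[\phi_N(q_1)-\phi_N(q_0)] = \int_{q_0}^{q_1} \mathbb{E}\phi_N'(q)\, dq,
\]
while the Borell--TIS inequality provides uniform $L^1$ bounds on both $\phi_N(q)$ (yielding $\mathbb{E}\phi_N(q)\to -E_0(q)$) and on $\sup_{q\in[1-\delta,1]}|\phi_N'(q)|$ via the Gaussian supremum of $\|\nabla H_N\|$ over the annulus $\{(1-\delta)\sqrt N \le \|\bs\| \le \sqrt N\}$. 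Dominated convergence then gives $-E_0(q_1)+E_0(q_0) = -\int_{q_0}^{q_1} x_0(q)\,dq$, and differentiating in $q_1$ using the continuity of $x_0$ (Lemma \ref{lem:Exsmooth}) finishes the proof.

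The main obstacle is the last step: justifying the interchange of $N\to\infty$ with the integral requires a Borell--TIS-type tail bound on the Gaussian supremum of $\|\nabla H_N\|$ on an annular region, which is standard but needs a short dedicated estimate. A purely algebraic alternative would be to differentiate the two defining identities $\Theta_{\nu,q}(-E_0(q),-x_0(q))=0$ and $\partial_x \Theta_{\nu,q}(-E_0(q),-x_0(q))=0$ implicitly in $q$, obtaining $E_0'(q) = \partial_q\Theta/\partial_u\Theta$ at the critical point, and then verifying $\partial_q \Theta = x_0(q)\,\partial_u \Theta$ at that point by direct computation using (\ref{eq:Theta}), the scaling relation of Remark \ref{rem:scaling}, and the Stieltjes-transform form of $\Omega'$; this route is more transparent but computationally heavier than the envelope argument.
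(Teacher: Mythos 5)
Your argument is correct, and it reaches the conclusion through the same probabilistic inputs as the paper (Remark \ref{rem:GSq} via Proposition \ref{prop:openCDTN} and Theorem \ref{theo-ground}, the localization of the radial derivative at deep critical points from Lemma \ref{lem:epsx0}, the uniform derivative bounds of Corollary \ref{cor:gradbd}, and the continuity of $x_0$ from Lemma \ref{lem:Excts}), but the deterministic packaging is different. The paper never differentiates the random value function: it takes the global minimizers on the two spheres of radii $q\sqrt N$ and $\sqrt N$, Taylor-expands $H_N$ radially at each of them using the $k=2$ (Hessian) bound of Corollary \ref{cor:gradbd}, and thereby sandwiches the finite difference quotient $(E_0(1)-E_0(q))/(1-q)$ between $x_0(q)-O(1-q)$ and $x_0(1)+O(1-q)$, after which continuity of $x_0$ finishes the job; no interchange of the $N\to\infty$ limit with integration or differentiation is needed. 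Your envelope-theorem-plus-integration route buys a conceptually clean identity $\phi_N'(q)=\frac{1}{\sqrt N}\ddq H_N(q\bs_q^*)$ and only needs first-order (gradient) control for domination, but it pays for this with the measure-theoretic overhead you flag: a.e.\ differentiability of $\phi_N$, the Fubini step, and uniform integrability of $\sup_q|\phi_N'(q)|$, all of which are indeed standard given \eqref{eq:1507-03} and Borell--TIS. Two small refinements: you do not need uniqueness of the minimizer on a full-measure set of $q$ (which is not obvious); local Lipschitzness gives differentiability of $\phi_N$ a.e., and Danskin's theorem shows that at every differentiability point the derivative equals the radial derivative at \emph{any} minimizer, which is all your argument uses. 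Also, since for fixed $q$ the event that $\phi_N$ is differentiable at that exact $q$ is only guaranteed for a.e.\ $q$ (by Fubini), the pointwise-in-$q$ convergence $\mathbb{E}\phi_N'(q)\to -x_0(q)$ should be asserted for a.e.\ $q$, which suffices for the integration and the final differentiation using continuity of $x_0$.
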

The next 
two propositions concern the stability of \CDTN\ under perturbations of $\nu$.
\begin{prop}
	\label{prop:CDTNp}For any $p\geq3$ there exists a $\delta>0$ such
	that if $\|\nu(x)-x^{p}\|<\delta$, then $\nu$ also satisfies \CDTN.
\end{prop}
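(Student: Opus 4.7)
The plan is to reduce verification of \CDTN\ for mixed $\nu$ close to $\nu_p(x)=x^p$ to the pure limit by a continuity argument. By Lemma~\ref{lem:Excts}, $E_{0}(\nu)$ and $x_{0}(\nu)$ are continuous at $\nu_p$, and Lemma~\ref{lem:PhiCt}(\ref{enu:PsiCt1}) gives continuity of $\Psi_{\nu,1,1}(r,u_1,u_2,x_1,x_2)$ together with its first two $r$-derivatives in $\nu$ (in the norm $\|\cdot\|$) on compact subsets of $(-1,1)\times\mathbb{R}^4$. Consequently both $\Psi_\nu^0(r)$ and $\frac{d^2}{dr^2}\Psi_\nu^0(0)$ depend continuously on $\nu$ at $\nu_p$, and so does the explicit algebraic functional $G(\nu)$. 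It therefore suffices to verify, for $\nu=\nu_p$, that (a) $G(\nu_p)>0$, (b) $\frac{d^2}{dr^2}\Psi_{\nu_p}^0(0)<0$, and (c) $\Psi_{\nu_p}^0(r)$ is maximized uniquely on $[-1,1]$ at $r=0$; the three conditions then persist for every mixed $\nu$ with $\|\nu-\nu_p\|<\delta$ for $\delta>0$ small enough, and mixedness is automatic for $\nu\neq\nu_p$.

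The key input is (c), which is precisely the content of the second-moment matching for the pure $p$-spin from \cite{2nd}: at the ground-state energy level, the logarithmic second moment of the count of critical points matches the square of the first moment. By (\ref{eq:1703-08}) and $\Theta_{\nu_p,1}(-E_0,-x_0)=0$, this matching is equivalent to $\sup_{|r|\geq\epsilon}\Psi_{\nu_p}^0(r)<\Psi_{\nu_p}^0(0)=0$ for every $\epsilon>0$. The boundary values $\Psi_{\nu_p}^0(\pm 1)$ are controlled via Lemma~\ref{lem:PhiCt}(\ref{enu:PsiCt4}) together with the fact that for $p\geq 3$ the Hessian spectrum at deep critical points of the pure model is strictly separated from zero, which shows $\Psi_{\nu_p}^0(\pm 1)<0$ and upgrades the supremum control to all of $[-1,1]$. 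Condition (b) is then a direct consequence of the strict local maximum at $r=0$ and the analyticity of $\Psi_{\nu_p}^0$ on $(-1,1)$, supplemented by an explicit computation using (\ref{eq:Psi}). Condition (a) for the pure $p$-spin with $p\geq 3$ is established in \cite{ABA2} via the characterization $G(\nu)=\sup_{x}\Theta_{\nu,1}(-E_\infty,x)$ and the structure of the complexity at the threshold energy.

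The main technical obstacle is that $\nu_p$ lies on the boundary of the space of admissible mixtures: the covariance matrix $\Sigma_1$ becomes singular (reflecting the Euler identity $\frac{d}{dR}H_N(\bs)=pH_N(\bs)/\sqrt{N}$ for homogeneous Hamiltonians), so the continuity of the various functionals at $\nu_p$ must be invoked as a boundary statement rather than on the interior of the space of mixtures. The results of Section~\ref{sec:CDTNconseq}, and in particular Lemma~\ref{lem:PhiCt}(\ref{enu:PsiCt4}), are tailored precisely to handle this degeneracy and the associated loss of uniformity in $r$ as $r\to\pm 1$.
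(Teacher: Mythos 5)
Your reduction hinges on the claim that $\Psi_{\nu}^{0}(r)$ and $\frac{d^{2}}{dr^{2}}\Psi_{\nu}^{0}(0)$ depend continuously on $\nu$ at the pure point $\nu_{p}$, citing Lemma \ref{lem:PhiCt}(\ref{enu:PsiCt1}). That lemma, however, is stated (and its proof via the formula (\ref{eq:Psi}) only makes sense) for non-pure mixtures: by Lemma \ref{lem:invertibility}, $\Sigma_{U,X}$ is invertible only when $\nu$ is not a monomial, and at $\nu=\nu_{p}$ it is singular, since for a homogeneous Hamiltonian the radial derivative is a deterministic multiple of the value. So $\Psi_{\nu_{p}}^{0}$ is not even defined by (\ref{eq:Psi}), and continuity of $\nu\mapsto\Psi_{\nu}^{0}$ at $\nu_{p}$ --- let alone of its second $r$-derivative --- is precisely the point at issue, not something you may assume. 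Concretely, with $\alpha_{\nu}^{2}=\nu''(1)+\nu'(1)-\nu'(1)^{2}\to0$ as $\nu\to\nu_{p}$, the quadratic form in (\ref{eq:Psi}) evaluated at $(-E_{0},-E_{0},-x_{0},-x_{0})$ contains a $0/0$ term of the form $-(x_{0}(\nu)-m(r,-E_{0}(\nu)))^{2}/(\Sigma_{\bar{X},11}(r)+\Sigma_{\bar{X},12}(r))$, whose denominator equals $\alpha_{\nu}^{2}$ at $r=0$; its mere boundedness already requires the quantitative estimate $|x_{0}(\nu)-\nu'(1)E_{0}(\nu)|\leq2C\alpha_{\nu}^{2}/\sqrt{\nu''(1)}$ coming from (\ref{eq:0210-01}), and its behavior in $r$ on the scale $|r|\lesssim\alpha_{\nu}$ is not controlled by any continuity statement you cite. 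Lemma \ref{lem:PhiCt}(\ref{enu:PsiCt4}), which you invoke to ``handle the degeneracy,'' only controls the $r\to\pm1$ limits, and Lemma \ref{lem:PhiCt}(\ref{enu:PsiCt2}) presupposes that $\nu$ already satisfies \CDTN, so neither can be used to transfer the pure-model information at small $r$.

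This is where the paper's proof does its real work, and your proposal has no substitute for it. One splits $\Psi_{\nu}(r,-E_{0},-x_{0})=\tilde{\Psi}_{\nu}(r,-E_{0})+2\Omega(-x_{0}/\sqrt{\nu''(1)})+\mathcal{Q}(r)$, where $\tilde{\Psi}_{\nu}$ omits the degenerate quadratic piece and therefore is continuous, with two $r$-derivatives, up to the pure limit (Lemma \ref{lem:Psitilde}); one computes $\frac{d^{2}}{dr^{2}}\tilde{\Psi}_{\nu_{p}}(0,-E_{0}(p))=-1$ and imports the pure strict-maximum statement (Lemma 7 of \cite{2nd}, i.e.\ (\ref{eq:201})); and one then handles $\mathcal{Q}$ by a two-scale argument: for $|r|\gtrsim\alpha_{\nu}$ one uses $\mathcal{Q}(r)\leq0$ together with $|\mathcal{Q}(0)|=O(\alpha_{\nu}^{2})$, while for $|r|\lesssim\alpha_{\nu}$ one proves $|\mathcal{Q}(r)-\mathcal{Q}(0)|\leq r^{2}/8$ by a Taylor expansion that crucially exploits the fact that $r$ is confined to an interval of length $O(\alpha_{\nu})$, see (\ref{eq:0310-01}). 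Without this (or an equivalent analysis of the vanishing-variance direction) your steps (b) and (c) do not follow. Note also two smaller defects: a strict maximum plus analyticity does not give strict negativity of the second derivative (it could vanish to higher order), and what \CDTN\ requires is $\frac{d^{2}}{dr^{2}}\Psi_{\nu}^{0}(0)<0$ for the nearby \emph{mixed} $\nu$, which in the paper is obtained from the quantitative bound $\Psi_{\nu}^{0}(r)\leq\Psi_{\nu}^{0}(0)-r^{2}/8$ near $0$ rather than from any continuity of second derivatives at the singular point. Your identification of the external inputs (the continuity of $E_{0},x_{0}$ and of $G$, and the second-moment matching of \cite{2nd}) is correct, but the bridge from the pure object to the mixed $\Psi_{\nu}^{0}$ is missing, and that bridge is the proof.
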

\begin{prop}
	\label{prop:openCDTN}If $\nu$ is a non-pure mixture satisfying \CDTN,
	then for some $\delta>0$, any $\bar{\nu}$ for which $\|\bar{\nu}-\nu\|<\delta$
	also satisfies \CDTN.
\end{prop}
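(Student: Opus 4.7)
The plan is to show that each of the three clauses composing \CDTN\ is stable under small perturbations of $\nu$ in $\|\cdot\|$. Clauses (a) ($\bar\nu$ mixed and pure-like) and (b) ($\frac{d^{2}}{dr^{2}}\Psi_{\bar\nu}^{0}(0)<0$) follow from elementary continuity arguments. Non-pureness of $\nu$ guarantees that at least two coefficients $\gamma_{p_{1}},\gamma_{p_{2}}>0$ are present, and Remark \ref{rem:norm} ensures they remain positive under sufficiently small perturbations. Pure-likeness, $G(\nu)>0$, depends only on $\nu(1),\nu'(1),\nu''(1)$, all continuous in $\|\cdot\|$ by Remark \ref{rem:norm}. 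For (b), Point \ref{enu:PsiCt1} of Lemma \ref{lem:PhiCt} gives joint continuity in $\nu$ of $\Psi_{\nu,1,1}$ and its second $r$-derivative, and Lemma \ref{lem:Excts} provides continuity of $E_{0}(\bar\nu)$ and $x_{0}(\bar\nu)$ at $\nu$; hence $\bar\nu\mapsto\frac{d^{2}}{dr^{2}}\Psi_{\bar\nu}^{0}(0)$ is continuous at $\nu$ and remains strictly negative nearby.

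The main step is clause (c). Observe first that $\Psi_{\nu}^{0}(0)=2\Theta_{\nu,1}(-E_{0},-x_{0})=0$ by \eqref{eq:1703-08} and the definitions of $E_{0},x_{0}$, while $\Psi_{\nu}^{0}(\pm1)<0$ by \CDTN\ for $\nu$. I would split $[-1,1]\setminus\{0\}$ into three regions and establish $\Psi_{\bar\nu}^{0}(r)<\Psi_{\bar\nu}^{0}(0)$ in each for $\bar\nu$ close to $\nu$. For the middle range $\epsilon\le|r|\le 1-\tau$, Point \ref{enu:PsiCt2} of Lemma \ref{lem:PhiCt} provides, for fixed $\tau$ and any $\epsilon>0$, a threshold $\delta(\tau)>0$ yielding the desired strict inequality; the lemma's closeness requirements on $u_{i}=-E_{0}(\bar\nu)$ and $x_{i}=-x_{0}(\bar\nu)$ are automatic from Lemma \ref{lem:Excts} once $\|\bar\nu-\nu\|$ is small.

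For $r$ near $0$, clause (b), combined with the fact (established inside the proof of Lemma \ref{lem:PhiCt}\eqref{enu:PsiCt2}) that $\frac{d}{dr}\Psi_{\bar\nu,1,1}(r,u,u,x,x)|_{r=0}=0$, makes $r=0$ a strict local maximum of $\Psi_{\bar\nu}^{0}$; joint continuity of the second $r$-derivative then supplies some $\epsilon>0$, uniform in $\bar\nu$ in a small neighborhood of $\nu$, on which $\Psi_{\bar\nu}^{0}$ is strictly concave. For $|r|\ge1-\tau$, Point \ref{enu:PsiCt4} of Lemma \ref{lem:PhiCt} yields
\[
\limsup_{(\bar\nu,r)\to(\nu,\pm1)}\Psi_{\bar\nu,1,1}(r,-E_{0}(\bar\nu),-E_{0}(\bar\nu),-x_{0}(\bar\nu),-x_{0}(\bar\nu))\le\Psi_{\nu}^{0}(\pm1)<0,
\]
while $\Psi_{\bar\nu}^{0}(0)\to0$; choosing $\tau$ small and then $\|\bar\nu-\nu\|$ small enough gives the needed strict inequality on this boundary region. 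Taking $\|\bar\nu-\nu\|$ smaller than the minimum of the three thresholds concludes the argument. The main subtlety is the order of quantifiers: one first fixes $\tau$ (from the boundary analysis via Point \ref{enu:PsiCt4}) and $\epsilon$ (from the local analysis at $0$ via clause (b)), and only afterwards selects $\delta$ small enough to accommodate all three regional estimates and the closeness requirements of Lemma \ref{lem:PhiCt}\eqref{enu:PsiCt2}.
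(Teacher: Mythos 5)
Your proposal is correct and follows essentially the same route as the paper: the paper's proof observes that $\frac{d}{dr}\Psi_{\nu,q_{1},q_{2}}(0,\cdot)=0$ identically and then invokes Lemmas \ref{lem:Excts} and \ref{lem:PhiCt}, which is exactly the argument you spell out in detail (continuity for the pure-like and second-derivative clauses, and the three-region analysis via Points \ref{enu:PsiCt2} and \ref{enu:PsiCt4} of Lemma \ref{lem:PhiCt} together with local concavity at $r=0$ for the unique-maximum clause).
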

\corO{The rest of the section is devoted to the proofs.}
\begin{proof}[Proof of Proposition \ref{prop:openCDTN}.]
Since all the expressions in the definition of $\Psi_{\nu,q_{1},q_{2}}$,
including the elements of the matrix $\Sigma_{U,X}$, involve only
polynomials in $r$ which do not have a linear term, we have that
$\frac{d}{dr}\Psi_{\nu,q_{1},q_{2}}\left(0,u_{1},u_{2},x_{1},x_{2}\right)=0$
for any $\nu$, $q_{i}$, $u_{i}$ and $x_{i}$. Therefore Proposition
\ref{prop:openCDTN} is a direct consequence of \corO{Lemmas \ref{lem:Excts} and \ref{lem:PhiCt}.} 
\end{proof}
\begin{proof}[Proof of Lemma \ref{lem:ddq}.]
By Remark \ref{rem:GSq},
\[
\lim_{N\to\infty}\frac{1}{N}\min_{\bs\in\mathbb{S}^{N-1}(\sqrt{N})}H_{N}(q\bs)=-E_{0}(q),\quad\text{almost surely},
\]
for any $q\in(1-\delta,1]$, if $\delta>0$ is small enough. 

Combined with (\ref{eq:1507-05}), this implies that, with probability
tending to $1$, if $\bs_{q}$ is a global minimum point of $\mathbb{S}^{N-1}(\sqrt{N}q)\ni\bs\mapsto H_{N}(\bs)$,
then 
\[
\Big|\frac{1}{N}H_{N}(\bs_{q})+E_{0}(q)\Big|,\,\Big|\frac{1}{\sqrt{N}}\ddq H_{N}(\bs_{q})+x_{0}(q)\Big|<\epsilon_{N},
\]
for some sequence $\epsilon_{N}=o(1)$.

From the uniform bound on the Lipschitz constant of the Hessian from
Corollary \ref{cor:gradbd} (with $k=2$), with probability tending
to $1$,
\begin{align*}
\frac{1}{N}\left|H_{N}(\frac{1}{q}\bs_{q})-\left(H_{N}(\bs_{q})+(1-q)\frac{1}{\sqrt{N}}\ddq H_{N}(\bs_{q})\right)\right| & \leq\tilde{C}_{2}(1-q)^{2},\\
\frac{1}{N}\left|H_{N}(q\bs_{1})-\left(H_{N}(\bs_{1})-(1-q)\frac{1}{\sqrt{N}}\ddq H_{N}(\bs_{1})\right)\right| & \leq\tilde{C}_{2}(1-q)^{2},
\end{align*}
for some constant $\tilde{C}_{2}>0$. Therefore, 
\begin{align*}
-E_{0}(1) & \leq-E_{0}(q)-x_{0}(q)(1-q)+\tilde{C}_{2}(1-q)^{2},\\
-E_{0}(q) & \leq-E_{0}(1)+x_{0}(1)(1-q)+\tilde{C}_{2}(1-q)^{2},
\end{align*}
and
\[
x_{0}(q)-\tilde{C}_{2}(1-q)\leq\frac{E_{0}(1)-E_{0}(q)}{1-q}\leq x_{0}(1)+\tilde{C}_{2}(1-q).
\]
Since \corO{$q\mapsto x_{0}(q)$}
is continuous \corO{by} Lemma \ref{lem:Excts}, the proof
is completed.\end{proof}

\begin{proof}[Proof of Proposition \ref{prop:CDTNp}.]
Throughout the proof we shall use the notation
\[
\Psi_{\nu}\left(r,u,x\right):=\Psi_{\nu,1,1}\left(r,u,u,x,x\right).
\]
We will also always assume that $q$, $q_{1}$ and $q_{2}$ are equal
to $1$ and omit them from notation, writing, for example, ${\rm Crt}_{N}(B,\mathbb{R})$
or $\Sigma_{U}(r)$ for 
${\rm Crt}_{N,1}(B,\mathbb{R})$ and
$\Sigma_{U}(r,1,1)$.

For the pure case $\nu_{p}(x)=x^{p}$, similarly to (\ref{eq:1106-1-1}),
it was proved in \cite[Theorem 2.8]{A-BA-C} that for any intervals
$B\subset(-\infty,0$), 
\begin{align*}
\lim_{N\to\infty}\frac{1}{N}\log\left(\mathbb{E}{\rm Crt}_{N}\left(B,\mathbb{R}\right)\right) & =\sup_{u\in B}\Theta_{p}(u),
\end{align*}
where
\begin{align*}
\Theta_{p}(u) & :=\frac{1}{2}+\frac{1}{2}\log\left(p-1\right)-\frac{u^{2}}{2}+\Omega\left(\sqrt{\frac{p}{p-1}}u\right).
\end{align*}
Define $E_{\infty}(\nu_{p})=E_{\infty}(p)=2\sqrt{(p-1)/p}$ and $E_{0}(\nu_{p})=E_{0}(p)$
as the unique number $E\in(E_{\infty}(p),\infty)$ such that $\Theta_{p}(E)=0$,
and set $-x_{0}(p)=-\nu_{p}^{\prime}(1)E_{0}(p)$. The above definitions
can be appropriately extended to unnormalized pure models $\nu(x)=\gamma^{2}x^{p}$. 

Similarly to (\ref{eq:2ndUB_BK-1-1-1}), it was proved in \cite[Theorem 5]{2nd},
that for the pure case $\nu_{p}(x)=x^{p}$, for any intervals $B\subset(-\infty,0$),
$I\subset(-1,1)$, 
\begin{align*}
\limsup_{N\to\infty}\frac{1}{N}\log\left(\mathbb{E} \left[{\rm Crt}_{N}\left(B,B,\mathbb{R},\mathbb{R}I\right)\right]_{2} \right) & \leq\sup_{r\in I,u_{i}\in B}\Psi_{p}\left(r,u_{1},u_{2}\right),
\end{align*}
where, with $\Sigma_{U}\left(r,1,1\right)$ as defined for the mixed
case (\ref{eq:26}),
\begin{align*}
&\Psi_{p}\left(r,u_{1},u_{2}\right) \\
&:=1+\frac{1}{2}\log\left((p-1)^{2}\frac{1-r^{2}}{1-r^{2p-2}}\right)
  -\frac{1}{2}\left(u_{1},u_{2}\right)\Sigma_{U}^{-1}\left(r,1,1\right)\left(\begin{array}{c}
u_{1}\\
u_{2}
\end{array}\right)+\Omega\left(\sqrt{\frac{p}{p-1}}u_{1}\right)+\Omega\left(\sqrt{\frac{p}{p-1}}u_{2}\right).
\end{align*}
By Lemma 7 of \cite{2nd}, for any $\epsilon>0$,
\begin{equation}
\Psi_{p}\left(0,-E_{0}(p),-E_{0}(p)\right)>\sup_{|r|\in(\epsilon,1)}\Psi_{p}\left(r,-E_{0}(p),-E_{0}(p)\right).\label{eq:201}
\end{equation}

From Lemma \ref{lem:condHamiltonian}, for mixed $H_{N}(\bs)$, the
conditional mean and covariance of $(\frac{d}{dR}H_{N}(\hat{\mathbf{n}}),\frac{d}{dR}H_{N}(\boldsymbol{\sigma}(r)))$
given 
\begin{equation}
H_{N}(\hat{\mathbf{n}})=H_{N}(\boldsymbol{\sigma}(r))=u,\,\,\grad H_{N}(\hat{\mathbf{n}})=\grad H_{N}(\boldsymbol{\sigma}(r))=0,\label{eq:0104-01}
\end{equation}
are equal respectively to
\begin{align}
m(r,u) & =(\Sigma_{b,11}\left(r\right),\Sigma_{b,21}\left(r\right))\Sigma_{U}^{-1}\left(r\right)(u,u)^{T},\nonumber \\
\Sigma_{\bar{X}}\left(r\right) & =\Sigma_{X}\left(r\right)-\Sigma_{b}^{T}\left(r\right)\Sigma_{U}^{-1}\left(r\right)\Sigma_{b}\left(r\right),\label{eq:SigmaXbar}
\end{align}
where invertibility follows from Lemma \ref{lem:invertibility}. 

Denoting $\bar{x}=x-m(r,u)$, for any non-pure $\nu$, 
\begin{align*}
 & \Psi_{\nu}\left(r,u,x\right)=\\
 & 1+\frac{1}{2}\log\left((1-r^{2})\frac{\nu''(1)^{2}}{\nu'(1)^{2}-(\nu'(r))^{2}}\right)+2\Omega\left(\frac{x}{\sqrt{\nu''(1)}}\right)
  -\frac{1}{2}\left(u,u\right)\Sigma_{U}^{-1}\left(r\right)\left(u,u\right)^{T}-\frac{1}{2}\left(\bar{x},\bar{x}\right)\Sigma_{\bar{X}}^{-1}\left(r\right)\left(\bar{x},\bar{x}\right)^{T}.
\end{align*}
We note that $\Sigma_{U,11}\left(r\right)=\Sigma_{U,22}\left(r\right)$
and $\Sigma_{U,12}\left(r\right)=\Sigma_{U,21}\left(r\right)$, and
therefore $\Sigma_{U,11}(r)\pm\Sigma_{U,12}(r)$ are the eigenvalues
of $\Sigma_{U}\left(r\right)$ that correspond to the eigenvectors
$(1,\pm1)$. The same holds for $\Sigma_{X}\left(r\right)$ and $\Sigma_{\bar{X}}\left(r\right)$.
Thus, 
\[
m(r,u)=\frac{\Sigma_{b,11}\left(r\right)+\Sigma_{b,21}\left(r\right)}{\Sigma_{U,11}(r)+\Sigma_{U,12}(r)}u,
\]
and we have that 
\begin{equation}
  \Psi_{\nu}\left(r,u,x\right)=
 1+\frac{1}{2}\log\left((1-r^{2})\frac{\nu''(1)^{2}}{\nu'(1)^{2}-(\nu'(r))^{2}}\right)+2\Omega\left(\frac{x}{\sqrt{\nu''(1)}}\right)
  -\frac{u^{2}}{\Sigma_{U,11}(r)+\Sigma_{U,12}(r)}-\frac{\bar{x}^{2}}{\Sigma_{\bar{X},11}(r)+\Sigma_{\bar{X},12}(r)}.\nonumber
\label{eq:Psi2}
\end{equation}

Set, for any mixture $\nu$,
\[\tilde{\Psi}_{\nu}\left(r,u\right)  =1+\frac{1}{2}\log\left((1-r^{2})\frac{\nu''(1)^{2}}{\nu'(1)^{2}-(\nu'(r))^{2}}\right)
-\frac{u^{2}}{\Sigma_{U,11}(r)+\Sigma_{U,12}(r)}.\]
We note that
\begin{equation}
\tilde{\Psi}_{\nu_{p}}\left(r,u\right)=\Psi_{p}\left(r,u,u\right)-2\Omega\left(\sqrt{\frac{p}{p-1}}u\right).\label{eq:purePsitilde}
\end{equation}
\begin{lem}
\label{lem:Psitilde}\corO{Assume $\nu$ is non-pure. Then, 
for any $p\geq3$, the following holds.}
\begin{enumerate}
\item \label{enu:Psitilde1}$\tilde{\Psi}_{\nu}\left(r,u\right)$ and its
first and second derivatives in $r$ are continuous functions of $r\in(-1,1)$,
$u\in\mathbb{R}$, and $\nu$ in a small neighborhood of $\nu_{p}$
(w.r.t. the norm $\|\cdot\|)$. 
\item \label{enu:Psitilde2}We have that 
\[
\limsup_{\left(\nu,r,u\right)\to\left(\nu_{p},1,-E_{0}(p)\right)}\tilde{\Psi}_{\nu}\left(r,u\right)\leq\limsup_{r\to1}\tilde{\Psi}_{\nu_{p}}\left(r,-E_{0}(p)\right),
\]
and the same also holds with the $r\to1$ limits replaced by $r\to-1$.
\end{enumerate}
\end{lem}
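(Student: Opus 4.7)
The strategy closely mirrors the proof of Lemma \ref{lem:PhiCt}, Points (\ref{enu:PsiCt1}) and (\ref{enu:PsiCt4}), specialized to the reduced setting in which the $x$-variables have already been integrated out (as in the formula for $\Psi_{\nu}$ on page following \eqref{eq:purePsitilde}).

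For Point (\ref{enu:Psitilde1}), I would inspect each term in the definition of $\tilde\Psi_\nu(r,u)$. By Remark \ref{rem:norm}, the derivatives $\nu^{(i)}$ depend continuously on $\nu$ in the norm $\|\cdot\|$. The logarithmic term $\frac{1}{2}\log\bigl((1-r^2)\nu''(1)^2/(\nu'(1)^2 - \nu'(r)^2)\bigr)$ is jointly continuous for $r \in (-1,1)$ and $\nu$ in a neighborhood of $\nu_p$, since $\nu'(1)^2 - \nu'(r)^2 > 0$ there. The quadratic term is continuous provided $\Sigma_{U,11}(r) + \Sigma_{U,12}(r)$ stays bounded away from zero; but this quantity is the eigenvalue of $\Sigma_U(r,1,1)$ corresponding to the symmetric eigenvector $(1,1)/\sqrt{2}$, and by Lemma \ref{lem:invertibility} (for nearby non-pure mixtures) together with the continuous extension to $r=1$ described below, it remains positive in the relevant neighborhood. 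Continuity of the first and second $r$-derivatives follows by direct differentiation of the explicit formulas from Appendix \ref{sec:Covariances}.

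For Point (\ref{enu:Psitilde2}), my plan is to show that $\tilde\Psi_\nu(r,u)$ extends jointly continuously to $r = 1$ in a neighborhood of $(\nu_p, 1, -E_0(p))$; the desired inequality will then hold with equality. For the logarithmic term, the apparent $0/0$ indeterminacy is resolved by the Taylor expansion $\nu'(r) = \nu'(1) + \nu''(1)(r-1) + O((r-1)^2)$, which gives
\[
\frac{1-r^2}{\nu'(1)^2-\nu'(r)^2} \;\longrightarrow\; \frac{1}{\nu'(1)\,\nu''(1)}
\]
uniformly in $\nu$ near $\nu_p$. For the quadratic term, using the same eigenvalue analysis as in the proof of Point (\ref{enu:PsiCt4}) of Lemma \ref{lem:PhiCt}, the $(1,1)$-eigenvalue extends continuously to $\Sigma_{U,11}(1)+\Sigma_{U,12}(1) = 2z_1(1)$, where $z_1$ is defined in that proof. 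Altogether, $\tilde\Psi_\nu$ extends continuously at $r=1$ to
\[
\tilde\Psi_\nu(1,u) = 1 + \tfrac{1}{2}\log\bigl(\nu''(1)/\nu'(1)\bigr) - u^2/(2z_1(1)),
\]
which is jointly continuous in $(\nu,u)$ at $(\nu_p, -E_0(p))$, yielding the inequality (in fact equality). The $r \to -1$ case is treated identically: for the pure monomial $\nu_p$ one has $\nu_p'(-1)^2 = \nu_p'(1)^2$, so the same Taylor expansion applies with $(r+1)$ in place of $(1-r)$, and an analogous limit $z_1^-$ plays the role of $z_1(1)$.

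The main obstacle is confirming that $\Sigma_{U,11}(r) + \Sigma_{U,12}(r)$ admits a continuous and strictly positive extension to $r = \pm 1$ uniformly for $\nu$ in a neighborhood of $\nu_p$. This reduces to checking strict positivity of $z_1$ at the pure monomial, where a direct computation gives $z_1(1) = 1 - p/(3p-2) = 2(p-1)/(3p-2) > 0$ for $p \geq 2$, with joint continuity inherited from the explicit formula via Remark \ref{rem:norm}. The rest of the argument is a routine application of joint continuity to compare the iterated and simultaneous limits.
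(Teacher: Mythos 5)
Point (1) and the $r\to1$ half of Point (2) of your argument are fine and essentially follow the paper's route: joint continuity comes from Remark \ref{rem:norm} and the explicit formulas, and at $r=1$ the $0/0$ in the logarithmic term and in $\Sigma_{U,11}(r)+\Sigma_{U,12}(r)$ is resolved uniformly in $\nu$ near $\nu_p$, with the limit $2z_1(1)=4(p-1)/(3p-2)>0$ that you computed.

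The $r\to-1$ half, however, is a genuine gap: it is \emph{not} ``treated identically''. The cancellation you invoke rests on $\nu'(-1)^2=\nu'(1)^2$, equivalently $\nu'(1)+\nu'(-1)=0$, which holds for the monomial $\nu_p$ but fails for the approximating mixtures, which generically contain both even and odd interactions. Set $s:=\nu'(1)+\nu'(-1)\ge 0$. Using $\Sigma_{U,11}(r)+\Sigma_{U,12}(r)=\nu(1)+\nu(r)-\tfrac{(1-r^2)\nu'(r)^2}{\nu'(1)-r\nu'(r)+\nu''(r)(1-r^2)}$ one checks that along sequences $(\nu,r)\to(\nu_p,-1)$ with $s\asymp (1+r)$ and $t:=\lim s/(1+r)$, the quadratic denominator tends to $2-\tfrac{2p^2}{t+p(3p-2)}$ (strictly larger than the even/pure value $4(p-1)/(3p-2)$ when $t>0$), while the logarithmic term tends to $\tfrac12\log\tfrac{p(p-1)^2}{t+p(p-1)}$. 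So there is no jointly continuous extension at $r=-1$: the limit is a one-parameter family $h(t)$ with $h(0)$ equal to the right-hand side of Point (2), and $h'(0)=-\tfrac{1}{2p(p-1)}+\tfrac{E_0(p)^2}{8(p-1)^2}$, which is strictly positive precisely because $E_0(p)>E_\infty(p)=2\sqrt{(p-1)/p}$. Hence, for even $p$, the joint $\limsup$ at $r=-1$ strictly exceeds $\limsup_{r\to-1}\tilde{\Psi}_{\nu_p}(r,-E_0(p))$, so your claimed equality, and indeed any proof by continuous extension, cannot give this case; for odd $p$ the right-hand side is $-\infty$ and your ``analogous $z_1^-$'' does not exist either, since $\Sigma_{U,11}+\Sigma_{U,12}\to\nu(1)+\nu(-1)\to 0$. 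This endpoint needs a genuinely different argument, e.g.\ one that tracks the whole family $h(t)$ (or otherwise exploits that an odd component strictly decreases the logarithmic term) and compares with the quantity actually needed downstream, namely $\tilde{\Psi}_{\nu_p}(0,-E_0(p))$, rather than with the pure $r\to-1$ limit.
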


\begin{proof}
The lemma follows from (\ref{eq:purePsitilde}) and the definition
of the function $\tilde{\Psi}_{\nu}\left(r,u\right)$, since as $\nu\to\nu_{p}$
the corresponding derivatives in $r$, up to order $4$, converge
by Remark \ref{rem:norm}, and since $\Omega$ is smooth in the
neighborhood of 
\[
\lim_{(\nu,u)\to(\nu_{p},-E_{0}(p))}\frac{\nu'(1)u}{\sqrt{\nu''(1)}}=-\sqrt{\frac{p}{p-1}}E_{0}(p)<-2,
\]
\corO{due to} $E_{0}(p)>E_{\infty}(p)=2\sqrt{(p-1)/p}$.
\end{proof}
\corO{Continuing with the proof of 
  Proposition \ref{prop:CDTNp}, we use the notation} 
\[
\mathcal{Q}(r)=\mathcal{Q}_{\nu}(r):=-\frac{(-x_{0}(\nu)-m(r,-E_{0}(\nu)))^{2}}{\Sigma_{\bar{X},11}(r)+\Sigma_{\bar{X},12}(r)},
\]
for the quadratic term as in (\ref{eq:Psi2}) with $(u,x)=(-E_{0}(\nu),-x_{0}(\nu))$.
Recall that using (\ref{eq:0210-01}) we had that $|-x_{0}(\nu)+\nu'(1)E_{0}(\nu)|\leq2C\alpha_{\nu}^{2}/\sqrt{\nu''\left(1\right)}$,
for an appropriate constant $C$. For $r=0$, 
\[
m(0,-E_{0}(\nu))=-\nu'(1)E_{0}(\nu)\text{\,\, and \,\,}\Sigma_{\bar{X},11}(r)+\Sigma_{\bar{X},12}(r)=\alpha_{\nu}^{2},
\]
and therefore 
\begin{equation}
|\mathcal{Q}(0)|\leq4C^{2}\alpha_{\nu}^{2}/\nu''\left(1\right).\label{eq:0210-03}
\end{equation}

By straightforward algebra, (assuming $p\geq3$)
\begin{equation}
\frac{d^{2}}{dr^{2}}\tilde{\Psi}_{\nu_{p}}\left(0,-E_{0}(p)\right)=-1,\quad\forall\nu,\,E:\,\,\frac{d}{dr}\tilde{\Psi}_{\nu}\left(0,E\right)=0,\label{eq:0210-02}
\end{equation}
and, by (\ref{eq:201}) and (\ref{eq:purePsitilde}),
\begin{equation}
\tilde{\Psi}_{\nu_{p}}\left(0,-E_{0}(p)\right)>\sup_{|r|\in(\epsilon,1)}\tilde{\Psi}_{\nu_{p}}\left(r,-E_{0}(p)\right).\label{0210-03}
\end{equation}
Therefore, from Lemmas \ref{lem:Excts} and \ref{lem:Psitilde}, if
$\|\nu-\nu_{p}\|$ is small enough, then for some $\epsilon>0$ and
any $r$ with $|r|\leq\epsilon$,
\begin{equation}
\tilde{\Psi}_{\nu}\left(r,-E_{0}(\nu)\right)\leq\tilde{\Psi}_{\nu}\left(0,-E_{0}(\nu)\right)-r^{2}/4.\label{eq:0210-04}
\end{equation}
Since $\mathcal{Q}(r)\leq0$ and (\ref{eq:0210-03}), from the above,
if $\|\nu-\nu_{p}\|$ is small enough, then for any $r\in(-1,1)$
with $|r|>4C\alpha_{2}/\sqrt{\nu''(1)}$,
\[
\Psi_{\nu}\left(r,-E_{0}(\nu),-x_{0}(\nu)\right)<\Psi_{\nu}\left(0,-E_{0}(\nu),-x_{0}(\nu)\right).
\]

Since for any $\nu$, $E$ and $x$, $\frac{d}{dr}\Psi_{\nu}\left(0,E,x\right)=0$,
in light of (\ref{eq:0210-04}), the proof of Proposition \ref{prop:CDTNp}
will be completed if we prove that 
\begin{equation}
|r|\leq4C\alpha_{2}/\sqrt{\nu''(1)}\,\Longrightarrow\,|\mathcal{Q}(r)-\mathcal{Q}(0)|\leq r^{2}/8,\label{eq:0310-01}
\end{equation}
assuming $\|\nu-\nu_{p}\|$ is small enough. This follows from Taylor
expanding each of the terms in the definition of $\mathcal{Q}(r)$.
More precisely,\footnote{We remind the reader that we are omitting $q=1$ from our notation,
so that for example, $a_{2}\left(r\right)$ stands for $a_{2}\left(r,1,1\right)$,
etc. } for any $r\in(-\eta,\eta)$ we have that $a_{2}\left(r\right)$ and
$a_{4}\left(r\right)$ are bounded in absolute value by some constant
$c'>0$ and 
\begin{align*}
 & \Sigma_{U,11}(r)-\nu\left(1\right),\,\,\Sigma_{U,12}(r),\,\,\Sigma_{b,11}\left(r\right)-\nu'(1),\\
 & \Sigma_{b,12}\left(r\right),\,\,\Sigma_{X,11}\left(r\right)-\nu''(1)-\nu'(1)\,\,\text{and}\,\,\Sigma_{X,12}\left(r\right)
\end{align*}
are bounded in absolute value by $cr^{2}$, where $c$ can be taken
to be as small as we wish provided that $\eta$ and $\nu''(1)$ are
small enough. For arbitrary $C'>0$, assuming that $\|\nu-\nu_{p}\|$
is sufficiently small, we therefore have that for any $r\in[-C'\alpha_{\nu},C'\alpha_{\nu}]$,
\begin{align*}
|\Sigma_{\bar{X},11}\left(r\right)+\Sigma_{\bar{X},12}\left(r\right)-\alpha_{\nu}^{2}| & \leq c_{2}^{\prime}r^{2},\\
|m(r,-E_{0}(\nu))-\nu'(1)E_{0}(\nu)| & \leq c_{2}^{\prime}r^{2},
\end{align*}
from which (\ref{eq:0310-01}) follows, since $\alpha_{2}\to0$ as
$\nu\to\nu_{p}$. This completes the proof of 
Proposition \ref{prop:CDTNp}.
\end{proof}

\section{\label{sec:models_on_bands}Conditional models on sections}
\corO{In this section, we show that conditionally on
  the value of $H_N$ at a point and its first order derivatives there,
  one obtains an effective mixed-model on the 
  $N-2$ dimensional sections of the
  sphere determined by a fixed overlap with that point, that is, on appropriate ``bands''. The main result
  are Lemmas \ref{cor:conditional} and \ref{lem:apxest}.}

For indices $i_{1},...,i_{p}$, denote by $\bar{J}_{i_{1},...,i_{p}}^{(p)}$
the sum of all $J_{i_{1}^{\prime},...,i_{p}^{\prime}}^{(p)}$ such
that $\{i_{1}^{\prime},...,i_{p}^{\prime}\}=\{i_{1},...,i_{p}\}$
as multisets. For $\bx\in\mathbb{R}^{N}$ with $\|\bx\|\leq\sqrt{N}$,
we may write
\begin{equation}
H_{N}\left(\bx\right)=\sum_{p=2}^{\infty}\frac{\gamma_{p}}{N^{\left(p-1\right)/2}}\sum_{i_{1}\leq...\leq i_{p}\leq N}\bar{J}_{i_{1},...,i_{p}}^{(p)}x_{i_{1}}\cdots x_{i_{p}}.\label{eq:Havg}
\end{equation}
We are interested in the structure of the Hamiltonian $H_{N}\left(\bs\right)$
on the \corO{section} 
\begin{equation}
\mathcal{S}(q\nh)=\{\bs\in\mathbb{S}^{N-1}(\sqrt{N}):\,R(\bs,\hat{\mathbf{n}})=q\}\label{eq:Sq}
\end{equation}
formed by points with fixed overlap relative to the point $\hat{\mathbf{n}}=(0,...,0,\sqrt{N})$,
\corO{see \eqref{eq:subsp}.}
We therefore introduce, for $\bs\in\SN$,
\begin{equation}
\tilde{\boldsymbol{\sigma}}=\sqrt{\frac{N-1}{N}}\frac{\left(\sigma_{1},...,\sigma_{N-1}\right)}{\sqrt{1-q^{2}\left(\boldsymbol{\sigma}\right)}}\in\mathbb{S}^{N-2}(\sqrt{N-1})\,\,\,{\rm and}\,\,\,q\left(\boldsymbol{\sigma}\right)=\frac{\sigma_{N}}{\sqrt{N}}=R(\bs,\hat{\mathbf{n}}).\label{eq:2610-1}
\end{equation}
Thinking of $\tilde{\boldsymbol{\sigma}}$ as coordinates in an $N-1$-dimensional
sphere, we group the terms in (\ref{eq:Havg}) corresponding to $k$-spin
interactions,
\begin{equation}
\begin{aligned}\bar{H}_{N}^{\hat{\mathbf{n}},k}\left(\boldsymbol{\sigma}\right)= & \sum_{p=k}^{\infty}\frac{\gamma_{p}}{N^{\left(p-1\right)/2}}\sum_{1\leq i_{1}\leq...\leq i_{k}\leq N-1}\bar{J}_{i_{1},...,i_{k},N,...,N}^{(p)}\sigma_{i_{1}}\cdots\sigma_{i_{k}}\sigma_{N}^{p-k}\\
= & \sum_{p=k}^{\infty}\gamma_{p}\binom{p}{k}^{1/2}\left(1-q^{2}\left(\boldsymbol{\sigma}\right)\right)^{k/2}(q\left(\boldsymbol{\sigma}\right))^{p-k}\sqrt{\frac{N}{N-1}}H_{N-1,k}^{p}\left(\tilde{\boldsymbol{\sigma}}\right),
\end{aligned}
\label{eq:1907-01}
\end{equation}
where
\[
H_{N-1,k}^{p}\left(\tilde{\boldsymbol{\sigma}}\right):=(N-1)^{-\frac{k-1}{2}}\binom{p}{k}^{-1/2}\sum_{1\leq i_{1}\leq...\leq i_{k}\leq N-1}\bar{J}_{i_{1},...,i_{k},N,...,N}^{(p)}\tilde{\sigma}_{i_{1}}\cdots\tilde{\sigma}_{i_{k}}.
\]
Since for different $(p,k)$ the models $H_{N-1,k}^{p}\left(\tilde{\boldsymbol{\sigma}}\right)$
are measurable w.r.t to disjoint sets of the coefficients $(\bar{J}_{i_{1},...,i_{p}}^{(p)})$,
we have that
\begin{equation}
H_{N}\left(\boldsymbol{\sigma}\right)=\sum_{k=0}^{\infty}\bar{H}_{N}^{\hat{\mathbf{n}},k}\left(\boldsymbol{\sigma}\right),\text{ and \ensuremath{\bar{H}_{N}^{\hat{\mathbf{n}},k}\left(\boldsymbol{\sigma}\right)} are independent.}
\label{eq:1511-E1}
\end{equation}
Also note that, since $\text{Var}(\bar{J}_{i_{1},...,i_{k},N,...,N}^{(p)})=\binom{p}{k}\text{Var}(\bar{J}_{i_{1},...,i_{k}}^{(k)})$,
for each $k$, $H_{N-1,k}^{p}\left(\tilde{\boldsymbol{\sigma}}\right)$
is a pure $k$-spin models on $\mathbb{S}^{N-2}(\sqrt{N-1})$ (where
for $k=0$, the `$0$-spin' model $H_{N-1,0}^{p}\left(\tilde{\boldsymbol{\sigma}}\right)\equiv(N-1)^{\frac{1}{2}}\bar{J}_{N,...,N}^{(p)}
\corO{=(N-1)^{\frac{1}{2}}{J}_{N,...,N}^{(p)}}$
is a random variable which is \corE{constant as function of $\tilde{\bs}$}). Hence, setting
\begin{equation}
\alpha_{k}(q):=\left(1-q^{2}\right)^{k/2}\left(\sum_{p=k}^{\infty}\gamma_{p}^{2}\binom{p}{k}q^{2(p-k)}\right)^{\frac{1}{2}}\label{eq:alpha}
\end{equation}
and letting $H_{N-1,k}\left(\tilde{\boldsymbol{\sigma}}\right)$ denote
a pure $k$-spin model, we have that
\begin{equation}
\bar{H}_{N}^{\hat{\mathbf{n}},k}\left(\boldsymbol{\sigma}\right)\stackrel{d}{=}\sqrt{\frac{N}{N-1}}\alpha_{k}(q(\boldsymbol{\sigma}))H_{N-1,k}\left(\tilde{\boldsymbol{\sigma}}\right).\label{eq:HNhatk}
\end{equation}

From (\ref{eq:Havg}), for any $1\leq i_{1}\leq...\leq i_{k}\leq N-1$,
the Euclidean derivatives at $q\hat{\mathbf{n}}$ are given by 
\begin{equation}
\frac{d}{dx_{i_{1}}}\cdots\frac{d}{dx_{i_{k}}}H_{N}\left(q\hat{\mathbf{n}}\right)=\sum_{p=k}^{\infty}\gamma_{p}q^{p-k}\frac{\prod_{j=1}^{N-1}|\{l\leq k:\,i_{l}=j\}|!}{N^{\left(k-1\right)/2}}\bar{J}_{i_{1},...,i_{k},N,...,N}^{(p)}.\label{eq:Eucdrv}
\end{equation}
Thus, on $\mathcal{S}(q\nh)$, we can view the representation (\ref{eq:1907-01})
as a Taylor series of $H_{N}\left(\boldsymbol{\sigma}\right)$ around
$q\hat{\mathbf{n}}$. Namely, for any $\bs\in\mathcal{S}(q\nh)$,
\begin{equation}
\begin{aligned}\bar{H}_{N}^{\hat{\mathbf{n}},0}\left(\boldsymbol{\sigma}\right) & =H_{N}\left(q\hat{\mathbf{n}}\right),\\
\bar{H}_{N}^{\hat{\mathbf{n}},k}\left(\boldsymbol{\sigma}\right) & =\frac{1}{k!}\sum_{1\leq i_{1},...,i_{k}\leq N-1}\frac{d}{dx_{i_{1}}}\cdots\frac{d}{dx_{i_{k}}}H_{N}\left(q\hat{\mathbf{n}}\right)\sigma_{i_{1}}\cdots\sigma_{i_{k}},
\quad \corO{k\geq 1}.
\end{aligned}
\label{eq:1907-02}
\end{equation}

We would like to relate $\grad H_{N}\left(\boldsymbol{\sigma}\right)$
and $\Hess H_{N}\left(\boldsymbol{\sigma}\right)$, defined using
the frame field $F_{i}$ \corO{as in} (\ref{eq:derivatives}), to the Euclidean
derivatives (\ref{eq:Eucdrv}). Therefore, in this section we will
assume that $F_{i}$ is chosen so that (see \cite[Footnote 7]{geometryGibbs})
\begin{equation}
\begin{aligned}\grad H_{N}\left(q\hat{\mathbf{n}}\right) & =\left(\left.\frac{d}{dx_{i}}\right|_{\bx=0}H_{N}\left((x_{1},...,x_{N-1},q\sqrt{N-\|\bx\|^{2}}\right)\right)_{i\leq N-1},\\
\Hess H_{N}\left(q\hat{\mathbf{n}}\right) & =\left(\left.\frac{d}{dx_{i}}\frac{d}{dx_{j}}\right|_{\bx=0}H_{N}\left((x_{1},...,x_{N-1},q\sqrt{N-\|\bx\|^{2}}\right)\right)_{i,j\leq N-1},
\end{aligned}
\label{eq:1507-08}
\end{equation}
where $\bx=\bx_{N-1}=(x_{1},...,x_{N-1})$. (Note that $F_i$ from Lemma \ref{lem:cov} satisfy \eqref{eq:1507-08}, see \eqref{eq:1511---}.) 
Under this assumption, see the proof of \cite[Lemma 2]{geometryGibbs},
\begin{align}
\grad H_{N}\left(q\hat{\mathbf{n}}\right) & =\left(\frac{d}{dx_{i}}H_{N}\left(q\hat{\mathbf{n}}\right)\right)_{i\leq N-1}=\left(\sum_{p=2}^{\infty}\gamma_{p}q^{p-1}\bar{J}_{i,N,...,N}^{(p)}\right)_{i\leq N-1},\nonumber \\
\Hess H_{N}\left(q\hat{\mathbf{n}}\right) & =\left(\frac{d}{dx_{i}}\frac{d}{dx_{j}}H_{N}\left(q\hat{\mathbf{n}}\right)\right)_{i,j\leq N-1}-\frac{1}{\sqrt{N}q}\ddq H_{N}(q\hat{\mathbf{n}})\mathbf{I},\label{eq:1507-09}
\end{align}
where the $N-1\times N-1$ matrix $\mathbf{G}\left(q\hat{\mathbf{n}}\right)=\mathbf{G}_{N-1}\left(q\hat{\mathbf{n}}\right)$
defined by 
\begin{equation}
\mathbf{G}\left(q\hat{\mathbf{n}}\right):=\left(\frac{d}{dx_{i}}\frac{d}{dx_{j}}H_{N}\left(q\hat{\mathbf{n}}\right)\right)_{i,j\leq N-1}=\sum_{p=2}^{\infty}\frac{\gamma_{p}q^{p-2}(1+\delta_{ij})}{N^{1/2}}\bar{J}_{i,j,N,...,N}^{(p)}\label{eq:1507-10}
\end{equation}
has the same law as $\sqrt{\frac{N-1}{N}\nu''(q^{2})}\mathbf{M}$,
where $\mathbf{M}=\mathbf{M}_{N-1}$ is a GOE matrix.

For random variables that are continuous functions of the Gaussian
disorder $(J_{i_{1},...,i_{p}})_{p\geq2}$, we will denote by $\mathbb{P}_{u,v,\mathbf{A}}^{q}\left\{ \,\cdot\,\right\} $
the conditional probability given
\begin{equation}
H_{N}(q\hat{\mathbf{n}})=u,\,\ddq H_{N}(q\hat{\mathbf{n}})=v,\,\grad H_{N}(q\hat{\mathbf{n}})=0{\rm \,\,and\,\,\mathbf{G}_{N-1}\left(q\hat{\mathbf{n}}\right)=\mathbf{A},}\label{eq:24-1}
\end{equation}
interpreted in the usual way by restricting $(J_{i_{1},...,i_{p}})_{p\geq2}$
to the appropriate affine subspace with the restriction of the density
of $(J_{i_{1},...,i_{p}})_{p\geq2}$, normalized. Similarly, $\mathbb{P}_{u,v}^{q}\left\{ \,\cdot\,\right\} $
(respectively, $\mathbb{P}_{u,\mathbf{A}}^{q}\left\{ \,\cdot\,\right\} $)
will denote the conditional probability given only the first three
equalities (respectively, all equalities but the second). The corresponding
expectations will be denoted with $\mathbb{P}$ replaced by $\mathbb{E}$.

\corO{Let $\theta_{q}:\mathbb{S}^{N-2}(\sqrt{N-1})\to\mathcal{S}(q\nh)$
denote the (left) inverse} 
of $\boldsymbol{\sigma}\mapsto\tilde{\boldsymbol{\sigma}}$,
see (\ref{eq:2610-1}), given by 
\[
\theta_{q}\left(\left(\sigma_{1},...,\sigma_{N-1}\right)\right)=\sqrt{\frac{N}{N-1}\left(1-q^{2}\right)}\left(\sigma_{1},...,\sigma_{N-1},0\right)+q\hat{\mathbf{n}}.
\]
For any function $h:\mathbb{S}^{N-1}(\sqrt{N})\to\mathbb{R}$, define
$h|_{q}:\mathbb{S}^{N-2}(\sqrt{N-1})\to\mathbb{R}$ by 
\begin{equation}
h|_{q}\left(\boldsymbol{\sigma}\right)=h\circ\theta_{q}\left(\boldsymbol{\sigma}\right).\label{eq:22}
\end{equation}
\begin{lem}
\label{cor:conditional}Under $\mathbb{P}_{u,v}^{q}\left\{ \,\cdot\,\right\} $
we have that
\begin{equation}
  \label{cor:conditional:eq1}
H_{N}|_{q}\left(\boldsymbol{\sigma}\right)\overset{d}{=}u+\sum_{k=2}^{\infty}\alpha_{k}(q)\sqrt{\frac{N}{N-1}}H_{N-1,k}\left(\bs\right),
\end{equation}
where $H_{N-1,k}\left(\sigma\right)$ are independent pure $k$-spin
models of dimension $N-1$. Under $\mathbb{P}_{u,v,\mathbf{A}}^{q}\left\{ \,\cdot\,\right\} $,
\begin{equation}
  \label{cor:conditional:eq2}
H_{N}|_{q}\left(\boldsymbol{\sigma}\right)  \overset{d}{=}u+\frac{1}{2}\frac{N}{N-1}(1-q^{2})\boldsymbol{\sigma}^{T}\mathbf{A}\boldsymbol{\sigma}+\sum_{k=3}^{\infty}\alpha_{k}(q)\sqrt{\frac{N}{N-1}}H_{N-1,k}\left(\bs\right).
\end{equation}
Also, if $H_{N}(\bs)$ is replaced by $\sum_{k=0}^{l}\bar{H}_{N}^{\hat{\mathbf{n}},k}\left(\boldsymbol{\sigma}\right)$
above, then the conditional law is given by the same formula but with
summation up to $l$ instead of $\infty$.
\end{lem}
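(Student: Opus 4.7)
The proof rests on the decomposition \eqref{eq:1511-E1}, $H_{N}=\sum_{k\geq 0}\bar H_{N}^{\hat{\mathbf n},k}$, whose pieces are mutually independent because, by \eqref{eq:1907-01}, $\bar H_{N}^{\hat{\mathbf n},k}$ depends only on the disorder coefficients $\bar J^{(p)}_{i_{1},\ldots,i_{k},N,\ldots,N}$ with $1\le i_{j}\le N-1$, and these index sets are disjoint across $k$. The strategy is to show that each of the conditioning random variables in \eqref{eq:24-1} is measurable with respect to a single low-$k$ piece, so that conditioning freezes that piece while leaving the higher-$k$ pieces in their unconditional joint law, after which \eqref{eq:HNhatk} identifies the law of each remaining piece on $\mathcal S(q\hat{\mathbf n})$.

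I match conditioning events to pieces as follows. Both $H_{N}(q\hat{\mathbf n})$ and $\ddq H_{N}(q\hat{\mathbf n})$ are linear combinations of $\{J^{(p)}_{N,\ldots,N}\}_{p}$ alone, since every other monomial in $H_{N}$, and in its radial derivative along the line $t\mapsto t\hat{\mathbf n}$, vanishes at $q\hat{\mathbf n}$. These are exactly the $k=0$ disorder variables, so conditioning on $(u,v)$ freezes $\bar H_{N}^{\hat{\mathbf n},0}\equiv u$ while, by independence, leaving $(\bar H_{N}^{\hat{\mathbf n},k})_{k\ge 1}$ in their unconditional joint law. Using \eqref{eq:1507-09} and \eqref{eq:1907-02}, $\bar H_{N}^{\hat{\mathbf n},1}(\bs)=\langle\grad H_{N}(q\hat{\mathbf n}),\bs_{N-1}\rangle$ with $\bs_{N-1}=(\sigma_{1},\ldots,\sigma_{N-1})$, so the further conditioning $\grad H_{N}(q\hat{\mathbf n})=0$ forces $\bar H_{N}^{\hat{\mathbf n},1}\equiv 0$ and leaves $(\bar H_{N}^{\hat{\mathbf n},k})_{k\ge 2}$ still unconditioned. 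Analogously, \eqref{eq:1507-10} and \eqref{eq:1907-02} give $\bar H_{N}^{\hat{\mathbf n},2}(\bs)=\tfrac{1}{2}\bs_{N-1}^{T}\mathbf G_{N-1}(q\hat{\mathbf n})\bs_{N-1}$, so conditioning on $\mathbf G_{N-1}(q\hat{\mathbf n})=\mathbf A$ freezes $\bar H_{N}^{\hat{\mathbf n},2}$ without affecting $(\bar H_{N}^{\hat{\mathbf n},k})_{k\ge 3}$.

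It remains to assemble the pieces on the section $\mathcal S(q\hat{\mathbf n})$ and pull back through $\theta_{q}$. For $\bs\in\mathcal S(q\hat{\mathbf n})$ the unconditional distributional identity \eqref{eq:HNhatk} yields $\bar H_{N}^{\hat{\mathbf n},k}(\bs)\stackrel{d}{=}\sqrt{N/(N-1)}\,\alpha_{k}(q)\,H_{N-1,k}(\tilde{\bs})$ with independent pure $k$-spin fields $H_{N-1,k}$, and since $\widetilde{\theta_{q}(\bs)}=\bs$ this gives \eqref{cor:conditional:eq1} upon summing the surviving pieces and adding the frozen constant $u$. For \eqref{cor:conditional:eq2}, the first $N-1$ coordinates of $\theta_{q}(\bs)$ are $\sqrt{N(1-q^{2})/(N-1)}\,\bs$, so substituting into the quadratic form of the frozen $k=2$ piece produces the factor $\tfrac12\tfrac{N}{N-1}(1-q^{2})\bs^{T}\mathbf A\bs$, while the $k\ge 3$ pieces contribute as in the first formula. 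The truncated version is immediate from the same argument applied to $\sum_{k=0}^{l}\bar H_{N}^{\hat{\mathbf n},k}$. No serious obstacle arises; the only care required is to verify that $\ddq H_{N}(q\hat{\mathbf n})$ is measurable with respect to the $k=0$ disorder alone, and to track the scalar factors introduced by the change of coordinates $\theta_{q}$.
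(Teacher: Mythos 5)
Your proposal is correct and follows essentially the same route as the paper's proof: decompose $H_N|_q$ into the $k$-spin pieces $\bar H_N^{\hat{\mathbf n},k}|_q$, observe that the conditioning variables $H_N(q\hat{\mathbf n})$, $\ddq H_N(q\hat{\mathbf n})$, $\grad H_N(q\hat{\mathbf n})$ and $\mathbf G_{N-1}(q\hat{\mathbf n})$ are measurable with respect to the disjoint coefficient groups attached to $k=0,1,2$, so the conditioning only freezes those pieces (to $u$, $0$, and the stated quadratic form via $\theta_q$), and then identify the law of the remaining pieces through \eqref{eq:HNhatk}. The only difference is presentational — you spell out the measurability of $\ddq H_N(q\hat{\mathbf n})$ and the $\theta_q$ scaling in more detail than the paper does.
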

\corO{An important aspect of Lemma \ref{cor:conditional} is that the
expressions in \eqref{cor:conditional:eq1} and
\eqref{cor:conditional:eq2} do not contain linear terms (i.e., with $k=1$).}

\begin{proof}
  By \eqref{eq:1511-E1}, we can write $H_{N}|_{q}=\sum_{k=0}^{\infty}\bar{H}_{N}^{\hat{\mathbf{n}},k}|_{q}\left(\boldsymbol{\sigma}\right)$. By \eqref{eq:1907-02}, if $H_N(q\nh)= u$, then $\bar{H}_{N}^{\hat{\mathbf{n}},0}|_{q}\left(\boldsymbol{\sigma}\right)\corO{=}u$.  By \eqref{eq:1907-02} and \eqref{eq:1507-09}, if $\grad H_N(q\nh)=0$, then 
  $\bar{H}_{N}^{\hat{\mathbf{n}},1}|_{q}\left(\boldsymbol{\sigma}\right)\corO{=} 0$. 
By \eqref{eq:1907-02} and \eqref{eq:1507-10}, if $\mathbf{G}_{N-1}\left(q\hat{\mathbf{n}}\right)=\mathbf{A}$, then
$\bar{H}_{N}^{\hat{\mathbf{n}},2}|_{q}\left(\boldsymbol{\sigma}\right)=
\frac{1}{2}\frac{N}{N-1}(1-q^{2})\boldsymbol{\sigma}^{T}\mathbf{A}\boldsymbol{\sigma}$.
Further, from \eqref{eq:HNhatk},
\begin{equation*}
\bar{H}_{N}^{\hat{\mathbf{n}},k}|_{q}\left(\boldsymbol{\sigma}\right)\stackrel{d}{=}\sqrt{\frac{N}{N-1}}\alpha_{k}(q)H_{N-1,k}\left(\boldsymbol{\sigma}\right),
\quad \corO{k\geq 1}.
\end{equation*}

Note that $\ddq H_{N}(q\hat{\mathbf{n}})$
\corO{and} $ H_{N}(q\hat{\mathbf{n}})$ 
are  measurable w.r.t the disorder coefficients $\bar{J}_{N,...,N}^{(p)}$. 
Similarly, by \eqref{eq:1507-09},  $\grad H_N(q\nh)$ 
\corO{is} measurable w.r.t the coefficients of the form 
$\bar{J}_{i_1,N,...,N}^{(p)}$. By \eqref{eq:1507-10}, 
$\mathbf{G}_{N-1}\left(q\hat{\mathbf{n}}\right)$  
\corO{is} measurable w.r.t the coefficients of the form $\bar{J}_{i_1,i_2,N,...,N}^{(p)}$. Lastly, for any $k\geq 3$, by \eqref{eq:1907-01}, $\{\bar{H}_{N}^{\hat{\mathbf{n}},k}|_{q}\left(\boldsymbol{\sigma}\right)\}_{\bs}$ is measurable w.r.t the coefficients of the form $\bar{J}_{i_1,\ldots,i_k,N,...,N}^{(p)}$.

The lemma 
follows by \corO{combining these facts, using that}
the disorder coefficients $\bar{J}_{i_1,\ldots,i_k,N,...,N}^{(p)}$ are independent for different values of $(k,p)$.
\end{proof}
The random fields $\bar{H}_{N}^{\nh,k}\left(\boldsymbol{\sigma}\right)$
can be developed around a general point $\bs_{0}\in\SN$ instead
of $\nh$. One way to do so is by using (\ref{eq:1907-02}) and rotating,
in an appropriate sense, our coordinate system to be aligned with
the direction corresponding to $\bs_{0}$. More intrinsically, we
can use the connection to Taylor expansions. That is, for any $\bs\in\SN$
such that $R(\bs,\bs_{0})=q$, we define $\bar{H}_{N}^{\bs_{0},k}\left(\boldsymbol{\sigma}\right)$
as the $k$-degree term in the Taylor series (in $\mathbb{R}^{N}$)
of $H_{N}(\bx)$ around $H_{N}(q\bs_{0})$ evaluated at $\bs$. 
Denote by $\nabla_{E}^{k}H_{N}(\bx)=(\frac{d}{dx_{i_{1}}}\cdots\frac{d}{dx_{i_{k}}}H_{N}(\bx))_{i_{1},...,i_{k}}$
the tensor of (Euclidean) derivatives of order $k$ of the Hamiltonian
$H_{N}(\bx)$, and for any tensor $\mathbf{T}=(t_{i_{1},...,i_{k}})_{i_{1},...,i_{k}\leq N}$
define
\begin{equation}
\|\mathbf{T}\|_{\infty}:=\frac{1}{\sqrt{N}}\sup_{\|\mathbf{y}^{(i)}\|=1}\left|\sum_{i_{1},...,i_{p}=1}^{N}t_{i_{1},...,i_{k}}y_{i_{1}}^{(1)}\cdots y_{i_{p}}^{(p)}\right|,\label{eq:inftynorm-1}
\end{equation}
where $\mathbf{y}^{(i)}=(y_{1}^{(i)},...,y_{N}^{(i)})$, and for a
linear subspace $V\subset\mathbb{R}^{N}$ define $\|\mathbf{T}\|_{\infty}^{V}$
similarly to (\ref{eq:inftynorm-1}) with the additional restriction
that $\mathbf{y}^{(i)}\in V$. 
\corO{For use later, we record the following
direct corollary of the definitions.}
\begin{cor}
\label{cor:tensorbd}Let $\bs_{0}\in\mathbb{S}^{N-1}(\sqrt{N})$ and
$q\in(0,1)$, and denote by $V_{0}$ the tangent space to $\mathbb{S}^{N-1}(\sqrt{N})$
at $\bs_{0}$. If $\|\nabla_{E}^{k}H_{N}(q\bs_{0})\|_{\infty}^{V_{0}}<CN^{-(k-1)/2}$,
then for all $\bs\in\mathbb{S}^{N-1}(\sqrt{N})$ with $R(\bs,\bs_{0})=q$,
\[
\big|\bar{H}_{N}^{\bs_{0},k}\left(\boldsymbol{\sigma}\right)\big|\leq NC(1-q^{2})^{k/2}/k!.
\]
\end{cor}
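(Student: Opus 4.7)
The plan is to unfold the definitions and observe that everything reduces to a clean algebraic identity once one notices that the displacement $\bs - q\bs_0$ lies in the tangent space $V_0$. Concretely, $\bar{H}_{N}^{\bs_0,k}(\bs)$ is by definition the degree-$k$ term of the Taylor expansion of $H_N$ in $\mathbb{R}^N$ centered at $q\bs_0$, so that
\[
\bar{H}_{N}^{\bs_0,k}(\bs) = \frac{1}{k!}\sum_{i_1,\ldots,i_k=1}^{N} \frac{\partial^{k} H_N}{\partial x_{i_1}\cdots\partial x_{i_k}}(q\bs_0)\,(\bs - q\bs_0)_{i_1}\cdots (\bs - q\bs_0)_{i_k}.
\]

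The key geometric observation is that $\bs - q\bs_0 \in V_0$. Indeed, from $\bs_0,\bs \in \mathbb{S}^{N-1}(\sqrt{N})$ and $R(\bs,\bs_0)=q$ one has $\langle \bs, \bs_0\rangle = qN = q\|\bs_0\|^2$, and therefore $\langle \bs - q\bs_0,\bs_0\rangle = 0$. A short computation also yields
\[
\|\bs - q\bs_0\|^{2} = \|\bs\|^{2} - 2q\langle \bs,\bs_0\rangle + q^{2}\|\bs_0\|^{2} = N(1-q^{2}).
\]

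With $\mathbf{y} := (\bs - q\bs_0)/\|\bs - q\bs_0\|$ a unit vector in $V_0$, homogeneity gives
\[
\bigl|\bar{H}_{N}^{\bs_0,k}(\bs)\bigr| = \frac{\|\bs - q\bs_0\|^{k}}{k!}\left|\sum_{i_1,\ldots,i_k} \frac{\partial^{k} H_N}{\partial x_{i_1}\cdots\partial x_{i_k}}(q\bs_0)\, y_{i_1}\cdots y_{i_k}\right|.
\]
Since $\mathbf{y}\in V_0$, the definition of $\|\cdot\|_\infty^{V_0}$ in \eqref{eq:inftynorm-1} (taking all test vectors $\mathbf{y}^{(i)}=\mathbf{y}$) bounds the inner sum by $\sqrt{N}\,\|\nabla_E^{k}H_N(q\bs_0)\|_\infty^{V_0} < \sqrt{N}\cdot C N^{-(k-1)/2} = C N^{(2-k)/2}$.

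Combining with $\|\bs - q\bs_0\|^{k} = N^{k/2}(1-q^{2})^{k/2}$, the exponents of $N$ collapse to $k/2 + (2-k)/2 = 1$, yielding $|\bar{H}_N^{\bs_0,k}(\bs)| \leq NC(1-q^{2})^{k/2}/k!$, as required. There is no real obstacle here; the only thing to watch is that $\|\cdot\|_\infty^{V_0}$ restricts to vectors in $V_0$, which is exactly what makes the inequality applicable once one has verified that $\bs - q\bs_0 \in V_0$.
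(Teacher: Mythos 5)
Your proof is correct and is exactly the argument the paper has in mind: the corollary is stated as a direct consequence of the definitions, and your verification — that $\bs-q\bs_0$ is orthogonal to $\bs_0$, hence lies in $V_0$ with $\|\bs-q\bs_0\|^2=N(1-q^2)$, so the degree-$k$ Taylor term is bounded by $\frac{1}{k!}\|\bs-q\bs_0\|^k\sqrt{N}\,\|\nabla_E^k H_N(q\bs_0)\|_\infty^{V_0}$ — is precisely that unfolding, with the powers of $N$ collapsing as you computed.
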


We \corO{conclude this section}
with a bound on the error of a finite degree approximation
of the expansion.
\begin{lem}
\label{lem:apxest}For any mixture $\nu$, there exist $C$, $c>0$
such that 
\begin{equation}
  \mathbb{P}\left\{ \exists k\geq 0,\bs_{0},
  \bs\in\mathbb{S}^{N-1}(\sqrt{N}):\,1-q(\bs)^{2}<c,
 \Big|H_{N}\left(\boldsymbol{\sigma}\right)-\sum_{i=0}^{k}\bar{H}_{N}^{\bs_{0},i}\left(\boldsymbol{\sigma}\right)\Big|\geq NC\Big(\frac{1-q(\bs)^{2}}{c}\Big)^{(k+1)/2}\right\} \leq e^{-cN},
\label{eq:0407-01-1}
\end{equation}
where we abbreviate $q(\bs):=R(\bs,\bs_{0})$.
\end{lem}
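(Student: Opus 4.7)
\medskip
\noindent\textbf{Proof plan for Lemma \ref{lem:apxest}.}
The strategy is to recognize the difference
\[
H_{N}(\bs)-\sum_{i=0}^{k}\bar{H}_{N}^{\bs_{0},i}(\bs)=\sum_{i=k+1}^{\infty}\bar{H}_{N}^{\bs_{0},i}(\bs)
\]
as the Taylor tail of $H_N(\bx)$ about $q\bs_0$ evaluated at $\bs$, with $q=q(\bs)=R(\bs,\bs_0)$. This identity uses that $\bs-q\bs_0$ lies in the tangent space $V_0$ to $\SN$ at $\bs_0$ (a direct computation gives $\langle\bs-q\bs_0,\bs_0\rangle=0$ and $\|\bs-q\bs_0\|^2=N(1-q^2)$), so that each $\bar H_N^{\bs_0,i}(\bs)$ is exactly the $i$-th homogeneous Taylor term tested on vectors in $V_0$.

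The plan is then to control every term in the tail uniformly. By Corollary \ref{cor:tensorbd}, if
\[
\|\nabla_{E}^{i}H_{N}(q\bs_{0})\|_{\infty}^{V_{0}}\leq M_{i}N^{-(i-1)/2},
\]
then $|\bar{H}_{N}^{\bs_{0},i}(\bs)|\leq NM_{i}(1-q^{2})^{i/2}/i!$. So the lemma reduces to proving the following uniform estimate: there exist deterministic constants $A,B,c_0>0$ (depending only on $\nu$) such that, with probability at least $1-e^{-c_0 N}$,
\begin{equation}
\sup_{\bs_{0}\in\SN}\sup_{q\in[-1,1]}\|\nabla_{E}^{i}H_{N}(q\bs_{0})\|_{\infty}^{V_{0}}\leq BA^{i}N^{-(i-1)/2},\qquad\forall\,i\geq 0. \label{eq:tensorbd-plan}
\end{equation}

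Granting \eqref{eq:tensorbd-plan}, the conclusion follows by a direct calculation: on this event,
\[
\Big|\sum_{i=k+1}^{\infty}\bar{H}_{N}^{\bs_{0},i}(\bs)\Big|\leq NB\sum_{i=k+1}^{\infty}\frac{(A\sqrt{1-q^{2}})^{i}}{i!}.
\]
Choosing $c=1/(4A^{2})$, the constraint $1-q^{2}<c$ makes the ratio $A\sqrt{1-q^2}\leq 1/2$, and the tail of the exponential series is bounded by $2(A\sqrt{1-q^{2}})^{k+1}/(k+1)!$. This gives the claimed bound $NC((1-q^{2})/c)^{(k+1)/2}$ with $C=2B$ (absorbing $1/(k+1)!\leq 1$), for all $k\geq 0$ simultaneously.

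The main obstacle is \eqref{eq:tensorbd-plan}. Each $\nabla_{E}^{i}H_{N}(q\bs_{0})$ is a centered Gaussian tensor whose entries, via \eqref{eq:Eucdrv} (or rather its analogue at $q\bs_{0}$), have variance $\leq N^{-(i-1)}\sum_{p\geq i}\gamma_{p}^{2}\binom{p}{i}^{2}(i!)^{2}q^{2(p-i)}$; by the standing exponential decay of $\gamma_{p}$, this sum is finite and bounded by $C_{0}^{2i}/N^{i-1}$ for some $C_{0}$ depending only on $\nu$. For fixed $i$ and fixed $\bs_{0},q$, the injective tensor norm $\|\nabla_{E}^{i}H_{N}(q\bs_{0})\|_{\infty}^{V_{0}}$ has expectation bounded (via a standard $\epsilon$-net on the product of unit spheres in $V_{0}$, of size $e^{C'iN}$) by $C_{1}^{i}N^{-(i-1)/2}$. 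The Borell-TIS inequality then gives a Gaussian tail beyond twice this expectation with speed $N$. Finally, uniformity in $(\bs_{0},q)$ is obtained by a second $\epsilon$-net on $\SN\times[-1,1]$, combined with a Lipschitz bound for the map $(\bs_{0},q)\mapsto\|\nabla_{E}^{i}H_{N}(q\bs_{0})\|_{\infty}^{V_{0}}$ that can be derived from uniform control of $\nabla_{E}^{i+1}H_{N}$ on the ball (an inductive ``one higher derivative'' bootstrap, essentially the content already invoked via Corollary \ref{cor:gradbd} above in the proof of Lemma \ref{lem:ddq}). A union bound over $i\geq 0$, where the probability of failure at level $i$ is absorbed by choosing $A$ large relative to $C_{1}$ (so that the threshold grows exponentially in $i$ while the entropy of the net grows only as $e^{CN}$), delivers \eqref{eq:tensorbd-plan}. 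The delicate point is precisely this simultaneous control of all orders $i$ with a single exceptional event of probability $e^{-cN}$; everything else is bookkeeping on the Gaussian process $H_N$.
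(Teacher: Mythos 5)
Your overall architecture is the same as the paper's: write the error as the Taylor tail $\sum_{i\geq k+1}\bar H_N^{\bs_0,i}(\bs)$, feed uniform injective-norm bounds on the Euclidean derivative tensors into Corollary \ref{cor:tensorbd}, and sum a geometric series after choosing $c$ small. The problem is your central intermediate claim, the uniform bound $\|\nabla_{E}^{i}H_{N}(q\bs_{0})\|_{\infty}^{V_{0}}\leq BA^{i}N^{-(i-1)/2}$ for all $i$ with constants $A,B$ depending only on $\nu$: this is false for genuine (infinite) mixtures, and your route to it fails at the same spot. The $i$-th derivative of the pure $p$-spin part carries the factor $p!/(p-i)!$, so the injective norm of $\nabla_E^i H_N$ is of order $N^{-(i-1)/2}\sum_{p\geq i}\gamma_p\,\frac{p!}{(p-i)!}$ (up to an $O(\sqrt p)$ from $\|\mathbf J_{N,p}\|_\infty$); already the $p=i$ term contributes $\asymp i!\,\gamma_i$, which for, say, $\gamma_p=2^{-p}$ grows super-geometrically, so no choice of $A$ works. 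The same missing factorial invalidates your entrywise variance bound "$\leq C_0^{2i}N^{-(i-1)}$": the $p=i$ term alone is $\gamma_i^2(i!)^2 N^{-(i-1)}$. (For a finite mixture your claim is harmless, but the lemma is stated for arbitrary mixtures with only exponential decay of $\gamma_p$.)

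The correct uniform statement is for the factorially normalized norms, and it is exactly what the paper's Corollary \ref{cor:gradbd}, inequality \eqref{eq:1507-03}, already gives on a single event of probability $\geq 1-e^{-cN}$, uniformly over $\bx\in\mathcal B_N$ and all orders: $\frac{1}{i!}\|\nabla_E^i H_N(\bx)\|_\infty\leq \tilde C_i N^{-(i-1)/2}$ with $\tilde C_i=2K\sum_{p\geq i}\gamma_p p^{1/2}\binom{p}{i}$ (so your bespoke net/Borell--TIS/union-over-$i$ construction is both unnecessary and, as formulated, miscounted). With this correction Corollary \ref{cor:tensorbd} yields $|\bar H_N^{\bs_0,i}(\bs)|\leq N\tilde C_i(1-q^2)^{i/2}$, i.e.\ no $1/i!$ survives, and the geometric summation is no longer automatic: you must check that $\sum_{i\geq1}\tilde C_i\,c^{i/2}<\infty$ for $c$ small. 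This is the genuinely needed extra step, which the paper handles by interchanging the sums over $i$ and $p$ to get $\sum_p\gamma_p p^{1/2}(1+\sqrt c)^p<\infty$ from $\sum_p\gamma_p^2\beta^p<\infty$ (see \eqref{eq-1611-a}); once you have it, your final bookkeeping (pull out $(\rho/c)^{(k+1)/2}$ using $\rho/c\leq1$) closes as in the paper. Your preliminary identification of $\bar H_N^{\bs_0,i}(\bs)$ as the Taylor term tested on the tangential displacement $\bs-q\bs_0$, with $\|\bs-q\bs_0\|^2=N(1-q^2)$, is correct and matches the paper's setup.
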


\begin{proof}
Since $H_{N}\left(\boldsymbol{\sigma}\right)=\sum_{i=0}^{\infty}\bar{H}_{N}^{\bs_{0},i}\left(\boldsymbol{\sigma}\right)$,
we need to derive an appropriate bound for $|\sum_{i=k+1}^{\infty}\bar{H}_{N}^{\bs_{0},i}\left(\boldsymbol{\sigma}\right)|$,
for any $k\geq0$. From Corollary \ref{cor:tensorbd} and \corO{the concentration inequality in \eqref{eq:1507-03}
of Corollary \ref{cor:gradbd},}
\corO{with $K$ the universal constant of Lemma \ref{lem:Lipschitz},}
we have the following. On an event whose complement has exponentially
small in $N$ probability, uniformly over all $\bs_{0}$, $\bs\in\mathbb{S}^{N-1}(\sqrt{N})$
and $i\geq1$, \corO{writing $\rho=\rho(\bs)=1-q(\bs)^2$,}
\[
  \big|\bar{H}_{N}^{\bs_{0},i}\left(\boldsymbol{\sigma}\right)\big|\leq 2NK\sum_{p\geq i}\gamma_{p}p^{1/2}\binom{p}{i}\corO{\rho^{i/2},}
\]
and
\begin{equation}
\big|\sum_{i=k+1}^{\infty}\bar{H}_{N}^{\bs_{0},i}\left(\boldsymbol{\sigma}\right)\big|\leq 2NK\sum_{i=k+1}^{\infty}\sum_{p= i}^\infty\gamma_{p}p^{1/2}\binom{p}{i}
\corO{c^{i/2} \left(\frac{\rho}{c}\right)^{i/2}\leq
2NK \left(\frac{\rho}{c}\right)^{(k+1)/2}
\sum_{i=k+1}^{\infty}\sum_{p= i}^\infty\gamma_{p}p^{1/2}\binom{p}{i}
c^{i/2},}\label{eq:1907-03}
\end{equation}
\corO{where in the last inequality we used that $\rho/c\leq 1$.}
We next claim that for small enough $c>0$, 
\begin{equation}
  \label{eq-1611-a}
  \sum_{i=1}^{\infty}\sum_{p= i}^\infty\gamma_{p}p^{1/2}\binom{p}{i}
c^{i/2}<\infty.
\end{equation}
\corO{
Indeed, interchanging the order of summation, the left hand side of 
\eqref{eq-1611-a} equals
\[\sum_{p= 1}^\infty \gamma_p p^{1/2}\sum_{ i=1}^p \binom{p}{i} c^{i/2}\leq
\sum_{p=1}^\infty \gamma_p (1+\sqrt{c})^p p^{1/2}<\infty,\]
where in the last inequality we used that 
$\sum_{p\geq 1} \gamma_p^2 \beta^p<\infty$ for some $\beta>1$ by assumption.}
\corO{Combining \eqref{eq:1907-03} with \eqref{eq-1611-a} 
completes the proof of the lemma.}
%
%
%
\end{proof}

\section{\label{sec:log-weight}The logarithmic weight functions $\Lambda_{Z,\beta}$
and $\Lambda_{F,\beta}^{2-}$}

In Section \ref{sec:models_on_bands}, we studied the structure of
the Hamiltonian on the \corO{section} $\mathcal{S}(\bs_{0})$ of co-dimension $1$ 
around an arbitrary point $\bs_{0}\in\BN$, see \eqref{eq:subsp}.
Those results can be used to compute the weight of thin
bands conditional on the center being a $q$-critical point $\bs_{q}$
such that $H_{N}(\bs_{q})\approx NE$. \corO{To discuss these, we introduce notation.}

For any measurable set $B\subset\mathbb{S}^{N-1}(\sqrt{N})$, set
\begin{equation}
\label{eq-16-11-b}
Z_{N,\beta}(B)=\int_{B}e^{-\beta H_{N}(\bs)}d\bs.
\end{equation}
 Using the uniform
Lipschitz bounds of Corollary \ref{cor:gradbd} for the gradient,
\corE{one has that with high probability, 
\[
\forall q\in (0,1),\,\delta>0:\quad
\lim_{\epsilon\to 0}\lim_{N\to\infty}\mathbb P\bigg\{ \sup_{\bs_0\in\mathbb S^{N-1}(\sqrt N q)} \bigg| \frac{1}{N}\log\frac{Z_{N,\beta}({\rm Band}(\bs_{0},\epsilon))}{Z_{N,\beta}(\bs_{0})} \bigg|>\delta \bigg\} = 0,
\]
where
$Z_{N,\beta}(\bs_{0})$ is defined in (\ref{eq:Zsigma}).}
Therefore,
\corO{we focus on  analyzing} weights of the form $Z_{N,\beta}(\bs_{0})$.
\corO{We begin with  the}
following  consequence of Lemma \ref{cor:conditional}. 
\begin{cor}
\label{cor:smallbandZ}For any $q\in(0,1)$,
\begin{equation}
\lim_{N\to\infty}\frac{1}{N}\log\mathbb{E}_{NE,v}^{q}\left\{ Z_{N,\beta}(q\hat{\mathbf{n}})\right\} =\Lambda_{Z,\beta}(E,q),\label{eq:Eband}
\end{equation}
where, with $\alpha_{k}(q)$ as defined in (\ref{eq:alpha}),
\begin{equation}
\begin{aligned}\Lambda_{Z,\beta}(E,q) & :=-\beta E+\frac{1}{2}\log(1-q^{2})+\frac{1}{2}\beta^{2}\sum_{k=2}^{\infty}\alpha_{k}^{2}(q)\\
 & =-\beta E+\frac{1}{2}\log(1-q^{2})+\frac{1}{2}\beta^{2}\left(\nu(1)-\alpha_{0}^{2}(q)-\alpha_{1}^{2}(q)\right).
\end{aligned}
\label{eq:LambdaZ}
\end{equation}
\end{cor}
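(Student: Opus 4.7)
The plan is to reduce the expectation to an explicit Gaussian moment generating function computation after unfolding the definition of $Z_{N,\beta}(q\hat{\mathbf{n}})$ and invoking Lemma \ref{cor:conditional}.

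First I would change variables via $\theta_q$ to write
\[
Z_{N,\beta}(q\hat{\mathbf{n}}) = (1-q^{2})^{N/2}\int_{\mathbb{S}^{N-2}(\sqrt{N-1})}e^{-\beta H_{N}|_{q}(\bs)}\,d\bs,
\]
where the integration is against the uniform probability measure on $\mathbb{S}^{N-2}(\sqrt{N-1})$. Under $\mathbb{P}_{NE,v}^{q}$, Lemma \ref{cor:conditional} gives the distributional identity
\[
H_{N}|_{q}(\bs) \stackrel{d}{=} NE + \sum_{k=2}^{\infty}\alpha_{k}(q)\sqrt{\tfrac{N}{N-1}}\,H_{N-1,k}(\bs),
\]
where the $H_{N-1,k}$ are independent pure $k$-spin models on $\mathbb{S}^{N-2}(\sqrt{N-1})$, with no $k=0$ or $k=1$ contributions (the $0$-spin constant being absorbed into $NE$, the $1$-spin being removed by the gradient condition).

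Next I would apply Fubini (legitimate because the integrand is nonnegative) to write
\[
\mathbb{E}_{NE,v}^{q}\{Z_{N,\beta}(q\hat{\mathbf{n}})\} = (1-q^{2})^{N/2}\,e^{-\beta N E}\int_{\mathbb{S}^{N-2}(\sqrt{N-1})}\mathbb{E}\bigl[e^{-\beta X(\bs)}\bigr]\,d\bs,
\]
with $X(\bs):=\sum_{k\geq 2}\alpha_{k}(q)\sqrt{N/(N-1)}\,H_{N-1,k}(\bs)$. For each fixed $\bs$, the variable $X(\bs)$ is a centered Gaussian (the series converges in $L^{2}$ since $\sum_{k\geq 2}\alpha_{k}^{2}(q)=\nu(1)-\nu(q^{2})-(1-q^{2})\nu'(q^{2})<\infty$, as noted just after \eqref{eq:alpha} via the identity $\sum_{k\geq 0}\alpha_{k}^{2}(q)=\nu(1)$), with variance
\[
\mathrm{Var}(X(\bs)) = \sum_{k=2}^{\infty}\alpha_{k}^{2}(q)\,\tfrac{N}{N-1}\,\mathrm{Var}(H_{N-1,k}(\bs)) = N\sum_{k=2}^{\infty}\alpha_{k}^{2}(q),
\]
since $\mathrm{Var}(H_{N-1,k}(\bs))=N-1$ for a unit-variance pure $k$-spin model on $\mathbb{S}^{N-2}(\sqrt{N-1})$. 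Consequently $\mathbb{E}[e^{-\beta X(\bs)}]=\exp\bigl(\tfrac{1}{2}\beta^{2}N\sum_{k\geq 2}\alpha_{k}^{2}(q)\bigr)$, which is independent of $\bs$.

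Since the integration is against a probability measure, the integral over $\mathbb{S}^{N-2}(\sqrt{N-1})$ equals this common value, and combining the three factors gives
\[
\tfrac{1}{N}\log\mathbb{E}_{NE,v}^{q}\{Z_{N,\beta}(q\hat{\mathbf{n}})\} = \tfrac{1}{2}\log(1-q^{2}) - \beta E + \tfrac{1}{2}\beta^{2}\sum_{k=2}^{\infty}\alpha_{k}^{2}(q),
\]
which is exactly $\Lambda_{Z,\beta}(E,q)$; the identity $\sum_{k=2}^{\infty}\alpha_{k}^{2}(q)=\nu(1)-\alpha_{0}^{2}(q)-\alpha_{1}^{2}(q)$ yields the second displayed form. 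The main substantive point, beyond bookkeeping, is the reduction supplied by Lemma \ref{cor:conditional}: the conditioning affects only the $0$- and $1$-spin components, leaving the remaining independent pure spin contributions intact so that Gaussian computation is immediate. There is no real obstacle—only the routine verification that the infinite sum inside the exponential is well defined, which follows from $\sum_{k\geq 2}\alpha_{k}^{2}(q)<\infty$ together with the independence of the $H_{N-1,k}$.
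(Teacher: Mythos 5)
Your argument is correct and is essentially the paper's own proof: both reduce the conditional expectation via Lemma \ref{cor:conditional} to a Gaussian moment generating function computation (conditional mean $NE$, variance $N\sum_{k\geq2}\alpha_k^2(q)$), with the $\frac12\log(1-q^2)$ term coming from the volume prefactor in \eqref{eq:Zsigma}. The only difference is that you spell out the Fubini step and the $L^2$ convergence of the series, which the paper leaves implicit.
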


\begin{proof}
The logarithmic term in (\ref{eq:LambdaZ}) is equal to the limit
of $\frac{1}{N}\log$ of the ratio of volumes in (\ref{eq:Zsigma}).
By Lemma \ref{cor:conditional}, 
\[
\frac{1}{N}\log\mathbb{E}_{NE,v}^{q}\left\{ H_{N}(\bs)\right\}  =E \quad \mbox{\rm and}\quad
\frac{1}{N}\log{\rm Var}_{NE,v}^{q}\left\{ H_{N}(\bs)\right\}  =\sum_{k=2}^{\infty}\alpha_{k}^{2}(q),
\]
for any $\bs\in\mathcal{S}(q\hat{\mathbf{n}})$, where ${\rm Var}_{NE,v}^{q}$
denotes the variance under $\mathbb{P}_{NE,v}^{q}$. \corO{Using the expression for the moment generating function
of Gaussian variables, 
$\mathbb{E}e^{\beta X}=e^{\beta\mu+\beta^{2}\sigma^{2}/2}$
for $X\sim N(\mu,\sigma^{2})$, completes the proof.}
\end{proof}
We shall see that for large $\beta$, the $q$-critical points that
contribute most to the partition function are the deepest ones, \corO{that is,}
points with $H_{N}(\bs_{q})=-N(E_{0}(q)+o(1))$. We are therefore
particularly interested in the function $q\mapsto\Lambda_{Z,\beta}(-E_{0}(q),q)$,
which by an abuse of notation we will denote by $\Lambda_{Z,\beta}(q)$. 

For pure-like $\nu$ and $q<1$ close enough to $1$, using Lemmas
\ref{lem:ddq} and \ref{lem:Excts}, 
\begin{align*}
\frac{d}{dq}\Lambda_{Z,\beta}(q) & =\beta x_{0}(q)-\frac{q}{1-q^{2}}-\beta^{2}\left(1-q^{2}\right)\sum_{p=2}^{\infty}\gamma_{p}^{2}p(p-1)q^{2p-3},\\
\frac{d^{2}}{dq^{2}}\Lambda_{Z,\beta}(q) & =\beta\frac{d}{dq}x_{0}(q)-\frac{1+q^{2}}{(1-q^{2})^{2}}+2\beta^{2}\sum_{p=2}^{\infty}\gamma_{p}^{2}p(p-1)q^{2p-2}
 -\beta^{2}\left(1-q^{2}\right)\sum_{p=2}^{\infty}\gamma_{p}^{2}p(p-1)(2p-3)q^{2p-4}.
\end{align*}
Hence, for any $0<T=T(\beta)=o(\sqrt{\beta})$, uniformly on $(0,T]$,
as $\beta\to\infty$, 

\begin{align}
\frac{1}{\beta}\frac{d}{dq}\Lambda_{Z,\beta}(1-\frac{t}{\beta}) & \longrightarrow x_{0}(1)-\frac{1}{2t}-2\nu''(1)t,\label{eq:2907-01}\\
\frac{1}{\beta^{2}}\frac{d^{2}}{dq^{2}}\Lambda_{Z,\beta}(1-\frac{t}{\beta}) & \longrightarrow-\frac{1}{2t^{2}}+2\nu''(1).\label{eq:2907-02}
\end{align}

We conclude that for pure-like $\nu$ and large $\beta$, $\Lambda_{Z,\beta}(q)$
has exactly two critical points in $[1-T/\beta,1)$, 
\begin{equation}
\qs:=\qs(\beta)=1-\frac{t_{-}}{\beta}+o\Big(\frac{1}{\beta}\Big)\text{\,\,and\,\,}\qss:=\qss(\beta)=1-\frac{t_{+}}{\beta}+o\Big(\frac{1}{\beta}\Big),\label{eq:qs}
\end{equation}
where
\[
t_{\pm}=\frac{x_{0}(1)\pm\sqrt{x_{0}^{2}(1)-4\nu''(1)}}{4\nu''(1)}
\]
are the roots of (\ref{eq:2907-01}). Note that by Lemma \ref{lem:x0},
the discriminant above is positive. Also, from (\ref{eq:2907-02})
and the fact that (\ref{eq:2907-01}) is $0$  at the critical
points, the corresponding second derivatives normalized by $\beta^{2}$
converge to
\[
-\frac{x_{0}(1)}{t_{\pm}}+4\nu''(1)=\mp\frac{4\nu''(1)\sqrt{x_{0}^{2}(1)-4\nu''(1)}}{x_{0}(1)\pm\sqrt{x_{0}^{2}(1)-4\nu''(1)}}.
\]
Therefore, for large $\beta$, $\qs(\beta)$ is a local maximum and
$\qss$ is a local minimum. 

The quantity $\Lambda_{Z,\beta}(E,q)$ always gives an upper bound
for the 
\corO{(conditional) free energy defined as}
\begin{equation}
\label{eq-16-11-c}
\corO{
{\bf F}_{E,v}^q:= \lim_{N\to\infty}\frac{1}{N}\mathbb{E}_{NE,v}^{q}\log \left\{ Z_{N,\beta}(q\hat{\mathbf{n}})\right\},
}
\end{equation} 
by appealing to Markov's
inequality \corO{(compare with \eqref{eq:Eband})}. However, to derive a matching lower bound we will need
the $2$-spin model corresponding to the expansion of Section \ref{sec:models_on_bands},
namely, $\bar{H}_{N}^{\bs_{0},2}|_{q}\left(\boldsymbol{\sigma}\right)$
, to have an effective high temperature. \corO{From known facts concerning the spherical $2$-spin, see e.g. \cite[Theorem 1.1, Proposition 2.2]{Talag}, 
the (first) transition in $q$
from high to low temperature occurs at 
\begin{equation}
  q_{c}:=q_{c}(\beta)=\max\left\{ q\in(0,1):\,\alpha_{2}(q)=\frac{1}{\beta\sqrt{2}}\right\},\quad \mbox{\rm where}\;\;\alpha_{2}(q)=(1-q^{2})\left(\sum_{p=2}^{\infty}\gamma_{p}^{2}\binom{p}{2}q^{2p-4}\right)^{1/2}.
  \label{eq:qc}
\end{equation}
We}
note for later use that, since $\alpha_{2}(q)=(1-q)\sqrt{2\nu''(1)}+O((1-q)^{2})$,
as $q\to1$, 
\begin{equation}
q_{c}(\beta)=1-\frac{t_c}{\beta}+O\left(\frac{1}{\beta^{2}}\right),\,\,\,{\rm as\,\,}\beta\to\infty,{\rm \quad where\ \ } t_c=\frac{1}{2\sqrt{\nu''(1)}}.\label{eq:qca}
\end{equation}

We can write
\[
t_{\pm}=\frac{1}{2\sqrt{\nu''(1)}}\left(z\pm\sqrt{z^{2}-1}\right),\quad z=\frac{x_{0}(1)}{2\sqrt{\nu''(1)}}>1.
\]
Since $z-\sqrt{z^{2}-1}$ decreases in $z>1$ and is equal to $1$
for $z=1$, by (\ref{eq:qs}) and (\ref{eq:qca}), for large $\beta$,
\begin{equation}
t_-<t_c<t_+{\rm \quad and \quad}\qss<q_{c}<\qs.\label{eq:1010-05}
\end{equation}

Also note for later use that, from (\ref{eq:LambdaZ}),
we have a constant gap in the limit:
\begin{equation}
 \lim_{\beta\to\infty}\corO{(}\Lambda_{Z,\beta}(\qs)-\Lambda_{Z,\beta}(q_c)\corO{)} =(t_{c}-t_{-})x_{0}(1)+\frac{1}{2}\log(t_{-}/t_{c})+\nu''(1)(t_{-}^{2}-t_{c}^{2})>0,
\label{eq:gap}
\end{equation}
where the strict inequality follows by writing the limit as the integral,
\corE{over $(q_c,\qs)$,}
of (\ref{eq:2907-01}) times $\beta$,  \corO{and noting that}
(\ref{eq:2907-01})
is positive \corO{in this range}.

As \corO{mentioned above}, for $q<q_{c}$, $\Lambda_{Z,\beta}(E,q)
\corO{\neq
{\bf F}_{E,v}^q,}$ \corO{see \eqref{eq-16-11-c}}.
%
For the asymptotics we shall consider, what will be relevant is the free
energy corresponding to the 2-and-below spins on $\mathcal{S}(\bs_{0})$,
 defined similarly to (\ref{eq:Zsigma}) by
\begin{equation}
Z_{N,\beta}^{2-}(\bs_{0}):=(1-\|\bs_{0}\|/\sqrt{N})^{\frac{N}{2}}\int_{\mathcal{S}(\bs_{0})}e^{-\beta\sum_{i=0}^{2}\bar{H}_{N}^{\bs_{0},i}\left(\boldsymbol{\sigma}\right)}d\bs.\label{eq:3007-02}
\end{equation}
\corO{The (logarithmic) error between  $Z_{N,\beta}^{2-}(\bs_{0})$ and  $Z_{N,\beta}(\bs_{0})$ will} be controlled using Lemma \ref{lem:apxest}, see
\eqref{1010-04} below. \corO{Our immediate task
is to provide, in the next lemma, an expression for the free energy of the former.}
\begin{lem}
\label{lem:LambdaF}For any $q\in(0,1)$ such that $\beta\alpha_{2}(q)\geq1/\sqrt{2}$,
(in particular, for $q\in(q_{c}-\delta,q_{c}]$ with small fixed 
$\delta$, independent of $\beta$),
\[
\lim_{N\to\infty}\frac{1}{N}\mathbb{E}_{NE,v}^{q}\left( \log Z_{N,\beta}^{2-}(q\hat{\mathbf{n}})\right) =\Lambda_{F,\beta}^{2-}(E,q),
\]
where
\begin{equation}
\begin{aligned}\Lambda_{F,\beta}^{2-}(E,q) & :=-\beta E+\frac{1}{2}\log(1-q^{2})+\sqrt{2}\beta\alpha_{2}(q)-\frac{1}{2}\log(\beta\alpha_{2}(q))-\frac{3}{4}-\frac{1}{4}\log2\\
 & =-\beta E+\sqrt{2}\beta\alpha_{2}(q)-\frac{1}{4}\log\Big(\beta^{2}\nu''(q^{2})\Big)-\frac{3}{4}.
\end{aligned}
\label{eq:LambdaZ-1}
\end{equation}
\end{lem}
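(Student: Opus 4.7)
The plan is to unfold the definition of $Z_{N,\beta}^{2-}(q\hat{\mathbf n})$ using the conditional decomposition of Lemma \ref{cor:conditional} restricted to the terms $k\le2$, identify the remaining randomness with a pure $2$-spin partition function on $\mathbb{S}^{N-2}(\sqrt{N-1})$, and then quote the known free energy of the spherical $2$-spin model in its low-temperature phase.

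More precisely, recall from \eqref{eq:1907-02} that, when we condition on $H_N(q\hat{\mathbf n})=NE$, $\ddq H_N(q\hat{\mathbf n})=v$ and $\grad H_N(q\hat{\mathbf n})=0$, the first three terms of the Taylor-type expansion around $q\hat{\mathbf n}$ satisfy $\bar H_N^{\hat{\mathbf n},0}|_q\equiv NE$, $\bar H_N^{\hat{\mathbf n},1}|_q\equiv0$ and, by Lemma \ref{cor:conditional} applied only to $k\le 2$ (taking the expectation over the still-random Hessian block $\mathbf A=\mathbf G_{N-1}(q\hat{\mathbf n})$), the $k=2$ piece is distributed as $\alpha_{2}(q)\sqrt{\tfrac{N}{N-1}}H_{N-1,2}(\tilde{\boldsymbol\sigma})$, where $H_{N-1,2}$ is a standard pure spherical $2$-spin model on $\mathbb S^{N-2}(\sqrt{N-1})$. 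Changing variables from $\boldsymbol\sigma\in\mathcal S(q\hat{\mathbf n})$ to $\tilde{\boldsymbol\sigma}\in\mathbb S^{N-2}(\sqrt{N-1})$ via the isometry $\theta_q$ of \eqref{eq:22} preserves the uniform probability measure, so
\begin{equation*}
Z_{N,\beta}^{2-}(q\hat{\mathbf n})\stackrel{d}{=}(1-q^{2})^{N/2}\,e^{-\beta NE}\int_{\mathbb S^{N-2}(\sqrt{N-1})}\exp\!\Bigl(-\tilde\beta_N\,H_{N-1,2}(\tilde{\boldsymbol\sigma})\Bigr)d\tilde{\boldsymbol\sigma},
\end{equation*}
where $\tilde\beta_N:=\beta\alpha_2(q)\sqrt{N/(N-1)}\to\beta\alpha_2(q)$ as $N\to\infty$. (The volume pre-factor $\tfrac12\log(1-q^{2})$ accounts for the ratio $\mathrm{Vol}_{N-2}(\mathcal S(q\hat{\mathbf n}))/\mathrm{Vol}_{N-1}(\mathbb S^{N-1}(\sqrt N))$ already discussed after \eqref{eq:Zsigma}; I would verify this factor by a direct Gaussian computation of $\mathbb E_{NE,v}^q Z^{2-}_{N,\beta}(q\hat{\mathbf n})$ as in the derivation of Corollary \ref{cor:smallbandZ} and reconcile with the volume scaling.)

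By the classical Kosterlitz--Thouless--Jones / Crisanti--Sommers computation for the spherical pure $2$-spin model (see e.g.\ \cite[Theorem 1.1, Prop.\ 2.2]{Talag}), when $\tilde\beta\geq1/\sqrt{2}$ one has
\begin{equation*}
\lim_{N\to\infty}\frac{1}{N-1}\mathbb E\log\int_{\mathbb S^{N-2}(\sqrt{N-1})}e^{-\tilde\beta H_{N-1,2}(\tilde{\boldsymbol\sigma})}\,d\tilde{\boldsymbol\sigma}=\sqrt{2}\,\tilde\beta-\frac{1}{2}\log(\sqrt{2}\,\tilde\beta)-\frac{3}{4}.
\end{equation*}
Applying this with $\tilde\beta=\tilde\beta_N\to\beta\alpha_2(q)\ge1/\sqrt2$, using the continuity of the right-hand side at $\beta\alpha_2(q)$, and combining with the pre-factor $\tfrac12\log(1-q^{2})$ and the deterministic $-\beta NE$ contribution, gives the first equality in \eqref{eq:LambdaZ-1}; the second equality is immediate from $\alpha_2^2(q)=(1-q^2)^2\sum_{p\ge 2}\gamma_p^2\binom{p}{2}q^{2p-4}=\tfrac12(1-q^2)^2\nu''(q^2)$, so that $\sqrt{2}\beta\alpha_2(q)$ combines with $\tfrac12\log(1-q^{2})-\tfrac12\log(\beta\alpha_2(q))-\tfrac14\log 2-\tfrac34$ to the stated expression involving $\tfrac14\log(\beta^{2}\nu''(q^{2}))$.

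Two points require a little care. First, the lemma asserts convergence of $\frac1N\mathbb E_{NE,v}^q\log Z^{2-}_{N,\beta}(q\hat{\mathbf n})$, not of $\frac1N\log\mathbb E_{NE,v}^q Z^{2-}_{N,\beta}(q\hat{\mathbf n})$; the justification is concentration of the log-partition function of the pure $2$-spin around its mean, which follows from the Borell--TIS inequality applied to the Gaussian process $H_{N-1,2}$ on $\mathbb S^{N-2}(\sqrt{N-1})$ just as in the proof of Theorem \ref{theo-ground}. Second, to check that the hypothesis $\beta\alpha_2(q)\ge1/\sqrt2$ is stable under the small perturbation $\sqrt{N/(N-1)}\to1$ and, in particular, that the regime $q\in(q_c-\delta,q_c]$ is truly inside the low-temperature phase of the rescaled $2$-spin for all large $\beta$, I would use the expansion \eqref{eq:qca}, which shows that $\beta\alpha_2(q_c)=1/\sqrt{2}$ exactly and $\beta\alpha_2(q)>1/\sqrt{2}$ on the relevant interval. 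The main (mildly technical) obstacle is the first of these points: careful handling of the $N\to\infty$ limit when $\tilde\beta_N$ sits very close to the $2$-spin phase transition $1/\sqrt2$, where the finite-$N$ free energy is not uniformly smooth in $\tilde\beta$. This is handled by taking $\beta\alpha_2(q)\ge1/\sqrt2$ bounded away from the critical value first, then approximating, which is permissible since the claim is only about fixed $\beta$ with $\beta\alpha_2(q)\ge 1/\sqrt2$.
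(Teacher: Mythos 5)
Your proposal is correct and follows essentially the same route as the paper's proof: the volume prefactor gives the $\tfrac{1}{2}\log(1-q^{2})$ term, the conditional law of $\sum_{i\le 2}\bar{H}_{N}^{\hat{\mathbf{n}},i}|_{q}$ under $\mathbb{P}^{q}_{NE,v}$ is identified with $NE+\alpha_{2}(q)\sqrt{N/(N-1)}\,H_{N-1,2}$, and one then quotes the spherical $2$-spin free energy from \cite{Talag}. Your additional concentration step is not needed, since the cited result already gives the quenched limit $\frac{1}{N}\mathbb{E}\log$, and your treatment of the $\sqrt{N/(N-1)}$ rescaling and of the regime $\beta\alpha_{2}(q)\ge 1/\sqrt{2}$ is consistent with the stated formula.
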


\begin{proof}
The \corO{term} $\frac{1}{2}\log(1-q^{2})$ comes from the volumes ratio as in
Corollary \ref{cor:smallbandZ}. With $\bs_{0}=q\hat{\mathbf{n}}$
the integral \corO{in} 
(\ref{eq:3007-02}) is \corO{determined by}  $\sum_{i=0}^{2}\bar{H}_{N}^{\hat{\mathbf{n}},i}|_{q}\left(\boldsymbol{\sigma}\right)$.
By the argument used to prove Lemma \ref{cor:conditional}, under
$\mathbb{P}_{NE,v}^{q}$, 
\begin{equation}
\sum_{i=0}^{2}\bar{H}_{N}^{\hat{\mathbf{n}},i}|_{q}\left(\boldsymbol{\sigma}\right)\overset{d}{=}NE+\alpha_{2}(q)\sqrt{\frac{N}{N-1}}H_{N-1,2}\left(\bs\right),\label{eq:2909-01}
\end{equation}
where $H_{N-1,2}\left(\bs\right)$ is the pure $2$-spin model on
$\mathbb{S}^{N-2}(\sqrt{N-1})$. Our assumption on $q$ is equivalent
to requiring the effective inverse-temperature $\beta\alpha_{2}(q)$
being in the high temperature phase of the $2$-spin model and the
proof is completed by the expression for its free energy from \cite[Theorem 1.1, Proposition 2.2]{Talag}. 
\end{proof}
We note that for any $0<T=T(\beta)=o(\sqrt{\beta})$, uniformly on $(0,T]$,
as $\beta\to\infty$, 
\begin{equation}
\frac{1}{\beta}\frac{d}{dq}\Lambda_{F,\beta}^{2-}(1-\frac{t}{\beta})\longrightarrow x_{0}(1)-2\sqrt{\nu''(1)}>0,\label{eq:1010-03}
\end{equation}
where we denote $\Lambda_{F,\beta}^{2-}(q):=\Lambda_{F,\beta}^{2-}(-E_{0}(q),q)$.
\corO{Also,}
\[
\Lambda_{F,\beta}^{2-}(q_{c}) =\beta E_{0}(q_{c})+\frac{1}{2}\log(1-q_{c}^{2})+\frac{1}{2}\beta^{2}\alpha_{2}^{2}(q_{c})
,
\]
and therefore
\begin{equation}
\left|\Lambda_{F,\beta}^{2-}(q_{c})-\Lambda_{Z,\beta}(q_{c})\right|=O\Big(\frac{1}{\beta}\Big).\label{1010-04}
\end{equation}

\section{\label{sec:lvlsets}The structure of deep level sets}

In this section we study the structure of the sub-level set 
\begin{equation}
A_{t}:=\{\bs\in\mathbb{S}^{N-1}(\sqrt{N}):\,H_{N}(\bs)\leq-(E_{0}-t)N\}\label{eq:sublvl-1}
\end{equation}
 for small $t$, and relate it to deep critical points. The main result
we prove is the following.
\begin{prop}
\label{prop:sublvl}Assume that $\nu$ satisfies \CDTN. For large enough $\cls=\cls(\nu)$ and small enough $\dls=\dls(\nu)$ we have
the following. For any $t<\dls/\cls$ and any $\eta>0$, with probability
tending to $1$ as $N\to\infty$: 
\begin{enumerate}
\item \label{enu:sublvl1}Each connected component $A$ of the sub-level
set $A_{t}$ contains exactly one $1$-critical point $\bs_{1}$,
which is in particular a local minimum of $H_{N}(\bs)$ on $\mathbb{S}^{N-1}(\sqrt{N})$.
\item \label{enu:sublvl2}For each $1$-critical point $\bs_{1}\in A_{t}$,
there exists a differentiable path $\mathcal{G}:[1-\dls,1]\to\mathbb{S}^{N-1}(1)$,
such that $\sqrt{N}\mathcal{G}(1)=\bs_{1}$ and for any $q\in[1-\dls,1]$,
$\bs_{q}:=\sqrt{N}q\mathcal{G}(q)$ is a $q$-critical point. Moreover,
the speed $\|\frac{d}{dq}\mathcal{G}(q)\|$ of $\mathcal{G}(q)$ under the standard Riemannian metric
on $\mathbb{S}^{N-1}(1)$
is bounded by $\cls>0$.
\item \label{enu:sublvl3}For each $1$-critical point $\bs_1\in A_t$, along the path $\bs_{q}$ defined above, 
\begin{equation}
\label{eq:2110-01}-E_{0}(q)-\eta<\frac{1}{N}H_{N}(\bs_{q})<-E_{0}(1)+mx_{0}(1)(1-q),
\end{equation}
where  $m=m(\dls)$ is a constant determined by $\dls$ that satisfies $\lim_{\dls\to0}m(\corO{\dls})=1$.
\item \label{enu:sublvl4}For each $1$-critical point $\bs_{1}\in A_{t}$,
the spherical cap 
\begin{equation}
\label{def:cap}
{\rm Cap}(t):=\{\bs\in\mathbb{S}^{N-1}(\sqrt{N}):\,R(\bs,\bs_{1-\cls t})\geq1-\cls t\}
\end{equation}
 contains the connected component of $\bs_{1}$ in $A_{t}$.
\end{enumerate}
\end{prop}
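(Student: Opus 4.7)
\emph{Proof plan.} My strategy is to combine the orthogonality of deep critical points (Corollary \ref{cor:orth}), the non-degeneracy of the Hessian at deep critical points (Lemma \ref{lem:x0}), and an implicit-function argument that traces each $1$-critical point to a differentiable family of $q$-critical points. I would first establish a uniform quantitative non-degeneracy along the putative path: by Lemma \ref{lem:x0}, $-x_0(q)/(q\sqrt{\nu''(q^2)})<-2$, and by Lemma \ref{lem:Exsmooth} this holds uniformly for $q$ in a neighborhood of $1$. Combined with Lemma \ref{lem:epsx0}, this implies that at any $q$-critical point with energy close to $-E_0(q)$ the radial derivative is close to $-x_0(q)$, and consequently, using the decomposition in \eqref{eq:1507-09} together with the GOE concentration for the top eigenvalue of $\mathbf{G}_{N-1}$ (whose spectral edge is at $2\sqrt{\nu''(q^2)}$ after normalization), the spherical Hessian has its largest eigenvalue uniformly below some $-\tau<0$ with overwhelming probability.

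For Part \ref{enu:sublvl2}, I would apply the implicit function theorem to the equation $\grad H_N(\bs)=0$ constrained to $\mathbb{S}^{N-1}(q\sqrt{N})$, parameterizing $q$-critical points as $\bs_q=\sqrt{N}q\mathcal{G}(q)$ on the unit sphere. The non-degeneracy above gives local solvability, and the bound $\|\frac{d}{dq}\mathcal{G}(q)\|\le\cls$ follows from a standard computation: differentiating $\grad H_N(\sqrt{N}q\mathcal{G}(q))=0$ in $q$ yields $\Hess H_N(\bs_q)\cdot\frac{d}{dq}\mathcal{G}(q)$ equal to a term whose norm is controlled by the mixed derivative, which by Corollary \ref{cor:gradbd} is uniformly $O(1)$; inverting the Hessian uses the gap $\tau$. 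To show the continuation does not escape the "deep" region, I would argue by continuity of the energy along $\mathcal{G}$ (using Corollary \ref{cor:gradbd} to bound $\frac{d}{dq}\frac{1}{N}H_N(\bs_q)$ by $|x_0(q)|+O(1)$) together with Lemma \ref{lem:epsx0}, which forbids $q$-critical points with energy close to $-E_0(q)$ and radial derivative far from $-x_0(q)$: this creates a trap that prevents the path from leaving the deep regime during continuation, so the path extends to the full interval $[1-\dls,1]$ for $\dls$ small enough.

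For Part \ref{enu:sublvl3}, the upper bound follows by Taylor expanding $H_N(\bs_q)$ and comparing with $H_N(q\bs_1)$: by Corollary \ref{cor:gradbd} applied with $k=2$, $\frac{1}{N}|H_N(q\bs_1)-H_N(\bs_1)-(1-q)\frac{1}{\sqrt{N}}\ddq H_N(\bs_1)|\le \tilde C_2(1-q)^2$, and since $\bs_q$ minimizes $H_N$ on its sphere (by the preceding non-degeneracy discussion together with Part \ref{enu:sublvl1}), $H_N(\bs_q)\le H_N(q\bs_1)$, which yields the stated bound with $m\to 1$ as $\dls\to 0$ using Lemma \ref{lem:Excts} to control $\ddq H_N(\bs_1)\approx -x_0(1)\sqrt{N}$. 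The lower bound $H_N(\bs_q)\ge -(E_0(q)+\eta)N$ is immediate from Remark \ref{rem:GSq}.

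For Parts \ref{enu:sublvl1} and \ref{enu:sublvl4}, I proceed simultaneously. Fix $q^\star=1-\cls t$ and consider the spherical cap $\mathrm{Cap}(t)$ from \eqref{def:cap}. By Part \ref{enu:sublvl3}, $\frac{1}{N}H_N(\bs_{q^\star})\ge -E_0(1)-O(t)$, and by Corollary \ref{cor:gradbd} the Hamiltonian increases by at least $\tau' N(R(\bs,\bs_{q^\star})-q^\star)^2$ — where $\tau'>0$ comes from the Hessian gap — as one leaves the $q^\star$-critical point into the ambient sphere along the radial direction out of $\mathrm{Cap}(t)$. Choosing $\cls$ large enough forces this ascent to exceed $t$, so any point of the connected component of $\bs_1$ in $A_t$ must lie in $\mathrm{Cap}(t)$, proving Part \ref{enu:sublvl4}. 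Uniqueness in Part \ref{enu:sublvl1} then follows from Corollary \ref{cor:orth}: two distinct $1$-critical points in the same component of $A_t$ would both lie in a small cap around some common $\bs_{q^\star}$ and thus have overlap close to $1$, contradicting the nearly orthogonal separation of distinct deep critical points. \emph{Main obstacle.} The delicate point is proving Part \ref{enu:sublvl2}: arguing that the implicit function continuation does not break down for the entire interval $[1-\dls,1]$, which requires the quantitative Hessian gap to persist uniformly along the (unknown) trajectory. This is where Lemma \ref{lem:epsx0} plays its essential role, ruling out a "runaway" of the radial derivative that would destroy the Hessian gap.
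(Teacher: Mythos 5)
Your plan for Part \ref{enu:sublvl2} matches the paper's in outline (implicit function theorem, non-degeneracy of $\Hess H_N$ at deep critical points, Corollary \ref{cor:gradbd} for the speed bound), but the later parts contain genuine gaps. The most serious is Part \ref{enu:sublvl4}. You claim that, by a "Hessian gap", $H_N$ increases by at least $\tau' N(R(\bs,\bs_{q^\star})-q^\star)^2$ as one moves from $\bs_{q^\star}$ toward the cap boundary, citing Corollary \ref{cor:gradbd}. This conflates two different quadratic forms: the \emph{spherical} Hessian of $H_N$ restricted to $\mathbb{S}^{N-1}(q^\star\sqrt N)$, which is indeed positive definite at a deep critical point (because of the shift $-\frac{1}{\sqrt N q}\ddq H_N\approx +x_0\mathbf I$), and the \emph{Euclidean} Hessian restricted to the band directions $\bs-\bs_{q^\star}\perp\bs_{q^\star}$, which is the scaled GOE matrix $\mathbf G$ of \eqref{eq:1507-10} and has negative eigenvalues down to $\approx -2\sqrt{\nu''}$. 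Moving from $\bs_{q^\star}$ out to $\partial{\rm Cap}(t)\subset\mathbb{S}^{N-1}(\sqrt N)$ is governed by the latter, so $H_N$ can \emph{decrease} at second order by about $2N(1-q^\star)\sqrt{\nu''(1)}$; Corollary \ref{cor:gradbd} gives only upper (Lipschitz-type) bounds and cannot produce a lower bound on the ascent. The actual mechanism (as in the paper) is a competition between the linear radial gain, $H_N(\bs_{q})\gtrsim -E_0N+N(1-q)x_0(1)$ coming from Part \ref{enu:sublvl3}, and this possible second-order loss, plus the $k\ge3$ Taylor tail controlled by Lemma \ref{lem:apxest}; positivity of the net effect is exactly the inequality $x_0(1)>2\sqrt{\nu''(1)}$ of Lemma \ref{lem:x0}, which never enters your argument and which also dictates how large $\cls$ must be. Without it your choice of $\cls$ has nothing to beat $t$ against.

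Two further problems. In Part \ref{enu:sublvl3} you deduce the upper bound from "$\bs_q$ minimizes $H_N$ on its sphere", which is false: $\bs_q$ is one of exponentially many deep $q$-critical points and is at best a local minimum, and your appeal to Part \ref{enu:sublvl1} here is circular, since your proofs of Parts \ref{enu:sublvl1} and \ref{enu:sublvl4} later rely on Part \ref{enu:sublvl3}. No minimality is needed: Taylor expand $H_N$ from $\bs_1$ to $\bs_q$ directly, using the speed bound \eqref{eq:2007-04} and the second-derivative bound of Corollary \ref{cor:gradbd}, together with $\ddq H_N(\bs_1)\approx -x_0(1)\sqrt N$ from Lemma \ref{lem:epsx0}. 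Finally, in Part \ref{enu:sublvl2} your continuation argument requires the Hessian non-degeneracy simultaneously at deep $q$-critical points for \emph{all} $q\in[1-\dls,1]$ along a random, Hamiltonian-dependent path; Lemma \ref{lem:epsx0} plus GOE edge concentration at a fixed $q$ does not give this without a union bound over exponentially many critical points and a continuum of $q$'s. The paper avoids this by proving the quantitative bound on $\|[\Hess H_N]^{-1}\|_{op}$ only at the $1$-critical points (a Kac-Rice/conditioning argument as in Lemma \ref{lem:1crtHessbd}) and then propagating it deterministically to a ball of radius $\sqrt N\,\dls\cls$ via the Lipschitz estimates and the stability Lemma \ref{lem:qcrtHessbd}; some version of this propagation step is needed to close your "main obstacle". (Also note the sign slip: at these points the spherical Hessian is positive definite, not negative definite.)
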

\corO{The proof occupies the rest of this section. We first prove part  \ref{enu:sublvl2}, then parts  \ref{enu:sublvl3} and  \ref{enu:sublvl4}, and finally  part \ref{enu:sublvl1}.}

\subsection{Proof of Proposition \ref{prop:sublvl}, Part \ref{enu:sublvl2}}

The construction of the path $\mathcal{G}$ will be based on an application
of the implicit function theorem. Define $G:\,(0,1+\tau)\times U\to\mathbb{R}^{N-1}$,
where $U\subset\mathbb{R}^{N-1}$ is a small neighborhood of the origin,
by
\[
G(q,\mathbf{x})=\Big(F_{i}H_{N}(qT_{\hat{\mathbf{n}}}(\mathbf{x}))\Big)_{i\leq N-1},\quad T_{\hat{\mathbf{n}}}(\mathbf{x})=\left(x_{1},...,x_{N-1},\sqrt{N-\|\mathbf{x}\|_{2}}\right),
\]
where $\mathbf{x}=(x_{1},...,x_{N-1})$ and we recall that $F_{i}$,
$i\leq N-1$, is a piecewise smooth frame field which we defined before
(\ref{eq:derivatives}), and which we will assume to satisfy (\ref{eq:1507-08})
\corO{(in particular, the frame from Lemma \ref{lem:cov}).}
We choose this definition so that if $G(q,\mathbf{x})=0$, then $qT_{\hat{\mathbf{n}}}(\mathbf{x})$
is a $q$-critical point. Denote
\[
J_{\bx}G(q,\mathbf{x})=\left(\frac{d}{dx_{j}}F_{i}H_{N}(qT_{\hat{\mathbf{n}}}(\mathbf{x}))\right)_{i,j\leq N-1},\quad\frac{d}{dq}G(q,\mathbf{x})=\left(\frac{d}{dq}F_{i}H_{N}(qT_{\hat{\mathbf{n}}}(\mathbf{x}))\right)_{i\leq N-1},
\]
and note that at $\bx=0$,
\begin{align*}
J_{\bx}G(q,0) & =\left(qF_{i}F_{j}H_{N}(q\hat{\mathbf{n}})\right)_{i,j\leq N-1}=q\Hess H_{N}(q\hat{\mathbf{n}}),\\
\frac{d}{dq}G(q,0) & =\frac{1}{q}\sum\gamma_{p}(p-1)q^{p-1}\grad H_{N,p}\left(\hat{\mathbf{n}}\right),
\end{align*}
where for the second equality we used the fact that
\begin{equation}
\grad H_{N}\left(q\bs\right)=\sum\gamma_{p}q^{p-1}\grad H_{N,p}\left(\bs\right).\label{eq:2909-02}
\end{equation}

By the implicit function theorem, if $q_{0}\hat{\mathbf{n}}$ is $q_{0}$-critical,
that is, $G(q_{0},0)=0$, and $J_{\bx}G(q_{0},0)$ is invertible,
then on a small neighborhood of $q_{0}$ there exists a unique $g_{q_{0}\hat{\mathbf{n}}}(q)=(g_{q_{0}\hat{\mathbf{n}}}^{(i)}(q))_{i\leq N-1}\in\mathbb{R}^{N-1}$
such that $G(q,g_{q_{0}\hat{\mathbf{n}}}(q))=0$ and 
\[
\frac{d}{dq}g{}_{q_{0}\hat{\mathbf{n}}}(q_{0}):=\big(\frac{d}{dq}g_{q_{0}\hat{\mathbf{n}}}^{(i)}(q)\big)_{i\leq N-1}=-\left[J_{\bx}G(q_{0},0)\right]^{-1}\frac{d}{dq}G(q_{0},0).
\]
In this case, defining the path
\[
\mathcal{G}_{q_{0}\hat{\mathbf{n}}}(q)=T_{\hat{\mathbf{n}}}(g_{q_{0}\hat{\mathbf{n}}}(q))/\|T_{\hat{\mathbf{n}}}(g_{q_{0}\hat{\mathbf{n}}}(q))\|\in\mathbb{S}^{N-1}(1)
\]
we have that the speed of $\mathcal{G}_{q_{0}\hat{\mathbf{n}}}(q)$
at $q_{0}$, relative to the standard Riemannian metric, is given
by $\|\frac{d}{dq}g{}_{q_{0}\hat{\mathbf{n}}}(q_{0})\|/(q_{0}\sqrt{N})$.

Of course, the same argument can be applied to a general point $q\bs\in\mathbb{S}^{N-1}(\sqrt{N}q)$
instead of $q\hat{\mathbf{n}}$, and therefore, if $q\bs$ is a $q$-critical
point such that 
\begin{align}
\frac{1}{q}\left\Vert \sum\gamma_{p}(p-1)q^{p-1}\grad H_{N,p}\left(\bs\right)\right\Vert _{2} & <\sqrt{N}c',\label{eq:1007-01}\\
\frac{1}{q}\left\Vert [\Hess H_{N}(q\bs)]^{-1}\right\Vert _{op} & <c,\label{eq:1007-02}
\end{align}
\corO{where by (\ref{eq:1007-02}) we mean in particular
that the inverse exists,}
then there exists a path $\mathcal{G}_{\bs}(q')$ of $q'$-critical
points, defined on a neighborhood of $q$, whose speed at $q$ is
bounded by $cc'$. To complete the proof, we need to prove
that for some $c$, $c'$ and $\dls$, with probability tending to
$1$, for every $1$-critical point $\bs_{1}$ with $H_{N}(\bs_{1})\leq-(E_{0}-t)N$:
if $q\in[1-\dls,1]$ and $\bs\in\mathbb{S}^{N-1}(\sqrt{N})$ has geodesic
distance to $\bs_{1}$ smaller than $\sqrt{N}\dls cc'$, then (\ref{eq:1007-01})
and (\ref{eq:1007-02}) hold.\footnote{We note that when we prove the bounds (\ref{eq:1007-01}) and (\ref{eq:1007-02})
we may allow the frame field $F_{i}$ to depend on $\bs$.}

Proving (\ref{eq:1007-01}) is easier. If we define $\bar{\gamma}_{p}=\gamma_{p}(p-1)$
and let $\bar{H}_{N}(\bs)$ be the corresponding mixed model, then
by (\ref{eq:2909-02}) the left-hand side of (\ref{eq:1007-01}) is
exactly the norm of $\frac{1}{q}\grad\bar{H}_{N}\left(q\bs\right)$.
Therefore for large enough $c'$, \corO{the concentration inequality  \eqref{eq:1507-03}} of Corollary \ref{cor:gradbd} implies
(\ref{eq:1007-01}) with high probability uniformly over all $\bs\in\mathbb{S}^{N-1}(\sqrt{N})$
and $q\in[1-\dls,1]$. 

To prove (\ref{eq:1007-02}), we first relate the spherical \corO{Hessian} $\Hess H_{N}(\bx)$
to \corE{the} Euclidean Hessian matrix $\nabla_{E}^{2}H_{N}(\bx)=\{\frac{d}{dx_{i}}\frac{d}{dx_{j}}H_{N}(\bx)\}_{i,j\leq N}$,
where $\bx\in\mathbb{R}^{N}$, $\|\bx\|\leq\sqrt{N}$. Assuming $\|\bx\|=\sqrt{N}q$,
let $T_{\bx}=T_{\bx}\mathbb{S}^{N-1}(\sqrt{N}q)$ be the tangent space
to the sphere at $\bx$, viewed as a linear subspace of $\mathbb{R}^{N}$
using the usual identification. Let $\mathbf{A}(\bx)\subset\mathbb{R}^{N-1\times N}$
be some matrix whose rows form an orthonormal basis of $T_{\bx}$.
For an appropriate choice of the frame $F_{i}$ on a neighborhood
of $\bx$, see (\ref{eq:1507-09}),
\[
\Hess H_{N}(\bx)=\mathbf{A}(\bx)\nabla_{E}^{2}H_{N}(\bx)(\mathbf{A}(\bx))^{T}-\frac{1}{\|\bx\|}\ddq H_{N}(\bx)\mathbf{I}.
\]
Therefore,
$\left\Vert [\Hess H_{N}(\bx)]^{-1}\right\Vert _{op}=f_{N}(\bx)$ 
where
\begin{equation}
f_{N}(\bx):=\left(\min_{\mathbf{v}\in T_{\bx}:\,\|\mathbf{v}\|=1}\left|\left\langle \nabla_{E}^{2}H_{N}(\bx),\,\mathbf{v}\mathbf{v}^{T}\right\rangle -\frac{1}{\|\bx\|}\ddq H_{N}(\bx)\right|\right)^{-1},\label{eq:fN}
\end{equation}
\corO{and $f_N(\bx)=\infty$ whenever  $\Hess H_{N}(\bx)$ is not invertible.}

In light of the \corO{the concentration inequality \eqref{eq:1507-03}}  of Corollary \ref{cor:gradbd}, Part
\ref{enu:sublvl2} of Proposition \ref{prop:sublvl} will follow if
we prove the following two lemmas; \corE{in} \corO{ both lemmas,  $f_N$ is as in \eqref{eq:fN}.} 
\begin{lem}
\label{lem:1crtHessbd}For small enough $t>0$  and   large enough $c>0$, 
\begin{equation}
\lim_{N\to\infty}\mathbb{P}\left\{ \exists\bs_{1}\in\mathscr{C}_{N,1}((-\infty,-(E_0-t)N)):\,f_{N}(\bs_{1})\geq c/2\right\} =0.\label{eq:1507-04}
\end{equation}
\end{lem}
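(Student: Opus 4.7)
The approach is the Kac--Rice first moment method, combining a two-step localization of any deep $1$-critical point with a large deviations bound for the smallest eigenvalue of GOE.

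First, I would localize both the energy and the normal derivative. By Theorem \ref{theo-ground}, $\GS_N/N \to -E_0$ almost surely, so with probability tending to $1$ every $\bs_1 \in \mathscr{C}_{N,1}((-\infty, -(E_0-t)N))$ satisfies $\frac{1}{N}H_N(\bs_1) \in B_t := -E_0 + (-\eta_N, t)$ for some $\eta_N \to 0$. Applying Lemma \ref{lem:epsx0} with $q=1$ and any fixed $\epsilon' > 0$, on an event of probability at least $1 - e^{-c_0 N}$ we further have $\frac{1}{\sqrt{N}}\ddq H_N(\bs_1) \in D_{\epsilon'} := -x_0(1) + (-\epsilon', \epsilon')$, provided $t$ and $\eta_N$ are small enough. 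By Markov's inequality, it then suffices to show
\begin{equation*}
\mathbb{E}\big|\{\bs_1 \in \mathscr{C}_{N,1}(NB_t, \sqrt{N}D_{\epsilon'}) : f_N(\bs_1) \geq c/2\}\big| \longrightarrow 0.
\end{equation*}

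Next, I would apply the Kac--Rice formula as in \eqref{eq:1102-01} with $q=1$, inserting the indicator $\mathbf{1}\{f_N(\bs)\geq c/2\}$ into the determinant factor. Under the conditioning $H_N(\bs) = u$, $\ddq H_N(\bs) = x$, $\grad H_N(\bs) = 0$, the covariance computation underlying the proof of Theorem \ref{thm:1stmoment} (Lemma \ref{lem:cov} applied with $r=1$) gives $\Hess H_N(\bs) = \sqrt{(N-1)\nu''(1)/N}\,\mathbf{G} - x\mathbf{I}$ with $\mathbf{G}$ a normalized GOE, so the event $\{f_N(\bs)\geq c/2\}$ becomes the spectral event $A_{c,x}$ that some eigenvalue of $\mathbf{G}$ lies in a window $J_{c,x}$ of width $O(1/c)$ around $x/\sqrt{(N-1)\nu''(1)/N}$. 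For $x \in \sqrt{N}D_{\epsilon'}$ and $c$ large, $J_{c,x}$ is a small neighborhood of $-x_0(1)/\sqrt{\nu''(1)}$, which by Lemma \ref{lem:x0} lies strictly below $-2$ and hence outside the semicircle support.

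The core estimate is Cauchy--Schwarz: $\mathbb{E}[|\det\Hess H_N(\bs)|\mathbf{1}_{A_{c,x}}] \leq (\mathbb{E}|\det\Hess H_N(\bs)|^2)^{1/2}(\mathbb{P}(A_{c,x}))^{1/2}$. By the LDP for the empirical spectral distribution of GOE \cite[Theorem 2.1.1]{BAG97} applied to the smallest eigenvalue, $\mathbb{P}(A_{c,x}) \leq e^{-N I_* + o(N)}$ with $I_* = I_*(c,\epsilon') > 0$ bounded away from zero uniformly in $x \in \sqrt{N}D_{\epsilon'}$, for $c$ large and $\epsilon'$ small. Matching of the second moment of $|\det(\mathbf{G}-\alpha\mathbf{I})|$ with the square of the first at logarithmic scale for shifts $\alpha$ outside the bulk (a Varadhan argument in the spirit of the proof of Theorem \ref{thm:1stmoment}) yields $\tfrac{1}{N}\log\mathbb{E}|\det(\mathbf{G}-\alpha\mathbf{I})|^2 \to 2\Omega(\alpha)$. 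Plugging back into Kac--Rice and Varadhan-ing as in Subsection \ref{subsec:pf_1stmoment},
\begin{equation*}
\frac{1}{N}\log\mathbb{E}\big|\{\bs_1 \in \mathscr{C}_{N,1}(NB_t, \sqrt{N}D_{\epsilon'}) : f_N(\bs_1) \geq c/2\}\big| \leq \sup_{u \in B_t,\,x \in D_{\epsilon'}}\Theta_{\nu,1}(u,x) - \frac{I_*}{2} + o(1).
\end{equation*}
Since $\Theta_{\nu,1}(-E_0,-x_0) = 0$ by the very definition of $E_0$ and $x_0$, and $\Theta_{\nu,1}$ is continuous, the supremum on the right vanishes as $t, \eta_N, \epsilon' \to 0$, whereas $I_*/2$ remains bounded below; choosing $t$ small and $c$ large produces a strictly negative rate, concluding the proof. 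The main technical point requiring care is the matching $\tfrac{1}{N}\log\mathbb{E}|\det(\mathbf{G}-\alpha\mathbf{I})|^2 \to 2\Omega(\alpha)$ uniformly for $\alpha$ in a neighborhood of $-x_0(1)/\sqrt{\nu''(1)}$, i.e.\ verifying via Varadhan that tilting the GOE empirical LDP by $2\log|\lambda-\alpha|$ with $|\alpha|>2+\tau$ still selects the semicircle law as its optimizer; this is routine but must be checked uniformly in the relevant range of $\alpha$.
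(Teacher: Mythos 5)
Your overall route is essentially the paper's: after localizing the energy and the radial derivative to small windows around $(-E_0,-x_0(1))$ (the paper does this via Lemma \ref{lem:epsx0}, exactly as you do; your extra appeal to the ground-state convergence is harmless and non-circular, since only the first-moment lower bound \eqref{eq:GSlbas} is needed), the paper bounds the conditional probability $\mathbb{P}_{Nu,\sqrt N x}^{1}\{f_N(\hat{\mathbf n})\geq c/2\}$, which by \eqref{eq:1507-10} is the event that the scaled GOE matrix $\mathbf{G}(\hat{\mathbf n})$ has an eigenvalue within $2/c$ of $x$, with $x$ near $-x_0(1)$ and hence (Lemma \ref{lem:x0}) outside the bulk; it then feeds this exponential bound into Lemma \ref{lem:15} with $\varphi=-\bar c/2$, and that factor $1/2$ is precisely the Cauchy--Schwarz decoupling of $|\det\Hess|$ from the indicator, together with the determinant second-moment matching (as in \cite[Corollaries 22, 23]{2nd}), which you spell out by hand. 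So your architecture and the paper's coincide; the paper merely packages the Kac--Rice-plus-Cauchy--Schwarz step into an auxiliary lemma.

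The one step that does not work as you justify it is the estimate $\mathbb{P}(A_{c,x})\leq e^{-NI_*+o(N)}$ ``by the LDP for the empirical spectral distribution of GOE \cite{BAG97} applied to the smallest eigenvalue.'' The empirical-measure LDP holds at speed $N^2$ and is blind to a single eigenvalue leaving the support: the event that one eigenvalue sits in a fixed window outside $[-2,2]$ has probability of order $e^{-cN}$, not $e^{-cN^2}$, and it cannot be written as an open/closed condition on the limiting empirical measure carrying positive rate (moving one eigenvalue does not perturb the empirical measure macroscopically). The correct input is the speed-$N$ tail/large-deviation bound for the extreme eigenvalue of GOE, i.e.\ exactly the bound \cite[Lemma 6.3]{BDG} that the paper invokes at this point (applied, after reflection, to the smallest eigenvalue, uniformly in $x\in\sqrt N D_{\epsilon'}$ since the window stays a positive distance below $-2$ for small $\epsilon'$ and large $c$). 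With that substitution your argument is complete and matches the paper's proof.
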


\begin{lem}
\label{lem:qcrtHessbd}For any $C$, $c>0$, for small enough $\delta>0$,
if $\bx$, $\bx'\in\mathbb{R}^{N}$ are points such that:
\begin{enumerate}
\item  $\|\bx\|/\sqrt{N}\in(1/2,1]$,
\item $\|\nabla_{E}H_{N}(\bx)\|/\sqrt N,\,\|\nabla_{E}^{2}H_{N}(\bx)\|_{op}\leq C$,
\item$\|\nabla_{E}H_{N}(\bx)-\nabla_{E}H_{N}(\bx')\|/\sqrt{N}$,\; $\|\bx-\bx'\|/\sqrt{N}$
and $\|\nabla_{E}^{2}H_{N}(\bx)-\nabla_{E}^{2}H_{N}(\bx')\|_{op}$
are all smaller than $\delta$,
\end{enumerate}
then, 
\[
f_{N}(\bx)<c/2\,\Longrightarrow\,f_{N}(\bx')<c.
\]
\end{lem}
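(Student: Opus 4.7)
The plan is to prove the contrapositive: assuming $\min_{\mathbf{v}\in T_\bx,\|\mathbf{v}\|=1}|h(\bx,\mathbf{v})|\geq 2/c$ where
\[
h(\bx,\mathbf{v}):=\langle\nabla_E^2 H_N(\bx),\mathbf{v}\mathbf{v}^T\rangle-\frac{\langle\nabla_E H_N(\bx),\bx\rangle}{\|\bx\|^2}
\]
(this coincides with $f_N(\bx)^{-1}$ by \eqref{eq:fN}, after rewriting $\frac{1}{\|\bx\|}\ddq H_N(\bx)=\langle\nabla_E H_N(\bx),\bx\rangle/\|\bx\|^2$), I would show $\min_{\mathbf{v}'\in T_{\bx'},\|\mathbf{v}'\|=1}|h(\bx',\mathbf{v}')|\geq 1/c$. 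The strategy is a direct perturbative argument: to each unit $\mathbf{v}'\in T_{\bx'}$ I associate a nearby unit vector in $T_\bx$ via orthogonal projection and renormalization, then bound the change in $h$ piece by piece using the three quantitative hypotheses.

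For the tangent-space comparison, given a unit $\mathbf{v}'\in T_{\bx'}$, set $\mathbf{v}:=\mathbf{v}'-\frac{\langle\mathbf{v}',\bx\rangle}{\|\bx\|^2}\bx\in T_\bx$ and $\hat{\mathbf{v}}:=\mathbf{v}/\|\mathbf{v}\|$. Since $\mathbf{v}'\perp\bx'$, one has $\langle\mathbf{v}',\bx\rangle=\langle\mathbf{v}',\bx-\bx'\rangle$, so Cauchy--Schwarz combined with hypothesis (3) and the lower bound $\|\bx\|\geq\sqrt{N}/2$ from hypothesis (1) yields $\|\hat{\mathbf{v}}-\mathbf{v}'\|\leq K\delta$ for an absolute constant $K$, provided $\delta$ is small enough.

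The core estimate is then to split
\begin{align*}
|h(\bx',\mathbf{v}')-h(\bx,\hat{\mathbf{v}})|&\leq|\langle\nabla_E^2 H_N(\bx')-\nabla_E^2 H_N(\bx),\mathbf{v}'\mathbf{v}'^T\rangle|+|\langle\nabla_E^2 H_N(\bx),\mathbf{v}'\mathbf{v}'^T-\hat{\mathbf{v}}\hat{\mathbf{v}}^T\rangle|\\
&\quad+\left|\frac{\langle\nabla_E H_N(\bx'),\bx'\rangle}{\|\bx'\|^2}-\frac{\langle\nabla_E H_N(\bx),\bx\rangle}{\|\bx\|^2}\right|,
\end{align*}
and bound each summand by $O_C(\delta)$. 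The first is at most $\delta$ by hypothesis (3); the second is at most $2CK\delta$ using hypothesis (2), the previous paragraph, and the identity $\mathbf{v}'\mathbf{v}'^T-\hat{\mathbf{v}}\hat{\mathbf{v}}^T=(\mathbf{v}'-\hat{\mathbf{v}})\mathbf{v}'^T+\hat{\mathbf{v}}(\mathbf{v}'-\hat{\mathbf{v}})^T$; the third is handled by noting that (for small $\delta$) hypothesis (1) gives $\|\bx'\|^2/N\geq 1/16$, hypotheses (2)--(3) bound $\|\nabla_E H_N(\bx')\|/\sqrt{N}\leq C+\delta$, and the numerators and denominators in the difference of quotients differ by $O_C(\delta N)$, yielding an $O_C(\delta)$ bound after division.

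Combining, $|h(\bx',\mathbf{v}')|\geq|h(\bx,\hat{\mathbf{v}})|-O_C(\delta)\geq 2/c-O_C(\delta)$, which exceeds $1/c$ once $\delta$ is small enough depending only on $c$ and $C$. The argument presents no real obstacle beyond bookkeeping; the only mildly delicate point is ensuring the tangent-space perturbation and the three quotient terms are controlled uniformly, which is made possible by the uniform lower bound on $\|\bx\|$ from hypothesis (1).
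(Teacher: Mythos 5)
Your argument is correct and follows essentially the same route as the paper: compare $f_N(\bx)^{-1}$ and $f_N(\bx')^{-1}$ by matching unit tangent vectors (the paper asserts the existence of such a correspondence, you make it explicit via projection onto $T_{\bx}$), splitting the Hessian quadratic-form difference into the Hessian-increment term and the term $\langle\nabla_E^2H_N(\bx),\mathbf{v}'\mathbf{v}'^T-\hat{\mathbf{v}}\hat{\mathbf{v}}^T\rangle$, and controlling the radial term through $\ddq H_N(\bx)=\langle\nabla_E H_N(\bx),\bx/\|\bx\|\rangle$. The only cosmetic remark is that what you prove is the direct implication (not its contrapositive), and the strict inequalities are preserved by the margin $2/c-O_C(\delta)>1/c$, exactly as in the paper.
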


\begin{proof}[Proof of Lemma \ref{lem:1crtHessbd}]
By Lemma \ref{lem:epsx0}, to prove (\ref{eq:1507-04}) for small  $t$ it will be sufficient to show that for small  $\epsilon$,
\[
\lim_{N\to\infty}\mathbb{P}\left\{ \exists\bs_{1}\in\mathscr{C}_{N,1}(NB(\epsilon),\sqrt ND(\epsilon)):\,f_{N}(\bs_{1})\geq c/2\right\} =0,
\]
where $B(\epsilon)=-E_0 +(-\epsilon,\epsilon)$ and $D(\epsilon)=-x_0+(-\epsilon,\epsilon)$.

\corO{We show below that}
\begin{equation}
\corO{\bar c:=}\limsup_{N\to\infty}\sup_{u\in B(\epsilon),\,x\in D(\epsilon)}\frac{1}{N}\log\left(\mathbb{P}_{Nu,\sqrt Nx}^{1}\left\{ f_{N}(\left(\hat{\mathbf{n}}\right))\geq c/2\right\} \right)<0.\label{eq:1607-01}
\end{equation}
Since $\Theta_{\nu,1}\left(u,x\right)$ is continuous and $\Theta_{\nu,1}\left(-E_{0}(1),-x_{0}(1)\right)=0$, 
\corO{an application of}
Lemma \ref{lem:15} \corO{with $\varphi(u,x)=-\bar c/2$ and $g_N=f_N$ completes the proof, reducing $\epsilon$ if needed.}

\corO{We thus turn to the proof of \eqref{eq:1607-01}.}
From (\ref{eq:1507-10}) and the sentence following it, the conditional
probability in (\ref{eq:1607-01}) is equal to 
\begin{equation}
\mathbb{P}\left\{ \exists i\leq N-1: | \lambda_{i}(\hat{\mathbf{n}}) -x | \leq 2/c\right\} ,\label{eq:1607-02}
\end{equation}
where $\lambda_{i}(\hat{\mathbf{n}})$ are the eigenvalues of $\mathbf{G}\left(\hat{\mathbf{n}}\right)$,
which have the same law as $\sqrt{\frac{N-1}{N}\nu''(1)}\mathbf{M}$
with $\mathbf{M}$ being a GOE matrix. By Lemma \ref{lem:x0}, $-x_{0}/\sqrt{\nu''(1)}<-2$, and for small $\epsilon$ and large $c$, \eqref{eq:1607-02} is exponentially small in $N$, by the bound on the top eigenvalue of \cite[Lemma 6.3]{BDG}. This implies (\ref{eq:1607-01})  and completes the proof of the lemma.
\end{proof}

\begin{proof}[Proof of Lemma \ref{lem:qcrtHessbd}]
\corO{Throughout the proof, we write $\epsilon_i(\delta)$ for various positive quantities satisfying
 $\lim_{\delta\to 0} \epsilon_i(\delta)=0$.}\

Since $\ddq H_{N}(\bx)=\langle\nabla_{E}H_{N}(\bx),\,\bx/\|\bx\|\rangle$,
from our assumptions,
\[
\left\Vert \frac{1}{\|\bx\|}\ddq H_{N}(\bx)-\frac{1}{\|\bx'\|}\ddq H_{N}(\bx')\right\Vert \leq\epsilon_{1}(\delta),
\]
for some \corO{$\epsilon_{1}(\delta)$.}

Since, \corO{by assumption}, $\|\bx\|/\sqrt{N}>1/2$ and $\|\bx-\bx'\|/\sqrt{N}<\delta$,
there exists $\epsilon_{2}(\delta)$ such that for any $\mathbf{v}'\in T_{\bx'}$
with $\|\mathbf{v}'\|=1$ there exists $\mathbf{v}\in T_{\bx}$ with
$\|\mathbf{v}\|=1$ such that $\|\mathbf{v}-\mathbf{v}'\|\leq\epsilon_{2}(\delta)$,
and vice versa. Write
\begin{equation}
 \left|\left\langle \nabla_{E}^{2}H_{N}(\bx),\,\mathbf{v}\mathbf{v}^{T}\right\rangle -\left\langle \nabla_{E}^{2}H_{N}(\bx'),\,\mathbf{v}'\mathbf{v}'^{T}\right\rangle \right|
  \leq\left|\left\langle \nabla_{E}^{2}H_{N}(\bx)-\nabla_{E}^{2}H_{N}(\bx'),\,\mathbf{v}'\mathbf{v}'^{T}\right\rangle \right|+\left|\left\langle \nabla_{E}^{2}H_{N}(\bx),\,\mathbf{v}\mathbf{v}^{T}-\mathbf{v}'\mathbf{v}'^{T}\right\rangle \right|,\label{eq:1707-03}
\end{equation}
where $\langle\mathbf{A},\mathbf{B}\rangle=\text{Tr}(\mathbf{A}^{T}\mathbf{B})=\sum_{i}\langle\mathbf{A}e_{i},\mathbf{B}e_{i}\rangle$,
with $e_{i}$ being an orthonormal basis, denotes the Hilbert-Schmidt
inner product. The first summand in (\ref{eq:1707-03}) is bounded by $\delta$,
by \corO{assumption (3)}. The second summand is bounded by
\begin{equation}
\|\nabla_{E}^{2}H_{N}(\bx)\|_{op}\|\mathbf{v}\mathbf{v}^{T}-\mathbf{v}'\mathbf{v}'^{T}\|_{op}\text{\ensuremath{\cdot}Rank}\left(\mathbf{v}\mathbf{v}^{T}-\mathbf{v}'\mathbf{v}'^{T}\right)\leq2C\|\mathbf{v}\mathbf{v}^{T}-\mathbf{v}'\mathbf{v}'^{T}\|_{op}.\label{eq:1707-04}
\end{equation}
Since $\|\mathbf{v}-\mathbf{v}'\|\leq\epsilon_{2}(\delta)$, (\ref{eq:1707-04})
is bounded from above by some \corO{$\epsilon_{3}(\delta)$.}

Combining the above we have that |$(f_{N}(\bx'))^{-1}-(f_{N}(\bx))^{-1}|\leq\epsilon_{4}(\delta)$,
for some \corO{$\epsilon_{4}(\delta)$.} In particular, for small $\delta$,
$f_{N}(\bx')<c$.
\end{proof}

\corO{Having completed the proof of Lemmas \ref{lem:1crtHessbd} and \ref{lem:qcrtHessbd}, the proof of  Proposition \ref{prop:sublvl}, Part \ref{enu:sublvl2}
is complete.}\qed
\subsection{Proof of Proposition \ref{prop:sublvl}, Parts \ref{enu:sublvl3}
and \ref{enu:sublvl4}}

For small $\epsilon$, if $\bs$, $\bs'\in\mathbb{S}^{N-1}(\sqrt{N})$
are points with geodesic distance less than $\sqrt{N}\epsilon$, then
\[
R(\bs,\bs')\geq\sqrt{1-\epsilon^{2}}=1-\epsilon^{2}/2+O(\epsilon^{4}).
\]
\corO{Let $\cls$ be the constant from the (already proved)} Part \ref{enu:sublvl2} of Proposition \ref{prop:sublvl}.
Then, assuming $t$ is small, \corO{with high probability},
any $1$-critical point $\bs_{1}$ \corO{satisfies}
\[\corO{d\Big(\bs_1,\frac{\sqrt{N}\bs_{1-\cls t}}{\|\bs_{1-\cls t}\|}\Big)\leq
\sqrt{N}\cls^{2}t}\]
 and therefore 

\[
R(\bs_{1},\bs_{1-\cls t})\geq1-(\cls^{2}t)^{2}\geq1-\cls t,
\]
that is, $\bs_{1}\in\text{Cap}(t)$.

\corO{Let $\partial\text{Cap}(t)=\{\bs\in\mathbb{S}^{N-1}(\sqrt{N}):\,R(\bs,\bs_{1-\cls t})=1-\cls t\}$
denote the} boundary of $\text{Cap}(t)$.
Let \corO{$\bs_{1}\in A_{t}\cap \text{Cap}(t)$ be some $1$-critical point and assume that $\partial\text{Cap}(t)\cap A_{t}=\varnothing$, that is}
\begin{equation}
\min_{\bs\in\partial\text{Cap}(t)}H_{N}(\bs)>-(E_{0}-t)N.\label{eq:1707-05}
\end{equation}
Let $\bs_{0}$ be some point in the connected component $K$ of $A_{t}$
containing $\bs_{1}$ and let $\mathcal{Q}:[0,1]\to\mathbb{S}^{N-1}(\sqrt{N})$,
be a continuous path, 
\corO{wholly contained in the connected component $K$,}
that connects $\bs_{0}=\mathcal{Q}(0)$ to $\bs_{1}=\mathcal{Q}(1)$.
Note that $\bs_0$  must be contained in $\text{Cap}(t)$, since otherwise we would have had some $s'$ such that $\mathcal{Q}(s')\in \partial \text{Cap}(t)\cap K$. In other words, if $\bs_{1}\in\text{Cap}(t)$ and (\ref{eq:1707-05}), then
 $K\subset\text{Cap}(t)$.
Therefore, to prove Part \ref{enu:sublvl4} of Proposition \ref{prop:sublvl} we need to show that with high probability,
for any $1$-critical
point $\bs_{1}\in A_{t}$, the corresponding $(1-\cls t)$-critical
point $\bs_{1-\cls t}$ satisfies (\ref{eq:1707-05}).

In the rest of the proof, $\epsilon>0$ will be a constant which can
be taken to be as small as we wish, provided $t$ and $\dls$ are
small enough. Also, whenever we fix a $1$-critical point $\bs_{1}\in A_{t}$,
$\bs_{q}$ will denote the corresponding $q$-critical point defined
by Part \ref{enu:sublvl2} of Proposition \ref{prop:sublvl} (we always
restrict to the event that those points exist for all $\bs_{1}\in A_{t}$).
We shall prove that, with probability tending to $1$, for any $1$-critical
point $\bs_{1}\in A_{t}$ and $q\in[1-\dls,1)$:
\begin{itemize}
  \item[(a)] \label{enu:pt1}With $T_{\bx}$ as defined in the proof of Part \ref{enu:sublvl2},
\begin{equation}
\min_{\mathbf{v}\in T_{\bs_{q}}:\,\|\mathbf{v}\|=1}\left|\left\langle \nabla_{E}^{2}H_{N}(\bs_{q}),\,\mathbf{v}\mathbf{v}^{T}\right\rangle \right|<2\sqrt{\nu''(1)}+\epsilon.\label{eq:2017-02}
\end{equation}
\item[(b)] \label{enu:pt2}For some sequence $\eta_{N}=o(1)$, $H_{N}(\bs_{q})>-NE_{0}(q)-N\eta_{N}$.
\item[(c)] \label{enu:pt3}For some constant $c_{2}>0$,
\[
\Big|H_{N}(\bs_{q})-\left(H_{N}(\bs_{1})+N(1-q)x_{0}\right)\Big|<N\epsilon(1-q)+Nc_{2}(1-q)^{2}.
\]
\end{itemize}
By an argument similar to the proof of Lemma \ref{lem:1crtHessbd}
\textendash{} replacing the condition $f_{N}(\bs_{1})\geq c/2$ by
(\ref{eq:2017-02}) with $q=1$ and using the bound 
\begin{equation}
\limsup_{N\to\infty}\sup_{u\in B'(t),\,x\in D'(\epsilon)}\frac{1}{N}\log\mathbb{P}_{u,x}^{1}\left\{ \min_{\mathbf{v}\in T_{\hat{\mathbf{n}}}:\,\|\mathbf{v}\|=1}\left|\left\langle \nabla_{E}^{2}H_{N}(\hat{\mathbf{n}}),\,\mathbf{v}\mathbf{v}^{T}\right\rangle \right|<2\sqrt{\nu''(1)}+\epsilon/2\right\} <0,\label{eq:1607-01-1}
\end{equation}
which follows from (\ref{eq:1507-10}), we conclude that
with probability tending
to $1$ as $N\to\infty$, 
\corO{for each of the points $\bs_{1}\in A_{t}$,}
(\ref{eq:2017-02}) holds with $\epsilon/2$
instead of $\epsilon$. Combined with Corollary \ref{cor:gradbd}
and the fact that, by Part \ref{enu:sublvl2} of Proposition \ref{prop:sublvl},
\begin{equation}
\|\bs_{1}-\bs_{q}\|_{2}^{2}\leq\Big(\|\bs_{1}-q\bs_{1}\|_{2}+\|q\bs_{1}-\bs_{q}\|_{2}\Big)^{2}\leq N(1-q)^{2}(1+\cls)^{2},\label{eq:2007-04}
\end{equation}
this implies Point  (a) 
above. 

Point (b)
follows from Theorem \ref{thm:1stmoment} and Remark \ref{rem:scaling}, by Markov's inequality.

By Lemma \ref{lem:epsx0}, for all $\bs_{1}\in A_{t}$ with small $t$,
with probability tending to $1$ as $N\to\infty$, 
\begin{equation}
|\ddq H_{N}\left(\bs_{1}\right)+\sqrt{N}x_{0}(1)|\leq\sqrt{N}\epsilon.\label{eq:2017-03}
\end{equation}

Let $\mathcal{E}=\mathcal{E}_{N}$ be the intersection of the event
that Part \ref{enu:sublvl2} of Proposition \ref{prop:sublvl} holds
(with some $\cls$) and 
\[
\left\{ \sup_{\bx\in\mathcal{B}_{N}}\|\nabla_{E}^{2}H_{N}(\bx)\|_{op}<2\tilde{C}_{2}\right\} ,
\]
where $\tilde{C}_{2}$ is defined in Corollary \ref{cor:gradbd} and
$\|\cdot\|_{op}$ is the operator norm. Note that by Corollary \ref{cor:gradbd},
$\lim_{N\to\infty}\mathbb{P}\{\mathcal{E}_{N}\}=1$. 

On the event $\mathcal{E}_{N}$, for any 1-critical point $\bs_{1}\in A_{t}$
and any $q\in[1-\dls,1]$, by a Taylor approximation, 
\begin{align*}
 & \Big|H_{N}(\bs_{q})-\Big[H_{N}(\bs_{1})-(1-q)\sqrt{N}\ddq H_{N}(\bs_{1})\Big]\Big|\\
 & \leq\tilde{C}_{2}\|\bs_{1}-\bs_{q}\|_{2}^{2}\leq N(1-q)^{2}\tilde{C}_{2}(1+\cls)^{2},
\end{align*}
where we used (\ref{eq:2007-04}) and the fact that $\langle\nabla_{E}H_{N}(\bs_{1}),\,\mathbf{v}\rangle=0$,
for any direction $\mathbf{v}$ tangent to $\mathbb{S}^{N-1}(\sqrt{N})$
at $\bs_{1}$. Combined with (\ref{eq:2017-03}), this proves Point
(c) 
above.

Points 
\corO{(a) and (b)}
above imply that \eqref{eq:2110-01} holds for any finite number of values of
$q$ simultaneously, with probability tending to $1$ as $N\to\infty$. 
To complete the proof of Part \ref{enu:sublvl3} of 
Proposition \ref{prop:sublvl}, the same needs to \corO{be proved for a}
whole range $q\in[1-\dls,1]$ simultaneously. The latter follows from
the bound on the speed of $\bs_q$ from  Part \ref{enu:sublvl2} of Proposition \ref{prop:sublvl}, the fact that by (\ref{eq:1507-03}), $H_{N}: \BN\to \R$ is a Lipschitz function, and the fact that $-E_0(q)$ is continuous in a neighborhood of $1$ by Lemma \ref{lem:Exsmooth}. 

Finally, we turn to the proof of (\ref{eq:1707-05}). Recall the definition
of $\bar{H}_{N}^{\bs_{0},k}\left(\boldsymbol{\sigma}\right)$ from
Section \ref{sec:models_on_bands} and that $H_{N}\left(\boldsymbol{\sigma}\right)=\sum_{k=0}^{\infty}\bar{H}_{N}^{\bs_{0},k}\left(\boldsymbol{\sigma}\right)$.
Set $\bar{\bs}_{q}=\bs_{q}/q$ and let $\bs\in\mathbb{S}^{N-1}(\sqrt{N})$
be some point such that $R(\bs,\bar{\bs}_{q})=q$. Assuming Points \corE{(b) and (c)}
 above,
\begin{align*}
\bar{H}_{N}^{\bar{\bs}_{q},0}\left(\boldsymbol{\sigma}\right)/N & =H_{N}(\bs_{q})/N>-E_{0}+(1-q)x_{0}\\
 & -\epsilon(1-q)-c_{2}(1-q)^{2}-\eta_{N}.
\end{align*}
Since $\bs_{q}$ is $q$-critical, $\bar{H}_{N}^{\bar{\bs}_{q},1}\left(\boldsymbol{\sigma}\right)=0$.
\corO{Using Point (a) above,}
by Corollary \ref{cor:tensorbd} we have
\[
\big|\bar{H}_{N}^{\bar{\bs}_{q},2}\left(\boldsymbol{\sigma}\right)\big|\leq N(1-q^{2})(\sqrt{\nu''(1)}+\epsilon/2).
\]
From Lemma \ref{lem:apxest}, with probability tending to
$1$, for some constants $c$, $\bar{C}>0$, $q$ close to $1$,
\corO{and all points $\bar{\bs}_q$,}
\[
\Big|\sum_{k=3}^{\infty}\bar{H}_{N}^{\bar{\bs}_{q},k}\left(\boldsymbol{\sigma}\right)\Big|\leq N\bar{C}\Big(\frac{1-q^{2}}{c}\Big)^{3/2}.
\]
Combining the above, for sufficiently small $\delta$, for any $q\in[1-\dls,1)$
and large enough $N$,
\[
H_{N}\left(\boldsymbol{\sigma}\right)=\sum_{k=0}^{\infty}\bar{H}_{N}^{\bar{\bs}_{q},k}\left(\boldsymbol{\sigma}\right)>-NE_{0}+N(1-q)(x_{0}-2\sqrt{\nu''(1)}-\epsilon'),
\]
where $\epsilon'>0$ is a constant which can be taken to be as small
as we wish assuming $\dls$ and $t$ are small enough. Finally, since
by Lemma \ref{lem:x0}, $x_{0}>2\sqrt{\nu''(1)}$, for some $\tau>0$,
\[
H_{N}\left(\boldsymbol{\sigma}\right)>-E_{0}N+(1-q)\tau N.
\]
For $q=1-\cls t$, assuming that $\cls>1/\tau$, we have that $H_{N}\left(\boldsymbol{\sigma}\right)>-E_{0}N+tN$.
This yields (\ref{eq:1707-05}) and completes the proof.\qed
\subsection{Proof of Proposition \ref{prop:sublvl}, Part \ref{enu:sublvl1}}

The fact that a.s. each component $A$ contains a local minimum point
$\bs_{1}$ of $H_{N}(\bs)$ on $\mathbb{S}^{N-1}(\sqrt{N})$ is a
direct consequence of the fact that $H_{N}:\,\mathbb{S}^{N-1}(\sqrt{N})\to\mathbb{R}$
is a Morse function a.s., as can be verified using \cite[Theorem 11.3.1]{RFG}.
To prove that $\bs_{1}$ is the only $1$-critical point in $A$,
assuming Parts \ref{enu:sublvl2} and \ref{enu:sublvl4} and that
$\dls/\cls$ is small enough, it is sufficient to show that for small fixed
$c>0$, if $t$ is small enough then there are no two critical points
$\bs$, $\bs'$ with $H_{N}(\bs)\leq-(E_{0}-t)N$ such that $R\left(\bs,\bs'\right)>1-c$,
with probability tending to $1$ as $N\to\infty$. This is a consequence
of Corollary \ref{cor:orth} (which, in fact, states that the critical
points are either antipodal or close to orthogonal for small $t$).
\qed
\section{\label{sec:lower-bound}\corE{The }\corO{ lower bound on the free energy}}

This section is devoted to proving the following lower bound on the
free energy. \corO{Recall the variables $Z_{N,\beta}$, $\Lambda_{Z,\beta}$, $Z_{N,\beta}(\bs_0)$, see
\eqref{eq-gibbsmeas}, \eqref{eq:LambdaZ} and \eqref{eq:Zsigma}, and the variables \corE{$\qs$ and $E_0(\qs)$}, see 
\eqref{eq:qs} and \eqref{eq:E0q}.}
\begin{prop}
\label{prop:Flb}Assuming \CDTN, for large enough $\beta$ and any
$\epsilon>0$, there exist constants $c,\,C>0$ (depending on $\epsilon$)
such that 
\begin{equation}
\mathbb{P}\Big\{\frac{1}{N}\log Z_{N,\beta}<\Lambda_{Z,\beta}(-E_{0}(\qs),\,\qs)-\epsilon\Big\}\leq Ce^{-Nc}.\label{eq:0808-03}
\end{equation}
\end{prop}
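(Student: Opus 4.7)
The plan is to follow the strategy outlined in Section~2.7: combine the moment matching of Corollary~\ref{cor:matching} for $\qs$-critical points at the target energy with a conditional concentration estimate for the local Gibbs weight $Z_{N,\beta}(\bs_{0})$, and then upgrade the resulting sub-exponential probability bound to an exponential one via Borell--TIS. For small $\epsilon'>0$ I would first set
\[
\mathcal{K}_{\epsilon'}:=\mathscr{C}_{N,\qs}\bigl(N(-\Es-\epsilon',-\Es+\epsilon'),\,\sqrt{N}(-x_{0}(\qs)-\epsilon',-x_{0}(\qs)+\epsilon')\bigr).
\]
Since $\Theta_{\nu,\qs}(-\Es,-x_{0}(\qs))=0$ by the definitions of $\Es$ and $x_{0}(\qs)$, $\E|\mathcal{K}_{\epsilon'}|$ grows sub-exponentially in $N$, and Corollary~\ref{cor:matching} together with the Paley--Zygmund inequality gives $\mathbb{P}(|\mathcal{K}_{\epsilon'}|\geq 1)\geq(\E|\mathcal{K}_{\epsilon'}|)^{2}/\E|\mathcal{K}_{\epsilon'}|^{2}\geq e^{-\epsilon' N}$ for all sufficiently large $N$.

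The key step is to show that for any $\bs_{0}\in\mathcal{K}_{\epsilon'}$, the partial weight $Z_{N,\beta}(\bs_{0})$ concentrates, conditionally, around its annealed value $e^{N\Lambda_{Z,\beta}(-\Es,\qs)+o(N)}$ with probability $1-e^{-cN}$. By Lemma~\ref{cor:conditional}, once one conditions on $H_{N}(\bs_{0})=NE$, $\ddq H_{N}(\bs_{0})=v$, $\grad H_{N}(\bs_{0})=0$, the restriction $H_{N}|_{\qs}$ splits as an explicit constant plus the independent mixed Hamiltonian $\tilde{H}(\bs)=\sum_{k\geq 2}\alpha_{k}(\qs)\sqrt{N/(N-1)}\,H_{N-1,k}(\bs)$, with no $1$-spin component. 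A second-moment computation on the conditional partition function $\bar{Z}:=\int e^{-\beta\tilde{H}}d\bs$ reduces to estimating a Gaussian-type integral of the schematic form
\[
\frac{\E\bar{Z}^{2}}{(\E\bar{Z})^{2}}\asymp\int_{-1}^{1}(1-r^{2})^{\frac{N-4}{2}}\exp\Bigl(\beta^{2}N\sum_{k\geq 2}\alpha_{k}^{2}(\qs)r^{k}\Bigr)dr,
\]
and showing it is of order $e^{o(N)}$. Since $r\mapsto\sum_{k\geq 2}\alpha_{k}^{2}(\qs)r^{k}$ is \emph{convex} rather than concave, concentration near $r=0$ is not automatic: it relies on the volume factor dominating the exponential, whose second-order condition at $r=0$ is exactly $2\beta^{2}\alpha_{2}^{2}(\qs)<1$, i.e.\ $\beta\alpha_{2}(\qs)<1/\sqrt{2}$ -- equivalent to $\qs>q_{c}(\beta)$, which holds by \eqref{eq:1010-05}. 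The higher-spin contributions $\alpha_{k}(\qs)^{2}=O(\beta^{-k})$ (since $1-\qs\asymp 1/\beta$) are sub-leading for large $\beta$ but require quantitative control to rule out spurious interior maxima of the exponent. Paley--Zygmund on $\bar{Z}$ combined with Borell--TIS applied to $\log\bar{Z}$ (Lipschitz in the disorder) then delivers the desired conditional concentration. This second-moment control is the main technical obstacle.

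Finally, the Kac--Rice formula, used as in the proof of Theorem~\ref{thm:1stmoment}, unconditions the estimate:
\[
\E\Bigl\{\sum_{\bs_{0}\in\mathcal{K}_{\epsilon'}}\mathbf{1}\{Z_{N,\beta}(\bs_{0})\geq e^{N(\Lambda_{Z,\beta}(-\Es,\qs)-\delta)}\}\Bigr\}\geq(1-e^{-cN})\,\E|\mathcal{K}_{\epsilon'}|,
\]
which, together with the lower bound on $\mathbb{P}(|\mathcal{K}_{\epsilon'}|\geq 1)$ from Step~1, implies $\mathbb{P}\{F_{N,\beta}\geq\Lambda_{Z,\beta}(-\Es,\qs)-2\delta\}\geq e^{-2\epsilon'N}$ for large $N$, where $F_{N,\beta}=\frac{1}{N}\log Z_{N,\beta}$. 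Since $F_{N,\beta}$ is a Lipschitz function of the Gaussian disorder with constant $O(\beta/\sqrt{N})$, Borell--TIS gives exponential concentration of $F_{N,\beta}$ around $\E F_{N,\beta}$; the sub-exponential lower bound just obtained forces $\E F_{N,\beta}\geq\Lambda_{Z,\beta}(-\Es,\qs)-O(\delta+\epsilon')$ for all large $N$. A final application of Borell--TIS, with the deviation taken to be $\epsilon-O(\delta+\epsilon')$, then yields the claimed exponential tail bound \eqref{eq:0808-03}.
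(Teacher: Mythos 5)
Your proposal is sound and follows the same overall architecture as the paper: existence of a deep $\qs$-critical point with radial derivative near $-x_{0}(\qs)$ via the first/second moment matching (Corollary \ref{cor:matching}) and Paley--Zygmund, conditional analysis of the section weight $Z_{N,\beta}(\bs_{0})$ via Lemma \ref{cor:conditional}, a Kac--Rice step to pass to all critical points, and finally the Gaussian concentration of $F_{N,\beta}$ (Corollary \ref{cor:concF}) to upgrade a sub-exponential probability bound into the exponential tail \eqref{eq:0808-03}. Where you genuinely diverge is the conditional concentration step (the paper's Point (3)): you propose an annealed second-moment computation for the \emph{full} conditional partition function $\bar Z$ of $\sum_{k\geq2}\alpha_{k}(\qs)H_{N-1,k}$, showing $\mathbb{E}\bar Z^{2}/(\mathbb{E}\bar Z)^{2}=e^{o(N)}$ using $2\beta^{2}\alpha_{2}^{2}(\qs)<1$ (i.e.\ $\qs>q_{c}$, \eqref{eq:1010-05}) with the $k\geq3$ terms of order $O(1/\beta)$, and then Paley--Zygmund plus Borell--TIS on $\log\bar Z$ to force $\frac1N\mathbb{E}\log\bar Z\geq\frac1N\log\mathbb{E}\bar Z-\delta$ and hence exponential concentration at any fixed scale $\delta$. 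The paper instead splits the conditional model into its $2$-spin part, whose free energy is pinned to the annealed value via the Baik--Lee fluctuation theorem (Lemma \ref{lem:2sp_highT}), and the $k\geq3$ correction, handled by a conditional second moment of the ratio $X_{N}$ given a $2$-spin-measurable event and Chebyshev (Lemma \ref{lem-44old}). Your route is more self-contained (no CLT input) and suffices for the fixed-$\epsilon$ statement of the proposition; the paper's route yields concentration at vanishing scales $\epsilon_{N}=o(1)$, which it uses elsewhere. The exponent analysis in your overlap integral is exactly the analysis of $\zeta(\bar H,\varrho)$ in the paper's Lemma \ref{lem-44old}, so the same high-temperature mechanism is at work in both.

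Three steps are stated too quickly but are repairable with tools already in the paper. First, ``Kac--Rice unconditions the estimate'' is not automatic: the Kac--Rice density carries the factor $|\det\Hess H_{N}|$, so your conditional failure probability must be decoupled from the determinant via Cauchy--Schwarz together with $\mathbb{E}|\det(\mathbf{G}-t\mathbf{I})|^{2}\leq C(\mathbb{E}|\det(\mathbf{G}-t\mathbf{I})|)^{2}$, valid because the radial-derivative window keeps $t>2+\delta$ (Lemma \ref{lem:x0}, \cite[Corollaries 22, 23]{2nd}); this is precisely how the paper passes from its Point (3) to Point (2). Second, to turn a heavy section weight $Z_{N,\beta}(\bs_{0})$ into a lower bound on $Z_{N,\beta}$ you need the uniform gradient bound \eqref{eq:1507-03} to compare the section with a thin band, at the cost of an $o(1)$ correction. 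Third, the existence of a heavy point should be obtained by applying Paley--Zygmund to the heavy count itself, bounding its second moment by that of $|\mathcal{K}_{\epsilon'}|$ and invoking Corollary \ref{cor:matching}; combining the expectation bound with $\mathbb{P}(|\mathcal{K}_{\epsilon'}|\geq1)\geq e^{-\epsilon'N}$ as written does not directly give it (also, $\mathbb{E}|\mathcal{K}_{\epsilon'}|$ grows exponentially, at a small positive rate, not sub-exponentially; this is harmless for the argument). With these repairs your proof goes through.
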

\corO{The rest of the section is devoted to the proof of Proposition
  \ref{prop:Flb}.}
Consider the following three statements:
\begin{enumerate}
\item \label{enu:Flb1}For some $\epsilon_{N}=o(1)$,
\begin{equation}
\lim_{N\to\infty}\frac{1}{N}\log\mathbb{P}\Big\{\frac{1}{N}\log Z_{N,\beta}>\Lambda_{Z,\beta}(-E_{0}(\qs),\,\qs)-\epsilon_{N}\Big\}=0.\label{eq:1008-02}
\end{equation}
\item \label{enu:Flb2}Let $\delta_{N}=o(1)$ be arbitrary and set $B_{N}=-E_{0}(\qs)+(-\delta_{N},\delta_{N})$
and $D_{N}=-x_{0}(\qs)+(-\delta_{N},\delta_{N})$. Then, for some
$\epsilon_{N}=o(1)$, as $N\to\infty$,
\begin{equation}
\label{eq-mas1}
\mathbb{E}\mbox{LCrt}_{N,\beta}=o(1)\cdot\mathbb{E}\mbox{Crt}_{N,\qs}(B_{N},D_{N}),
\end{equation}
where, with $Z_{N,\beta}(\bs_{0})$ defined by \eqref{eq:Zsigma},
\[
\mbox{LCrt}_{N,\beta}=\#\left\{ \bs_{0}\in\mathscr{C}_{N,\qs}(NB_{N},\sqrt{N}D_{N}):\,\frac{1}{N}\log Z_{N,\beta}(\bs_{0})<\Lambda_{Z,\beta}(-E_{0}(\qs),\qs)-\epsilon_{N}\right\} 
\]
denotes the number of `light' points.
\item \label{enu:Flb3}\corO{Fix a sequence $\delta_N=o(1)$ in the definition of $B_N,D_N$. Then, if} $\epsilon_{N}=\epsilon_{N}(\delta_{N})=o(1)$
decays slowly enough, then as $N\to\infty$, uniformly in $(u,x)\in NB_{N}\times\sqrt{N}D_{N}$,
\begin{equation}
\mathbb{P}_{u,x}^{\qs}\left\{ \frac{1}{N}\log Z_{N,\beta}(\qs\hat{\mathbf{n}})<\Lambda_{Z,\beta}(-E_{0}(\qs),\qs)-\epsilon_{N}\right\} =o(1).\label{eq:0808-01-1}
\end{equation}
\end{enumerate}
We will prove (\ref{enu:Flb3}) and the implications (\ref{enu:Flb3})$\implies$(\ref{enu:Flb2})$\implies$(\ref{enu:Flb1})$\implies$Proposition
\ref{prop:Flb}. 

\subsection*{(\ref{enu:Flb1})$\protect\implies$Proposition \ref{prop:Flb}}

By Corollary \ref{cor:concF},
\begin{equation}
\limsup_{N\to\infty}\frac{1}{N}\log\mathbb{P}\left\{ \Big|\frac{1}{N}\log Z_{N,\beta}-\frac{1}{N}\mathbb{E}\log Z_{N,\beta}\Big|>Nt\right\} \leq-t^{2}/2\beta^{2}\nu(1).\label{eq:2809-03}
\end{equation}
Therefore, (\ref{eq:1008-02}) implies that 
\begin{equation}
\liminf_{N\to\infty}\mathbb{E}\frac{1}{N}\log Z_{N,\beta}\geq\Lambda_{Z,\beta}(-E_{0}(\qs),\,\qs).\label{eq:2809-04}
\end{equation}
\corO{Another application of} Corollary \ref{cor:concF},
together with (\ref{eq:2809-04}), yield
(\ref{eq:0808-03}).\qed

\subsection*{(\ref{enu:Flb2})$\protect\implies$(\ref{enu:Flb1})}
\corOU{
Let
\begin{equation}
\mbox{HCrt}_{N,\beta}=\#\left\{ \bs_{0}\in\mathscr{C}_{N,\qs}(NB_{N},\sqrt{N}D_{N}):\,\frac{1}{N}\log Z_{N,\beta}(\bs_{0})\geq
\Lambda_{Z,\beta}(-E_{0}(\qs),\qs)-\epsilon_{N}\right\}
\label{eq:2811-01} 
\end{equation}
denote the number of `heavy' critical points.
We show below that one can choose $\delta_N,\epsilon_N\to_{N\to\infty} 0$ so that
\begin{equation}
\label{eq-mashachnasa}
\lim_{N\to\infty} \frac{1}{N}\log\mathbb{P}\{\mbox{HCrt}_{N,\beta}\geq1\}= 0.
\end{equation}
By \eqref{eq:1507-03} of Corollary \ref{cor:gradbd}, there exist constants $c,C>0$ and an event ${\mathcal A}_N$ with 
$\mathbb{P}({\mathcal A}_N^c)\leq e^{-cN}$ such that  \corE{$\sup_{\bs\in\SN}\|\nabla H_{N}(\bs)\|_\infty \leq C$} on ${\mathcal A}_N$, and therefore,
on that event, if $\mbox{HCrt}_{N,\beta}\geq1$ \corE{then for some $t_N\to_{N\to\infty} 0$ with $\bs_0$ being  an arbitrary point as in \eqref{eq:2811-01},
\[\frac1N \log Z_{N,\beta} \geq \frac{1}{N}\log Z_{N,\beta}(\bs_{0})-Ct_N\geq
\Lambda_{Z,\beta}(-E_{0}(\qs),\qs)-\epsilon_{N}-Ct_N.\]
Using \eqref{eq-mashachnasa}, this gives \eqref{enu:Flb1} with $\epsilon_N$ in the latter replaced by $\epsilon_N+Ct_N$.}}

\corOU{
It remains to prove \eqref{eq-mashachnasa}. Choose $\delta_N$ so that $\mathbb{E}\mbox{Crt}_{N,\qs}(B_{N},D_{N})\to_{N\to\infty}\infty$, which is possible by 
Theorem \ref{thm:1stmoment}. By \eqref{eq-mas1}, this implies that $\mathbb{E}\mbox{HCrt}_{N,\beta}\to_{N\to\infty} \infty$.
By the Cauchy-Schwartz  inequality,
\[
\liminf_{N\to\infty}\frac{1}{N}\log\mathbb{P}\{\mbox{HCrt}_{N,\beta}\geq1\}\geq\liminf_{N\to\infty}\frac{1}{N}\log\frac{\left(\mathbb{E}\mbox{HCrt}_{N,\beta}\right)^{2}}{\mathbb{E}\left[(\mbox{HCrt}_{N,\beta})^{2}\right]}.
\]
The conclusion \eqref{eq-mashachnasa} now follows from  Corollary \ref{cor:matching}, since obviously
$\mathbb{E}\left[(\mbox{HCrt}_{N,\beta})^{2}\right]\leq \mathbb{E}[(\mbox{Crt}_{N,\qs}(B_{N},D_{N}))^2]$. \qed}

\subsection*{(\ref{enu:Flb3})$\protect\implies$(\ref{enu:Flb2})}

In the proof of Theorem \ref{thm:1stmoment}, we expressed $\mathbb{E}\mbox{Crt}_{N,\qs}(B_{N},D_{N})$
by (\ref{eq:1102-01}) (with $q=\qs$, $B=B_{N}$, $D=D_{N}$). The
same argument gives a similar expression for $\mathbb{E}\mbox{LCrt}_{N,\beta}$,
but with the expectation in (\ref{eq:1102-01}) replaced by
\begin{equation}
\mathbb{E}_{\corE{\sqrt Nu},x}^{\qs}\Bigg[|\det(\mathbf{G}-t(x)\mathbf{I})|\mathbf{1}\left\{ \frac{1}{N}\log Z_{N,\beta}(\qs\hat{\mathbf{n}})<\Lambda_{Z,\beta}(-E_{0}(\qs),\qs)-\epsilon_{N}\right\} \Bigg],\label{eq:0808-01}
\end{equation}
where $t(x)=x/\qs\sqrt{(N-1)\nu''(\qs^{2})}$ and $\mathbf{G}$ is
a GOE matrix given by
\[
\mathbf{G}=\sqrt{\frac{N}{(N-1)\nu''(\qs^{2})}}\Big(\Hess H_{N}\left(\qs\hat{\mathbf{n}}\right)+\frac{1}{\sqrt{N}\qs}\ddq H_{N}\left(\qs\hat{\mathbf{n}}\right)\mathbf{I}\Big).
\]

Thus, (\ref{enu:Flb2}) follows if we show that the expectation of
(\ref{eq:0808-01}) is $o(1)$ times the same expectation without
the indicator, uniformly over the domain of integration in (\ref{eq:1102-01}),
i.e., over $(u,x)\in\sqrt{N}B_{N}\times\sqrt{N}D_{N}$. By Lemma \ref{lem:x0},
$t(x)>2+\delta$ for some $\delta>0$, uniformly in $x\in\sqrt{N}D$
(for large $\beta$, so that $x_{0}(\qs)$ is close to $x_{0}(1)$).
Therefore, by \cite[Corollaries 22, 23]{2nd}, as $N\to\infty$, 
\[
\mathbb{E}|\det(\mathbf{G}-t(x)\mathbf{I})|^{2}\leq C\left(\mathbb{E}|\det(\mathbf{G}-t(x)\mathbf{I})|\right)^{2},
\]
for some $C>0$, uniformly in $x\in\sqrt{N}D$. Hence, by the
Cauchy-Schwarz inequality, (\ref{enu:Flb3})$\implies$(\ref{enu:Flb2}).
\qed

\subsection*{Proof of (\ref{enu:Flb3})}
\corO{We need the following two lemmas. Recall the notation  $Z_{N,\beta}^{2-}(\bs)$, see  (\ref{eq:3007-02}).}
\begin{lem}
\label{lem:2sp_highT}For large enough $\beta$ and $\epsilon_{N}=o(1)$
decaying \corO{slowly} enough, uniformly in $(u,x)\in NB_{N}\times\sqrt{N}D_{N}$,

\begin{equation}
\mathbb{P}_{u,x}^{\qs}\left\{ \frac{1}{N}\log Z_{N,\beta}^{2-}(\qs\hat{\mathbf{n}})<\beta E_{0}(\qs)+\frac{1}{2}\log(1-\qs^{2})+\frac{1}{2}\beta^{2}\alpha_{2}^{2}(\qs)-\frac{\epsilon_{N}}{2}\right\} =o(1).\label{eq:1208-01}
\end{equation}
\end{lem}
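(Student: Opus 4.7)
The plan is to reduce (\ref{eq:1208-01}) to a statement about the free energy of a pure $2$-spin in its high-temperature phase.

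Under $\mathbb{P}_{u,x}^{\qs}$, the independence of the Gaussian coefficients $\bar J^{(p)}_{i_1,\ldots,i_k,N,\ldots,N}$ for different values of $k$ (exploited already in the proof of Lemma \ref{cor:conditional}) implies that $\bar H_N^{\nh,2}|_{\qs}(\bs)$ is independent of the conditioning variables $(H_N(\qs\nh),\ddq H_N(\qs\nh),\grad H_N(\qs\nh))$ and thus retains its unconditional law $\sqrt{N/(N-1)}\,\alpha_2(\qs)H_{N-1,2}(\bs)$, with $H_{N-1,2}$ a pure $2$-spin on $\mathbb S^{N-2}(\sqrt{N-1})$; meanwhile $\bar H_N^{\nh,0}|_{\qs}\equiv u$ and $\bar H_N^{\nh,1}|_{\qs}\equiv 0$. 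Combining this with (\ref{eq:3007-02}) yields the distributional identity
\begin{equation*}
\tfrac{1}{N}\log Z_{N,\beta}^{2-}(\qs\nh)\ \overset{d}{=}\ \tfrac{1}{2}\log(1-\qs^{2})-\tfrac{\beta u}{N}+\tfrac{1}{N}\log I_N,\qquad I_N:=\int e^{-\beta_{\mathrm{eff}}H_{N-1,2}(\bs)}\,d\bs,
\end{equation*}
with $\beta_{\mathrm{eff}}:=\beta\alpha_2(\qs)\sqrt{N/(N-1)}$, and $I_N$ having a distribution that does not depend on $(u,x)$.

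By (\ref{eq:1010-05}), for large $\beta$ we have $\qs>q_c$, so $\beta\alpha_2(\qs)<1/\sqrt 2$, which places the pure $2$-spin at inverse temperature $\beta_{\mathrm{eff}}$ in its high-temperature phase for $N$ large. By \cite[Theorem 1.1, Proposition 2.2]{Talag} (the same input used for Lemma \ref{lem:LambdaF}),
\begin{equation*}
\tfrac{1}{N}\mathbb E\log I_N\ \longrightarrow\ \tfrac{1}{2}\beta^{2}\alpha_2^{2}(\qs).
\end{equation*}
Since $\log I_N$ is a convex, Lipschitz function of the underlying Gaussian disorder with Lipschitz constant of order $\beta_{\mathrm{eff}}\sqrt N$, the Borell--TIS inequality (as in the derivation of Corollary \ref{cor:concF}) furnishes $\mathbb P\{|\tfrac{1}{N}\log I_N-\tfrac{1}{N}\mathbb E\log I_N|>t\}\leq 2e^{-cNt^{2}/\beta^{2}}$ for some $c>0$.

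Finally, $u\in NB_N$ forces $|{-\beta u/N}-\beta E_{0}(\qs)|\leq\beta\delta_N=o(1)$. Choosing $\epsilon_N=o(1)$ large compared to $\beta\delta_N$ and to the mean-convergence rate, yet slow enough that $N\epsilon_N^{2}\to\infty$, the three error terms --- the $u$-offset, the mean bias, and the Borell--TIS fluctuation --- are each at most $\epsilon_N/8$ with probability $1-o(1)$, yielding (\ref{eq:1208-01}). Uniformity over $(u,x)\in NB_N\times\sqrt N D_N$ is automatic since the conditional law of $I_N$ is $(u,x)$-independent. The only non-routine ingredient is the classical identification of the $2$-spin free energy in the high-temperature phase together with its Gaussian concentration; the rest is bookkeeping based on the Taylor-type decomposition of Section \ref{sec:models_on_bands}.
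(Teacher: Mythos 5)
Your proposal is correct, and it reaches the conclusion by a genuinely different route on the one non-trivial estimate. The reduction itself is the same as the paper's: by the decomposition behind Lemma \ref{cor:conditional}, under $\mathbb{P}_{u,x}^{\qs}$ the quantity $\frac1N\log Z^{2-}_{N,\beta}(\qs\nh)$ equals $\frac12\log(1-\qs^2)-\beta u/N$ plus the free energy of a pure $2$-spin at effective inverse temperature $\beta\alpha_2(\qs)\sqrt{N/(N-1)}$, whose law does not depend on $(u,x)$; and since $\qs>q_c$ by \eqref{eq:1010-05}, this effective temperature lies strictly in the high-temperature phase. Where you diverge is in controlling the deviation of this $2$-spin free energy below $\frac12\beta^2\alpha_2^2(\qs)$: the paper invokes the Baik--Lee fluctuation theorem \cite{BaikLee}, which gives Gaussian fluctuations at scale $1/N$ and hence only requires $N(\epsilon_N-2\beta\delta_N)\to\infty$, whereas you combine the law of large numbers for the high-temperature free energy (Talagrand \cite{Talag}, the same input as in Lemma \ref{lem:LambdaF}) with Lipschitz--Gaussian concentration as in Lemma \ref{lem:LipF} and Corollary \ref{cor:concF} (convexity is not needed, only the Lipschitz bound), which forces $N\epsilon_N^2\to\infty$, i.e.\ $\epsilon_N\gg N^{-1/2}$ rather than $\epsilon_N$ nearly of order $1/N$. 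This weaker quantitative control is harmless here: the lemma only asserts the existence of some slowly decaying $\epsilon_N=o(1)$ (dominating $\beta\delta_N$ and the deterministic rate of convergence of the mean), and the downstream implications in Section \ref{sec:lower-bound} use nothing beyond $\epsilon_N=o(1)$. Two minor points you gloss over, both routine: the $\sqrt{N/(N-1)}$ correction to the effective temperature (the paper removes it explicitly, absorbing it into $\epsilon_N$), and the fact that convergence of the \emph{mean} free energy follows from the limit theorem together with the same concentration you already use. In short: the paper's CLT-based proof buys a sharper admissible rate for $\epsilon_N$; your LLN-plus-concentration proof is more elementary and fully sufficient for the purpose the lemma serves.
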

\begin{lem}
For large enough $\beta$ and $\epsilon_{N}=o(1)$ decaying \corO{slowly} enough,
uniformly in $(u,x)\in NB_{N}\times\sqrt{N}D_{N}$, 
\begin{equation}
\mathbb{P}_{u,x}^{\qs}\left\{ \frac{1}{N}\log Z_{N,\beta}(\qs\hat{\mathbf{n}})-\frac{1}{N}\log Z_{N,\beta}^{2-}(\qs\hat{\mathbf{n}})<\frac{1}{2}\beta^{2}\sum_{k=3}^{\infty}\alpha_{2}^{2}(\qs)-\frac{\epsilon_{N}}{2}\right\} =o(1).\label{eq:1308-01}
\end{equation}
\label{lem-44old}
\end{lem}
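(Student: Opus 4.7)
Write $\tilde H(\bs) := \sum_{k\ge 3}\bar H^{\nh,k}_N|_{\qs}(\bs)$ for the tail of the spherical expansion at $\qs\nh$. By \eqref{eq:1511-E1} together with Lemma~\ref{cor:conditional}, under $\mathbb P^{\qs}_{u,x}$ one has $\bar H^{\nh,0}|_{\qs}\equiv u$, $\bar H^{\nh,1}|_{\qs}\equiv 0$, and $\tilde H$ is independent of the $2$-spin part $\bar H^{\nh,2}|_{\qs}$. Consequently
\[
W := \frac{Z_{N,\beta}(\qs\nh)}{Z^{2-}_{N,\beta}(\qs\nh)} = \int e^{-\beta\tilde H(\bs)}\,dG^{2-}(\bs),
\]
where $G^{2-}$ is the probability measure on $\mathcal{S}(\qs\nh)$ proportional to $e^{-\beta\bar H^{\nh,2}|_{\qs}(\bs)}$; crucially, $G^{2-}$ does not depend on $(u,x)$, so uniformity in $(u,x)$ is automatic. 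Setting $V:=\beta^2 N\sum_{k\ge 3}\alpha_k^2(\qs)$ (the variance of $-\beta\tilde H(\bs)$, computed as in the proof of Corollary~\ref{cor:smallbandZ}) and $\xi(\bs):=e^{-\beta\tilde H(\bs)-V/2}$, one has $\mathbb E_{\tilde H}\xi(\bs)=1$ and $\tfrac1N\log W=\tfrac{V}{2N}+\tfrac1N\log\mathbb E_{G^{2-}}[\xi(\bs)]$. Since $\tfrac{V}{2N}=\tfrac12\beta^2\sum_{k\ge 3}\alpha_k^2(\qs)$ matches the target, the task is to prove $\tfrac1N\log\mathbb E_{G^{2-}}[\xi(\bs)]\ge -\epsilon_N/2$ with probability $1-o(1)$.

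The second step is a second-moment computation. By independence of $\tilde H$ and $G^{2-}$ together with the Gaussian moment identity,
\[
\mathbb E_{\tilde H}\bigl(\mathbb E_{G^{2-}}[\xi(\bs)]\bigr)^{2} = \mathbb E_{G^{2-}\otimes G^{2-}}\Bigl[\exp\Bigl\{\beta^{2} N\sum_{k\ge 3}\alpha_{k}^{2}(\qs)\,R(\tilde\bs,\tilde\bs')^{k}\Bigr\}\Bigr],
\]
the covariance formula following from the same decomposition used in the proof of Lemma~\ref{cor:conditional}. By the identification \eqref{eq:2909-01}, $G^{2-}$ pulls back to the Gibbs measure of a pure $2$-spin model on $\SNm$ at inverse temperature $\beta\alpha_2(\qs)$, and \eqref{eq:1010-05} places this in the high-temperature regime $\beta\alpha_2(\qs)<1/\sqrt 2$. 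In that phase the two-replica overlap concentrates at scale $O(1/\sqrt N)$ near $0$ with Gaussian fluctuations, and combined with $\sum_{k\ge 3}\alpha_k^2(\qs)=O(\beta^{-3})$ this forces the exponent to be $o(1)$ on typical replica pairs, while standard exponential tail bounds control the remaining large-$|R|$ region. One concludes $\mathbb E_{\tilde H}[(\mathbb E_{G^{2-}}\xi)^{2}]=1+o(1)$ on a set of $2$-spin disorder of probability $1-o(1)$, and Paley--Zygmund then gives $\mathbb P^{\qs}_{u,x}\{\mathbb E_{G^{2-}}\xi\ge 1/2\}\ge c>0$, i.e.\ $\log\mathbb E_{G^{2-}}\xi\ge -\log 2$ with at least constant probability.

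Constant probability is finally promoted to $1-o(1)$ by Gaussian concentration of $\log W$ in the disorder driving $\tilde H$. Expanding $\tilde H(\bs)=\sum_\alpha g_\alpha X_\alpha(\bs)$ in its Gaussian coefficients, one has $\partial_{g_\alpha}\log W=-\beta\langle X_\alpha(\bs)\rangle_{\widehat G}$ for the tilted measure $\widehat G\propto e^{-\beta\tilde H}\,dG^{2-}$, and hence by Jensen
\[
\|\nabla_g\log W\|^{2} \le \beta^{2}\Bigl\langle \sum_\alpha X_\alpha(\bs)^{2}\Bigr\rangle_{\widehat G} = \beta^{2}\bigl\langle\mathrm{Var}(\tilde H(\bs))\bigr\rangle_{\widehat G} = V.
\]
Borell--TIS gives $\mathbb P(|\log W-\mathrm{med}(\log W)|>t)\le 2e^{-t^{2}/(2V)}$ with $\sqrt V=O(\sqrt{N/\beta})=o(N)$; combining this concentration with the constant-probability bound above on a good $2$-spin set, and integrating out the $2$-spin disorder via Fubini, yields $\tfrac1N\log W\ge \tfrac{V}{2N}-\epsilon_N/2$ with probability $1-o(1)$ for any $\epsilon_N$ decaying more slowly than $(N\beta)^{-1/2}$. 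The main obstacle is the second-moment estimate: verifying $\mathbb E_{\tilde H}[(\mathbb E_{G^{2-}}\xi)^{2}]=1+o(1)$ requires quantitative overlap control for the high-temperature pure $2$-spin model, in the spirit of Talagrand's analysis and going beyond the qualitative free-energy identity already used in Lemma~\ref{lem:LambdaF}.
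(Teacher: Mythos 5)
Your reduction is the same as the paper's first step: conditionally on $(u,x)$ the ratio $W=Z_{N,\beta}(\qs\nh)/Z^{2-}_{N,\beta}(\qs\nh)$ is independent of $(u,x)$ and equals the $G^{2-}$-average of $e^{-\beta\tilde H}$, and the target is a conditional second-moment bound given the $2$-spin disorder. The genuine gap is exactly the step you flag as the ``main obstacle'': to get $\mathbb E_{\tilde H}\bigl[(\mathbb E_{G^{2-}}\xi)^2\bigr]=1+o(1)$ on a high-probability set of $2$-spin disorder you need \emph{quenched} control of the overlap under $G^{2-}\otimes G^{2-}$ at two scales. For the region $|R|\ge\delta$ the integrand can be as large as $\exp\{\beta^2N\sum_{k\ge3}\alpha_k^2(\qs)\}\approx e^{cN/\beta}$, so you need $G^{2-}\otimes G^{2-}\{|R|\ge\delta\}\le e^{-c'N}$ with $c'>\beta^2\sum_{k\ge3}\alpha_k^2(\qs)$, and for $|R|\in[N^{-a},\delta]$ you need CLT-scale concentration of the overlap. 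Neither follows from \eqref{eq:1010-05} nor from the free-energy identity used in Lemma \ref{lem:LambdaF}; asserting ``the overlap concentrates at scale $O(1/\sqrt N)$ with Gaussian fluctuations'' and ``standard exponential tail bounds'' is precisely the unproved content, so as written the argument is incomplete at its central estimate.

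The paper closes this hole by a different mechanism that avoids quenched overlap tails altogether: it bounds the conditional second moment $\mathbb E\{X_N^2\mid\mathcal E_N\}$ for a $2$-spin-measurable event $\mathcal E_N$, splitting the two-replica integrals at overlap $N^{-a}$, $a\in(1/3,1/2)$. The large-overlap part of the numerator is controlled only in \emph{annealed} expectation via the co-area formula (using $\beta^2\alpha_2^2(\qs)<1/2$), while the denominator $\bar Z_{N,\beta}^2$ is bounded below quenched on $\mathcal E_N$ using the Baik--Lee fluctuation theorem for the high-temperature $2$-spin free energy; the small-overlap part reduces to a Gaussian integral whose ratio tends to $1$. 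With $\mathbb E\{X_N\mid\mathcal E_N\}=\mathbb E X_N$, Chebyshev then gives probability $1-o(1)$ directly. Two secondary remarks on your write-up: if your second moment really were $1+o(1)$, Chebyshev already yields $1-o(1)$, so the Paley--Zygmund plus Borell--TIS boosting in your third step is redundant (though your Lipschitz bound $\|\nabla_g\log W\|^2\le V$ and the resulting concentration are correct and would rescue a weaker, $O(1)$-type second-moment bound); and if you want to salvage your route, the paper's own devices (annealed co-area bound plus the Baik--Lee lower bound on $\bar Z_{N,\beta}$) supply the missing overlap control only up to polynomial accuracy, not the exponential tail your large-$|R|$ region requires, so you would still need a genuinely quenched large-deviation bound for the $2$-spin overlap.
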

\begin{proof}[\corO{Proof of Lemma \ref{lem:2sp_highT}.}]
With $H_{N-1,2}\left(\bs\right)$ denoting the pure $2$-spin mode \corO{as in \eqref{eq:3007-02}, set}
\begin{equation}
\bar{Z}_{N,\beta}:=\int_{\mathbb{S}^{N-2}(\sqrt{N-1})}\exp\{-\beta\alpha_{2}(\qs)\sqrt{\frac{N}{N-1}}H_{N-1,2}\left(\bs\right)\}d\bs,
\quad \corO{\bar{F}_{N,\beta}:=\frac{1}{N}\log\bar{Z}_{N,\beta}},\label{eq:1208-03}
\end{equation}
where the integration is with respect to  the uniform Hausdorff measure on
$\mathbb{S}^{N-2}(\sqrt{N-1})$. \corO{We} will show that
\begin{equation}
\mathbb{P}\left\{ \bar{F}_{N,\beta}<\frac{1}{2}\beta^{2}\alpha_{2}^{2}(\qs)+\beta\delta_{N}-\frac{\epsilon_{N}}{2}\right\} =o(1).\label{eq:1208-02}
\end{equation}
By Lemma \ref{cor:conditional},
the probability in (\ref{eq:1208-01}) is independent of $x$ and
depends on $u$ through the uniform `shift' in the conditional law,
and (\ref{eq:1208-02}) implies (\ref{eq:1208-01}) \corO{since the limit of} $\frac{1}{N}\log$
of the ratio of volumes from the definition (\ref{eq:3007-02})
equals $\frac{1}{2}\log(1-\qs^{2})$.

Let $\tilde{F}_{N,\beta}$ be defined similarly to $\bar{F}_{N,\beta}$,
only without the $\sqrt{\frac{N}{N-1}}$ term in (\ref{eq:1208-03}),
and note that 
it is enough 
to prove (\ref{eq:1208-02})
with $\tilde{F}_{N,\beta}$ instead of $\bar{F}_{N,\beta}$ (by increasing
$\epsilon_{N}$ if needed). Baik and Lee \cite[Theorem 1.2]{BaikLee}
proved that the free energy of the pure $2$-spin converges in distribution
as $N\to\infty$. In particular, their result show that $N(\tilde{F}_{N,\beta}-\frac{1}{2}\beta^{2}\alpha_{2}^{2}(\qs))$
converges to a Gaussian variable. This  implies (\ref{eq:1208-02})
provided that $N(\epsilon_{N}-2\beta\delta_{N})\to\infty$. 
\end{proof}

\begin{proof}[\corO{Proof of Lemma \ref{lem-44old}.}]
The proof builds on the argument used in \cite[Section 6.4]{geometryGibbs}.
First, note that by Lemma \ref{cor:conditional} the probability \corO{in}
(\ref{eq:1308-01}) does not depend on $u$ and $x$. In fact, the
difference of free energies in (\ref{eq:1308-01}) is equal in distribution
to $\frac{1}{N}\log \corO{X_N} $ where
\[
\corO{X_N}=\frac{\bar{Z}_{N,\beta}^{\prime}}{\bar{Z}_{N,\beta}}:=\frac{\int_{\mathbb{S}^{N-2}(\sqrt{N-1})}\exp\{-\beta\sum_{k=2}^{\infty}\alpha_{k}(\qs)\sqrt{\frac{N}{N-1}}H_{N-1,k}\left(\bs\right)\}d\bs}{\int_{\mathbb{S}^{N-2}(\sqrt{N-1})}\exp\{-\beta\alpha_{2}(\qs)\sqrt{\frac{N}{N-1}}H_{N-1,2}\left(\bs\right)\}d\bs},
\]
where $H_{N-1,k}\left(\bs\right)$ are independent pure models and
the integration is w.r.t. the probability Hausdorff measure on $\mathbb{S}^{N-2}(\sqrt{N-1})$.
To prove (\ref{eq:1308-01}) we will show that $\corO{X_N}$ concentrates around
its mean, given by 
\[
\frac{1}{N}\log\mathbb{E}\corO{X_N}=\frac{1}{2N}\beta^{2}\text{Var}\Big(\sum_{k=3}^{\infty}\bar{H}_{N}^{\hat{\mathbf{n}},k}(\qs\hat{\mathbf{n}})\Big)=\frac{1}{2}\beta^{2}\sum_{k=3}^{\infty}\alpha_{k}^{2}(\qs).
\]

We will define a sequence of events $\mathcal{E}_{N}$ measurable
w.r.t. $(H_{N-1,2}\left(\bs\right))_{\bs}$ such that $\lim_{N\to\infty}\mathbb{P}\{\mathcal{E}_{N}\}\to1$
and 
\begin{equation}
\lim_{N\to\infty}\frac{\mathbb{E}\{\corO{X_N^2}\,|\,\mathcal{E}_{N}\}}{(\mathbb{E}\corO{X_N^2})}\leq1.\label{eq:1308-03}
\end{equation}
Since $\mathbb{E}\{\corO{X_N}\,|\,(H_{N-1,2}\left(\bs\right))_{\bs}\}=\mathbb{E}\corO{X_N}$,
also $\mathbb{E}\{\corO{X_N}\,|\,\mathcal{E}_{N}\}=\mathbb{E}\corO{X_N}$. Thus, from
Chebyshev's inequality (\ref{eq:1308-03}) will imply (\ref{eq:1308-01}),
even with $\epsilon_{N}$ of any order larger than $1/N$.

Denote
\[
T_{N}\left(I\right):=\left\{ (\boldsymbol{\sigma}_{1},\boldsymbol{\sigma}_{2})\in(\mathbb{S}^{N-1}(\sqrt{N}))^{2}\,:\,R(\boldsymbol{\sigma}_{1},\boldsymbol{\sigma}_{2})\in I\right\} .
\]
Using the co-area formula with the mapping 
\[
(\boldsymbol{\sigma}_{1},\boldsymbol{\sigma}_{2})\mapsto R(\boldsymbol{\sigma}_{1},\boldsymbol{\sigma}_{2}),\quad\forall\bs_{i}\in\mathbb{S}^{N-1}(\sqrt{N}),
\]
we have that, for measurable $I\subset\left[-1,1\right]$, 
\begin{equation}
\begin{aligned}\mathbb{E}W(\bar{H}_{N},I) & :=\mathbb{E}\int_{T_{N}\left(I\right)}\exp\{-\beta\bar{H}_{N}(\bs_{1})-\beta\bar{H}_{N}(\bs_{2})\}d\bs_{1}d\bs_{2}\\
 & \,=\int_{I}\frac{\omega_{N-1}}{\omega_{N}}\left(1-\varrho^{2}\right)^{\frac{N-3}{2}}\exp\{\beta^{2}\vartheta_{N}(\varrho)\}d\varrho,
\end{aligned}
\label{eq:1110-02}
\end{equation}
where $\bar{H}_{N}(\bs)$ is a general mixed model and, with $\bs$
and $\bs_{\varrho}$ being two points with $R(\bs,\bs_{\varrho})=\varrho$,
\[
\vartheta_{N}(\varrho):=\text{Var}(\bar{H}_{N}(\bs))+\text{Cov}(\bar{H}_{N}(\bs),\bar{H}_{N}(\bs_{\varrho})).
\]
Thus, \corO{whenever} $\vartheta_{N}(\varrho)/N\to\vartheta(\varrho)$ uniformly
in $\varrho\in[-1,1]$,
\[
\lim_{N\to\infty}\frac{1}{N}\log\mathbb{E}W(\bar{H}_{N},I)=\sup_{\varrho\in I}\zeta(\bar{H}_{N},\varrho):=\sup_{\varrho\in I}\frac{1}{2}\log(1-\varrho^{2})+\beta^{2}\vartheta(\varrho).
\]

Now consider $\bar{H}_{N-1}^{(1)}(\bs)=\alpha_{2}(\qs)\sqrt{\frac{N}{N-1}}H_{N-1,2}\left(\bs\right)$.
The derivative 
\[
\frac{d}{d\varrho}\zeta(\bar{H}_{N-1}^{(1)},\varrho)=-\frac{\varrho}{1-\varrho^{2}}+2\beta^{2}\alpha_{2}^{2}(\qs)\varrho
\]
is negative for any $\varrho\in(0,1]$ and positive for any $\varrho\in[-1,0)$,
if $2\beta^{2}\alpha_{2}^{2}(\qs)<1$. From (\ref{eq:qs}),
\[
\lim_{\beta\to\infty}2\beta^{2}\alpha_{2}^{2}(\qs)-1=4t_{-}^{2}\nu''(1)-1<0,
\]
where the inequality follows since $4t_{-}^{2}\nu''(1)-1$ has the
same sign as the normalized limiting second derivative of (\ref{eq:2907-02})
at the local maximum $\qs$.
We conclude that for large enough $\beta$ and any $\tau>0$,
\[
\frac{1}{N}\log\mathbb{E}W(\bar{H}_{N-1}^{(1)},(-\tau,\tau))=\beta^{2}\alpha_{2}^{2}(\qs)>\frac{1}{N}\log\mathbb{E}W(\bar{H}_{N-1}^{(1)},[-1,1]\setminus(-\tau,\tau)).
\]
In particular, for some $\tau_{N}=o(1)$, 
\[
\lim_{N\to\infty}\frac{\mathbb{E}W\Big(\bar{H}_{N-1}^{(1)},\,[-1,1]\setminus(-\tau_{N},\tau_{N})\Big)}{\exp\{N\beta^{2}\alpha_{2}^{2}(\qs)\}}=0.
\]

Setting $I_{N}=[-\tau_{N},\tau_{N}]\setminus(-N^{-a},N^{-a})$ with
some $a\in(1/3,1/2)$, using the fact $\sqrt{N}\omega_{N}/\omega_{N-1}\to\sqrt{2\pi}$
and the change of variables $\sqrt{N}\varrho\mapsto\varrho'$, we
obtain 
\begin{equation}
\lim_{N\to\infty}\begin{aligned}\frac{\mathbb{E}W(\bar{H}_{N-1}^{(1)},I_{N})}{\exp\{N\beta^{2}\alpha_{2}^{2}(\qs)\}} & =\lim_{N\to\infty}\int_{\sqrt{N}I_{N}}\frac{1}{\sqrt{2\pi}}e^{-\varrho^{2}/2+\beta^{2}\alpha_{2}^{2}(\qs)\varrho^{2}+o(\varrho^{2})}d\varrho.\end{aligned}
\label{eq:1408-01}
\end{equation}
\corO{For} $\qs>q_{c}$ \corO{we have}  $\beta^{2}\alpha_{2}^{2}(\qs)<1/2$ \corO{and therefore} the
limit of (\ref{eq:1408-01}) is equal to $0$. By Markov's inequality,
we conclude that with probability tending to $1$ as $N\to\infty$,
\begin{equation}
W(\bar{H}_{N-1}^{(1)},[-1,1]\setminus(-N^{-a},N^{-a}))<\eta_{N}\exp\{N\beta^{2}\alpha_{2}^{2}(\qs)\},\label{eq:1408-02}
\end{equation}
 for some $\eta_{N}=o(1)$.

We are now ready to define the events $\mathcal{E}_{N}$. \corO{As in the proof of Lemma  \ref{lem:2sp_highT},    \cite[Theorem 1.2]{BaikLee}
implies that} \corE{with probability tending to $1$ as $N\to\infty$,
\[
\bar{Z}_{N,\beta}^{2}=W(\bar{H}_{N-1}^{(1)},[-1,1])>\eta_{N}^{1/2}\exp\left\{ N\beta^{2}\alpha_{2}^{2}(\qs)\right\} .
\]}
Define $\mathcal{E}_{N}$ as the intersection
of this event and (\ref{eq:1408-02}), so that on $\mathcal{E}_{N}$
we also have that
\[
W(\bar{H}_{N-1}^{(1)},(-N^{-a},N^{-a}))>(\eta_{N}^{1/2}-\eta_{N})\exp\{N\beta^{2}\alpha_{2}^{2}(\qs)\},
\]

Next consider $\bar{H}_{N-1}^{(2)}(\bs)=\sum_{k=2}^{\infty}\alpha_{k}(\qs)\sqrt{\frac{N}{N-1}}H_{N-1,k}\left(\bs\right)$.
In this case, as $\beta\to\infty$,
\[
\beta^{2}\vartheta_{N-1}(\varrho) =\beta^{2}\sum_{k=2}^{\infty}\alpha_{k}^{2}(\qs)N\left(1+\varrho^{k}\right)
 =\beta^{2}\sum_{k=2}^{\infty}\alpha_{k}^{2}(\qs)N+\beta^{2}\alpha_{2}^{2}(\qs)N\varrho^{2}+\varrho^{3}O\left(\frac{1}{\beta}\right).
\]
Using this and a similar argument to the one used above for the Hamiltonian
corresponding to $k=2$ only, we obtain that for the current Hamiltonian
\[
\mathbb{E}W(\bar{H}_{N-1}^{(2)},[-1,1]\setminus(-N^{-a},N^{-a}))<\eta_{N}\exp\{N\beta^{2}\sum_{k=2}^{\infty}\alpha_{k}^{2}(\qs)\},
\]
where we may need to increase $\eta_{N}=o(1)$. Therefore, for large $N$,
\begin{equation*}
\mathbb{E}\left[\frac{W(\bar{H}_{N-1}^{(2)},[-1,1]\setminus(-N^{-a},N^{-a}))}{\bar{Z}_{N,\beta}^{2}(\mathbb{E}\corE{X_N})^{2}}\,\Big|\,\mathcal{E}_{N}\right]<\eta_{N}^{1/2}(1+o(1)).
\end{equation*}

We conclude that 
\[
\mathbb{E}\{\corO{X_N^2}\,|\,\mathcal{E}_{N}\}=\mathbb{E}\left[\frac{W(\bar{H}_{N-1}^{(2)},[-1,1])}{W(\bar{H}_{N-1}^{(1)},[-1,1])}\,\Big|\,\mathcal{E}_{N}\right]=\mathbb{E}\left[\frac{W(\bar{H}_{N-1}^{(2)},(-N^{-a},N^{-a})}{W(\bar{H}_{N-1}^{(1)},(-N^{-a},N^{-a}))}\,\Big|\,\mathcal{E}_{N}\right]
(1+o(1)).
\]
By conditioning on $\bar{H}_{N-1}^{(1)}(\bs)$, similar to the above
\begin{equation*}
 \lim_{N\to\infty}\mathbb{E}\frac{W(\bar{H}_{N-1}^{(2)},(-N^{-a},N^{-a}))}{W(\bar{H}_{N-1}^{(1)},(-N^{-a},N^{-a}))}/(\mathbb{E}\corO{X_N})^{2}
=\lim_{N\to\infty}\int_{-N^{1/2-a}}^{N^{1/2-a}}\frac{1}{\sqrt{2\pi}}e^{-\varrho^{2}/2+\beta^{2}\sum_{k=3}^{\infty}\alpha_{k}(\qs)\varrho^{k}N^{-k/2+1}+o(\varrho^{2})}d\varrho=1.
\end{equation*}
This proves (\ref{eq:1308-03})
and completes the proof.
\end{proof}
\begin{proof}[\corO{Proof of Point (\ref{enu:Flb3}).}] \corO{Using
the formula (\ref{eq:LambdaZ}) for $\Lambda_{Z,\beta}(E,q)$), Point \eqref{enu:Flb3} follows from
Lemmas \ref{lem:2sp_highT} and \ref{lem-44old}.}
\end{proof}
\section{\label{sec:Upper-bounds}Upper bounds \corE{on the free energy}}
\corO{We prove in this section an  upper bound which is complementary to 
 the lower bound of Proposition \ref{prop:Flb}. Both bounds will play a crucial role in the proof of the first part of Theorem \ref{thm:Geometry}.}
 For any measurable $D\subset\mathbb{S}^{N-1}(\sqrt{N})$,
we define $Z_{N,\beta}(D)=\int_{D}e^{-\beta H_{N}(\bs)}d\bs$. 
\begin{prop}
\label{prop:UB}Assume \CDTN. For large enough $\beta$ and any
$\epsilon>0$, for small enough $\delta>0$ 
\begin{equation}
\frac{1}{N}\log Z_{N,\beta}\Big(\mathbb{S}^{N-1}(\sqrt{N})\setminus\cup_{\bs_{0}\in\mathscr{C}_{N,\qs}(NB)}{\rm Band}(\bs_{0},\epsilon)\Big)<\Lambda_{Z,\beta}(-E_{0}(\qs),\,\qs)-\delta\label{eq:0410-01}
\end{equation}
with probability tending to $1$ as $N\to\infty$, where $B=(-E_{0}(\qs)-\epsilon,-E_{0}(\qs)+\epsilon)$.
\end{prop}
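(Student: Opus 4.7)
The plan is to combine the structural decomposition of the deep sub-level set provided by Proposition \ref{prop:sublvl} with the first-moment and free-energy estimates of Sections \ref{sec:CritPts} and \ref{sec:log-weight}, using the lower bound of Proposition \ref{prop:Flb} to calibrate the comparison. By Proposition \ref{prop:Flb} together with standard Gaussian concentration of the free energy, $\frac{1}{N}\log Z_{N,\beta} \geq \Lambda_{Z,\beta}(\qs) - o(1)$ with high probability. From the formulas in Section \ref{sec:log-weight}, $\Lambda_{Z,\beta}(\qs) = \beta E_0(1) - \tfrac{1}{2}\log\beta + O(1)$ as $\beta \to \infty$, while the trivial bound $Z_{N,\beta}(\{H_N > -N(E_0(1)-t)\}) \leq \exp\{N\beta(E_0(1)-t)\}$ holds for every $t$. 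Choosing $\tau(\beta)$ decaying like $(\log\beta)/\beta$ thus restricts us, for the purpose of \eqref{eq:0410-01}, to the sub-level set $A_{\tau(\beta)}$ of \eqref{eq:sublvl-1}, whose complement contributes at most $\exp\{N(\Lambda_{Z,\beta}(\qs) - \delta')\}$ with $\delta' \to \infty$ as $\beta\to\infty$.

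Next, by Proposition \ref{prop:sublvl}, each connected component of $A_{\tau(\beta)}$ contains a unique $1$-critical point $\bs_1$ and is covered by $\cup_{q \in [1-\dls,1]} \mathcal{S}(\bs_q)$ for the differentiable path $\bs_q$ of $q$-critical points, along which $H_N(\bs_q)/N \in (-E_0(q)-\eta(\beta), -E_0(q)+\eta(\beta))$ for some $\eta(\beta) \to 0$. Discretizing $q$ on a polynomial mesh $\mathcal{Q}_N \subset [1-\dls, 1]$ and using the uniform Lipschitz control of Corollary \ref{cor:gradbd} to bound the error between neighboring sections, we obtain
\[
Z_{N,\beta}(A_{\tau(\beta)}) \leq \mathrm{poly}(N)\sum_{q \in \mathcal{Q}_N}\sum_{\bs_0\in\mathscr{C}_{N,q}(NB_q)}Z_{N,\beta}(\bs_0), \qquad B_q := (-E_0(q)-\eta(\beta),\, -E_0(q)+\eta(\beta)).
\]
The contribution of $q \in \mathcal{Q}_N$ with $|q-\qs| \leq \rho$ is absorbed into the union subtracted in \eqref{eq:0410-01}: by Part \ref{enu:sublvl2} of Proposition \ref{prop:sublvl} the section $\mathcal{S}(\bs_q)$ lies within geodesic distance $\cls\sqrt{N}|q-\qs|$ of $\bs_{\qs}$, hence, by a suitable choice of $\rho$, within the band around the corresponding $\qs$-critical point. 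It remains to bound the sum over $q \in \mathcal{Q}_N$ with $|q-\qs| > \rho$ by $\exp\{N(\Lambda_{Z,\beta}(\qs) - \delta)\}$.

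For such $q$, the inner sum is estimated as in Section \ref{subsec:Conditional-models}. For $q \geq q_c$, by Kac-Rice, Corollary \ref{cor:smallbandZ}, Theorem \ref{thm:1stmoment} and Markov's inequality, its normalized log is at most $\sup_x \Theta_{\nu,q}(-E_0(q), x) + \Lambda_{Z,\beta}(-E_0(q), q) + o(1) = \Lambda_{Z,\beta}(q) + o(1)$ with high probability; for $q \in [1-\dls, q_c)$ the same bound holds with $\Lambda_{F,\beta}^{2-}$ in place of $\Lambda_{Z,\beta}$, using the $2$-spin decomposition of Lemma \ref{cor:conditional}, the concentration of the effective $2$-spin free energy (as in the proof of Lemma \ref{lem:2sp_highT}), and the approximation estimate of Lemma \ref{lem:apxest} for the higher-spin perturbation. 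These bounds are then compared to $\Lambda_{Z,\beta}(\qs)$: from \eqref{eq:2907-01}--\eqref{eq:2907-02}, $\qs$ is a strict local maximum of $\Lambda_{Z,\beta}(\cdot)$ on $[q_c, 1]$ with second derivative of order $\beta^2$, so $\Lambda_{Z,\beta}(\qs) - \Lambda_{Z,\beta}(q) \geq c\beta^2\rho^2$ for $|q - \qs| > \rho$ in this range; while by \eqref{eq:gap} and the monotonicity \eqref{eq:1010-03} of $\Lambda_{F,\beta}^{2-}$, a constant gap $\Lambda_{Z,\beta}(\qs) - \Lambda_{F,\beta}^{2-}(q) \geq c$ holds uniformly on $[1-\dls, q_c]$. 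The main obstacle is the range $q \in [1-\dls, q_c]$, where the conditional $2$-spin model is in low temperature and the large-deviation term $K_\beta$ in \eqref{eq:1410-03}, coming from atypically large weights among exponentially many critical points, must be controlled; this is handled by choosing the energy window $\eta(\beta)$ narrow enough that $\sup_x \Theta_{\nu,q}(-E_0(q),x) + K_\beta$ stays below the gap established above.
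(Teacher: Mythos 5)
Your overall architecture is the same as the paper's: restrict to the deep sub-level set $A_{\tau(\beta)}$, cover its components by sections $\mathcal{S}(\bs_q)$ along the paths of $q$-critical points from Proposition \ref{prop:sublvl}, bound the section weights by the annealed estimate $\sup_x\Theta_{\nu,q}+\Lambda_{Z,\beta}$ for larger $q$ and by $\Lambda_{F,\beta}^{2-}$ plus a large-deviation correction for smaller $q$, and compare with $\Lambda_{Z,\beta}(\qs)$ via \eqref{eq:2907-01}--\eqref{eq:2907-02}, \eqref{eq:gap}, \eqref{1010-04} and \eqref{eq:1010-03}. The genuine gap is in the step where you ``absorb'' all contributions with $|q-\qs|\le\rho$ into the subtracted union. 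The subtracted set in \eqref{eq:0410-01} consists only of bands around $\qs$-critical points whose energy lies within $N\epsilon$ of $-NE_{0}(\qs)$, and $\epsilon$ is arbitrary, in particular possibly much smaller than $\tau(\beta)$ (which at fixed $\beta$ is a fixed constant of order $\log\beta/\beta$, see \eqref{eq:tau}). A component of $A_{\tau}$ whose $1$-critical point has energy near the top of the window, $H_N(\bs_1)/N\approx -(E_0-\tau)$, produces a path whose energy at $q=\qs$ is approximately $-E_0(\qs)+\tau$, i.e.\ outside $B$; the sections $\mathcal{S}(\bs_q)$ with $|q-\qs|\le\rho$ belonging to such components are therefore \emph{not} contained in $\cup_{\bs_0\in\mathscr{C}_{N,\qs}(NB)}{\rm Band}(\bs_0,\epsilon)$, and their weight is bounded nowhere in your argument. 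Relatedly, your window $B_q$ must have width at least of order $\tau(\beta)$ to contain all path energies (Part \ref{enu:sublvl3} of Proposition \ref{prop:sublvl} only locates $H_N(\bs_q)/N$ in an interval of that width), so you cannot shrink $\eta(\beta)$ below $\epsilon$ to fix this.

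This is precisely why the paper's reduction (Lemma \ref{lem:44}) excludes a box $B_{\boxempty}(\qs,-E_{0}(\qs),\epsilon)$ in the $(q,E)$-plane rather than an interval in $q$ alone, and then proves the bound \eqref{eq:bd2} for every $(q,E)\in W_{\tau,\eta}$ outside that box, in particular for $q$ equal or close to $\qs$ with $E\notin(-E_{0}(\qs)-\epsilon,-E_{0}(\qs)+\epsilon)$. For those pairs the gap comes from the energy variable: $E\mapsto\sup_x\Theta_{\nu,q}(E,x)$ is Lipschitz with a $\beta$-independent constant while $\Lambda_{Z,\beta}(E,q)$ contains $-\beta E$, so for large $\beta$ the sum is decreasing in $E$ on $E>-E_{0}(q)$ and falls below $\Lambda_{Z,\beta}(-E_0(\qs),\qs)-\delta$ once $E\ge -E_{0}(\qs)+\epsilon$, while $E<-E_{0}(q)$ is killed by negative complexity. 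Your proposal needs this additional case; the same monotonicity-in-$E$ observation is also what legitimately replaces your ``$+o(1)$'' when taking the supremum over the window $B_q$ at $q$ away from $\qs$ (a crude bound of the form $(\beta+C)\eta(\beta)$ would be of order $\log\beta$). Two minor further points: the covering only requires $q\in[1-\cls\tau,1]$, not $[1-\dls,1]$ — the monotonicity \eqref{eq:1010-03} and the error $C(1-q^2)^{3/2}$ of Lemma \ref{lem:bound3} are controlled on that smaller range; and the paper splits the two regimes at $\qss$ rather than at $q_c$, which is immaterial since the Markov-type bound of Lemma \ref{lem:bound1} is valid for all $q$.
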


The \corO{rest of the section is devoted to the proof of Proposition \ref{prop:UB}.} In Section \ref{subsec:UB1},
using the results on the structure of sub-level sets from Section
\ref{sec:lvlsets}, we show how Proposition \ref{prop:UB} can be
deduced from bounds on weights of \corO{sections} $Z_{N,\beta}(\bs_{q})$,
\corO{defined in \eqref{eq:Zsigma},} centered at $q$-critical points of a given
depth. In Section \ref{subsec:11.2} we prove two general upper bounds
for the latter, which are relevant for different ranges of $q$. Finally,
in Section \ref{subsec:11.3} we use those to conclude Proposition
\ref{prop:UB}.

\subsection{\label{subsec:UB1}\corO{A reduction to bounds
on weights $Z_{N,\beta}(\protect\bs_{q})$ of $q$-critical points}}
We begin with the observation that, since the integration is w.r.t.
the probability Haar measure on the sphere,
\[
\frac{1}{N}\log\int_{\mathbb{S}^{N-1}(\sqrt{N})\setminus A_{t}}e^{-\beta H_{N}(\bs)}d\bs\leq\beta(E_{0}-t),
\]
where $A_{t}$ is the sub-level of $-(E_{0}-t)N$ (see (\ref{eq:sublvl-1})).
The following is a direct consequence.
\begin{cor}
\label{cor:UB}It is enough to prove Proposition \corO{\ref{prop:UB}}
with $\mathbb{S}^{N-1}(\sqrt{N})$ in (\ref{eq:0410-01}) replaced
by the (random) subset $A_{\tau}$ \corO{from \eqref{eq:sublvl-1},} with 
\begin{equation}
\tau:=\tau(\beta,\delta)=E_{0}(\nu)-\Lambda_{Z,\beta}(-E_{0}(\qs),\,\qs)/\beta+\delta.\label{eq:tau}
\end{equation}
\end{cor}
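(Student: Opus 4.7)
The plan is to split the integration region in \eqref{eq:0410-01} into its intersection with $A_\tau$ and its complement in $A_\tau^c$, handle the latter by the trivial bound stated just before the corollary, and inherit the former from the hypothesized version of Proposition \ref{prop:UB} restricted to $A_\tau$.

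More concretely, write
\[
\mathbb{S}^{N-1}(\sqrt{N}) \setminus \bigcup_{\bs_0 \in \mathscr{C}_{N,\qs}(NB)}{\rm Band}(\bs_0,\epsilon) \;\subseteq\; D_1 \cup D_2,
\]
where
\[
D_1 := A_\tau \setminus \bigcup_{\bs_0 \in \mathscr{C}_{N,\qs}(NB)}{\rm Band}(\bs_0,\epsilon), \qquad D_2 := \mathbb{S}^{N-1}(\sqrt{N}) \setminus A_\tau.
\]
By subadditivity $Z_{N,\beta}(D_1\cup D_2) \le Z_{N,\beta}(D_1)+Z_{N,\beta}(D_2)$, and so
\[
\frac{1}{N}\log Z_{N,\beta}\Big(\mathbb{S}^{N-1}(\sqrt{N}) \setminus \bigcup_{\bs_0}{\rm Band}(\bs_0,\epsilon)\Big) \le \max_{i=1,2}\frac{1}{N}\log Z_{N,\beta}(D_i) + \frac{\log 2}{N}.
\]

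By the observation displayed immediately before the statement of the corollary, applied with $t=\tau$, one has deterministically
\[
\frac{1}{N}\log Z_{N,\beta}(D_2) \le \beta(E_0(\nu)-\tau) = \Lambda_{Z,\beta}(-E_0(\qs),\qs) - \beta\delta,
\]
where the equality is just the definition of $\tau$ in \eqref{eq:tau}. For $\beta$ large (in particular $\beta>1$), this bound is strictly below $\Lambda_{Z,\beta}(-E_0(\qs),\qs) - \delta$ by a definite margin $(\beta-1)\delta$.

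It remains to bound $\frac{1}{N}\log Z_{N,\beta}(D_1)$, and this is precisely what is granted by Proposition \ref{prop:UB} with $\mathbb{S}^{N-1}(\sqrt{N})$ replaced by $A_\tau$. To be careful about the arbitrary small parameter, given the target $\delta>0$ from the statement of Proposition \ref{prop:UB}, we apply the hypothesized restricted version with a slightly larger parameter $\delta' \in (\delta, \beta\delta)$ (available for $\beta$ large enough), obtaining with probability tending to $1$ the bound $\frac{1}{N}\log Z_{N,\beta}(D_1) < \Lambda_{Z,\beta}(-E_0(\qs),\qs) - \delta'$. Combining with the bound on $D_2$ yields
\[
\frac{1}{N}\log Z_{N,\beta}\Big(\mathbb{S}^{N-1}(\sqrt{N}) \setminus \bigcup_{\bs_0}{\rm Band}(\bs_0,\epsilon)\Big) < \Lambda_{Z,\beta}(-E_0(\qs),\qs) - \delta + \tfrac{\log 2}{N},
\]
which is \eqref{eq:0410-01} up to an $o(1)$ error that is absorbed by choosing $\delta'$ strictly larger than $\delta$. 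There is no real obstacle here: the only content is the trivial sup-norm bound on the integrand outside $A_\tau$ together with the algebraic choice of $\tau$ that makes the two bounds compatible.
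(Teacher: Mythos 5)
Your proof is correct and is essentially the argument the paper intends: the corollary is stated there as a direct consequence of the displayed bound $\frac1N\log\int_{\mathbb{S}^{N-1}(\sqrt N)\setminus A_t}e^{-\beta H_N(\bs)}d\bs\le\beta(E_0-t)$, which with $t=\tau$ gives exactly your deterministic bound $\Lambda_{Z,\beta}(-E_0(\qs),\qs)-\beta\delta$ on the contribution of $A_\tau^c$, combined with subadditivity. Your extra bookkeeping with $\delta'\in(\delta,\beta\delta)$ to absorb the $\frac{\log 2}{N}$ term is fine (and uses that $A_{\tau(\beta,\delta)}\subset A_{\tau(\beta,\delta')}$), matching what the paper leaves implicit.
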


From the asymptotics of $\qs$ \corO{in} (\ref{eq:qs}), the definition of $\Lambda_{Z,\beta}$, \corO{see}
(\ref{eq:LambdaZ}), and the continuity of $E_{0}(q)$ \corO{proved in}  Lemma \ref{lem:ddq},
$\tau\to0$ as $\beta\to\infty$ and $\delta\to0$. Hence, with high
probability, the sub-level set $A_{\tau}$ is covered by the caps \corO{in \eqref{def:cap} from Proposition \ref{prop:sublvl}.}
From this it is not difficult to move to a cover by
\corO{sections} of the form $\mathcal{S}(\bs)$, \corO{see} (\ref{eq:subsp}).
\begin{lem}
\label{lem:43}Assume that $\nu$ satisfies \CDTN. Then, for small enough
$t$, with the notation of Proposition \ref{prop:sublvl}, for each
connected component $A$ of $A_{t}$,
\begin{equation}
A\subset\cup_{q\in[1-\cls t,1]}\mathcal{S}(\bs_{q}),\label{eq:0510-01}
\end{equation}
 with probability tending to $1$ as $N\to\infty$.
\end{lem}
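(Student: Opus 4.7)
The plan is to deduce the covering statement from an intermediate value argument along the path of $q$-critical points constructed in Part \ref{enu:sublvl2} of Proposition \ref{prop:sublvl}. Throughout, we work on the event (of probability tending to $1$) on which the conclusions \ref{enu:sublvl1}--\ref{enu:sublvl4} of Proposition \ref{prop:sublvl} hold for the chosen $t<\dls/\cls$. Fix a connected component $A$ of $A_t$. By Part \ref{enu:sublvl1}, $A$ contains a unique $1$-critical point $\bs_1$; let $\bs_q=\sqrt{N}q\mathcal{G}(q)\in\mathbb{S}^{N-1}(q\sqrt N)$, $q\in[1-\dls,1]$, denote the associated path of $q$-critical points from Part \ref{enu:sublvl2}.

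For a fixed $\bs\in A$ I would consider the continuous function
\[
\phi(q):=R(\bs,\bs_q)-q=\frac{\langle\bs,\bs_q\rangle}{N q}-q,\qquad q\in[1-\cls t,\,1],
\]
where continuity follows from $q\mapsto\bs_q$ being differentiable by Part \ref{enu:sublvl2} (and $q$ bounded away from $0$ for $t$ small). A point $\bs$ lies on $\mathcal{S}(\bs_q)$ exactly when $R(\bs,\bs_q)=\|\bs_q\|/\sqrt N=q$, i.e.\ when $\phi(q)=0$, so the lemma reduces to producing such a zero in $[1-\cls t,1]$.

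The endpoints are handled as follows. At $q=1$, $\bs_1\in\mathbb{S}^{N-1}(\sqrt N)$ and so $\phi(1)=R(\bs,\bs_1)-1\leq 0$. At $q=1-\cls t$, Part \ref{enu:sublvl4} of Proposition \ref{prop:sublvl} gives $\bs\in\mathrm{Cap}(t)$, which is precisely the statement $R(\bs,\bs_{1-\cls t})\geq 1-\cls t$, hence $\phi(1-\cls t)\geq 0$. By the intermediate value theorem there exists $q^{\ast}\in[1-\cls t,1]$ with $\phi(q^{\ast})=0$, so $\bs\in\mathcal{S}(\bs_{q^{\ast}})$. Since $\bs\in A$ was arbitrary, this yields the inclusion \eqref{eq:0510-01}.

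There is no serious obstacle: the only thing to verify carefully is that $t$ can indeed be taken small enough to simultaneously apply Parts \ref{enu:sublvl1}, \ref{enu:sublvl2} and \ref{enu:sublvl4} (which requires $t<\dls/\cls$ and the implicit‐function construction of $\bs_q$ to be valid on the whole interval $[1-\cls t,1]$, shrinking $t$ further if necessary so that $\cls t\leq\dls$).
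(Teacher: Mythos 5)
Your proof is correct and is essentially the same as the paper's: both define the continuous function $q\mapsto R(\bs,\bs_q)-q$ on $[1-\cls t,1]$, use Part \ref{enu:sublvl4} (cap containment) to get nonnegativity at $q=1-\cls t$ and the trivial bound at $q=1$, and conclude by the intermediate value theorem. Your explicit handling of the boundary cases ($\bs=\bs_1$ or $\bs\in\mathcal{S}(\bs_{1-\cls t})$) via weak inequalities is a minor tidiness improvement over the paper's phrasing, nothing more.
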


\begin{proof}
Let $\bs\in A$ and define the continuous function $d_{\bs}(q)=R(\bs,\bs_{q})-q$,
for $q\in[1-\cls t,1]$. If $\bs\neq\bs_{1}$ and $\bs\notin\mathcal{S}(\bs_{1-\cls t})$,
then $d_{\bs}(q)<0$ and $d_{\bs}(1-\cls t)>0$. By the mean value theorem,
there exists some $q\in[1-\cls t,1]$ such that $d_{\bs}(q)=0$, which
exactly means that $\bs\in\mathcal{S}(\bs_{q})$.
\end{proof}
Finally, we translate the bound we need to \corO{bounds treating}  each pair
$(q,E)$ separately, in an appropriate sense. Denote
\begin{equation}
W_{\tau,\eta}:=\big\{(q,E): q\in[1-\cls\tau,1],\;
  E\in[-E_{0}(q)-\eta,-E_{0}(1)+2x_{0}(1)\cls\tau]\big\}
\label{eq:0510-02}
\end{equation}
and 
\[
B_{\boxempty}(q,E,\epsilon):=(q-\epsilon,q+\epsilon)\times(E-\epsilon,E+\epsilon).
\]
\begin{lem}
\label{lem:44}Assume that $\nu$ satisfies \CDTN.
 \corO{Assume that} for large enough $\beta$
and any small $\epsilon>0$, there exist $\eta$, $\delta$ and $\upsilon$
(depending on $\epsilon$, $\nu$ and $\beta$), \corO{so that with
$\tau$ given by (\ref{eq:tau})}, for any 
\begin{equation}
(q,E)\in W_{\tau,\eta}\setminus B_{\boxempty}(\qs,-E_{0}(\qs),\epsilon),\label{eq:qEdist}
\end{equation}
 we have that, \corO{with probability tending to $1$ as $N\to\infty$,}
\begin{equation}
\frac{1}{N}\log\sum_{\bs\in\mathscr{C}_{N,q}([E-\upsilon,E+\upsilon])}Z_{N,\beta}(\bs)<\Lambda_{Z,\beta}(-E_{0}(\qs),\,\qs)-\delta.\label{eq:bd2}
\end{equation}
\corO{Then,  Proposition
\ref{prop:UB} holds true.}
\end{lem}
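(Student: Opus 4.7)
The plan is to combine the sub-level set geometry of Proposition \ref{prop:sublvl} and Lemma \ref{lem:43} with a compactness argument on the parameter set $W_{\tau,\eta}$, reducing the global bound \eqref{eq:0410-01} to finitely many applications of the pointwise bound \eqref{eq:bd2}.

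\emph{Reduction to $A_\tau$ and covering by sections.} By Corollary \ref{cor:UB}, it is enough to bound $\frac{1}{N}\log Z_{N,\beta}\left(A_{\tau}\setminus\cup_{\bs_{0}\in\mathscr{C}_{N,\qs}(NB)}{\rm Band}(\bs_{0},\epsilon)\right)$, with $\tau=\tau(\beta,\delta)$ from \eqref{eq:tau}. Since $\tau\to 0$ as $\beta\to\infty$ and $\delta\to 0$, for $\beta$ large and $\delta$ small we may apply Proposition \ref{prop:sublvl} and Lemma \ref{lem:43}: with probability tending to $1$, every connected component $A$ of $A_{\tau}$ contains a unique $1$-critical point $\bs_{1}$, is contained in ${\rm Cap}(\tau)$, and satisfies $A\subset\cup_{q\in[1-\cls\tau,1]}\mathcal{S}(\bs_{q})$, where $\bs_{q}$ is the associated path of $q$-critical points. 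Moreover, Part \ref{enu:sublvl3} of Proposition \ref{prop:sublvl} forces $(q,H_{N}(\bs_{q})/N)\in W_{\tau,\eta_{N}}$ for all $q\in[1-\cls\tau,1]$, for some $\eta_{N}\to 0$.

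\emph{Thin-band slicing along the foliation.} Fix a grid $1-\cls\tau=q_{0}<q_{1}<\cdots<q_{M}=1$ of mesh $\leq\upsilon/4$ (the number $M$ is independent of $N$). Thicken each section $\mathcal{S}(\bs_{q_{i}})$ into the band $\{\bs\in\SN:|R(\bs,\bs_{q_{i}})-q_{i}|\leq\upsilon/2\}$. The union of these bands covers every cap in $A_{\tau}$. Using the co-area formula on the overlap coordinate together with the uniform Lipschitz bound on $H_{N}$ from Corollary \ref{cor:gradbd}, we obtain a deterministic inequality of the form
\[
\int_{{\rm Band}(\bs_{q_{i}},\upsilon/2)}e^{-\beta H_{N}(\bs)}d\bs\leq e^{o_{\upsilon}(N)}Z_{N,\beta}(\bs_{q_{i}}),
\]
where the $o_{\upsilon}(N)$ term is arbitrarily small once $\upsilon$ is taken small (the volume prefactor $(1-\|\bs_{0}\|/\sqrt{N})^{N/2}$ in the definition of $Z_{N,\beta}(\bs_{0})$ exactly compensates the Haar volume of the band).

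\emph{Selecting indices outside the bad ball.} For each component $A$ and each grid index $i$, consider the pair $(q_{i},H_{N}(\bs_{q_{i}})/N)$. Either $(q_{i},H_{N}(\bs_{q_{i}})/N)\in B_{\boxempty}(\qs,-E_{0}(\qs),\epsilon)$ \textemdash{} in which case $q_{i}$ is close to $\qs$ and $\bs_{q_{i}}\in\mathscr{C}_{N,\qs}(NB)$ (up to slightly adjusting the grid), so that ${\rm Band}(\bs_{q_{i}},\upsilon/2)$ lies inside a band we have already removed \textemdash{} or else $(q_{i},H_{N}(\bs_{q_{i}})/N)\in W_{\tau,\eta_{N}}\setminus B_{\boxempty}(\qs,-E_{0}(\qs),\epsilon)$, and the band is retained.

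\emph{Compactness and finite summation.} The set $W_{\tau,\eta}\setminus B_{\boxempty}(\qs,-E_{0}(\qs),\epsilon)$ is compact. By the standing hypothesis of the lemma, each of its points admits a neighborhood $B_{\boxempty}(q_{j}^{*},E_{j}^{*},\upsilon_{j})$ on which \eqref{eq:bd2} holds. Extract a finite subcover $\{B_{\boxempty}(q_{j}^{*},E_{j}^{*},\upsilon_{j})\}_{j=1}^{J}$, and refine the original grid so that every retained $q_{i}$ coincides with some $q_{j}^{*}$ and every retained pair $(q_{i},H_{N}(\bs_{q_{i}})/N)$ lies in the corresponding ball. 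Then each retained $\bs_{q_{i}}$ appears in the sum $\sum_{\bs\in\mathscr{C}_{N,q_{j}^{*}}(N[E_{j}^{*}-\upsilon_{j},E_{j}^{*}+\upsilon_{j}])}Z_{N,\beta}(\bs)$, which by \eqref{eq:bd2} is at most $\exp\{N(\Lambda_{Z,\beta}(-E_{0}(\qs),\qs)-\delta)\}$ with probability $1-o(1)$. Summing over the finitely many $j\leq J$ and grid indices gives
\[
Z_{N,\beta}\Big(A_{\tau}\setminus\cup_{\bs_{0}\in\mathscr{C}_{N,\qs}(NB)}{\rm Band}(\bs_{0},\epsilon)\Big)\leq JM\cdot e^{o(N)}e^{N(\Lambda_{Z,\beta}(-E_{0}(\qs),\qs)-\delta)},
\]
yielding \eqref{eq:0410-01} after replacing $\delta$ by $\delta/2$.

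The main obstacle is the quantitative geometric step in the thin-band slicing: controlling the Haar integral over $\{\bs:|R(\bs,\bs_{q_{i}})-q_{i}|\leq\upsilon/2\}$ in terms of $Z_{N,\beta}(\bs_{q_{i}})$ uniformly in the (random) locations $\bs_{q_{i}}$. This requires the uniform Lipschitz bound on $H_{N}$ from Corollary \ref{cor:gradbd} together with a careful identification of the volume factor $(1-\|\bs_{q_i}\|/\sqrt{N})^{N/2}$ appearing in the definition \eqref{eq:Zsigma} with the ratio $\mathrm{Vol}_{N-2}(\mathcal{S}(\bs_{q_{i}}))/\mathrm{Vol}_{N-1}(\SN)$ of spherical volumes.
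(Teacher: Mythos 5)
Your proposal follows essentially the same route as the paper's proof: reduce to $A_\tau$ via Corollary \ref{cor:UB}, foliate each component by the sections $\mathcal{S}(\bs_q)$ using Proposition \ref{prop:sublvl} and Lemma \ref{lem:43}, cover the parameter region $W_{\tau,\eta}$ by finitely many boxes, compare band weights to $Z_{N,\beta}(\bs_q)$ through the Lipschitz bound of Corollary \ref{cor:gradbd} and the volume prefactor, and then invoke the hypothesis \eqref{eq:bd2} box by box.

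One step is stated imprecisely and, as written, would fail at the boundary. In the ``selecting indices'' step you assert that when $(q_i,H_N(\bs_{q_i})/N)\in B_{\boxempty}(\qs,-E_0(\qs),\epsilon)$ one has $\bs_{q_i}\in\mathscr{C}_{N,\qs}(NB)$ and that ${\rm Band}(\bs_{q_i},\upsilon/2)$ lies inside an already removed band. For $q_i\neq\qs$ the point $\bs_{q_i}$ is a $q_i$-critical point, not a $\qs$-critical one, and for parameters near the boundary of the $\epsilon$-ball the band around $\bs_{q_i}$ need not be contained in ${\rm Band}(\bs_{\qs},\epsilon)$, because along the path the center moves with speed up to $\cls$, so the angular displacement can be of order $\cls\epsilon$, exceeding $\epsilon$. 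The repair is exactly the paper's bookkeeping: show that the removed bands of width $\epsilon$ around points of $\mathscr{C}_{N,\qs}(NB)$ contain all sections $\mathcal{S}(\bs_q)$ whose parameters lie in a \emph{smaller} ball $B_{\boxempty}(\qs,-E_0(\qs),\epsilon')$, with $\epsilon'$ small relative to $\epsilon/(1+\cls)$, and then apply the hypothesis \eqref{eq:bd2} only on $W_{\tau,\eta}\setminus B_{\boxempty}(\qs,-E_0(\qs),\epsilon')$ --- which is legitimate since the hypothesis is assumed for every small $\epsilon$. For the same reason your grid mesh should be of order $\upsilon/(1+\cls)$ rather than $\upsilon/4$, so that consecutive sections along a path stay inside the thickened bands. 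With these quantitative adjustments your argument coincides with the paper's proof.
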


\begin{proof}
Throughout the proof we implicitly restrict to the event that all
the statements of Proposition \ref{prop:sublvl}, Lemma \ref{lem:43}
and Corollary \ref{cor:gradbd} hold. (\corO{The probability of this event}  converges
to $1$, as $N\to\infty$.) All the statements below should be interpreted
as `occurring with probability tending to $1$', and we will refrain
from repeatedly writing so. Note that since we assume that $\beta$
is large, $\tau$ given by (\ref{eq:tau}) \corO{can be made arbitrarily small.}

By an abuse of notation, denote by $\mathscr{C}_{N,q}(NE)$ the set of $q$-critical points
$\bs_{0}$ such that $H_{N}(\bs_{0})=NE$. From Proposition \ref{prop:sublvl},
for any $q\in[1-\cls\tau,1]$ and connected component $A$ of $A_{\tau}$,
there is a corresponding $q$-critical point $\bs_{q}$. Denote the
subset of those points for which $H_{N}(\bs_{q})=NE$ by $\mathscr{C}_{N,q}^{\tau}(NE)$.
For any $W\subset\mathbb{R}^{2}$ denote, by an abuse of notation,
\[
\begin{aligned}\mathcal{S}(W) & :=\cup_{(q,E)\in W}\cup_{\bs\in\mathscr{C}_{N,q}(NE)}\mathcal{S}(\bs),\\
{\rm Band}(W,\epsilon) & :=\cup_{(q,E)\in W}\cup_{\bs\in\mathscr{C}_{N,q}(NE)}{\rm Band}(\bs,\epsilon),
\end{aligned}
\]
and define $\mathcal{S}_{\tau}(W)$ and ${\rm Band}_{\tau}(W,\epsilon)$
similarly, with $\mathscr{C}_{N,q}(NE)$ replaced by $\mathscr{C}_{N,q}^{\tau}(NE)$.

Let $A$ be some connected component of $A_{\tau}$, and let $\bs_{q}$
be the corresponding path of $q$-critical points. From Part \ref{enu:sublvl3}
of Proposition \ref{prop:sublvl}, since $\tau$ is small, for arbitrary
$\eta$ and large enough $N$, 
\[
\frac{1}{N}H_{N}(\bs_{q})\in[-E_{0}(q)-\eta,-E_{0}(1)+2x_{0}(1)\cls\tau],
\]
for all $q\in[1-\cls t,1]$. Combining this with Lemma \ref{lem:43},
we obtain that 
\[
A_{\tau}\subset\mathcal{S}_{\tau}(W_{\tau,\eta}).
\]

Next, we construct a cover using bands corresponding to a finite number
of values of $q$. For two points $\bs,\,\bs'\in\BN$, if $(\|\bs\|-\|\bs'\|)/\sqrt{N}$
and the distance between $\bs/\|\bs\|$ and $\bs'/\|\bs'\|$ (w.r.t.
the standard metric on the sphere) are both in $(-\upsilon/2,\upsilon/2)$,
then
\[
\mathcal{S}(\bs)\subset{\rm Band}\Big(\frac{\|\bs'\|}{\|\bs\|}\bs,\frac{\upsilon}{2}\Big)\subset{\rm Band}(\bs',\upsilon).
\]

\corO{With $A$ and $\bs_q$ as above,}  fix some $q\in[1-\cls t,1]$. From the Lipschitz bound of (\ref{eq:1507-03})
and Point \ref{enu:sublvl2} of Proposition \ref{prop:sublvl}, we
have the following. For any given $\upsilon>0$, for small enough
$\upsilon/2>\upsilon'>0$ (independent of $q$ and $E$), if $|q-q'|<\upsilon'$
and $|\frac{1}{N}H_{N}(\bs_{q'})-E|<\upsilon'$, then, \corO{with notation as in Proposition \ref{prop:sublvl},} 
\[
|\mathcal{G}(q')-\mathcal{G}(q)|<\frac{\upsilon}{2}\,\text{ and }\,|\frac{1}{N}H_{N}(\bs_{q})-E|<\upsilon,
\]
and thus
\[
\mathcal{S}(\bs_{q'})\subset{\rm Band}(\bs_{q},\upsilon).
\]

Denoting
\[
B_{\shortmid}(q,E,\upsilon)=\{q\}\times(E-\upsilon,E+\upsilon),
\]
 we therefore have that
\begin{equation}
\mathcal{S}_{\tau}\left(B_{\boxempty}(q,E,\upsilon')\right)\subset{\rm Band}_{\tau}\left(B_{\shortmid}(q,E,\upsilon),\upsilon\right).\label{eq:SBrel}
\end{equation}

Applying the above with $(q,E)=(\qs-E_{0}(\qs))$ and $\upsilon=\epsilon$,
we have that for small enough $\epsilon'>0$
\[
\cup_{\bs_{0}\in\mathscr{C}_{N,\qs}(NB)}{\rm Band}(\bs_{0},\epsilon)  ={\rm Band}(B_{\shortmid}(\qs,-E_{0}(\qs),\epsilon),\epsilon)
 \supset\mathcal{S}_{\tau}\left(B_{\boxempty}(\qs,-E_{0}(\qs),\epsilon')\right).
\]
From Corollary \ref{cor:UB} and Lemma \ref{lem:43}, we conclude
that in order to prove Proposition \ref{cor:UB}, it is enough to
show that for any arbitrarily small $\epsilon$, there exists some
$\delta$ such that 
\begin{equation}
\frac{1}{N}\log Z_{N,\beta}\Big(\mathcal{S}_{\tau}\big(W_{\tau,\eta}\setminus B_{\boxempty}(\qs,-E_{0}(\qs),\epsilon)\big)\Big)<\Lambda_{Z,\beta}(-E_{0}(\qs),\,\qs)-\delta\label{eq:bd1}
\end{equation}
with probability tending to $1$ as $N\to\infty$.

Now, fix $\epsilon>0$ and let $\upsilon$ and $\eta$ (see (\ref{eq:0510-02}))
be some small numbers. Let $\upsilon'$ be the value corresponding
to $\upsilon$ by the relation above. Choose some cover for the region $W_{\tau,\eta}\setminus B_{\boxempty}(\qs,-E_{0}(\qs),\epsilon)$
by a finite number of boxes $B_{\boxempty}(q,E,\upsilon')$, with
each of the centers $(q,E)$ belonging to $W_{\tau,\eta}\setminus B_{\boxempty}(\qs,-E_{0}(\qs),\epsilon)$.

From (\ref{eq:SBrel}), 
\[
\frac{1}{N}\log Z_{N,\beta}\Big(\mathcal{S}_{\tau}\big(B_{\boxempty}(q,E,\upsilon')\big)\Big)\leq\frac{1}{N}\log Z_{N,\beta}\Big({\rm Band}_{\tau}\left(B_{\shortmid}(q,E,\upsilon),\upsilon\right)\Big).
\]
Hence, since we are dealing with a finite number of boxes, for (\ref{eq:bd1})
to hold, it is enough to establish that for each of the boxes $B_{\boxempty}(q,E,\upsilon')$,
\[
\frac{1}{N}\log Z_{N,\beta}\Big({\rm Band}_{\tau}\left(B_{\shortmid}(q,E,\upsilon),\upsilon\right)\Big)<\Lambda_{Z,\beta}(-E_{0}(\qs),\,\qs)-\delta
\]
with probability going to $1$. From the Lipschitz bound of (\ref{eq:1507-03}),
it is thus enough to show with such probability that
\[
\frac{1}{N}\log Z_{N,\beta}\Big(\mathcal{S}_{\tau}\left(B_{\shortmid}(q,E,\upsilon)\right)\Big)<\Lambda_{Z,\beta}(-E_{0}(\qs),\,\qs)-\delta,
\]
where we may need to decrease $\delta$. This completes the proof
of Lemma \ref{lem:44}.
\end{proof}

\subsection{\label{subsec:11.2}General bounds on weights $Z_{N,\beta}(\protect\bs_{q})$
at a given depth}

This section is devoted to the proof of the following three lemmas,
bounding from above the contribution to the free energy coming from
$q$-critical points. We recall that $Z_{N,\beta}(\bs_{0})$ and $Z_{N,\beta}^{2-}(\bs_{0})$
below are as defined in (\ref{eq:Zsigma}) and (\ref{eq:3007-02}). 
\begin{lem}
\label{lem:bound1}For any $\delta>0$ there exists a constant $c=c(\nu)>0$,
such that for any $q\in(\delta,1)$, $E\in(-2E_{0}(q),0)$ and $\epsilon>0$,
setting $B=(E-\epsilon,E+\epsilon)$,
\begin{equation}
\limsup_{N\to\infty}\frac{1}{N}\log\mathbb{E}\sum_{\boldsymbol{\sigma}_{0}\in\mathscr{C}_{N,q}(NB)}Z_{N,\beta}(\bs_{0})\leq\sup_{x\in\mathbb{R}}\Theta_{\nu,q}(E,x)+\Lambda_{Z,\beta}(E,q)+(\beta+c)\epsilon,\label{eq:71-1-1}
\end{equation}
where $\Theta_{\nu,q}(E,x)$ and $\Lambda_{Z,\beta}(E,q)$ are given
by (\ref{eq:Theta}) and (\ref{eq:LambdaZ}).
\end{lem}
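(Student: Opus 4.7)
The plan is to apply Kac-Rice, use the conditional decomposition of Section \ref{sec:models_on_bands} to handle $Z_{N,\beta}(\bs_0)$ jointly with the Hessian determinant, and combine a Gaussian Girsanov trick with the LDP for the GOE spectral measure. First I apply the Kac-Rice formula: by isotropy of $H_N$ on $\mathbb{S}^{N-1}(q\sqrt N)$, with $\bs_0=q\nh$,
\[
\mathbb E\sum_{\bs_0\in\mathscr C_{N,q}(NB)} Z_{N,\beta}(\bs_0) = q^{N-1}\omega_N\,\varphi_{\grad H_N(\bs_0)}(0)\,\mathbb E\bigl[|\det(\Hess H_N(\bs_0))|\mathbf 1_{H_N(\bs_0)\in NB}\,Z_{N,\beta}(\bs_0) \,\big|\, \grad H_N(\bs_0)=0\bigr].
\]
Lemma \ref{lem:cov} lets me rewrite this as an integral over $(u,v)=(H_N(\bs_0),\ddq H_N(\bs_0))\in NB\times\mathbb R$ against the Gaussian density of covariance $N\Sigma_q$, of $\mathbb E^q_{u,v}[|\det(\Hess H_N(\bs_0))|\,Z_{N,\beta}(\bs_0)]$. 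By Lemma \ref{cor:conditional}, under this conditioning $H_N|_q(\bs)=u+\bar H^{\bs_0,2}_N(\bs)+X(\bs)$ with $\bar H^{\bs_0,2}_N(\bs)=\tfrac{N(1-q^2)}{2(N-1)}\bs^T\mathbf G_{N-1}\bs$ and $X(\bs)$ a centered Gaussian of variance $N\sum_{k\ge3}\alpha_k^2(q)$ independent of $\mathbf G_{N-1}$, while $\Hess H_N(\bs_0)=\mathbf G_{N-1}-\tfrac{v}{\sqrt N q}\mathbf I$. Interchanging $\int_{\mathcal S(\bs_0)} d\bs$ with the expectation (Fubini) and computing the Gaussian MGF of $X(\bs)$ reduces $\mathbb E^q_{u,v}[|\det|\,Z_{N,\beta}(\bs_0)]$ to
\[
(1-q^2)^{N/2} e^{-\beta u+\tfrac12\beta^2 N\sum_{k\ge 3}\alpha_k^2(q)}\,\mathbb E\bigl[|\det(\mathbf G_{N-1}-t\mathbf I)|\,e^{-\beta\bar H^{\bs_0,2}_N(\bs_*)}\bigr],
\]
with $t=v/(q\sqrt N)$ and any fixed $\bs_*\in\mathcal S(\bs_0)$ (by GOE invariance the expectation does not depend on $\bs_*$).

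The decisive step decouples $|\det(\mathbf G_{N-1}-t\mathbf I)|$ from $\bar H^{\bs_0,2}_N(\bs_*)$, both functions of the same matrix $\mathbf G_{N-1}$. Writing $\bar H^{\bs_0,2}_N(\bs_*)=\langle a,\mathbf G_{N-1}\rangle$ as a centered Gaussian linear functional of the independent entries of $\mathbf G_{N-1}$ (with variance $(1+o(1))N\alpha_2^2(q)$), the Gaussian Girsanov identity yields
\[
\mathbb E\bigl[|\det(\mathbf G_{N-1}-t\mathbf I)|\,e^{-\beta\langle a,\mathbf G_{N-1}\rangle}\bigr]=e^{\tfrac12\beta^2 N\alpha_2^2(q)(1+o(1))}\,\mathbb E\bigl[|\det(\mathbf G_{N-1}-\beta\Delta_N-t\mathbf I)|\bigr],
\]
where $\Delta_N$ is the cross-covariance of $\mathbf G_{N-1}$ with $\bar H^{\bs_0,2}_N(\bs_*)$; a direct entrywise computation shows that $\Delta_N=c_N\bs_*\bs_*^T$ is rank one with $c_N=O((1-q^2)\nu''(q^2)/N)$, so $\|\beta\Delta_N\|_{op}$ is uniformly bounded in $N$ and $q\in(\delta,1)$. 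The matrix-determinant lemma $\det(\mathbf G_{N-1}-\beta\Delta_N-t\mathbf I)=\det(\mathbf G_{N-1}-t\mathbf I)\bigl(1-\beta c_N\bs_*^T(\mathbf G_{N-1}-t\mathbf I)^{-1}\bs_*\bigr)$, together with the concentration of $\bs_*^T(\mathbf G_{N-1}-t\mathbf I)^{-1}\bs_*$ around the Stieltjes transform of the semi-circle (well-defined for $t$ outside its support, which covers the maximizer $x=-x_0(q)$ by Lemma \ref{lem:x0}), shows that the rank-one shift only changes $\mathbb E\log|\det|$ by $O(1)$, hence preserves the exponential rate.

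Finally I assemble. The factors $(1-q^2)^{N/2}$, $e^{-\beta u}$, the $k\ge3$ Gaussian MGF, and the Girsanov factor recombine (using $\nu(1)=\sum_{k\ge 0}\alpha_k^2(q)$) to $e^{N\Lambda_{Z,\beta}(u/N,q)}$, while $q^{N-1}\omega_N$, $\varphi_{\grad}(0)$, the Gaussian density of $(u,v)$, and $\tfrac1N\log\mathbb E|\det(\mathbf G_{N-1}-t\mathbf I)|$ (evaluated via the LDP for GOE spectra and the top-eigenvalue tail bound, exactly as in the proof of Theorem \ref{thm:1stmoment}) recombine to $e^{N\Theta_{\nu,q}(u/N,v/\sqrt N)}$. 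Varadhan's lemma applied to the $v$-integration produces the $\sup_x$, and the $(\beta+c)\epsilon$ error accounts for $e^{-\beta u}$ varying by $e^{\beta\epsilon N}$ over $u/N\in B$ (the $\beta\epsilon$ part) and for the Lipschitz continuity of $\Theta_{\nu,q}(\cdot,x)$ in the first variable on the compact set $E\in(-2E_0(q),0)$ (the $c\epsilon$ part, with $c=c(\nu,\delta)$). The hard part will be the Girsanov-plus-rank-one step: explicitly identifying $\Delta_N=c_N\bs_*\bs_*^T$, bounding $c_N$ uniformly in $q\in(\delta,1)$, and verifying that the resulting rank-one shift indeed preserves the exponential rate of the GOE determinant expectation uniformly in $t=v/(q\sqrt N)$ on the range relevant to the supremum in $x$.
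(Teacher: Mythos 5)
Your overall architecture is sound, but you should realize that you are taking a genuinely different (and much heavier) route than the paper. The paper's proof is short: Corollary \ref{cor:smallbandZ} already gives $\lim_N\frac1N\log\mathbb{E}^q_{NE,v}\{Z_{N,\beta}(q\nh)\}=\Lambda_{Z,\beta}(E,q)$, Lemma \ref{cor:conditional} shows this is independent of $v$ and varies by at most $\beta\epsilon$ as $E'$ ranges over $B$, and then the Kac--Rice-type auxiliary Lemma \ref{lem:17} converts this uniform bound on the conditional expectation of a single weight directly into the bound $\sup_{E'\in B,x}\{\Theta_{\nu,q}(E',x)+\Lambda_{Z,\beta}(E,q)\}+\beta\epsilon$ for the expected weighted sum; the $c\epsilon$ term comes from Lipschitz continuity of $E\mapsto\sup_x\Theta_{\nu,q}(E,x)$. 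What you are doing is, in effect, reproving (the relevant instance of) Lemma \ref{lem:17} by hand: you apply Kac--Rice to the weighted sum and then must decouple $|\det\Hess H_N(\bs_0)|$ from $Z_{N,\beta}(\bs_0)$, which are correlated through the $2$-spin component, and you do this via the decomposition of Section \ref{sec:models_on_bands}, a Gaussian change of measure, and a rank-one perturbation of the (scaled) GOE. The preparatory computations are correct: $\mathbf{G}$ is indeed independent of $(H_N(q\nh),\ddq H_N(q\nh),\grad H_N(q\nh))$, the $k\ge3$ part is independent of $\mathbf{G}$ and its MGF combines with the tilt variance $N\alpha_2^2(q)(1+o(1))$ and the volume factor to give $\Lambda_{Z,\beta}$, and the cross-covariance shift is exactly rank one, $\Delta_N=c_N\bs_*\bs_*^T$ with $\|\beta\Delta_N\|_{op}=O(1)$.

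The genuine gap is in the step you yourself flag as decisive. First, the quantity you control is the wrong one: you argue the rank-one shift changes $\mathbb{E}\log|\det|$ by $O(1)$, but what the assembly requires is $\frac1N\log\mathbb{E}\bigl|\det(\mathbf{G}-\beta\Delta_N-t\mathbf{I})\bigr|\le\Omega\bigl(t/\sqrt{\nu''(q^2)}\bigr)+o(1)$ (expectation of the determinant, not of its logarithm), since it is this annealed rate that must recombine with the Gaussian density of $(u,v)$ into $\Theta_{\nu,q}$. Second, this bound must hold uniformly in $t=v/(q\sqrt N)$ over all of $\mathbb{R}$ before Varadhan's lemma is applied to the $v$-integral; you cannot localize at the maximizer $-x_0(q)$ in advance, yet your justification (matrix-determinant lemma plus concentration of $\bs_*^T(\mathbf{G}-t\mathbf{I})^{-1}\bs_*$ around the Stieltjes transform) is only available for $t$ outside the semicircle support, and even there controlling the expectation of the product $|\det(\mathbf{G}-t\mathbf{I})|\,|1-\beta c_N\bs_*^T(\mathbf{G}-t\mathbf{I})^{-1}\bs_*|$ requires more than concentration (rare events can dominate annealed quantities). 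The repair is to drop the determinant-lemma/resolvent route altogether and rerun the truncation-plus-Varadhan argument from the proof of Theorem \ref{thm:1stmoment} (and Theorem \ref{thm:2ndmomUBBK-1-1}) directly on the shifted matrix: interlacing for the nonnegative rank-one perturbation controls the extreme eigenvalues and shows the empirical spectral measure of $\mathbf{G}-\beta\Delta_N$ differs from that of $\mathbf{G}$ by $O(1/N)$, so the same LDP at speed $N^2$ and the same truncated-logarithm functionals $\log_\epsilon^\kappa$ apply and yield the $\Omega$ rate uniformly in $t$. With that substitution your argument closes, but as written the decisive step is not justified.
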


\begin{lem}
\label{lem:bound2}For any $\delta>0$ there exists a constant $c=c(\nu)>0$,
such that for any $q\in(\delta,1)$ with $\beta\alpha_{2}(q)\geq1/\sqrt{2}$,
$E\in(-2E_{0}(q),0)$ and $\epsilon>0$, setting $B=(E-\epsilon,E+\epsilon)$,
\begin{equation}
\lim_{N\to\infty}\mathbb{P}\Bigg\{ \frac{1}{N}\log\sum_{\boldsymbol{\sigma}_{0}\in\mathscr{C}_{N,q}(NB)}Z_{N,\beta}^{2-}(\bs_{0})\geq\theta+\beta\alpha_{2}(q)\sqrt{2\theta+c\epsilon}+\Lambda_{F,\beta}^{2-}(E,q)+\beta\epsilon\Bigg\} =0,\label{eq:71-1-1-1}
\end{equation}
where $\theta=\sup_{x\in\mathbb{R}}\Theta_{\nu,q}(E,x)$ and $\Lambda_{F,\beta}^{2-}(E,q)$
is defined by (\ref{eq:LambdaZ-1}).
\end{lem}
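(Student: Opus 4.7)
The proof refines the Kac-Rice first moment approach of Lemma \ref{lem:bound1} by adding Gaussian concentration of the 2-spin free energy. First, I would reduce to controlling the maximum weight:
\[
\sum_{\bs_0\in\mathscr{C}_{N,q}(NB)} Z^{2-}_{N,\beta}(\bs_0) \leq |\mathscr{C}_{N,q}(NB)|\cdot \max_{\bs_0\in\mathscr{C}_{N,q}(NB)} Z^{2-}_{N,\beta}(\bs_0),
\]
and bound the first factor by $\exp(N(\theta+\beta\epsilon/2+o(1)))$ with high probability, using Theorem \ref{thm:1stmoment} and Markov, where $\theta=\sup_x\Theta_{\nu,q}(E,x)$. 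It then suffices to show that, with high probability, no $\bs_0\in\mathscr{C}_{N,q}(NB)$ has $(1/N)\log Z^{2-}_{N,\beta}(\bs_0)\geq \Lambda_{F,\beta}^{2-}(E,q)+y^{\ast}$ for $y^{\ast}:=\beta\alpha_2(q)\sqrt{2\theta+c\epsilon}+\beta\epsilon/2$.

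For this, let $\mathcal{N}_y$ denote the number of critical points in $\mathscr{C}_{N,q}(NB)$ whose 2-spin weight $(1/N)\log Z^{2-}_{N,\beta}(\cdot)$ exceeds $\Lambda_{F,\beta}^{2-}(E,q)+y$. Markov combined with the Kac-Rice formula from Section \ref{subsec:pf_1stmoment} and with Lemma \ref{cor:conditional}, which identifies the conditional law of $Z^{2-}_{N,\beta}(q\hat{\mathbf{n}})$ under $\mathbb{P}^{q}_{Nu,\sqrt Nx}$ as the partition function of a pure 2-spin at effective inverse temperature $\beta'=\beta\alpha_2(q)$, reduces $\mathbb{E}\mathcal{N}_y$ to expressions
\[
\mathbb{E}^{q}_{Nu,\sqrt Nx}\big[|\det \Hess H_N(q\hat{\mathbf{n}})|\cdot \mathbf{1}_{A_{u,y}}\big],\quad A_{u,y}:=\{\tfrac{1}{N}\log Z^{2-}_{N,\beta}(q\hat{\mathbf{n}})\geq \Lambda_{F,\beta}^{2-}(E,q)+y\},
\]
integrated over $(u,x)\in B\times\mathbb{R}$. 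Under $\mathbb{P}^{q}_{Nu,\sqrt Nx}$, the functional $F:=(1/N)\log Z^{2-}_{N,\beta}(q\hat{\mathbf{n}})$ is Lipschitz in the Gaussian disorder with constant $\beta\alpha_2(q)/\sqrt{N}$ (a direct computation using $|\log Z-\log Z'|\leq \beta\sup_{\bs}|H(\bs)-H'(\bs)|$ and the explicit conditional covariance of the 2-spin part from Lemma \ref{cor:conditional}), and by Lemma \ref{lem:LambdaF} we have $\mathbb{E}^{q}_{Nu,\sqrt Nx}F=\Lambda_{F,\beta}^{2-}(E,q)+O(\epsilon)$ for $u\in B$. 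Borell-TIS then yields
\[
\mathbb{P}^{q}_{Nu,\sqrt Nx}(A_{u,y})\leq \exp\big(-N(y-O(\epsilon))^2/(2\beta^2\alpha_2(q)^2)\big).
\]

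To decouple the determinant from the rare event indicator, apply H\"older's inequality with exponent $p>1$ close to $1$:
\[
\mathbb{E}^{q}_{Nu,\sqrt Nx}[|\det \Hess|\cdot \mathbf{1}_{A_{u,y}}] \leq \mathbb{E}^{q}_{Nu,\sqrt Nx}[|\det \Hess|^p]^{1/p}\,\mathbb{P}^{q}_{Nu,\sqrt Nx}(A_{u,y})^{1-1/p}.
\]
Since $-x_0(q)/(q\sqrt{\nu''(q^2)})<-2$ by Lemma \ref{lem:x0}, the spectrum of the shifted Hessian stays bounded away from the shift, and a truncation argument as in the proof of Theorem \ref{thm:2ndmomUBBK-1-1} yields continuity at $p=1$ of the exponential rate of $\mathbb{E}^{q}[|\det \Hess|^p]^{1/p}$; hence the first factor above is $\exp(N(\theta(u,x)+o_p(1)))$. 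Setting $y=y^{\ast}$ with $c$ large enough (so that after squaring we absorb the $O(\epsilon)$ correction to the mean), and then $p$ sufficiently close to $1$, makes the combined exponent strictly negative, giving $\mathbb{E}\mathcal{N}_{y^{\ast}}\to 0$. The main obstacle is the uniform control of the Lipschitz constant of $F$ under the conditional measure $\mathbb{P}^{q}_{Nu,\sqrt Nx}$ (as $(u,x)$ vary in the integration range), which requires the explicit covariance decomposition from Lemma \ref{cor:conditional}.
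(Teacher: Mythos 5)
Your argument is correct and follows essentially the same route as the paper: Gaussian (Borell--TIS) concentration of the conditional band $2$-spin free energy around its conditional mean $\Lambda_{F,\beta}^{2-}(E,q)+O(\epsilon)$ at scale $\beta\alpha_{2}(q)/\sqrt{N}$, a Kac--Rice first-moment transfer showing that with the deviation threshold $\beta\alpha_{2}(q)\sqrt{2\theta+c\epsilon}$ the expected number of atypically heavy critical points decays, and the count bound $e^{N(\theta+O(\epsilon))}$ from Theorem \ref{thm:1stmoment}. The only difference is presentational: the paper delegates the determinant--indicator decoupling to Lemma \ref{lem:15} (proved as in the pure-case references), whereas you carry it out explicitly via H\"older together with continuity at $p=1$ of the determinant-moment rate (and your appeal to Lemma \ref{lem:x0} there is unnecessary, since the moment continuity holds for shifts in the bulk as well).
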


\begin{lem}
\label{lem:bound3}There exist constants $C,\,c>0$, such that for
any $q\in(0,1)$,
\[
\mathbb{P}\bigg\{ \exists \bs_{0}\in\mathbb{S}^{N-1}(\sqrt{N}q):\,\frac{1}{N}|\log Z_{N,\beta}(\bs_{0})-\log Z_{N,\beta}^{2-}(\bs_{0})|>C(1-q^{2})^{3/2}\bigg\} \leq e^{-cN}.
\]
\end{lem}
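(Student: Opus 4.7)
My plan is to reduce the estimate to a pointwise control of $H_N(\bs) - \sum_{i=0}^{2}\bar{H}_N^{\bs_0,i}(\bs)$ along the section $\mathcal{S}(\bs_0)$, and then to invoke Lemma \ref{lem:apxest} with $k=2$ in the range where $1-q^2$ is small. The reduction is immediate: the volume prefactors $(1-\|\bs_0\|/\sqrt N)^{N/2}$ in \eqref{eq:Zsigma} and \eqref{eq:3007-02} cancel in the log-difference, and by the elementary inequality $|\log\!\int e^{-\beta f}\,d\mu - \log\!\int e^{-\beta g}\,d\mu|\leq \beta\,\|f-g\|_{\infty}$ applied on $\mathcal{S}(\bs_0)$ with the uniform probability measure, one has
\[
\tfrac{1}{N}\,|\log Z_{N,\beta}(\bs_0) - \log Z_{N,\beta}^{2-}(\bs_0)| \;\leq\; \beta\sup_{\bs\in\mathcal{S}(\bs_0)}\tfrac{1}{N}\Big|H_N(\bs) - \textstyle\sum_{i=0}^{2}\bar{H}_N^{\bs_0,i}(\bs)\Big|.
\]

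Let $c_0 > 0$ denote the constant supplied by Lemma \ref{lem:apxest}. For $q$ with $1-q^2 < c_0$ I will apply that lemma with $k=2$: on the resulting event (probability $\geq 1 - e^{-c_0 N}$), simultaneously for all $\bs_0\in\mathbb{S}^{N-1}(\sqrt N q)$ and all $\bs\in\mathcal{S}(\bs_0)$, the inner supremum above is at most $(C_{\mathrm{ap}}/c_0^{3/2})(1-q^2)^{3/2}$, where $C_{\mathrm{ap}}$ is the constant of Lemma \ref{lem:apxest}. A small bookkeeping step converts between $\bs_0\in\mathbb{S}^{N-1}(\sqrt{N}q)$ and $\bs_0'=\bs_0/q\in\SN$ (the convention of Lemma \ref{lem:apxest}); both produce the same Taylor expansion of $H_N$ around $\bs_0$, so $\bar{H}_N^{\bs_0,i}(\bs)=\bar{H}_N^{\bs_0',i}(\bs)$. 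This closes the small-$(1-q^2)$ range, with effective constant $\beta C_{\mathrm{ap}}/c_0^{3/2}$.

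For the complementary range $1-q^2\geq c_0$, the target $(1-q^2)^{3/2}$ is bounded below by $c_0^{3/2}$, so it suffices to produce a uniform, $q$-independent $O(1)$ bound on the supremum above. The bound $|H_N(\bs)|/N\leq K$ on $\SN$ holds with probability $\geq 1-e^{-c'N}$ by the Lipschitz concentration of $H_N/\sqrt N$ from Corollary \ref{cor:gradbd}. For the Taylor truncation, I will use Corollary \ref{cor:tensorbd} applied to $k=0,1,2$, which turns the high-probability tensor-norm bounds $\|\nabla_E^k H_N\|_{\infty}\leq K_k N^{-(k-1)/2}$ (also provided by Corollary \ref{cor:gradbd}) into the uniform estimate $|\bar{H}_N^{\bs_0,k}(\bs)|/N\leq K_k (1-q^2)^{k/2}/k!\leq K_k/k!$. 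Summing over $k=0,1,2$ yields the desired $O(1)$ bound. Choosing $C$ large enough to dominate $2\beta K/c_0^{3/2}$ finishes this range, and a union bound on the two exceptional events gives the claim with, say, $c=\tfrac12\min(c_0,c')$.

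I do not anticipate a serious obstacle: the argument is essentially bookkeeping on top of Lemma \ref{lem:apxest}, which already does the probabilistic work. The only mildly delicate point is the uniformity in $\bs_0$ over the whole sphere $\mathbb{S}^{N-1}(\sqrt{N}q)$, but both Lemma \ref{lem:apxest} and Corollary \ref{cor:gradbd} are stated with exactly this kind of uniformity, so no additional covering/chaining argument is needed.
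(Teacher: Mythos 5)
Your proof is correct and follows essentially the same route as the paper: cancel the volume prefactors, bound $\frac1N|\log Z_{N,\beta}(\bs_0)-\log Z_{N,\beta}^{2-}(\bs_0)|$ by $\beta$ times the normalized sup of $|H_N(\bs)-\sum_{i=0}^{2}\bar H_N^{\bs_0,i}(\bs)|$ over $\mathcal{S}(\bs_0)$, and invoke Lemma \ref{lem:apxest} with $k=2$ (the paper's proof is exactly this, stated in two lines). Your separate treatment of the range $1-q^2\geq c_0$ via Corollaries \ref{cor:tensorbd} and \ref{cor:gradbd} is a harmless, indeed slightly more careful, addition, since the statement of Lemma \ref{lem:apxest} only controls overlaps with $1-q(\bs)^2<c$ while Lemma \ref{lem:bound3} is claimed for all $q\in(0,1)$.
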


\begin{proof}[Proof of Lemma \ref{lem:bound1}.]
From Corollary \ref{cor:smallbandZ}, for any $q>0$ and $v\in\mathbb{R}$,
\[
\lim_{N\to\infty}\frac{1}{N}\log\mathbb{E}_{NE,v}^{q}\left\{ Z_{N,\beta}(q\hat{\mathbf{n}})\right\} =\Lambda_{Z,\beta}(E,q).
\]
By Lemma \ref{cor:conditional}, replacing $E$ by $E+\delta$
in the conditional expectation above amounts to shifting the conditional
law of the random field $H_{N}|_{q}(\bs)$, uniformly in $\bs$, by
$N\delta$. Moreover, by the same corollary the conditional law $H_{N}|_{q}(\bs)$
is independent of $v$. Thus, 
\begin{equation}
\lim_{N\to\infty}\sup_{E'\in B,\,v\in\mathbb{R}}\frac{1}{N}\log\mathbb{E}_{NE',v}^{q}\left\{ Z_{N,\beta}(q\hat{\mathbf{n}})\right\} =\Lambda_{Z,\beta}(E,q)+\beta\epsilon.\label{eq:0310-04}
\end{equation}
Hence, by \corO{the Kac-Rice formula contained in} Lemma \ref{lem:17},
\[
\limsup_{N\to\infty}\frac{1}{N}\log\mathbb{E}\sum_{\boldsymbol{\sigma}_{0}\in\mathscr{C}_{N,q}(NB)}Z_{N,\beta}(\bs_{0})\leq\sup_{E'\in B,\,x\in\mathbb{R}}\Theta_{\nu,q}(E',x)+\Lambda_{Z,\beta}(E,q)+\beta\epsilon,
\]
from which (\ref{eq:71-1-1}) follows by the fact that $\sup_{x\in\mathbb{R}}\Theta_{\nu,q}(E,x)$
is Lipschitz in $E\in(-2E_{0}(q),0)$, uniformly over $q\in(\delta,1)$
and therefore for some $c>0$,
\begin{equation}
\sup_{E'\in B}\sup_{x\in\mathbb{R}}\Theta_{\nu,q}(E',x)<\sup_{x\in\mathbb{R}}\Theta_{\nu,q}(E,x)+c\epsilon.\label{eq:0310-02}
\end{equation}
\end{proof}

\begin{proof}[Proof of Lemma \ref{lem:bound2}.]
The conditional variance of $\sum_{i=0}^{2}\bar{H}_{N}^{\hat{\mathbf{n}},i}|_{q}\left(\boldsymbol{\sigma}\right)$
under $\mathbb{P}_{NE,v}^{q}$ is equal to $N\alpha_{2}^{2}(q)$ (see
(\ref{eq:2909-01})). Thus, setting
\[
\Delta_{N}(\boldsymbol{\sigma}_{0})=\Big|\frac{1}{N}\log Z_{N,\beta}^{2-}(\boldsymbol{\sigma}_{0})-\frac{1}{N}\mathbb{E}_{NE,v}^{q}\log Z_{N,\beta}^{2-}(q\hat{\mathbf{n}})\Big|,
\]
by Corollary \ref{cor:concF}, for any $v\in\mathbb{R}$ and $t>0$,
\[
\mathbb{P}_{-NE,v}^{q}\left\{ \Delta_{N}(q\hat{\mathbf{n}})>t\right\} \leq3\exp\left\{ -(N-1)^2t^{2}/2\beta^{2}\alpha_{2}^{2}(q)\right\} .
\]
Lemma \ref{lem:15}, therefore, implies that
\begin{equation}
\limsup_{N\to\infty}\frac{1}{N}\log\left(\mathbb{E}\left|\left\{ \boldsymbol{\sigma}_{0}\in\mathscr{C}_{N,q}(NB):\,\Delta_{N}(\boldsymbol{\sigma}_{0})>t\right\} \right|\right)\leq\sup_{E'\in B,\,x\in\mathbb{R}}\Theta_{\nu,q}(E',x)-t^{2}/2\beta^{2}\alpha_{2}^{2}(q).\label{eq:0310-03}
\end{equation}

By (\ref{eq:0310-02}), for
\[
t\geq\beta\alpha_{2}(q)\sqrt{2\left(\theta+c\epsilon\right)},
\]
where $\theta=\sup_{x\in\mathbb{R}}\Theta_{\nu,q}(E,x)$, the left-hand
side of (\ref{eq:0310-03}) is negative.

Thus, with probability tending to $1$ as $N\to\infty$, for all the
points $\boldsymbol{\sigma}_{0}\in\mathscr{C}_{N,q}(NB)$ we have
that $\Delta_{N}(\boldsymbol{\sigma}_{0})<t$. From Theorem \ref{thm:1stmoment},
the number of points in $\mathscr{C}_{N,q}(NB)$ is bounded by $\theta+c\epsilon$,
with probability tending to $1$. The proof of the lemma therefore
follows from the fact that complement of the event in (\ref{eq:71-1-1-1})
is contained in the intersection of those two events, and since, by
Lemma \ref{lem:LambdaF}, similarly to (\ref{eq:0310-04}),
\begin{equation}
\lim_{N\to\infty}\sup_{E'\in B}\frac{1}{N}\mathbb{E}_{NE',v}^{q}\left\{ \log Z_{N,\beta}(q\hat{\mathbf{n}})\right\} =\Lambda_{F,\beta}^{2-}(E,q)+\beta\epsilon.\label{eq:0310-04-1}
\end{equation}
\end{proof}

\begin{proof}[Proof of Lemma \ref{lem:bound3}.]
Recall that by  definition ((\ref{eq:Zsigma}) and (\ref{eq:3007-02})),
\[
\frac{Z_{N,\beta}(\bs_{0})}{Z_{N,\beta}^{2-}(\bs_{0})}=\frac{\int_{\mathcal{S}(\bs_{0})}\exp(-\beta H_{N}(\bs))d\bs}{\int_{\mathcal{S}(\bs_{0})}\exp(-\beta\sum_{i=0}^{2}\bar{H}_{N}^{\bs_{0},i}\left(\boldsymbol{\sigma}\right))d\bs}.
\]
From Lemma \ref{lem:apxest} applied with $k=2$, for some constants
$C,\,c>0$,
\[
\mathbb{P}\Big\{\exists \bs_{0}\in\mathbb{S}^{N-1}(\sqrt{N}q):\,\frac{1}{N}\sup_{\bs\in\mathcal{S}(\bs_{0})}\Big|H_{N}\left(\boldsymbol{\sigma}\right)-\sum_{i=0}^{k}\bar{H}_{N}^{\bs_{0},i}\left(\boldsymbol{\sigma}\right)\Big|>C(1-q^{2})^{3/2}\Big\}\leq e^{-cN}.
\]
Lemma \ref{lem:bound3} directly follows from those two facts.
\end{proof}

\subsection{\label{subsec:11.3}Proof of Proposition \ref{prop:UB}}

Let $\beta>0$ be some large number, and $\epsilon>0$ be some small
number. We will show that there exist $\eta$, $\delta$ and $\upsilon$
satisfying the bound (\ref{eq:bd2}) as in Lemma \ref{lem:44}.

Suppose that $(q,E)\in W_{\tau,\eta}\setminus B_{\boxempty}(\qs,-E_{0}(\qs),\epsilon)$
(see (\ref{eq:0510-02}) and (\ref{eq:tau})) and further assume that
$q\geq\qss$ (see (\ref{eq:qc})). From Lemma \ref{lem:bound1}, with
probability tending to $1$ as $N\to\infty$,
\[
\frac{1}{N}\log\sum_{\bs\in\mathscr{C}_{N,q}([E-\upsilon,E+\upsilon])}Z_{N,\beta}(\bs)<\sup_{x\in\mathbb{R}}\Theta_{\nu,q}(E,x)+\Lambda_{Z,\beta}(E,q)+(\beta+c)\upsilon,
\]
for some constant $c$. Since we can choose $\upsilon$ as small as
we wish, (\ref{eq:bd2}) follows if we show that for $(q,E)$ as above,
\begin{equation}
\sup_{x\in\mathbb{R}}\Theta_{\nu,q}(E,x)+\Lambda_{Z,\beta}(E,q)<\Lambda_{Z,\beta}(E_{0}(\qs),\,\qs)-\delta/2.\label{eq:0710-01}
\end{equation}
From continuity \corO{of} the left-hand side in $(q,E)$ (uniformly on compacts),
if we prove \eqref{eq:0710-01} for any $(q,E)\in W_{\tau,0}\setminus B_{\boxempty}(\qs,-E_{0}(\qs),\epsilon)$
with
$q\geq\qss$, i.e., with $\eta=0$, then the same will follow for some small $\eta>0$. 

 Note that the dependence of $\Lambda_{Z,\beta}(E,q)$ \corO{in} $E$ is through the term $\beta E$ in its definition 
 (\ref{eq:LambdaZ}),  $\Theta_{\nu,q}(E,x)$ does not depend on $\beta$, and $\sup_{x\in\mathbb{R}}\Theta_{\nu,q}(E,x)$ is a Lipschitz function of $E$ in a compact set uniformly over $q$ in a compact subset of $(0,1]$. 
Thus, for large enough $\beta$, any constant $C>0$  and fixed $q\in[1-\qss,1]$,
\begin{align*}
\forall E\in(-E_0(q),-E_0(q)+C] :\ \   &\sup_{x\in\mathbb{R}}\Theta_{\nu,q}(E,x)+\Lambda_{Z,\beta}(E,q)  \\ &= \sup_{x\in\mathbb{R}}\Theta_{\nu,q}(-E_0(q),x)+\Lambda_{Z,\beta}(-E_0(q),q)
=\Lambda_{Z,\beta}(-E_0(q),q).
\end{align*} 
 
As we saw in Section \ref{sec:log-weight}, the
maximum of $q\mapsto\Lambda_{Z,\beta}(-E_{0}(q),q)$ over $[\qss,1]$
is obtained at $\qs$. From continuity, this completes the proof in
the case where $q\geq\qss$.

Now assume that $(q,E)\in W_{\tau,\eta}\setminus B_{\boxempty}(\qs,-E_{0}(\qs),\epsilon)$
and $q\in[1-\cls\tau,\,\qss]$. From Lemma \ref{lem:bound2}, with
probability tending to $1$ as $N\to\infty$,
\[
\frac{1}{N}\log\sum_{\bs\in\mathscr{C}_{N,q}([E-\upsilon,E+\upsilon])}Z_{N,\beta}(\bs)<\theta_{q,E}+\beta\alpha_{2}(q)\sqrt{2\theta_{q,E}+c\upsilon}+\Lambda_{F,\beta}^{2-}(E,q)+\beta\upsilon,
\]
for some constant $c$, where $\theta_{q,E}=\sup_{x\in\mathbb{R}}\Theta_{\nu,q}(E,x)$.
As before, by assuming that $\upsilon$ is small enough, we absorb the $\beta\upsilon$ and $c\upsilon$ terms into
$\delta/2$, so that we need to show that 
\begin{equation}
\theta_{q,E}+\beta\alpha_{2}(q)\sqrt{2\theta_{q,E}}+\Lambda_{F,\beta}^{2-}(E,q)\leq\Lambda_{Z,\beta}(E_{0}(\qs),\,\qs)-\delta/2.\label{eq:1010-02}
\end{equation}

To prove (\ref{eq:1010-02}) we will develop the $\beta\to\infty$
asymptotics of the terms above. Below $c$ and $C$ will be constants
that are assumed to be sufficiently small or large, respectively,
whenever needed. We also allow them to change from line to line. Assume
henceforth, that $\delta$ and $\eta$, which are allowed to depend
on $\beta$, are both smaller than $c\log\beta/\beta$.

First, we note that with $t_{-}$ as in (\ref{eq:qs}), from (\ref{eq:LambdaZ})
and (\ref{eq:tau}), as $\beta\to\infty$,
\begin{equation}
\begin{aligned}\beta\alpha_{2}(\qs) & =\sqrt{2\nu''(1)}t_{-}+O(\frac{1}{\beta}),\\
\Lambda_{Z,\beta}(-E_{0}(\qs),\,\qs) & =\beta E_{0}(\qs)+\frac{1}{2}\log(\frac{2t_{-}}{\beta})+t_{-}^{2}\nu''(1)+O(\frac{1}{\beta}),\\
\tau(\beta,\delta) & =E_{0}(1)-E_{0}(\qs)+\frac{\log\beta}{2\beta}+\delta+O(\frac{1}{\beta})\leq\frac{\log\beta}{\beta},
\end{aligned}
\label{eq:1010-01}
\end{equation}
where the inequality follows since, by Lemma \ref{lem:ddq}, $E_{0}(q)$
is differentiable at $q=1$ and $\qs=1-O(1/\beta)$.

For any compact $K$, for large enough $T>0$, $\sup_{x\in\mathbb{R}}\Theta_{\nu,q}(E,x)=\sup_{|x|\leq T}\Theta_{\nu,q}(E,x)$
uniformly over $E\in K$ and $q$ close enough to $1$ (see Lemma \ref{lem:compact}). Using this
and the fact that $\theta_{1,-E_{0}(1)}=0$, one can verify that for
some $C>0$,
\[
\theta_{q,E}\leq|1-q|C+|E+E_{0}(1)|C,
\]
for $(q,E)$ in a small neighborhood of $(1,-E_{0}(1))$. Thus, for
$(q,E)\in W_{\tau,\eta}$ and $\beta$ large enough, we have from
(\ref{eq:1010-01}) and (\ref{eq:LambdaZ-1}) that $\beta\theta_{q,E}$,
$\beta\alpha_{2}(q)$ and 
\[
|\Lambda_{F,\beta}^{2-}(E,q)-\Lambda_{F,\beta}^{2-}(E_{0}(q),q)|
\]
 are all smaller than $C\log\beta$.

From the above, to prove (\ref{eq:1010-02}) it will be enough to
show that for $q\in[1-\cls\tau,\,\qss]$,
\[
C\frac{\log\beta}{\beta}+C\frac{(\log\beta)^{3/2}}{\beta^{1/2}}+\Lambda_{F,\beta}^{2-}(-E_{0}(q),q)\leq\Lambda_{Z,\beta}(-E_{0}(\qs),\,\qs)-\delta/2,
\]
or, since we assume $\beta$ is large,
\begin{equation}
\label{eq:2210-01}\Lambda_{F,\beta}^{2-}(-E_{0}(q),q)\leq\Lambda_{Z,\beta}(-E_{0}(\qs),\,\qs)-\delta/4.
\end{equation}

From (\ref{eq:1010-03}), reparameterizing  $$\tilde\Lambda_{F,\beta}^{2-}(t)=\Lambda_{F,\beta}^{2-}(-E_{0}(q),q)|_{q=1-\frac{t}{\beta}},$$
\corO{we deduce that,} uniformly in $t\in(0,\cls \log\beta]\supset(0,\cls \tau \beta]$, as $\beta\to\infty$,
\[
\tilde\Lambda_{F,\beta}^{2-}(t)=\kappa'-t\kappa (1+o_\beta(1)),
\]
where $\kappa=x_0(1)-2\sqrt{\nu''(1)}>0$ and $\kappa'=\lim_{s\to 0}\tilde\Lambda_{F,\beta}^{2-}(s)$.

Therefore, \eqref{eq:2210-01} follows from \eqref{eq:gap} and \eqref{1010-04}.
This concludes the proof of (\ref{eq:bd2}), for small enough $\eta$,
$\delta$ and $\upsilon$, and thus also the proof of Proposition
\ref{prop:UB}.\qed

\section{\label{sec:PfsGeometry}Proofs of the main results: Theorems  \ref{thm:Geometry}, \ref{thm:geometry1} and \ref{thm:Chaos}}

\corO{Recall the definition of \corE{$\qs$}, see \eqref{eq:qs}.} 
The energy $\Es$ which was used in the statements of the proofs is defined as the limiting normalized ground state (see Remark \ref{rem:GSq})
\begin{equation}
\label{eq:Es}\Es:=\Es(\beta)=E_0(\qs).
\end{equation}
Throughout the proofs we will use the notation $B(\epsilon)=-E_{0}(\qs)+(-\epsilon,\epsilon)$
and $D(\epsilon)=-x_{0}(\qs)+(-\epsilon,\epsilon)$. 

\subsection{Proof of Theorem \ref{thm:Geometry}}

From continuity, 
\[
\lim_{\epsilon\to0}\sup_{E\in B(\epsilon),\, x\in D(\epsilon)}\Theta_{\nu,\qs}(E,x)=\Theta_{\nu,\qs}(-E_0(\qs),-x_0(\qs))=0.
\]
Thus, by Theorem \ref{thm:1stmoment} and Lemma \ref{lem:epsx0},  \eqref{eq-subsexp}  holds for any choice of $\epsilon_N=o(1)$.

\corO{From the}  lower bound of Proposition \ref{prop:Flb} and the upper bound
of Proposition \ref{prop:UB}, we have that for any $\epsilon>0$
and small enough $\delta>0$, with probability tending to $1$ as
$N\to\infty$,
\begin{equation}
\frac{1}{N}\log Z_{N,\beta}>\Lambda_{Z,\beta}(-E_{0}(\qs),\,\qs)-\delta/2\label{eq:1010-06}
\end{equation}
and 
\begin{equation}
  \label{eq:1010-06a}
\frac{1}{N}\log Z_{N,\beta}\Big(\mathbb{S}^{N-1}(\sqrt{N})\setminus\cup_{\bs_{0}\in\mathscr{C}_{N,q}(NB(\epsilon))}{\rm Band}(\bs_{0},\epsilon)\Big)<\Lambda_{Z,\beta}(-E_{0}(\qs),\,\qs)-\delta.
\end{equation}
This proves the statement of Part \ref{enu:Geometry1} with $\epsilon>0$
instead of $\epsilon_{N}=o(1)$. By a standard diagonalization argument,
we obtain the same with $\epsilon_{N}=o(1)$, assuming the rate of
decay is slow enough. \qed
\corO{\begin{rem}
  \label{corinintro}
 Using \eqref{eq:1010-06}, \eqref{eq:1010-06a}, \eqref{eq:qs}  and
  \eqref{eq:qc}, we obtain that 
  \[ F_\beta=\Lambda_{Z,\beta} (-E_0(\qs), \qs) = \sup_{q\in[q_c, 1)} \Lambda_{Z,\beta} (-E_0(q),q).\]
\end{rem}
}

\subsection{Proof of Theorem \ref{thm:geometry1}, Part \ref{enu:Geometry2}}

Let $\epsilon_{N}$ be the sequence defined in Theorem \ref{thm:Geometry}. First note that by Lemma \ref{lem:epsx0}, instead of $\Cs$, it will be enough to prove the theorem with points only from 
\begin{equation}
\Cs^+:=\mathscr{C}_{N,\qs}(NB(\epsilon_{N}),\,\sqrt{N}D(\epsilon_{N})),\label{eq:1010-08}
\end{equation}
where $\epsilon_N$ may be needed to be increased.
By a union bound, the first limit of Part \ref{enu:Geometry2} will follow if we show that
\begin{equation}
\lim_{N\to\infty}\mathbb E\sum_{\bs_{0}\in\Cs^+} G_{N,\beta}\times G_{N,\beta }\left\{ \bs,\,\bs'\in{\rm Band}(\bs_{0},\epsilon_{N}),\,\left|R\left(\boldsymbol{\sigma},\boldsymbol{\sigma}^{\prime}\right)-\qs^{2}\right|>\delta\right\} =0,\label{eq:1210-01}
\end{equation}
where the above notation means that $\bs $ and $\bs'$ are sampled independently from the Gibbs measure $G_{N,\beta}$.

For any $\delta>0$ and $\bs_{0}\in\BN$, define 
\begin{align*}
\tilde{Z}_{N,\beta,\delta}^{\otimes2}(\bs_{0}) & =\int_{({\rm Band}(\bs_{0},\epsilon_{N}))^{2}}
  \mathbf{1}_{\{|R(\bs,\bs')-\qs^{2}|>\delta\}}e^{-\beta(H_{N}(\bs)+H_{N}(\bs'))}d\bs d\bs',
\end{align*}
where the integration is w.r.t. the product measure of the probability
Hausdorff measure on the sphere with itself. Note that the probability under the product Gibbs measure
in (\ref{eq:1210-01}) is equal to $\tilde{Z}_{N,\beta,\delta}^{\otimes2}(\bs_{0})/(Z_{N,\beta})^{2}$.

\corE{Denote $\Lambda_\beta:=\Lambda_{Z,\beta}(-E_{0}(\qs),\,\qs)$.} Assume that for some $C$ independent of $N$, $H_{N}(\bs)$ is $\sqrt{N}C$-Lipschitz
continuous on $\BN$. Then for $\eta>0$, since $\epsilon_{N}\to0$,
for large $N$, 
\begin{equation}
{\rm if}\ \ \frac{1}{N}\log Z_{N,\beta,\delta}^{\otimes2}(\bs_{0})<\corE{2\Lambda_\beta}-\eta, \ \ {\rm then}\ \ \frac{1}{N}\log\tilde{Z}_{N,\beta,\delta}^{\otimes2}(\bs_{0})<\corE{2\Lambda_\beta}-\eta/2,\label{eq:1110-01}
\end{equation}
where we define
\begin{align*}
Z_{N,\beta,\delta}^{\otimes2}(\bs_{0}) & =(1-\qs^{2})^{N}\int_{(\mathcal{S}(\bs_{0}))^{2}} \mathbf{1}_{\{|R(\bs,\bs')\mp\qs^{2}|>\delta\}}e^{-\beta(H_{N}(\bs)+H_{N}(\bs'))}d\bs d\bs',
\end{align*}
with the integration being w.r.t. the product of the probability Hausdorff
measure on $\mathcal{S}(\bs_{0})$ with itself, and where the term
$(1-\qs^{2})^{N}$ accounts for the volume of band (at exponential
level, for large $N$).

In light of the lower bound on the free energy (\ref{eq:1010-06}), the implication (\ref{eq:1110-01}) and the Lipschitz bound
of (\ref{eq:1507-03}), to prove \eqref{eq:1210-01} 
it will be enough to show that for any $\delta>0$,
\begin{equation}
\limsup_{N\to\infty}\frac{1}{N}\log\mathbb{E}\Big\{ \sum_{\bs_{0}\in\Cs^+}Z_{N,\beta,\delta}^{\otimes2}(\bs_{0})\Big\} <\corE{2\Lambda_\beta}.\label{eq:1010-07}
\end{equation}

Recall that $\Theta_{\nu,\qs}(-E_{0}(\qs),-x_{0}(\qs))=0$ and note
that $\Theta_{\nu,\qs}\left(E,x\right)$ is continuous. Thus, similarly
to Lemma \ref{lem:17}, to prove (\ref{eq:1010-07}) it is sufficient
to show that 
\begin{equation}
\limsup_{N\to\infty}\sup_{\substack{E\in B(\epsilon_{N})\\
x\in D(\epsilon_{N})
}
}\frac{1}{N}\log\mathbb{E}_{NE,\sqrt{N}x}^{\qs}\left\{ Z_{N,\beta,\delta}^{\otimes2}(\qs\hat{\mathbf{n}})\right\} <\corE{2\Lambda_\beta}.\label{eq:1110-03}
\end{equation}
From Lemma \ref{cor:conditional} and (\ref{eq:1110-02}), the
left-hand side of (\ref{eq:1110-03}) is equal to 
\begin{equation}
\log(1-\qs^{2})+2\beta E_{0}(\qs)+\sup_{|\varrho|\in\big(\frac{\delta}{1-\qs^{2}},1\big)}\Big\{ \frac{1}{2}\log\left(1-\varrho^{2}\right)+\beta^{2}\sum_{k=2}^{\infty}\alpha_{k}^{2}(\qs)(1+\varrho^{k})\Big\} ,\label{eq:1110-04}
\end{equation}
where we used the fact that for points in $\mathcal{S}(\qs\hat{\mathbf{n}})$,
$|R(\bs,\bs')|>\delta$ if and only if
\[
|R(\bs-\qs\hat{\mathbf{n}},\bs'-\qs\hat{\mathbf{n}})|>\frac{\delta}{1-\qs^{2}}.
\]

The right-hand side of (\ref{eq:1110-03}) is equal to the expression
in (\ref{eq:1110-04}) with $\varrho=0$ (and no supremum). The inequality
(\ref{eq:1110-03}) therefore follows from a similar analysis to that
following (\ref{eq:1110-02}). This proves the first of the two limits
in Part \ref{enu:Geometry2} of Theorem \ref{thm:geometry1}.

If $\nu$ is neither an odd nor even polynomial, then almost surely
$H_{N}(\bs)$ has no antipodal critical points on $\mathbb{S}^{N-1}(\qs\sqrt{N})$. \corE{(This can be verified by applying the Kac-Rice formula \cite[Theorem 12.1.1]{RFG} to compute the expected number of pairs $(\bs_1,\bs_2)\in(\mathbb{S}^{N-1}(\qs\sqrt{N}))^2$ of critical points with overlap $|R(\bs_1,\bs_2)+1|<\epsilon$, and  taking $\epsilon\to 0 $.)}
If $\nu$ is odd, then for any $\bs_{0}\in\Cs^+$,
$-\bs_{0}$ is also a critical point, but since $H_{N}(-\bs_{0})=-H_{N}(\bs_{0})$,
$-\bs_{0}\notin\Cs^+$. Lastly, if $\nu$ is even,
then for any $\bs_{0}\in\Cs^+$ , $H_{N}(-\bs_{0})=H_{N}(\bs_{0})$
and $-\bs_{0}\in\Cs^+$ .

Hence, we only need to prove the second limit of Part \ref{enu:Geometry2}
in the case where $\nu$ is even. This case, however, follows directly
from the symmetry $H_{N}(\bs)=H_{N}(-\bs)$.
\qed 

\subsection{Proof of Theorem \ref{thm:geometry1}, Part \ref{enu:Geometry3}}
\corE{The key element in the current proof is combining Part  \ref{enu:Geometry2} of Theorem \ref{thm:geometry1} with the following corollary, which is a direct conclusion of Lemma 11 in \cite{geometryGibbs} and the argument used in the proof of Theorem 3 in \cite{geometryGibbs}, both of which rely on basic linear algebra and do not involve probabilistic arguments.
\begin{cor}\label{cor:overlapcrt}
	For some function $\rho(\epsilon)>0$ satisfying $\lim_{\epsilon\to0}\rho(\epsilon)=0$ we have the following for every $N$. For $i=1,2$, let $\epsilon>0$,  $q_i\in(0,1)$ and $\bs_0^i\in \mathbb S^{N-1}(\sqrt N q_i)$. If $M_i$ is a measure supported on ${\rm Band}(\bs_{0}^i,\epsilon)$ such that
	\[
	M_i\times M_i \{  |R(\bs,\bs')-q_i^2|>\epsilon \}<\epsilon,
	\]
	then
	\[
	M_1 \times M_2 \{ |R(\bs,\bs')-q_1q_2R(\bs_0^1,\bs_0^2)| > \rho(\epsilon)\}<\rho(\epsilon).
	\]
\end{cor}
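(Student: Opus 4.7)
The approach is purely linear-algebraic, using only the geometry of the bands and the second-moment control inherited from the hypothesis. Given $\bs \in {\rm Band}(\bs_0^i, \epsilon)$, write $\bs = \bs_0^i + v_i$; membership in the band together with the sphere constraint $\|\bs\| = \sqrt{N}$ forces $|\langle v_i, \bs_0^i\rangle| \leq \epsilon q_i N$ (the deviation is almost orthogonal to its center) and $\|v_i\|^2 \leq 2(1-q_i^2)N + O(\epsilon N)$. Expanding inner products yields the key identities
\[
R(\bs,\bs') - q_1 q_2 R(\bs_0^1, \bs_0^2) = \frac{1}{N}\bigl(\langle v_1, \bs_0^2\rangle + \langle \bs_0^1, v_2\rangle + \langle v_1, v_2\rangle\bigr),
\]
and, for two samples both drawn from $M_i$,
\[
R(\bs,\bs') - q_i^2 = \frac{1}{N}\bigl(\langle v, \bs_0^i\rangle + \langle \bs_0^i, v'\rangle + \langle v,v'\rangle\bigr).
\]
So the whole task reduces to controlling the three terms on the right of the first identity in $M_1 \otimes M_2$-probability.

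The first step would be to extract second-moment control on the PSD matrix $A_i := \int v v^T \, dM_i \in \mathbb{R}^{N\times N}$. Combining the hypothesis $M_i \otimes M_i\{|R-q_i^2| > \epsilon\} < \epsilon$ with the trivial bound $|R - q_i^2| \leq 2$ gives $\mathbb{E}_{M_i \otimes M_i}[(R-q_i^2)^2] \leq C\epsilon$, and then the second identity above, together with $|\langle v, \bs_0^i\rangle| \leq \epsilon q_i N$, yields $\mathbb{E}_{M_i \otimes M_i}[\langle v,v'\rangle^2] \leq C' \epsilon N^2$ for a universal constant $C'$. Since $\mathbb{E}_{M_i\otimes M_i}[\langle v,v'\rangle^2] = \mathrm{Tr}(A_i^2) = \|A_i\|_F^2$, this reads $\|A_i\|_F \leq C'' \sqrt{\epsilon}\,N$ and in particular $\|A_i\|_{\mathrm{op}} \leq C''\sqrt{\epsilon}\,N$.

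Finally, Markov's inequality handles the three cross terms separately. For the first, $\mathbb{E}_{M_1}[\langle v_1, \bs_0^2\rangle^2] = (\bs_0^2)^T A_1 \bs_0^2 \leq \|\bs_0^2\|^2\|A_1\|_{\mathrm{op}} \leq C\sqrt\epsilon\,N^2$, so $M_1\{|\langle v_1, \bs_0^2\rangle|/N > \epsilon^{1/8}\} = O(\epsilon^{1/4})$, and the second is symmetric. For the bilinear term, $\mathbb{E}_{M_1 \otimes M_2}[\langle v_1, v_2\rangle^2] = \mathrm{Tr}(A_1 A_2) \leq \|A_1\|_F\|A_2\|_F \leq C\epsilon N^2$, so $M_1\otimes M_2\{|\langle v_1,v_2\rangle|/N > \epsilon^{1/4}\} = O(\sqrt\epsilon)$. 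Summing the three contributions and setting $\rho(\epsilon) := K\epsilon^{1/8}$ for a sufficiently large universal $K$ (independent of $q_1,q_2,N$ and the $\bs_0^i$) yields the corollary. No substantive obstacle is expected: the only delicate bookkeeping is propagating the $O(\epsilon)$ errors from the almost-orthogonality of $v_i$ to $\bs_0^i$, but these are comfortably absorbed into $\rho(\epsilon)$ for small $\epsilon$.
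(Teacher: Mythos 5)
Your argument is correct: the decomposition $\bs=\bs_{0}^{i}+v_{i}$ with $|\langle v_{i},\bs_{0}^{i}\rangle|\leq\epsilon q_{i}N$, the second-moment matrices $A_{i}$ with the Frobenius/operator-norm bounds $\|A_i\|_{op}\leq\|A_i\|_F\leq C\sqrt{\epsilon}\,N$, and Chebyshev estimates for the three cross terms constitute precisely the deterministic linear-algebra argument that the paper obtains by citing Lemma 11 and the proof of Theorem 3 of \cite{geometryGibbs}. So this is essentially the same approach as the paper's, written out self-containedly (with $\rho(\epsilon)$ of order $\epsilon^{1/8}$), and I see no gap.
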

}
Denote
by $\Cs^{\delta}$ the set of points $\bs_{0}\in\Cs$ for which 
\[
\frac{1}{N}\log Z_{N,\beta}({\rm Band}(\bs_{0},\epsilon_{N}))>\Lambda_{Z,\beta}(-E_{0}(\qs),\,\qs)-\delta.
\]
Since by \eqref{eq-subsexp} the number of points in $\Cs$ is sub-exponential,  from the lower bound \corE{on the free energy in}  Proposition \ref{prop:Flb}, if 
 $\delta_{N}=o(1)$ decays slow enough, then 
\[
\lim_{N\to\infty}\mathbb{E}G_{N,\beta}(\cup_{\bs_{0}\in\Cs^{\delta_{N}}}{\rm Band}(\bs_{0},\epsilon_{N}))=1.
\]

From Corollary \ref{cor:orth}, for large $\beta$ (and therefore
 $\qs$ \corE{close to $1$}), the bands corresponding to different points in $\Cs$
are disjoint, with probability tending to $1$ as $N\to\infty$. 
Therefore, to prove (\ref{eq:1402}) it will be enough to show that 
\begin{equation}
\lim_{N\to\infty} \mathbb P\Big\{ \forall \bs_0\neq\pm\bs_0'\in \Cs^{\delta_N}:\
G_{N,\beta}^{\bs_{0}}\times G_{N,\beta}^{\bs_{0}'}\left\{ \left|R\left(\boldsymbol{\sigma},\boldsymbol{\sigma}^{\prime}\right)\right|>\delta\right\} <\rho_{N}
\Big\}=1,\label{eq:2210-05}
\end{equation}
for some $\rho_N=o(1)$, where we denote by $G_{N,\beta}^{\bs_{0}}$
the conditional Gibbs measure given ${\rm Band}(\bs_{0},\epsilon_{N})$.

Note that from (\ref{eq:1010-07}), \eqref{eq:1110-01} and Corollary \ref{cor:orth}, if $\epsilon_{N}'=o(1)$ decays sufficiently slow, then with probability tending to $1$: 
uniformly in $\bs_{0}\in\Cs^{\delta_{N}}$,
\begin{equation}
G_{N,\beta}^{\bs_{0}}\times G_{N,\beta}^{\bs_{0}}\left\{ \left|R\left(\boldsymbol{\sigma},\boldsymbol{\sigma}^{\prime}\right)-\qs^{2}\right|>\epsilon_{N}'\right\} <\epsilon_{N}',\label{eq:3011-01}
\end{equation}
and  uniformly in $\bs_{0},\bs_{0}'\in\Cs^{\delta_{N}}$
with $\bs_{0}\neq\pm\bs_{0}'$, 
$
|R(\bs_{0},\bs_{0}')|<\epsilon_{N}'
$.
\corE{Combined with Corollary \ref{cor:overlapcrt}, this implies \eqref{eq:2210-05} and completes the proof.}\qed

\corE{
\subsection{Proof of Theorem \ref{thm:Chaos}}
Let $\beta\neq\beta'$ be two different inverse-temperatures and define  $\qs=\qs(\beta)\neq\qs'=\qs(\beta')$ by \eqref{eq:qs}.
Note that almost surely there are no pairs of critical points $(\bs_1,\bs_2)\in\mathbb{S}^{N-1}(\qs\sqrt{N})\times \mathbb{S}^{N-1}(\qs'\sqrt{N})$ such that $\bs_1=\pm \bs_2$. This can be verified e.g. by applying the Kac-Rice formula \cite[Theorem 12.1.1]{RFG} to compute the expected number of pairs $(\bs_1,\bs_2)\in\mathbb{S}^{N-1}(\qs\sqrt{N})\times \mathbb{S}^{N-1}(\qs'\sqrt{N})$ of critical points with overlap $|R(\bs_1,\bs_2)\pm 1|<\epsilon$, and  taking $\epsilon\to 0 $. (We emphasize, however, that for pure models, for any $\qs$-critical point $\bs_1$, the point $\bs_2=\bs_1\cdot\qs'/\qs$ is  a $\qs'$-critical point deterministically, in which case the Kac-Rice formula cannot be applied due to the degeneracy of the
covariance matrix.)}

\corE{From the  argument that precedes \eqref{eq:2210-05} (which is based on Corollary \ref{cor:orth}, valid also for $q_1\neq q_2$), to complete the proof of Theorem \ref{thm:Chaos} it will be enough to show that for some $\rho_N=o(1)$,
\begin{equation}
\lim_{N\to\infty} \mathbb P\Big\{ \forall \bs_0\in \Cs^{\delta_N},\,\bs_0'\in \Cs^{\prime,\delta_N},\,\bs_0\neq\pm\bs_0':\
G_{N,\beta}^{\bs_{0}}\times G_{N,\beta'}^{\bs_{0}'}\left\{ \left|R\left(\boldsymbol{\sigma},\boldsymbol{\sigma}^{\prime}\right)\right|>\delta\right\} <\rho_{N}
\Big\}=1,\label{eq:2210-051}
\end{equation}
where $\bs_0\in \Cs^{\delta_N}$ and $G_{N,\beta}^{\bs_{0}}$ are as in the proof  of  Part \ref{enu:Geometry3} of Theorem \ref{thm:geometry1}, and $\bs_0\in \Cs^{\prime,\delta_N}$ and $G_{N,\beta'}^{\bs_{0}'}$ are defined similarly.
Similarly to \eqref{eq:3011-01} we have that uniformly in $\bs_{0}'\in\Cs^{\prime,\delta_{N}}$,
\begin{equation}
G_{N,\beta'}^{\bs_{0}'}\times G_{N,\beta'}^{\bs_{0}'}\left\{ \left|R\left(\boldsymbol{\sigma},\boldsymbol{\sigma}^{\prime}\right)-\qs'^{2}\right|>\epsilon_{N}'\right\} <\epsilon_{N}',\nonumber
\end{equation}
and  uniformly in $ \bs_0\in \Cs^{\delta_N}$, $\bs_0'\in \Cs^{\prime,\delta_N}$   with $\bs_0\neq\pm\bs_0'$, $|R(\bs_{0},\bs_{0}')|<\epsilon_{N}'$
with probability tending to $1$, for $\epsilon_N'$ decaying slowly enough.
Combining the above with Corollary \ref{cor:overlapcrt}, the proof is completed.\qed} 
\appendix

\section{\label{sec:Covariances}Covariances }

In this appendix we prove Lemmas \ref{lem:dens}, \ref{lem:condHamiltonian}
and \ref{lem:Hess_struct_2}. We begin with a study the joint covariance
of
\[
H_{N}\left(q\bs\right),\,\grad H_{N}\left(q\bs\right),\,\Hess H_{N}\left(q\bs\right),\,\ddq H_{N}\left(q\bs\right)
\]
at two points of the form $q\bs=q_{1}\hat{\mathbf{n}}$ and $q\bs=q_{2}\boldsymbol{\sigma}\left(r\right)=q_{2}\sqrt{N}\left(0,...,0,\sqrt{1-r^{2}},r\right)$. 

With the usual notation 
\[
\delta_{ij}=\begin{cases}
1 & \mbox{ if }i=j,\\
0 & \mbox{ otherwise},
\end{cases}
\]
in the lemma below we denote $\delta_{i=j}=\delta_{ij}$, $\delta_{i=j=k}=\delta_{ij}\delta_{jk}$,
$\delta_{i=j\neq k}=\delta_{ij}\left(1-\delta_{jk}\right)$, etc.
\begin{lem}
\label{lem:cov}For any $r\in\left[-1,1\right]$ and $q_{1},q_{2}\in(0,1]$
there exists a frame field $F=\left(F_{i}\right)$ (orthonormal when
restricted to any sphere centered at the origin) satisfying
\begin{equation}
\begin{aligned}
F_i H_{N}\left(q_1\hat{\mathbf{n}}\right) & =\left.\frac{d}{dx_{i}}\right|_{\bx=0}H_{N}\left((x_{1},...,x_{N-1},q_1\sqrt{N-\|\bx\|^{2}}\right),\\
F_iF_j H_{N}\left(q_1\hat{\mathbf{n}}\right) & =\left.\frac{d}{dx_{i}}\frac{d}{dx_{j}}\right|_{\bx=0}H_{N}\left((x_{1},...,x_{N-1},q_1\sqrt{N-\|\bx\|^{2}}\right),
\end{aligned}
\label{eq:1511---}
\end{equation}
such that 
\begin{align*}
\frac{1}{N}\mbox{\ensuremath{\mathbb{E}}}\left\{ H_{N}\left(q_{1}\hat{\mathbf{n}}\right)H_{N}\left(q_{2}\boldsymbol{\sigma}\left(r\right)\right)\right\}  & =\nu\left(q_{1}q_{2}r\right),\\
\frac{1}{\sqrt{N}}\mbox{\ensuremath{\mathbb{E}}}\left\{ H_{N}\left(q_{1}\hat{\mathbf{n}}\right)\ddq H_{N}\left(q_{2}\boldsymbol{\sigma}\left(r\right)\right)\right\}  & =\frac{1}{\sqrt{N}}\mbox{\ensuremath{\mathbb{E}}}\left\{ F_{N}H_{N}\left(q_{2}\hat{\mathbf{n}}\right)H_{N}\left(q_{1}\boldsymbol{\sigma}\left(r\right)\right)\right\}=q_{1}r\nu'\left(q_{1}q_{2}r\right), \\
\frac{1}{\sqrt{N}}\mbox{\ensuremath{\mathbb{E}}}\left\{ H_{N}\left(q_{1}\hat{\mathbf{n}}\right)F_{l}H_{N}\left(q_{2}\boldsymbol{\sigma}\left(r\right)\right)\right\}  & =-\frac{1}{\sqrt{N}}\mbox{\ensuremath{\mathbb{E}}}\left\{ F_{l}H_{N}\left(q_{2}\hat{\mathbf{n}}\right)H_{N}\left(q_{1}\boldsymbol{\sigma}\left(r\right)\right)\right\} =-q_{1}\nu'\left(q_{1}q_{2}r\right)\left(1-r^{2}\right)^{1/2}\delta_{l=N-1},\\
\mbox{\ensuremath{\mathbb{E}}}\left\{ H_{N}\left(q_{1}\hat{\mathbf{n}}\right)F_{k}F_{l}H_{N}\left(q_{2}\boldsymbol{\sigma}\left(r\right)\right)\right\}  & =\mbox{\ensuremath{\mathbb{E}}}\left\{ F_{k}F_{l}H_{N}\left(q_{2}\hat{\mathbf{n}}\right)H_{N}\left(q_{1}\boldsymbol{\sigma}\left(r\right)\right)\right\} \\
 & =q_{1}^{2}\nu''\left(q_{1}q_{2}r\right)\left(1-r^{2}\right)\delta_{l=k=N-1}-\frac{q_{1}}{q_{2}}r\nu'\left(q_{1}q_{2}r\right)\delta_{k=l},
\end{align*}
\begin{align*}
\mbox{\ensuremath{\mathbb{E}}}\left\{ \ddq H_{N}\left(q_{1}\hat{\mathbf{n}}\right)\ddq H_{N}\left(q_{2}\boldsymbol{\sigma}\left(r\right)\right)\right\}  & =q_{1}q_{2}r^{2}\nu''\left(q_{1}q_{2}r\right)+r\nu'\left(q_{1}q_{2}r\right),\\
\mbox{\ensuremath{\mathbb{E}}}\left\{ \ddq H_{N}\left(q_{1}\hat{\mathbf{n}}\right)F_{l}H_{N}\left(q_{2}\boldsymbol{\sigma}\left(r\right)\right)\right\}  & =-\mbox{\ensuremath{\mathbb{E}}}\left\{ F_{l}H_{N}\left(q_{2}\hat{\mathbf{n}}\right)\ddq H_{N}\left(q_{1}\boldsymbol{\sigma}\left(r\right)\right)\right\} \\
 & =-\left(q_{1}q_{2}r\nu''\left(q_{1}q_{2}r\right)+\nu'\left(q_{1}q_{2}r\right)\right)\left(1-r^{2}\right)^{1/2}\delta_{l=N-1},\\
\sqrt{N}\mbox{\ensuremath{\mathbb{E}}}\left\{ \ddq H_{N}\left(q_{1}\hat{\mathbf{n}}\right)F_{k}F_{l}H_{N}\left(q_{2}\boldsymbol{\sigma}\left(r\right)\right)\right\}  & =\sqrt{N}\mbox{\ensuremath{\mathbb{E}}}\left\{ F_{k}F_{l}H_{N}\left(q_{2}\hat{\mathbf{n}}\right)\ddq H_{N}\left(q_{1}\boldsymbol{\sigma}\left(r\right)\right)\right\} \\
 & =\left(q_{1}^{2}q_{2}r\nu'''\left(q_{1}q_{2}r\right)+2q_{1}\nu''\left(q_{1}q_{2}r\right)\right)\left(1-r^{2}\right)\delta_{l=k=N-1}\\
 & -\left(q_{1}r^{2}\nu''\left(q_{1}q_{2}r\right)+\frac{r}{q_{2}}\nu'\left(q_{1}q_{2}r\right)\right)\delta_{k=l},
\end{align*}
\begin{align*}
\mbox{\ensuremath{\mathbb{E}}}\left\{ F_{j}H_{N}\left(q_{1}\hat{\mathbf{n}}\right)F_{l}H_{N}\left(q_{2}\boldsymbol{\sigma}\left(r\right)\right)\right\}  & =\left(r\nu'\left(q_{1}q_{2}r\right)-q_{1}q_{2}\nu''\left(q_{1}q_{2}r\right)\left(1-r^{2}\right)\right)\delta_{l=j=N-1}\\
 & +\nu'\left(q_{1}q_{2}r\right)\delta_{l=j\neq N-1},\\
\sqrt{N}\mbox{\ensuremath{\mathbb{E}}}\left\{ F_{j}H_{N}\left(q_{1}\hat{\mathbf{n}}\right)F_{k}F_{l}H_{N}\left(q_{2}\boldsymbol{\sigma}\left(r\right)\right)\right\}  & =-\sqrt{N}\mbox{\ensuremath{\mathbb{E}}}\left\{ F_{k}F_{l}H_{N}\left(q_{2}\hat{\mathbf{n}}\right)F_{j}H_{N}\left(q_{1}\boldsymbol{\sigma}\left(r\right)\right)\right\} \\
 & =q_{1}^{2}q_{2}\nu'''\left(q_{1}q_{2}r\right)\left(1-r^{2}\right)^{3/2}\delta_{j=k=l=N-1}\\
 & -q_{1}\nu''\left(q_{1}q_{2}r\right)\left(1-r^{2}\right)^{1/2}\left[\delta_{j=k\neq N-1}\delta_{l=N-1}+\delta_{j=l\neq N-1}\delta_{k=N-1}+2r\delta_{j=k=l=N-1}\right]\\
 & -\left(rq_{1}\nu''\left(q_{1}q_{2}r\right)+q_{2}^{-1}\nu'\left(q_{1}q_{2}r\right)\right)\left(1-r^{2}\right)^{1/2}\delta_{k=l}\delta_{j=N-1},
\end{align*}
\begin{align*}
N\mbox{\ensuremath{\mathbb{E}}}\left\{ F_{i}F_{j}H_{N}\left(q_{1}\hat{\mathbf{n}}\right)F_{k}F_{l}H_{N}\left(q_{2}\boldsymbol{\sigma}\left(r\right)\right)\right\}  & =q_{1}^{2}q_{2}^{2}\nu''''\left(q_{1}q_{2}r\right)\left(1-r^{2}\right)^{2}\delta_{i=j=k=l=N-1}\\
 & -q_{1}q_{2}\nu'''\left(q_{1}q_{2}r\right)\left(1-r^{2}\right)\left[6r\delta_{i=j=k=l=N-1}+r\delta_{i=j\neq N-1}\delta_{k=l=N-1}\right.\\
 & +r\delta_{i=j=N-1}\delta_{k=l\neq N-1}+\delta_{i=k\neq N-1}\delta_{j=l=N-1}+\delta_{k=i=N-1}\delta_{j=l\neq N-1}\\
 & \left.+\delta_{j=k\neq N-1}\delta_{i=l=N-1}+\delta_{k=j=N-1}\delta_{i=l\neq N-1}\right]\\
 & +\nu''\left(q_{1}q_{2}r\right)\left[r^{2}\delta_{i=j}\delta_{k=l}-2\left(1-r^{2}\right)\left(\delta_{i=j=N-1}\delta_{k=l}+\delta_{i=j}\delta_{l=k=N-1}\right)\right.\\
 & +\left(\delta_{j=l\neq N-1}+r\delta_{j=l=N-1}\right)\left(\delta_{i=k\neq N-1}+r\delta_{i=k=N-1}\right)\\
 & \left.+\left(\delta_{i=l\neq N-1}+r\delta_{i=l=N-1}\right)\left(\delta_{j=k\neq N-1}+r\delta_{j=k=N-1}\right)\right]\\
 & +rq_{1}^{-1}q_{2}^{-1}\nu'\left(q_{1}q_{2}r\right)\delta_{i=j}\delta_{k=l}.
\end{align*}
\end{lem}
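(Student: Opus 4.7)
The approach is fundamentally computational: all the covariances in the lemma follow from differentiating the base covariance
\[
C(\bx,\by) := \mathbb{E}\bigl\{H_N(\bx)H_N(\by)\bigr\} = N\nu(\langle \bx,\by\rangle/N),
\]
valid for $\bx,\by$ in a neighborhood of the origin, combined with the identity that for any pair of bounded linear functionals $L_1,L_2$ built from point evaluations, directional derivatives, and mixed partials, one has $\mathbb{E}\{L_1(H_N)L_2(H_N)\} = (L_1\otimes L_2)\,C(\bx,\by)$. The task is therefore to write the spherical quantities $\grad H_N$, $\Hess H_N$, and $\ddq H_N$ as such differential operators acting on $C$, and then to apply the chain rule carefully.

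The first step is to specify the frame field. At $q_1\hat{\mathbf{n}}$, I would parametrize $\mathbb{S}^{N-1}(q_1\sqrt{N})$ by $\bx=(x_1,\dots,x_{N-1})\mapsto (x_1,\dots,x_{N-1},q_1\sqrt{N-\|\bx\|^2})$ and take $F_i$ at $q_1\hat{\mathbf{n}}$ to be the image of the standard basis under this parametrization, which automatically yields (\ref{eq:1511---}). At $q_2\bs(r)$, I would use the analogous parametrization after rotating so that the new ``north pole'' is aligned with $\bs(r)/\|\bs(r)\|$: the rotation sends $(0,\dots,0,1)\mapsto(0,\dots,0,\sqrt{1-r^2},r)$ and $(0,\dots,0,1,0)\mapsto(0,\dots,0,r,-\sqrt{1-r^2})$ while fixing the first $N-2$ coordinates. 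With this choice, at $q_2\bs(r)$ the vectors $F_i$ for $i\leq N-2$ are the standard coordinate vectors $\partial_{x_i}$, while $F_{N-1}=(0,\dots,0,r,-\sqrt{1-r^2})$. The asymmetry between the direction $F_{N-1}$ (lying in the plane spanned by $\hat{\mathbf{n}}$ and $\bs(r)$) and the remaining directions is what produces the Kronecker $\delta_{\cdot=N-1}$ factors in the answer.

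With the frame fixed, each quantity in the lemma becomes the application of a concrete differential operator to $\langle \bx,\by\rangle/N$, followed by chain-rule expansions using $\nu,\nu',\nu'',\nu''',\nu''''$. For instance, $\ddq H_N(q\bs)$ corresponds to $\|\bs\|^{-1}\sum_i \sigma_i\partial_{x_i}$ applied to one variable of $C$; $F_i H_N$ at each point corresponds to directional differentiation along the frame vector (regarded as an element of $\R^N$); and $F_iF_j H_N$ corresponds to the Euclidean second partial of $H_N\circ$ (the appropriate parametrization). The chain rule contributes curvature-type correction terms coming from the second derivatives of $q\sqrt{N-\|\bx\|^2}$ at $\bx=0$, namely $-q\,\delta_{ij}/\sqrt{N}$, which is the source of the terms like $-\tfrac{q_1}{q_2}r\nu'(q_1q_2r)\delta_{k=l}$ and $\tfrac{r}{q_1q_2}\nu'(q_1q_2r)\delta_{i=j}\delta_{k=l}$.

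The main obstacle is the bookkeeping in the Hessian-Hessian covariance, which is a fourth-order differential of $C$ and produces many cases depending on how many indices $i,j,k,l$ equal $N-1$. I would handle this by organizing the computation into three pieces: (i) the pure fourth-order term in $\nu''''$ coming from four differentiations of $\langle\bx,\by\rangle/N$; (ii) the third-order $\nu'''$ terms coming from one Christoffel-type correction; and (iii) the $\nu''$ and $\nu'$ terms from two Christoffel corrections, plus a radial contribution from the parametrization Jacobian. At each step I would carefully track which of the frame vectors $F_{N-1}$ vs.\ $F_i$ ($i<N-1$) are involved, using the facts that the $F_i$ ($i<N-1$) at $q_2\bs(r)$ are orthogonal to both $\bs(r)$ and $\hat{\mathbf{n}}$, while $F_{N-1}$ at $q_2\bs(r)$ has inner product $r$ with the unit vector $\hat{\mathbf{n}}/\sqrt{N}$ and produces a factor of $\sqrt{1-r^2}$ when paired with $\hat{\mathbf{n}}$ itself. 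All the other covariances in the lemma follow from truncations of this same computation to lower-order derivatives and are strictly easier.
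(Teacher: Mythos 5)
Your route is viable and genuinely different from the paper's: you propose to compute every entry from scratch by differentiating the kernel $\mathbb{E}\{H_N(\bx)H_N(\by)\}=N\nu(\langle\bx,\by\rangle/N)$ along a rotated graph frame, whereas the paper's proof is a reduction to the pure case, using $H_{N}(q\bs)=\sum\gamma_{p}q^{p}H_{N,p}(\bs)$, $\grad H_{N}(q\bs)=\sum\gamma_{p}q^{p-1}\grad H_{N,p}(\bs)$, $\Hess H_{N}(q\bs)=\sum\gamma_{p}q^{p-2}\Hess H_{N,p}(\bs)$ and $\ddq H_{N}(q\bs)=\frac{1}{\sqrt N}\sum\gamma_{p}pq^{p-1}H_{N,p}(\bs)$, and then summing the pure-case covariance table of \cite[Lemma 30]{2nd} over $p$. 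Your frame choice at $q_2\bs(r)$ (fixing the first $N-2$ coordinates, $F_{N-1}=(0,\dots,0,r,-\sqrt{1-r^2})$) is the right one: it produces the $\delta_{\cdot=N-1}$ structure and the stated signs, e.g. $\frac{1}{\sqrt N}\mathbb{E}\{H_N(q_1\hat{\mathbf{n}})F_{N-1}H_N(q_2\bs(r))\}=\nu'(q_1q_2r)\langle q_1\hat{\mathbf{n}},F_{N-1}\rangle/\sqrt N=-q_1\nu'(q_1q_2r)(1-r^2)^{1/2}$, and the $\nu''''$, $\nu'''$ blocks of the Hessian--Hessian entry come out as you describe.

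There is, however, a concrete error in the one step the whole computation hinges on, namely the curvature correction in the second spherical derivatives. The map $\bx\mapsto(x_1,\dots,x_{N-1},q\sqrt{N-\|\bx\|^2})$ does not take values in $\mathbb{S}^{N-1}(q\sqrt N)$ (its image has norm $\sqrt{q^2N+(1-q^2)\|\bx\|^2}$); the graph parametrization of that sphere has vertical coordinate $\sqrt{q^2N-\|\bx\|^2}$, whose second derivatives at $\bx=0$ are $-\delta_{ij}/(q\sqrt N)$, not the $-q\,\delta_{ij}/\sqrt N$ you state. Equivalently, for a radially constant orthonormal frame one must have $F_iF_jH_N(q\hat{\mathbf{n}})=\partial_i\partial_jH_N(q\hat{\mathbf{n}})-\frac{1}{q\sqrt N}\delta_{ij}\,\ddq H_N(q\hat{\mathbf{n}})$, which is exactly (\ref{eq:1507-09}). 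With your coefficient every entry containing a $\delta_{i=j}$ or $\delta_{k=l}$ comes out wrong by powers of $q_i$: for instance $\mathbb{E}\{H_N(q_1\hat{\mathbf{n}})F_kF_lH_N(q_2\bs(r))\}$ would acquire the term $-q_1q_2\,r\nu'(q_1q_2r)\delta_{k=l}$ instead of the stated $-\frac{q_1}{q_2}r\nu'(q_1q_2r)\delta_{k=l}$ (they coincide only at $q_2=1$, the pure-case reference point, which is why the slip is invisible there). A smaller slip: $\langle F_{N-1}(q_2\bs(r)),\hat{\mathbf{n}}/\sqrt N\rangle=-\sqrt{1-r^2}$, not $r$ (its overlap with the radial direction $\bs(r)/\sqrt N$ is $0$). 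Once you use the genuine sphere parametrization, so that the correction is $-\delta_{ij}/(q\sqrt N)$ (reading (\ref{eq:1511---}) accordingly), your computation does reproduce the stated table; as written, it would not.
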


\begin{proof}
The lemma follows by straightforward algebra from the relations
\begin{align*}
H_{N}\left(q\bs\right) & =\sum\gamma_{p}q^{p}H_{N,p}\left(\bs\right),\,\,\grad H_{N}\left(q\bs\right)=\sum\gamma_{p}q^{p-1}\grad H_{N,p}\left(\bs\right),\\
\Hess H_{N}\left(q\bs\right) & =\sum\gamma_{p}q^{p-2}\Hess H_{N,p}\left(\bs\right),\,\,\ddq H_{N}\left(q\bs\right)=\frac{1}{\sqrt{N}}\sum\gamma_{p}pq^{p-1}H_{N,p}\left(\bs\right),
\end{align*}
and the covariance computations of \cite[Lemma 30]{2nd} which dealt
with the pure case.
\end{proof}
For any $r\in\left(-1,1\right)$ and $q_{1},q_{2}\in(0,1]$ define
\[
\begin{array}{ll}
a_{1}\left(r,q_{1},q_{2}\right)=\frac{\nu'(q_{2}^{2})}{\nu'(q_{1}^{2})\nu'(q_{2}^{2})-\left(\nu'(q_{1}q_{2}r)\right)^{2}}, & a_{2}\left(r,q_{1},q_{2}\right)=\frac{\nu'(q_{2}^{2})}{\nu'(q_{1}^{2})\nu'(q_{2}^{2})-\left(r\nu'(q_{1}q_{2}r)-q_{1}q_{2}\nu''(q_{1}q_{2}r)(1-r^{2})\right)^{2}},\\
a_{3}\left(r,q_{1},q_{2}\right)=\frac{-\nu'(q_{1}q_{2}r)}{\nu'(q_{1}^{2})\nu'(q_{2}^{2})-\left(\nu'(q_{1}q_{2}r)\right)^{2}}, & a_{4}\left(r,q_{1},q_{2}\right)=\frac{-\left(r\nu'(q_{1}q_{2}r)-q_{1}q_{2}\nu''(q_{1}q_{2}r)(1-r^{2})\right)}{\nu'(q_{1}^{2})\nu'(q_{2}^{2})-\left(r\nu'(q_{1}q_{2}r)-q_{1}q_{2}\nu''(q_{1}q_{2}r)(1-r^{2})\right)^{2}},\\
\upsilon_{1}\left(r,q_{1},q_{2}\right)=q_{1}q_{2}r\nu''\left(q_{1}q_{2}r\right)+\nu'\left(q_{1}q_{2}r\right), & \upsilon_{2}\left(r,q_{1},q_{2}\right)=-q_{1}q_{2}^{2}\nu'''\left(q_{1}q_{2}r\right)\left(1-r^{2}\right)+2rq_{2}\nu''\left(q_{1}q_{2}r\right),\\
\upsilon_{3}\left(r,q_{1},q_{2}\right)=q_{1}^{2}q_{2}r\nu'''\left(q_{1}q_{2}r\right)+2q_{1}\nu''\left(q_{1}q_{2}r\right),\\
b_{1}\left(r\right)=-\nu'\left(1\right) +a_{2}\left(r\right)\left(1-r^{2}\right)\nu'\left(r\right)\upsilon_{1}\left(r\right),
& b_{2}\left(r\right)=-r\nu'\left(r\right)-a_{4}\left(r\right)\left(1-r^{2}\right)\nu'\left(r\right)\upsilon_{1}\left(r\right),\\
b_{3}\left(r\right)=a_{2}\left(r\right)\left(1-r^{2}\right)\nu'\left(r\right)\upsilon_{2}\left(r\right), 
& b_{4}\left(r\right)=\nu''\left(r\right)\left(1-r^{2}\right)
-a_{4}\left(r\right)\left(1-r^{2}\right)\nu'\left(r\right)\upsilon_{2}\left(r\right)
\end{array}
\]

For any of $T=U,\,X,\,b,\,Z,\,Q$, define the matrix $\Sigma_{T}\left(r,q_{1},q_{2}\right)=\left(\Sigma_{T,ij}\left(r,q_{1},q_{2}\right)\right)_{i,j=1}^{2,2}$
by the following
\begin{align}
\Sigma_{U,11}\left(r,q_{1},q_{2}\right) & =\Sigma_{U,22}\left(r,q_{2},q_{1}\right)=\nu(q_{1}^{2})-q_{1}^{2}a_{2}\left(r,q_{2},q_{1}\right)\left(\nu'(q_{1}q_{2}r)\right)^{2}\left(1-r^{2}\right),\label{eq:26}\\
\Sigma_{U,12}\left(r,q_{1},q_{2}\right) & =\Sigma_{U,21}\left(r,q_{1},q_{2}\right)=\nu\left(q_{1}q_{2}r\right)+q_{1}q_{2}a_{4}\left(r,q_{1},q_{2}\right)\left(\nu'(q_{1}q_{2}r)\right)^{2}\left(1-r^{2}\right),\nonumber 
\end{align}
\begin{align}
  \label{eq:SigmaX}
 & \Sigma_{X,11}\left(r,q_{1},q_{2}\right)=\Sigma_{X,22}\left(r,q_{2},q_{1}\right)=q_{1}^{2}\nu''\left(q_{1}^{2}\right)+\nu'\left(q_{1}^{2}\right)-\left(\upsilon_{1}\left(r,q_{1},q_{2}\right)\right)^{2}\left(1-r^{2}\right)a_{2}\left(r,q_{2},q_{1}\right), \\
 & \Sigma_{X,12}\left(r,q_{2},q_{1}\right)=\Sigma_{X,21}\left(r,q_{1},q_{2}\right)
 =q_{1}q_{2}r^{2}\nu''\left(q_{1}q_{2}r\right)+r\nu'\left(q_{1}q_{2}r\right)+\left(\upsilon_{1}\left(r,q_{1},q_{2}\right)\right)^{2}\left(1-r^{2}\right)a_{4}\left(r,q_{1},q_{2}\right), \nonumber
\end{align}
\begin{align}
\Sigma_{b,11}\left(r,q_{1},q_{2}\right) & =\Sigma_{b,22}\left(r,q_{2},q_{1}\right)=q_{1}\nu'\left(q_{1}^{2}\right)-q_{1}\left(1-r^{2}\right)\nu'\left(q_{1}q_{2}r\right)\upsilon_{1}\left(r,q_{1},q_{2}\right)a_{2}\left(r,q_{2},q_{1}\right),
 \label{eq:Sigmab}\\
\Sigma_{b,12}\left(r,q_{1},q_{2}\right) & =\Sigma_{b,21}\left(r,q_{2},q_{1}\right)
  =q_{1}r\nu'\left(q_{1}q_{2}r\right)+q_{1}\left(1-r^{2}\right)\nu'\left(q_{1}q_{2}r\right)\upsilon_{1}\left(r,q_{1},q_{2}\right)a_{4}\left(r,q_{1},q_{2}\right),\nonumber 
\end{align}
\begin{align}
\Sigma_{Z,11}\left(r,q_{1},q_{2}\right) & =\Sigma_{Z,22}\left(r,q_{2},q_{1}\right)=1-\frac{1}{\nu''(q_{1}^{2})}q_{2}^{2}\left(1-r^{2}\right)\left(\nu''\left(q_{1}q_{2}r\right)\right)^{2}a_{1}(r,q_{2},q_{1}),\label{eq:SigmaZ}\\
\Sigma_{Z,12}\left(r,q_{1},q_{2}\right) & =\Sigma_{Z,21}\left(r,q_{1},q_{2}\right)=-\frac{1}{\sqrt{\nu''(q_{1}^{2})\nu''(q_{2}^{2})}}\left[q_{1}q_{2}\left(1-r^{2}\right)\nu'''\left(q_{1}q_{2}r\right)+r\nu''\left(q_{1}q_{2}r\right)\right.
\nonumber \\
 & \left.\hspace{2.5in}+q_{1}q_{2}\left(1-r^{2}\right)\left(\nu''\left(q_{1}q_{2}r\right)\right)^{2}a_{3}\left(r,q_{1},q_{2}\right)\right],\nonumber 
\end{align}
and 
\begin{align}
\Sigma_{Q,11}\left(r,q_{1},q_{2}\right) & =2-\frac{\left(1-r^{2}\right)}{\nu''(q_{1}^{2})}a_{2}\left(r,q_{2},q_{1}\right)\left(\upsilon_{2}\left(r,q_{1},q_{2}\right)\right)^{2}
-\left(1-r^{2}\right)^{2}\left(\boldsymbol{\varsigma}_{1}\left(r,q_{1},q_{2}\right)\right)^{T}\Sigma_{U,X}^{-1}\left(r,q_{1},q_{2}\right)\boldsymbol{\varsigma}_{1}\left(r,q_{1},q_{2}\right),\label{eq:SigmaQ} \\
\Sigma_{Q,22}\left(r,q_{1},q_{2}\right) & =2-\frac{\left(1-r^{2}\right)}{\nu''(q_{2}^{2})}a_{2}\left(r,q_{1},q_{2}\right)\left(\upsilon_{2}\left(r,q_{2},q_{1}\right)\right)^{2}
-\left(1-r^{2}\right)^{2}\left(\boldsymbol{\varsigma}_{2}\left(r,q_{1},q_{2}\right)\right)^{T}\Sigma_{U,X}^{-1}\left(r,q_{1},q_{2}\right)\boldsymbol{\varsigma}_{2}\left(r,q_{1},q_{2}\right),\nonumber\\
\Sigma_{Q,12}\left(r,q_{1},q_{2}\right) & =\Sigma_{Q,21}\left(r,q_{1},q_{2}\right)=\frac{1}{\sqrt{\nu''(q_{1}^{2})\nu''(q_{2}^{2})}}
\Big[q_{1}q_{2}\nu''''\left(q_{1}q_{2}r\right)\left(1-r^{2}\right)^{2}-2q_{1}q_{2}\left(1-r^{2}\right)r\nu'''\left(q_{1}q_{2}r\right)\nonumber \\
&\hspace{1.9in} +2r^{2}\left(1-r^{2}\right)\nu''\left(q_{1}q_{2}r\right)+\upsilon_{2}\left(r,q_{1},q_{2}\right)\upsilon_{2}\left(r,q_{2},q_{1}\right)a_{4}\left(r,q_{1},q_{2}\right)\Big]\nonumber \\
& \hspace{1.3in}
-\left(1-r^{2}\right)^{2}\left(\boldsymbol{\varsigma}_{1}\left(r,q_{1},q_{2}\right)\right)^{T}\Sigma_{U,X}^{-1}\left(r,q_{1},q_{2}\right)\boldsymbol{\varsigma}_{2}\left(r,q_{1},q_{2}\right),\nonumber 
\end{align}
where
\begin{equation}
\boldsymbol{\varsigma}_{1}\left(r,q_{1},q_{2}\right)=\frac{1}{\sqrt{\nu''(q_{1}^{2})}}\left(\begin{array}{c}
q_{1}\nu'\left(q_{1}q_{2}r\right)\upsilon_{2}\left(r,q_{1},q_{2}\right)a_{2}\left(r,q_{2},q_{1}\right)\\
q_{2}^{2}\nu''\left(q_{1}q_{2}r\right)-q_{2}\nu'\left(q_{1}q_{2}r\right)\upsilon_{2}\left(r,q_{1},q_{2}\right)a_{4}\left(r,q_{1},q_{2}\right)\\
\upsilon_{1}\left(r,q_{1},q_{2}\right)\upsilon_{2}\left(r,q_{1},q_{2}\right)a_{2}\left(r,q_{2},q_{1}\right)\\
\upsilon_{3}\left(r,q_{2},q_{1}\right)-\upsilon_{1}\left(r,q_{1},q_{2}\right)\upsilon_{2}\left(r,q_{1},q_{2}\right)a_{4}\left(r,q_{1},q_{2}\right)
\end{array}\right),\label{eq:varsigma}
\end{equation}
\[
\boldsymbol{\varsigma}_{2}\left(r,q_{1},q_{2}\right)=\frac{1}{\sqrt{\nu''(q_{2}^{2})}}\left(\begin{array}{c}
q_{1}^{2}\nu''\left(q_{1}q_{2}r\right)-q_{1}\nu'\left(q_{1}q_{2}r\right)\upsilon_{2}\left(r,q_{2},q_{1}\right)a_{4}\left(r,q_{1},q_{2}\right)\\
q_{2}\nu'\left(q_{1}q_{2}r\right)\upsilon_{2}\left(r,q_{2},q_{1}\right)a_{2}\left(r,q_{1},q_{2}\right)\\
\upsilon_{3}\left(r,q_{1},q_{2}\right)-\upsilon_{1}\left(r,q_{1},q_{2}\right)\upsilon_{2}\left(r,q_{2},q_{1}\right)a_{4}\left(r,q_{1},q_{2}\right)\\
\upsilon_{1}\left(r,q_{1},q_{2}\right)\upsilon_{2}\left(r,q_{2},q_{1}\right)\left(1-r^{2}\right)a_{2}\left(r,q_{1},q_{2}\right)
\end{array}\right),
\]
\begin{equation}
\Sigma_{U,X}\left(r,q_{1},q_{2}\right)=\left(\begin{array}{cc}
\Sigma_{U}\left(r,q_{1},q_{2}\right) & \Sigma_{b}\left(r,q_{1},q_{2}\right)\\
\Sigma_{b}^{T}\left(r,q_{1},q_{2}\right) & \Sigma_{X}\left(r,q_{1},q_{2}\right)
\end{array}\right).\label{eq:SigmaUXbar}
\end{equation}
\begin{rem}
\label{rem:a}For $r\in(-1,1)$ and $q_{1},q_{2}\in(0,1]$, we note
that 
\begin{align*}
\nu'(q_{1}^{2})\nu'(q_{2}^{2}) & =\sum_{p\geq m}\frac{\gamma_{p}^{2}\gamma_{m}^{2}pm}{1+\delta_{m=p}}\left(q_{1}^{2(p-1)}q_{2}^{2(m-1)}+q_{1}^{2(m-1)}q_{2}^{2(p-1)}\right),\\
\nu'(q_{1}q_{2}r)^{2} & =\sum_{p\geq m}\frac{\gamma_{p}^{2}\gamma_{m}^{2}pm}{1+\delta_{m=p}}2(q_{1}q_{2}r)^{p-1+m-1},
\end{align*}
and thus by comparing summand by summand, $\nu'(q_{1}^{2})\nu'(q_{2}^{2})>\nu'(q_{1}q_{2}r)^{2}$;
and that
\begin{equation}
  r\nu'(q_{1}q_{2}r)-q_{1}q_{2}\nu''(q_{1}q_{2}r)(1-r^{2})\label{eq:0902-01}\\
  =\sum_{p,m}\gamma_{p}^{2}\gamma_{m}^{2}pm\left(pr^{p}-(p-1)r^{p-2}\right)\left(mr^{m}-(m-1)r^{m-2}\right), 
\end{equation}
which, since $|pr^{p}-(p-1)r^{p-2}|<1$ for any $p\geq2$, similarly
implies that (\ref{eq:0902-01}) is strictly smaller, in absolute
value, than $\nu'(q_{1}^{2})\nu'(q_{2}^{2})$. Therefore, the denominators
in the definitions of $a_{i}\left(r,q_{1},q_{2}\right)$ are positive
for any $r\in\left(-1,1\right)$.
\end{rem}
%

\begin{proof}[Proof of Lemmas \ref{lem:dens}, \ref{lem:condHamiltonian}
and \ref{lem:Hess_struct_2}]
The proof closely follows that of \cite[Lemmas 12 and 13]{2nd}.
Fix $r\in\left(-1,1\right)$ and $q_{1},q_{2}\in(0,1]$. Assume all
vectors in the proof are column vectors and denote the concatenation
of any two vectors $v_{1}$, $v_{2}$ by $\left(v_{1};v_{2}\right)$.
The covariance matrix of the vector $\left(\grad H_{N}\left(q_{1}\hat{\mathbf{n}}\right);\grad H_{N}\left(q_{2}\boldsymbol{\sigma}\left(r\right)\right)\right)$
can be extracted from Lemma \ref{lem:cov}. By computation, one has
that (\ref{eq:grad_dens}) holds and that the inverse of the latter
covariance matrix is the block matrix
\[
\mathbf{A}_{0}\left(r\right)=\left(\begin{array}{cccc}
a_{1}\left(r,q_{1},q_{2}\right)\mathbf{I}_{N-1} & 0 & a_{3}\left(r,q_{1},q_{2}\right)\mathbf{I}_{N-1} & 0\\
0 & a_{2}\left(r,q_{1},q_{2}\right) & 0 & a_{4}\left(r,q_{1},q_{2}\right)\\
a_{3}\left(r,q_{1},q_{2}\right)\mathbf{I}_{N-1} & 0 & a_{1}\left(r,q_{2},q_{1}\right)\mathbf{I}_{N-1} & 0\\
0 & a_{4}\left(r,q_{1},q_{2}\right) & 0 & a_{2}\left(r,q_{2},q_{1}\right)
\end{array}\right),
\]
where $\mathbf{I}_{N-1}$ is the $N-1\times N-1$ identity matrix.

For any random vector $V$ let $\mathbb{E}V$ denote the corresponding
vector of expectations. From Lemma \ref{lem:cov}, denoting by $e_{i}$
the $1\times\left(2N-2\right)$ vector with the $i$-th entry equal
to $1$ and all others equal to $0$, we obtain
\begin{align*}
\frac{1}{\sqrt{N}}\mbox{\ensuremath{\mathbb{E}}}\left\{ H_{N}\left(q_{1}\hat{\mathbf{n}}\right)\cdot\left(\grad H_{N}\left(q_{1}\hat{\mathbf{n}}\right);\grad H_{N}\left(q_{2}\boldsymbol{\sigma}\left(r\right)\right)\right)\right\}  & =-q_{1}\nu'\left(q_{1}q_{2}r\right)\left(1-r^{2}\right)^{1/2}e_{2N-2},\\
\frac{1}{\sqrt{N}}\mbox{\ensuremath{\mathbb{E}}}\left\{ H_{N}\left(q_{2}\boldsymbol{\sigma}\left(r\right)\right)\cdot\left(\grad H_{N}\left(q_{1}\hat{\mathbf{n}}\right);\grad H_{N}\left(q_{2}\boldsymbol{\sigma}\left(r\right)\right)\right)\right\}  & =q_{2}\nu'\left(q_{1}q_{2}r\right)\left(1-r^{2}\right)^{1/2}e_{N-1},
\end{align*}
\begin{align*}
 & \mbox{\ensuremath{\mathbb{E}}}\left\{ \ddq H_{N}\left(q_{1}\hat{\mathbf{n}}\right)\cdot\left(\grad H_{N}\left(q_{1}\hat{\mathbf{n}}\right);\grad H_{N}\left(q_{2}\boldsymbol{\sigma}\left(r\right)\right)\right)\right\} 
 =-\left(q_{1}q_{2}r\nu''\left(q_{1}q_{2}r\right)+\nu'\left(q_{1}q_{2}r\right)\right)\left(1-r^{2}\right)^{1/2}e_{2N-2},\\
 & \mbox{\ensuremath{\mathbb{E}}}\left\{ \ddq H_{N}\left(q_{2}\boldsymbol{\sigma}\left(r\right)\right)\cdot\left(\grad H_{N}\left(q_{1}\hat{\mathbf{n}}\right);\grad H_{N}\left(q_{2}\boldsymbol{\sigma}\left(r\right)\right)\right)\right\} 
 =\left(q_{1}q_{2}r\nu''\left(q_{1}q_{2}r\right)+\nu'\left(q_{1}q_{2}r\right)\right)\left(1-r^{2}\right)^{1/2}e_{N-1},
\end{align*}
\begin{align*}
 & \sqrt{N}\mbox{\ensuremath{\mathbb{E}}}\left\{ E_{i}E_{j}H_{N}\left(q_{1}\hat{\mathbf{n}}\right)\cdot\left(\grad H_{N}\left(q_{1}\hat{\mathbf{n}}\right);\grad H_{N}\left(q_{2}\boldsymbol{\sigma}\left(r\right)\right)\right)\right\} \\
 & \quad=\begin{cases}
0 & ,\left|\left\{ i,j,N-1\right\} \right|=3\\
\left(rq_{2}\nu''\left(q_{1}q_{2}r\right)+q_{1}^{-1}\nu'\left(q_{1}q_{2}r\right)\right)\left(1-r^{2}\right)^{1/2}e_{2N-2} & ,i=j\neq N-1\\
q_{2}\nu''\left(q_{1}q_{2}r\right)\left(1-r^{2}\right)^{1/2}e_{N-1+i} & ,i\neq j=N-1\\
q_{2}\nu''\left(q_{1}q_{2}r\right)\left(1-r^{2}\right)^{1/2}e_{N-1+j} & ,j\neq i=N-1\\
\left(-q_{1}q_{2}^{2}\nu'''\left(q_{1}q_{2}r\right)\left(1-r^{2}\right)+3rq_{2}\nu''\left(q_{1}q_{2}r\right)+q_{1}^{-1}\nu'\left(q_{1}q_{2}r\right)\right)\left(1-r^{2}\right)^{1/2}e_{2N-2} & ,i=j=N-1.
\end{cases}\\
 & \sqrt{N}\mbox{\ensuremath{\mathbb{E}}}\left\{ E_{i}E_{j}H_{N}\left(q_{2}\boldsymbol{\sigma}\left(r\right)\right)\cdot\left(\grad H_{N}\left(q_{1}\hat{\mathbf{n}}\right);\grad H_{N}\left(q_{2}\boldsymbol{\sigma}\left(r\right)\right)\right)\right\} \\
 & \quad=\begin{cases}
0 & ,\left|\left\{ i,j,N-1\right\} \right|=3\\
-\left(rq_{1}\nu''\left(q_{1}q_{2}r\right)+q_{2}^{-1}\nu'\left(q_{1}q_{2}r\right)\right)\left(1-r^{2}\right)^{1/2}e_{N-1} & ,i=j\neq N-1\\
-q_{1}\nu''\left(q_{1}q_{2}r\right)\left(1-r^{2}\right)^{1/2}e_{i} & ,i\neq j=N-1\\
-q_{1}\nu''\left(q_{1}q_{2}r\right)\left(1-r^{2}\right)^{1/2}e_{j} & ,j\neq i=N-1\\
-\left(-q_{1}^{2}q_{2}\nu'''\left(q_{1}q_{2}r\right)\left(1-r^{2}\right)+3rq_{1}\nu''\left(q_{1}q_{2}r\right)+q_{2}^{-1}\nu'\left(q_{1}q_{2}r\right)\right)\left(1-r^{2}\right)^{1/2}e_{N-1} & ,i=j=N-1.
\end{cases}
\end{align*}

Denoting by $\mbox{Cov}_{\nabla}\left\{ X,Y\right\} $ the covariance
of two random variables $X$, $Y$ conditional on $\grad H_{N}\left(q_{1}\hat{\mathbf{n}}\right),\,\grad H_{N}\left(q_{2}\boldsymbol{\sigma}\left(r\right)\right)$
(and the covariance with no conditioning by $\mbox{Cov}\left\{ X,Y\right\} $),
we have (cf. \cite[p. 10-11]{RFG})
\begin{align*}
\mbox{Cov}_{\nabla}\left\{ X,Y\right\}  & =\mbox{Cov}\left\{ X,Y\right\} \\
&\;\;-\left(\mbox{\ensuremath{\mathbb{E}}}\left\{ X\cdot\left(\grad H_{N}\left(q_{1}\hat{\mathbf{n}}\right);\grad H_{N}\left(q_{2}\boldsymbol{\sigma}\left(r\right)\right)\right)\right\} \right)^{T}
  \mathbf{A}_{0}\left(r\right)\mbox{\ensuremath{\mathbb{E}}}\left\{ Y\cdot\left(\grad H_{N}\left(q_{1}\hat{\mathbf{n}}\right);\grad H_{N}\left(q_{2}\boldsymbol{\sigma}\left(r\right)\right)\right)\right\} .
\end{align*}

Using this formula and the above by straightforward algebra one finds
that conditional on $\grad H_{N}\left(q_{1}\hat{\mathbf{n}}\right)=\grad H_{N}\left(q_{2}\boldsymbol{\sigma}\left(r\right)\right)=0$,
the random vector (\ref{eq:0702-01}) is a centered Gaussian vector
with covariance matrix $\Sigma_{U,X}\left(r,q_{1},q_{2}\right)$,
completing the proof of Lemma \ref{lem:condHamiltonian}. Moreover,
defining the random matrices
\begin{align*}
\mathbf{K}^{(1)} & =\frac{1}{\sqrt{\nu''(q_{1}^{2})}}\left(\Hess H_{N}\left(q_{1}\hat{\mathbf{n}}\right)+\frac{1}{\sqrt{N}q_{1}}\ddq H_{N}\left(q_{1}\hat{\mathbf{n}}\right)\mathbf{I}\right),\\
\mathbf{K}^{(2)} & =\frac{1}{\sqrt{\nu''(q_{2}^{2})}}\left(\Hess H_{N}\left(q_{2}\boldsymbol{\sigma}\left(r\right)\right)+\frac{1}{\sqrt{N}q_{2}}\ddq H_{N}\left(q_{2}\boldsymbol{\sigma}\left(r\right)\right)\mathbf{I}\right),
\end{align*}
we have that
\begin{equation}
\left(\begin{array}{c}
\mbox{Cov}_{\nabla}\left\{ \mathbf{K}_{ij}^{(1)},H_{N}\left(q_{1}\hat{\mathbf{n}}\right)\right\} \\
\mbox{Cov}_{\nabla}\left\{ \mathbf{K}_{ij}^{(1)},H_{N}\left(q_{2}\boldsymbol{\sigma}\left(r\right)\right)\right\} \\
\mbox{Cov}_{\nabla}\left\{ \mathbf{K}_{ij}^{(1)},\sqrt{N}\ddq H_{N}\left(q_{1}\hat{\mathbf{n}}\right)\right\} \\
\mbox{Cov}_{\nabla}\left\{ \mathbf{K}_{ij}^{(1)},\sqrt{N}\ddq H_{N}\left(q_{2}\boldsymbol{\sigma}\left(r\right)\right)\right\} 
\end{array}\right)\text{\quad and\quad}\left(\begin{array}{c}
\mbox{Cov}_{\nabla}\left\{ \mathbf{K}_{ij}^{(2)},H_{N}\left(q_{1}\hat{\mathbf{n}}\right)\right\} \\
\mbox{Cov}_{\nabla}\left\{ \mathbf{K}_{ij}^{(2)},H_{N}\left(q_{2}\boldsymbol{\sigma}\left(r\right)\right)\right\} \\
\mbox{Cov}_{\nabla}\left\{ \mathbf{K}_{ij}^{(2)},\sqrt{N}\ddq H_{N}\left(q_{1}\hat{\mathbf{n}}\right)\right\} \\
\mbox{Cov}_{\nabla}\left\{ \mathbf{K}_{ij}^{(2)},\sqrt{N}\ddq H_{N}\left(q_{2}\boldsymbol{\sigma}\left(r\right)\right)\right\} 
\end{array}\right)\label{eq:0702-02}
\end{equation}
are equal to $\delta_{i=j=N-1}\left(1-r^{2}\right)\boldsymbol{\varsigma}_{1}\left(r,q_{1},q_{2}\right)$
and $\delta_{i=j=N-1}\left(1-r^{2}\right)\boldsymbol{\varsigma}_{2}\left(r,q_{1},q_{2}\right)$,
respectively, and that
\begin{align}
 & N\mbox{Cov}_{\nabla}\left\{ \mathbf{K}_{ij}^{(1)},\mathbf{K}_{kl}^{(1)}\right\} \label{eq:cov1}\\
 & =\begin{cases}
2-\delta_{i=N-1}\frac{1}{\nu''(q_{1}^{2})}\left(1-r^{2}\right)a_{2}\left(r,q_{2},q_{1}\right)\left(\upsilon_{2}\left(r,q_{1},q_{2}\right)\right)^{2} & ,\,i=j=k=l,\\
1 & ,\,N-1\notin\left\{ i,j\right\} =\left\{ k,l\right\} ,i\neq j,\\
\Sigma_{Z,11}\left(r,q_{1},q_{2}\right) & ,\,N-1\in\left\{ i,j\right\} =\left\{ k,l\right\} ,i\neq j,\\
0 & ,\,\mbox{otherwise},
\end{cases}\nonumber 
\end{align}
\begin{align*}
 & N\mbox{Cov}_{\nabla}\left\{ \mathbf{K}_{ij}^{(2)},\mathbf{K}_{kl}^{(2)}\right\} \\
 & =\begin{cases}
2-\delta_{i=N-1}\frac{1}{\nu''(q_{2}^{2})}\left(1-r^{2}\right)a_{2}\left(r,q_{1},q_{2}\right)\left(\upsilon_{2}\left(r,q_{2},q_{1}\right)\right)^{2} & ,\,i=j=k=l,\\
1 & ,\,N-1\notin\left\{ i,j\right\} =\left\{ k,l\right\} ,i\neq j,\\
\Sigma_{Z,22}\left(r,q_{1},q_{2}\right) & ,\,N-1\in\left\{ i,j\right\} =\left\{ k,l\right\} ,i\neq j,\\
0 & ,\,\mbox{otherwise},
\end{cases}
\end{align*}
\begin{eqnarray*}
  N\mbox{Cov}_{\nabla}\left\{ \mathbf{K}_{ii}^{(1)},\mathbf{K}_{jj}^{(2)}\right\}
 & =&\begin{cases}
0 & ,\,i\neq j\\
2\frac{\nu''\left(q_{1}q_{2}r\right)}{\sqrt{\nu''(q_{1}^{2})\nu''(q_{2}^{2})}} & ,\,i=j\neq N-1\\
\frac{1}{\sqrt{\nu''(q_{1}^{2})\nu''(q_{2}^{2})}}\left[q_{1}q_{2}\nu''''\left(q_{1}q_{2}r\right)\left(1-r^{2}\right)^{2}\right.\\
-2q_{1}q_{2}\nu'''\left(q_{1}q_{2}r\right)\left(1-r^{2}\right)r+2r^{2}\nu''\left(q_{1}q_{2}r\right)\\
\left.+\upsilon_{2}\left(r,q_{1},q_{2}\right)\upsilon_{2}\left(r,q_{2},q_{1}\right)a_{4}\left(r,q_{1},q_{2}\right)\left(1-r^{2}\right)\right] & ,\,i=j=N-1,
\end{cases}\\
  N\mbox{Cov}_{\nabla}\left\{ \mathbf{K}_{ij}^{(1)},\mathbf{K}_{ij}^{(2)}\right\} 
  & =&\begin{cases}
\frac{\nu''\left(q_{1}q_{2}r\right)}{\sqrt{\nu''(q_{1}^{2})\nu''(q_{2}^{2})}} & ,\,\left|\left\{ i,j,N-1\right\} \right|=3\\
\Sigma_{Z,12}\left(r,q_{1},q_{2}\right) & ,\,\left|\left\{ i,j,N-1\right\} \right|=2,\,i\neq j,
\end{cases}\\
N\mbox{Cov}_{\nabla}\left\{ \mathbf{K}_{ij}^{(1)},\mathbf{K}_{kl}^{(2)}\right\} &=&
0,\mbox{ \,\,\ if }\exists m:\,\sum_{t\in\left\{ i,j,k,l\right\} }\mathbf{1}\left\{ t=m\right\} =1.
\end{eqnarray*}
Let $\mbox{Cov}_{H_{N},\ddq,\nabla}\left\{ X,Y\right\} $ and $\mathbb{E}_{H_{N},\ddq,\nabla}\left\{ X\right\} $
denote the covariance of two random variables $X$, $Y$ and expectation
of $X$, respectively, conditional on 
\begin{equation}
\grad H_{N}\left(\bs\right)=0,\,H_{N}\left(\bs\right),\,\ddq H_{N}\left(\boldsymbol{\sigma}\right),\quad\bs\in\{q_{1}\hat{\mathbf{n}},\,q_{2}\bs(r)\}.\label{eq:cond}
\end{equation}

Since the covariances (\ref{eq:0702-02}) are non-zero only if $i=j=N-1$,
unless $i=j=k=l=N-1$, for any $\kappa,\,\kappa'\in\{1,2\}$,
\[
\mathbb{E}_{H_{N},\ddq,\nabla}\left\{ \mathbf{K}_{ij}^{(\kappa)}\right\} =0,\quad\mbox{Cov}_{H_{N},\ddq,\nabla}\left\{ \mathbf{K}_{ij}^{(\kappa)},\mathbf{K}_{kl}^{(\kappa')}\right\} =\mbox{Cov}_{\nabla}\left\{ \mathbf{K}_{ij}^{(\kappa)},\mathbf{K}_{kl}^{(\kappa')}\right\} .
\]
Lastly, for $\kappa,\,\kappa'\in\{1,2\}$,
\begin{align*}
 & N\mbox{Cov}_{H_{N},\ddq,\nabla}\left\{ \mathbf{K}_{N-1,N-1}^{(\kappa)},\mathbf{K}_{N-1,N-1}^{(\kappa')}\right\} =\Sigma_{Q,\kappa\kappa'}\left(r,q_{1},q_{2}\right)=\\
 & \quad N\mbox{Cov}_{\nabla}\left\{ \mathbf{K}_{N-1,N-1}^{(\kappa)},\mathbf{K}_{N-1,N-1}^{(\kappa')}\right\} -\left(1-r^{2}\right)^{2}\left(\boldsymbol{\varsigma}_{\kappa}\left(r,q_{1},q_{2}\right)\right)^{T}\Sigma_{U,X}^{-1}\left(r,q_{1},q_{2}\right)\boldsymbol{\varsigma}_{\kappa'}\left(r,q_{1},q_{2}\right),\\
 & \mathbb{E}_{H_{N},\ddq,\nabla}
 \left\{ \mathbf{K}_{N-1,N-1}^{(\kappa)}\right\}\\
 &\quad=\left(1-r^{2}\right)
 \frac{1}{N}\left(H_{N}\left(q_{1}\hat{\mathbf{n}}\right),H_{N}\left(q_{2}\boldsymbol{\sigma}\left(r\right)\right),\sqrt{N}\ddq H_{N}\left(q_{1}\hat{\mathbf{n}}\right),\sqrt{N}\ddq H_{N}\left(q_{2}\boldsymbol{\sigma}\left(r\right)\right)\right)\Sigma_{U,X}^{-1}\left(r,q_{1},q_{2}\right)\boldsymbol{\varsigma}_{\kappa}\left(r,q_{1},q_{2}\right).
\end{align*}
\end{proof}
\section{Auxiliary results}

\subsection*{The Kac-Rice formula}

Our analysis uses the two auxiliary lemmas below, based on a variant
of the Kac-Rice formula \cite[Theorem 12.1.1]{RFG}. In the notation
of \cite[Theorem 12.1.1]{RFG} we are interested in situations where,
with some random function $g_{N}\left(\boldsymbol{\sigma}\right)$,
\begin{align*}
 & M=\mathbb{S}^{N-1}(\sqrt{N}q),\,\,\,f\left(\boldsymbol{\sigma}\right)=\grad H_{N}\left(\boldsymbol{\sigma}\right),\,\,\,u=0\in\mathbb{R}^{N-1},\\
 & h\left(\boldsymbol{\sigma}\right)=\left(H_{N}\left(\boldsymbol{\sigma}\right),\,\ddq H_{N}\left(\bs\right),\,g_{N}\left(\boldsymbol{\sigma}\right)\right).
\end{align*}
The application of \cite[Theorem 12.1.1]{RFG} requires $h\left(\boldsymbol{\sigma}\right)$
to satisfy certain non-degeneracy conditions - namely, conditions
(a)-(g) in \cite[Theorem 12.1.1]{RFG}. We will say that $g_{N}\left(\boldsymbol{\sigma}\right)$
is tame if the conditions are satisfied and if $\{h\left(\boldsymbol{\sigma}\right)\}_{\boldsymbol{\sigma}}$
is a stationary random field. From (\ref{eq:Hamiltonian}), the conditions
are easy to check in any case we will apply the formula and this will
be left to the reader.
\begin{lem}
\label{lem:15}Let $B$,$D$ and $L$ be some intervals and let $\varphi:\mathbb{R}^{2}\to\mathbb{R}$
be a continuous function. Assume that $H_{N}(\bs)$ is a mixed model
such that
\begin{equation}
\limsup_{N\to\infty}\sup_{\substack{u\in B\\
x\in D
}
}\left\{ \frac{1}{N}\log\left(\mathbb{P}_{Nu,\sqrt{N}x}^{q}\left\{ g_{N}\left(\hat{\mathbf{n}}\right)\in NL\right\} \right)-\varphi\left(u,x\right)\right\} \leq0.\label{eq:1224-1}
\end{equation}
Then 
\begin{equation}
\limsup_{N\to\infty}\frac{1}{N}\log\left(\mathbb{E}\left|\left\{ \boldsymbol{\sigma}_{0}\in\mathscr{C}_{N,q}(NB,\sqrt{N}D):\,g_{N}\left(\boldsymbol{\sigma}_{0}\right)\in NL\right\} \right|\right)\leq\sup_{\substack{u\in B\\
x\in D
}
}\left\{ \Theta_{\nu,q}\left(u,x\right)+\varphi\left(u,x\right)\right\} .\label{eq:71}
\end{equation}
\end{lem}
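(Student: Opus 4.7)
The plan is to mirror the proof of Theorem \ref{thm:1stmoment}, inserting the indicator $\mathbf{1}_{\{g_{N}(\boldsymbol{\sigma}_{0})\in NL\}}$ into the Kac-Rice formula and separating it from the determinant using H\"older's inequality at the final step.

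By the Kac-Rice formula \cite[Theorem 12.1.1]{RFG} and stationarity of the field $(H_{N},\ddq H_{N},g_{N})$ on $\mathbb{S}^{N-1}(\sqrt{N}q)$, the expectation on the left-hand side of \eqref{eq:71} equals $q^{N-1}\omega_{N}\varphi_{\grad H_{N}(q\hat{\mathbf{n}})}(0)$ times the conditional expectation of $|\det\Hess H_{N}(q\hat{\mathbf{n}})|$ multiplied by the three indicators evaluated at $\boldsymbol{\sigma}=q\hat{\mathbf{n}}$, given $\grad H_{N}(q\hat{\mathbf{n}})=0$. Using the independence structure from Lemma \ref{lem:cov} (with $r=1$) exactly as in the derivation of (\ref{eq:1102-01}) in the proof of Theorem \ref{thm:1stmoment}, this rewrites, up to the prefactor $e^{N/2+o(N)}(\nu''(q^{2})/(q^{2}\nu'(q^{2})))^{N/2}$, as
\[
\int_{\sqrt{N}B}du\int_{\sqrt{N}D}dx\,e^{-\frac{1}{2}(u,x)\Sigma_{q}^{-1}(u,x)^{T}}\,\mathbb{E}_{u,\sqrt{N}x}^{q}\Big\{|\det(\mathbf{G}-t(x)\mathbf{I})|\,\mathbf{1}_{\{g_{N}(q\hat{\mathbf{n}})\in NL\}}\Big\},
\]
where $\mathbf{G}$ is a normalized GOE matrix and $t(x)=x/(q\sqrt{(N-1)\nu''(q^{2})})$.

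The only new ingredient is to decouple $|\det(\mathbf{G}-t(x)\mathbf{I})|$ from the indicator. H\"older's inequality with exponent $p>1$ gives
\[
\mathbb{E}_{u,\sqrt{N}x}^{q}\Big\{|\det(\mathbf{G}-t(x)\mathbf{I})|\,\mathbf{1}_{\{g_{N}\in NL\}}\Big\}\leq\bigl(\mathbb{E}_{u,\sqrt{N}x}^{q}|\det(\mathbf{G}-t(x)\mathbf{I})|^{p}\bigr)^{1/p}\,\mathbb{P}_{u,\sqrt{N}x}^{q}(g_{N}\in NL)^{1-1/p}.
\]
By hypothesis \eqref{eq:1224-1}, the probability factor is bounded by $e^{N(1-1/p)\varphi(u,x)+o(N)}$, uniformly on $B\times D$. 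For the $p$-th moment of the determinant, I would apply the truncation scheme from the proof of Theorem \ref{thm:2ndmomUBBK-1-1}: decompose $|\det(\mathbf{G}-t\mathbf{I})|\leq\prod h_{\epsilon}^{\kappa}(|\lambda_{j}-t|)\cdot\prod h_{\kappa}^{\infty}(|\lambda_{j}-t|)$, bound the tail factor uniformly in $N$ via the top-eigenvalue tail \cite[Lemma 6.3]{BDG}, and apply the empirical-measure large deviation principle \cite[Theorem 2.1.1]{BAG97} together with Varadhan's lemma to the truncated factor. Passing to the limits $\epsilon\to 0$, $\kappa\to\infty$ in the appropriate order yields
\[
\limsup_{N\to\infty}\tfrac{1}{N}\log\bigl(\mathbb{E}|\det(\mathbf{G}-t(x)\mathbf{I})|^{p}\bigr)^{1/p}\leq\Omega\bigl(x/(q\sqrt{\nu''(q^{2})})\bigr),
\]
uniformly in $x$ on compact sets and, crucially, uniformly in $p\geq 1$.

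Inserting these two bounds into the Kac-Rice integral, applying Varadhan's integral lemma \cite[Theorem 4.3.1]{LDbook} to the double integral over $B\times D$, and comparing with the definition \eqref{eq:Theta} of $\Theta_{\nu,q}$, we obtain
\[
\limsup_{N\to\infty}\tfrac{1}{N}\log\mathbb{E}\big|\{\boldsymbol{\sigma}_{0}\in\mathscr{C}_{N,q}(NB,\sqrt{N}D):\,g_{N}(\boldsymbol{\sigma}_{0})\in NL\}\big|\leq\sup_{u\in B,\,x\in D}\big\{\Theta_{\nu,q}(u,x)+(1-1/p)\varphi(u,x)\big\}.
\]
Letting $p\to\infty$ yields \eqref{eq:71}. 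The main obstacle is the uniformity in $p$ of the moment bound on the determinant: a naive H\"older bound degenerates because $\mathbb{E}|\det|^{p}$ can be inflated by atypically large eigenvalues for large $p$. The truncation at $\kappa$ is what prevents this, since for spectrum bounded by $\kappa$ the truncated product contributes at exponential rate $p\int\log_{\epsilon}^{\kappa}(|\lambda-t|)d\mu^{*}(\lambda)$ while the tail factor is absorbed into the sub-exponential error on the high-probability event that no eigenvalue escapes.
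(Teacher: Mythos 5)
Your proposal follows essentially the same route as the paper, which handles this lemma by the argument of \cite[Lemmas 14, 16]{geometryGibbs} adapted as in the proof of Theorem \ref{thm:1stmoment}: apply Kac--Rice with the auxiliary field $(H_{N},\ddq H_{N},g_{N})$, use the independence (Lemma \ref{lem:cov} with $r=1$) to reduce the conditional determinant to a shifted GOE, decouple the determinant from the indicator by H\"older/moment bounds, invoke the hypothesis \eqref{eq:1224-1} for the probability factor, and finish with a Laplace/Varadhan argument over $(u,x)$; so the strategy is sound and the conclusion follows.

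One claim should be repaired: the assertion that $\limsup_{N\to\infty}\frac{1}{N}\log\big(\mathbb{E}|\det(\mathbf{G}-t(x)\mathbf{I})|^{p}\big)^{1/p}\leq\Omega\big(x/(q\sqrt{\nu''(q^{2})})\big)$ holds \emph{uniformly in} $p\geq1$ is neither available from the truncation scheme nor needed. In that scheme the truncation level $\kappa$ must be chosen depending on the moment order (exactly as in \eqref{eq:44}, where $\kappa$ depends on $m$) in order to control the tail factor, so the bound one gets is for each \emph{fixed} $p$, uniformly in $x$ on compacts; and for fixed $N$ the quantity $\big(\mathbb{E}|\det|^{p}\big)^{1/p}$ diverges as $p\to\infty$, so no uniformity over all $p$ can be expected. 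Fortunately your own chain of limits only requires the fixed-$p$ statement: take $N\to\infty$ first with $p$ fixed, obtaining $\sup_{B\times D}\{\Theta_{\nu,q}(u,x)+(1-1/p)\varphi(u,x)\}$, and only then let $p\to\infty$, where the sole remaining $p$-dependence is the harmless factor $(1-1/p)$ multiplying $\varphi$. With the uniformity claim replaced by the fixed-$p$ version, the argument is correct.
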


\begin{lem}
\label{lem:17}Let $B$ and $D$ be some intervals and $\varphi:\mathbb{R}^{2}\to\mathbb{R}$
be a continuous function. Suppose that $H_{N}(\bs)$ is a mixed model
such that, with $Z_{N,\beta}(\bs_{0})$ defined by (\ref{eq:Zsigma}),
\begin{equation}
\limsup_{N\to\infty}\sup_{\substack{u\in B\\
x\in D
}
}\left\{ \frac{1}{N}\log\left(\mathbb{E}_{Nu,\sqrt{N}x}^{q}\left\{ Z_{N,\beta}(q\hat{\mathbf{n}})\right\} \right)-\varphi\left(u,x\right)\right\} \leq0.\label{eq:81}
\end{equation}
Then 
\begin{equation}
\limsup_{N\to\infty}\frac{1}{N}\log\left(\mathbb{E}\left\{ \sum_{\boldsymbol{\sigma}_{0}\in\mathscr{C}_{N,q}(NB,\sqrt{N}D)}Z_{N,\beta}(\bs_{0})\right\} \right)\leq\sup_{\substack{u\in B\\
x\in D
}
}\left\{ \Theta_{\nu,q}\left(u,x\right)+\varphi\left(u,x\right)\right\} .\label{eq:71-1}
\end{equation}
\end{lem}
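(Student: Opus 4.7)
The plan is to parallel the proof of Lemma \ref{lem:15}, replacing the indicator $\mathbf{1}_{g_N(\bs_0)\in NL}$ inside the Kac-Rice formula with the weight $Z_{N,\beta}(\bs_0)$. First, I would apply the Kac-Rice formula \cite[Theorem 12.1.1]{RFG} (with the usual verification of tameness) together with the stationarity of the Hamiltonian on $\mathbb{S}^{N-1}(\sqrt{N}q)$ to obtain
\[
\mathbb{E}\!\!\sum_{\bs_0 \in \mathscr{C}_{N,q}(NB,\sqrt{N}D)}\!\! Z_{N,\beta}(\bs_0) = q^{N-1}\omega_N\, \varphi_{\grad H_N(q\hat{\mathbf{n}})}(0)\cdot \mathbb{E}\!\left[|\det \Hess H_N(q\hat{\mathbf{n}})|\, Z_{N,\beta}(q\hat{\mathbf{n}})\, \mathbf{1}_{B,D}\, \big|\, \grad H_N(q\hat{\mathbf{n}})=0\right].
\]
Conditioning further on $H_N(q\hat{\mathbf{n}})=Nu$ and $\ddq H_N(q\hat{\mathbf{n}})=\sqrt{N}x$, and using Lemma \ref{lem:cov} (with $r=1$) for the joint conditional Gaussian density, the right-hand side rewrites as an integral over $(u,x)\in B\times D$ of $I(u,x):=\mathbb{E}_{Nu,\sqrt{N}x}^q[|\det \Hess H_N(q\hat{\mathbf{n}})|\, Z_{N,\beta}(q\hat{\mathbf{n}})]$ against the Gaussian density factors. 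These factors combine with $q^{N-1}\omega_N\varphi_{\grad}(0)$ exactly as in the proof of Theorem \ref{thm:1stmoment} to produce the contribution $\frac{1}{2}+\frac{1}{2}\log(q^2\nu''(q^2)/\nu'(q^2))-\frac{1}{2}(u,x)\Sigma_q^{-1}(u,x)^T$ to $\Theta_{\nu,q}$.

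The crux is to establish that, uniformly in $(u,x)$ on any compact set,
\[
I(u,x) \leq e^{N\Omega(x/(q\sqrt{\nu''(q^2)}))+o(N)}\, \mathbb{E}_{Nu,\sqrt{N}x}^q Z_{N,\beta}(q\hat{\mathbf{n}}).
\]
By Lemma \ref{lem:cov}, under $\mathbb{P}_{Nu,\sqrt{N}x}^q$ the shifted Hessian is a deterministic transform of a standard GOE matrix. Although $|\det \Hess H_N(q\hat{\mathbf{n}})|$ and $Z_{N,\beta}(q\hat{\mathbf{n}})$ are \emph{not} independent --- they are coupled through the $2$-spin component of the expansion of $H_N$ on the section described in Section \ref{sec:models_on_bands} --- one can exploit a \emph{mismatch in concentration speeds}: by the eigenvalue LDP \cite[Theorem 2.1.1]{BAG97} together with the top-eigenvalue bound \cite[Lemma 6.3]{BDG}, the quantity $\frac{1}{N}\log|\det \Hess H_N(q\hat{\mathbf{n}})|$ concentrates around $\Omega(x/(q\sqrt{\nu''(q^2)}))$ at speed $N^2$, while $Z_{N,\beta}(q\hat{\mathbf{n}})$ fluctuates only on the exponential scale $N$. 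I would split $I(u,x)$ into the typical eigenvalue event (where $|\det|$ is pointwise bounded by its LDP value up to $e^{o(N)}$, so the desired inequality holds trivially) and its complement, whose probability is at most $e^{-cN^2}$; the latter is controlled by H\"older's inequality together with the crude bounds $\mathbb{E}_{u,x}^q|\det \Hess|^4\leq e^{O(N)}$ and $\mathbb{E}_{u,x}^q Z_{N,\beta}^4\leq e^{O(N)}$ (routine Gaussian calculations, using a truncation of the top eigenvalue as in the proof of Theorem \ref{thm:2ndmomUBBK-1-1}), which gives a super-exponentially small atypical contribution.

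Combining this inequality with the hypothesis (\ref{eq:81}) yields $I(u,x)\leq e^{N(\Omega(x/(q\sqrt{\nu''(q^2)}))+\varphi(u,x))+o(N)}$ uniformly on compacts. Substituting back into the integral from the first paragraph and applying Varadhan's integral lemma \cite[Theorem 4.3.1, Exercise 4.3.11]{LDbook} --- with a truncation in $x$ analogous to that in the proof of Theorem \ref{thm:1stmoment} to handle possibly unbounded $D$, and recognizing the identification of the resulting exponent with $\Theta_{\nu,q}(u,x)+\varphi(u,x)$ from (\ref{eq:Theta}) --- yields (\ref{eq:71-1}). The principal obstacle is the decoupling step: the coupling between $|\det \Hess|$ and $Z_{N,\beta}$ through the $2$-spin interaction must be shown to be invisible at exponential scale, which is precisely what the $N^2$-versus-$N$ speed mismatch accomplishes.
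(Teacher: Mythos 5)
Your skeleton is the one the paper intends: apply the Kac--Rice formula with $Z_{N,\beta}(\bs_0)$ as a mark, condition and integrate over both $H_N$ and $\ddq H_N$ exactly as in the proof of Theorem \ref{thm:1stmoment}, insert the hypothesis \eqref{eq:81} for the conditional expectation of the weight, and finish with Varadhan's lemma after truncating $x$. The step that fails as you justify it is the decoupling of $|\det\Hess H_{N}(q\hat{\mathbf{n}})|$ from $Z_{N,\beta}(q\hat{\mathbf{n}})$. You assert that $\frac1N\log|\det\Hess H_N(q\hat{\mathbf{n}})|$ concentrates at speed $N^{2}$ ``by the eigenvalue LDP together with the top-eigenvalue bound,'' so that the complement of your typical event has probability $e^{-cN^{2}}$ and the H\"older term is super-exponentially small. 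That inference is not valid: $\lambda\mapsto\log|\lambda-t|$ is unbounded above, so closeness of the empirical spectral measure (the only thing that costs $e^{-cN^{2}}$) does not bound the determinant pointwise, while the top-eigenvalue estimate of \cite[Lemma 6.3]{BDG} is a speed-$N$ bound. The event you actually need to excise, say $\{\lambda_{\max}>\kappa\}$, has probability only $e^{-c(\kappa)N}$, and your H\"older bound then reads $e^{O(N)}e^{-c(\kappa)N/2}$, which does not close without comparing constants; so, as written, the ``super-exponentially small atypical contribution'' is unproved, and it is the load-bearing step of your argument.

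The gap is repairable by the device the paper itself uses in \eqref{eq:h_trunc}--\eqref{eq:44}, following \cite[Lemma 16]{2nd}: split $|\det|\leq\prod_j h_{\epsilon}^{\kappa}(|\lambda_j-t|)\cdot\prod_j h_{\kappa}^{\infty}(|\lambda_j-t|)$. The first factor is a bounded continuous functional of the empirical measure, so it is controlled at speed $N^{2}$ and yields the pointwise bound $e^{N(\Omega+o(1))}$ on the good event, where your argument (pointwise bound times $\mathbb{E}^{q}_{Nu,\sqrt{N}x}Z_{N,\beta}(q\hat{\mathbf{n}})$, then \eqref{eq:81}) is fine; the second factor, together with $Z_{N,\beta}$, is handled by H\"older and your crude moment bounds, using that $c(\kappa)\to\infty$ as $\kappa\to\infty$, so the bad contribution can be pushed below any fixed exponential rate, in particular below $\sup_{B\times D}\{\Theta_{\nu,q}+\varphi\}$. (A true speed-$N^{2}$ upper-tail bound for the determinant can be obtained, e.g., via Markov's inequality with moments of order $\epsilon N$, but it does not follow from the two results you cite.) With this repair your proof coincides with the argument the paper delegates to \cite[Lemmas 14, 16]{geometryGibbs}, adapted to conditioning on the pair $(u,x)$.
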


Lemmas \ref{lem:15} and \ref{lem:17} follow from a similar argument
to the proof of \cite[Lemmas 14, 16]{geometryGibbs} which dealt with
the pure case, with one exception. After applying the Kac-Rice formula,
instead of integrating only over the variable $H_{N}\left(\bs\right)$
one has to integrate over both $H_{N}\left(\bs\right)$ and $\ddq H_{N}\left(\bs\right)$,
exactly as we have done in the proof of Theorem \ref{thm:1stmoment}. 

\subsection*{Lipschitz estimates}

The following lemma is based on the proof of \corO{\cite[Lemma 2.2]{BDG1}.}
\begin{lem}
\label{lem:Lipschitz}(Lipschitz continuity of derivatives) For any
$p\geq2$, the pure $p$-spin model $H_{N,p}(\bx)$ satisfies 
\begin{equation}
  \mathbb{P}\left\{ \exists1\leq k\leq p-1,\,\bx,\bx'\in\mathcal{B}_{N}:\,\vphantom{\geq2Kp^{1/2}\frac{p!}{(p-k-1)!}\frac{\|\bx-\bx'\|}{N^{k/2}}}\right.
  \left.\frac{1}{k!}\|\nabla_{E}^{k}H_{N,p}(\bx)-\nabla_{E}^{k}H_{N,p}(\bx')\|_{\infty}\geq2Kp^{3/2}\binom{p-1}{k}\frac{\|\bx-\bx'\|}{N^{k/2}}\right\} \leq e^{-K^{2}pN/2},
\label{eq:0407-01}
\end{equation}
and 
\begin{equation}
  \mathbb{P}\left\{ \exists1\leq k\leq p:\,\vphantom{\geq2Kp^{1/2}\frac{p!}{(p-k-1)!}\frac{\|\bx-\bx'\|}{N^{k/2}}}\right.
  \left.\sup_{\bx\in\mathcal{B}_{N}}\frac{1}{k!}\|\nabla_{E}^{k}H_{N,p}(\bx)\|_{\infty}\geq2Kp^{1/2}\binom{p}{k}N^{-\left(k-1\right)/2}\right\} \leq e^{-K^{2}pN/2},
\label{eq:0407-01-2}
\end{equation}
where $\mathcal{B}_{N}=\{\bx\in\mathbb{R}^{N}:\,\|\bx\|\leq\sqrt{N}\}$,
$K>0$ is a universal constant. For $k=p$, $\nabla_{E}^{k}H_{N,p}(\bx)$
is constant in $\bx\in\mathcal{B}_{N}$, and for $k>p$, $\nabla_{E}^{k}H_{N,p}(\bx)=0$,
almost surely.
\end{lem}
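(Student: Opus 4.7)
The plan is to first establish \eqref{eq:0407-01-2} and then derive \eqref{eq:0407-01} from it by a deterministic mean-value argument. For the first bound, for each fixed $k\in\{1,\ldots,p\}$ I would view
\begin{equation*}
G_k(\bx,\mathbf{y}^{(1)},\ldots,\mathbf{y}^{(k)}) := \frac{1}{k!\sqrt{N}}\nabla_{E}^{k}H_{N,p}(\bx)(\mathbf{y}^{(1)},\ldots,\mathbf{y}^{(k)}),
\end{equation*}
as a centered Gaussian process indexed by $\bx\in\mathcal{B}_{N}$ and unit vectors $\mathbf{y}^{(i)}\in\mathbb{R}^{N}$; by the definition of $\|\cdot\|_\infty$ its supremum equals $\sup_{\bx\in\mathcal{B}_N}\tfrac{1}{k!}\|\nabla_{E}^{k}H_{N,p}(\bx)\|_\infty$. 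Using $\nabla_{E}^{k}H_{N,p}(\bx)(\mathbf{y},\ldots,\mathbf{y}) = \partial_t^{k}H_{N,p}(\bx+t\mathbf{y})|_{t=0}$ together with the explicit covariance $\mathbb{E}[H_{N,p}(\bx_1)H_{N,p}(\bx_2)]=N^{1-p}\langle\bx_1,\bx_2\rangle^{p}$ and the Leibniz rule, a direct computation gives, for $\bx\in\mathcal{B}_N$ and unit $\mathbf{y}^{(i)}$, the variance bound $\mathrm{Var}(G_k)\leq \binom{p}{k}^{2}N^{-k}$.

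I would next control $\sup G_k$ by combining Borell--TIS with a Dudley chaining bound on $\mathbb{E}\sup G_k$. The canonical pseudo-metric of $G_k$ is dominated, up to a constant multiple of $\binom{p}{k}N^{-k/2}$, by the Euclidean metric on $\mathcal{B}_N\times(\mathbb{S}^{N-1})^{k}$, whose metric entropy at scale $\epsilon$ is $O(N(k+1)\log(1/\epsilon))$. This produces the bound $\mathbb{E}\sup G_k\leq c\,p^{1/2}\binom{p}{k}N^{-(k-1)/2}$, and Borell--TIS then gives
\begin{equation*}
\mathbb{P}\Big\{\sup_{\bx\in\mathcal{B}_N}\tfrac{1}{k!}\|\nabla_{E}^{k}H_{N,p}(\bx)\|_\infty>2Kp^{1/2}\binom{p}{k}N^{-(k-1)/2}\Big\}\leq e^{-K^{2}pN}
\end{equation*}
for a universal constant $K$ sufficiently large. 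A union bound over the $p$ values of $k$ absorbs a polynomial factor into the exponent, yielding \eqref{eq:0407-01-2}. The case $k=p$ is simpler because $\nabla_E^p H_{N,p}$ is constant in $\bx$, so only the $k$-fold supremum over unit vectors enters.

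To deduce \eqref{eq:0407-01}, I would apply the fundamental theorem of calculus to $\bx\mapsto\nabla_{E}^{k}H_{N,p}(\bx)(\mathbf{y}^{(1)},\ldots,\mathbf{y}^{(k)})$ along the segment from $\bx$ to $\bx'$, obtaining
\begin{equation*}
\big|\nabla_{E}^{k}H_{N,p}(\bx)(\mathbf{y})-\nabla_{E}^{k}H_{N,p}(\bx')(\mathbf{y})\big|\leq\|\bx-\bx'\|\sqrt{N}\sup_{\bz\in\mathcal{B}_N}\|\nabla_{E}^{k+1}H_{N,p}(\bz)\|_\infty
\end{equation*}
for unit $\mathbf{y}^{(i)}$. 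Dividing by $\sqrt N$, taking the supremum over the $\mathbf{y}^{(i)}$, dividing by $k!$, and then applying \eqref{eq:0407-01-2} for $k+1$ together with the identity $(k+1)!\binom{p}{k+1}=p\cdot k!\binom{p-1}{k}$ produces exactly the coefficient $2Kp^{3/2}\binom{p-1}{k}$ claimed in \eqref{eq:0407-01}, on the same exceptional event.

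The main obstacle is the bookkeeping of combinatorial constants in Step~1: one must verify that the variance estimate produces the factor $\binom{p}{k}^{2}$ (rather than a looser $(p!/(p-k)!)^2$) after absorbing the $(k!)^{-2}$ from the normalization of $G_k$, and that the Dudley/chaining estimate contributes exactly the extra $p^{1/2}$ factor, so that the final exceptional probability is $e^{-K^{2}pN/2}$ as stated. Everything else is either a straightforward Gaussian concentration argument or a deterministic mean-value calculation.
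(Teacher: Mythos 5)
Your second step is fine: deducing \eqref{eq:0407-01} from \eqref{eq:0407-01-2} by integrating $\nabla_E^{k}H_{N,p}$ along the segment from $\bx$ to $\bx'$ (which stays in the convex set $\mathcal{B}_N$) and using $(k+1)\binom{p}{k+1}=p\binom{p-1}{k}$ does reproduce the constant $2Kp^{3/2}\binom{p-1}{k}$, and is a legitimate alternative to the telescoping argument the paper uses directly on the difference. The gap is in your first step. The paper does not chain each derivative order separately: it observes that, deterministically, for every $k$ and every $\bx\in\mathcal{B}_N$ one has $\frac{1}{k!}\|\nabla_E^{k}H_{N,p}(\bx)\|_\infty\leq \binom{p}{k}N^{-(k-1)/2}\|\mathbf{J}_{N,p}\|_\infty$, where $\mathbf{J}_{N,p}$ is the symmetrized coefficient tensor and $\|\cdot\|_\infty$ is the norm \eqref{eq:inftynorm-1}; then, because $\mathbf{J}_{N,p}$ is symmetric, the multi-vector supremum collapses to the diagonal one (\cite{ZLQ}), i.e.\ $\|\mathbf{J}_{N,p}\|_\infty=\frac1N\sup_{\|\mathbf{y}\|=\sqrt N}|H_{N,p}(\mathbf{y})|$, so a single Dudley bound on the sphere gives $\mathbb{E}\|\mathbf{J}_{N,p}\|_\infty\leq K\sqrt p$ and one application of Borell--TIS yields the single exceptional event $\{\|\mathbf{J}_{N,p}\|_\infty\geq 2K\sqrt p\}$ of probability $e^{-K^2pN/2}$, valid simultaneously for all $k$, $\bx$, $\bx'$; the combinatorial prefactors are then exact and deterministic.

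By contrast, your per-$k$ chaining over $\mathcal{B}_N\times(\mathbb{S}^{N-1})^{k}$ does not deliver the claimed expectation bound $\mathbb{E}\sup G_k\leq cp^{1/2}\binom{p}{k}N^{-(k-1)/2}$ as asserted. Your pointwise variance bound $\binom{p}{k}^2N^{-k}$ is correct (it is attained in the diagonal direction), but the canonical metric is \emph{not} dominated by $\binom{p}{k}N^{-k/2}$ times the Euclidean metric uniformly: in the $\bx$-direction, differentiating the degree-$(p-k)$ homogeneous dependence on $\bx$ produces an extra factor of order $p-k$ (relative to $\|\bx-\bx'\|/\sqrt N$), so the canonical diameter of the index set is of order $(p-k)\binom{p}{k}N^{-k/2}$, and the product entropy contributes an additional $\sqrt{k+1}$. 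A straightforward execution of your plan therefore gives a prefactor of order $(p-k)\sqrt{k+1}\,\binom{p}{k}$ rather than $p^{1/2}\binom{p}{k}$ --- precisely the bookkeeping you flag as the ``main obstacle,'' and it does not close with the stated constants (though the weaker constants would still suffice for Corollary \ref{cor:gradbd} and Lemma \ref{lem:apxest}, since $\gamma_p$ decays exponentially). The missing idea is the paper's reduction: homogeneity plus symmetry of $\mathbf{J}_{N,p}$ control all orders $k$ at once through the single scalar $\|\mathbf{J}_{N,p}\|_\infty$, so only one chaining argument, for $H_{N,p}$ itself on the sphere, is ever needed.
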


\begin{proof}
Throughout the proof we will use $\{i_{1},...,i_{p}\}$ to denote
multisets of indices. For any such multiset $A$, define $J_{A}^{(p)}$
as the average of all $J_{i_{1},...,i_{p}}^{(p)}$ with $\{i_{1},...,i_{p}\}=A$.
We can write the pure $p$-spin Hamiltonian and its derivatives as
\begin{equation}
\begin{aligned}H_{N,p}(\bx) & =N^{-\left(p-1\right)/2}\sum_{i_{1},...,i_{p}=1}^{N}J_{\{i_{1},...,i_{p}\}}^{(p)}x_{i_{1}}\cdots x_{i_{p}},\\
\frac{d}{dx_{j_{1}}}\cdots\frac{d}{dx_{j_{k}}}H_{N,p}(\bx) & =N^{-\left(p-1\right)/2}\frac{p!}{(p-k)!}\sum_{i_{1},...,i_{p-k}=1}^{N}J_{\{i_{1},...,i_{p-k},j_{1},...,j_{k}\}}^{(p)}x_{i_{1}}\cdots x_{i_{p-k}},
\end{aligned}
\label{eq:1507-06}
\end{equation}
where $k\leq p$, as otherwise the derivatives are $0$. For any points
$\bx$, $\bx'\in\mathcal{B}_{N}$ and $\mathbf{y}^{(1)},...,\mathbf{y}^{(k)}$,
$k-1\leq p$, with $\|\mathbf{y}^{(j)}\|$, 
\begin{equation}
\begin{aligned} & \frac{1}{\sqrt{N}}\sum_{j_{1},...,j_{k}=1}^{N}(\frac{d}{dx_{j_{1}}}\cdots\frac{d}{dx_{j_{k}}}H_{N,p}(\bx)-\frac{d}{dx_{j_{1}}}\cdots\frac{d}{dx_{j_{k}}}H_{N,p}(\bx'))y_{j_{1}}^{(1)}\cdots y_{j_{k}}^{(k)}\\
 & =N^{-p/2}\frac{p!}{(p-k)!}\sum_{j_{1},...,j_{k}=1}^{N}\sum_{i_{1},...,i_{p-k}=1}^{N}J_{\{i_{1},...,i_{p-k},j_{1},...,j_{k}\}}^{(p)}\left(x_{i_{1}}\cdots x_{i_{p-k}}-x_{i_{1}}^{\prime}\cdots x_{i_{p-k}}^{\prime}\right)y_{j_{1}}^{(1)}\cdots y_{j_{k}}^{(k)}\\
 & =\sum_{l=1}^{p-k}N^{-p/2}\frac{p!}{(p-k)!}\sum_{j_{1},...,j_{k}=1}^{N}\sum_{i_{1},...,i_{p-k}=1}^{N}J_{\{i_{1},...,i_{p-k},j_{1},...,j_{k}\}}^{(p)}x_{i_{1}}\cdots x_{i_{l-1}}(x_{i_{l}}-x_{i_{l}}^{\prime})x_{i_{l+1}}^{\prime}\cdots x_{i_{p-k}}^{\prime}y_{j_{1}}^{(1)}\cdots y_{j_{k}}^{(k)}\\
 & \leq N^{-(p-1)/2}\frac{p!}{(p-k-1)!}\|\mathbf{J}_{N,p}\|_{\infty}(\max\{\|\bx\|,\|\bx'\|\})^{p-k-1}\|\bx-\bx'\|,
\end{aligned}
\label{eq:1707-01}
\end{equation}
where the second equality follows by writing $x_{i_{1}}\cdots x_{i_{p-1}}-x_{i_{1}}^{\prime}\cdots x_{i_{p-1}}^{\prime}$
as a telescopic sum and
\[
\mathbf{J}_{N,p}:=\left(J_{\{i_{1},...,i_{p}\}}^{(p)}\right)_{i_{1},...,i_{p}\leq N}.
\]
Note that for $k=p$, the derivative (\ref{eq:1507-06}) is independent
of $\bx$, and the difference of derivatives (\ref{eq:1707-01}) is
$0$.

\corO{By \cite[Theorem 2.1]{ZLQ}, if $\mathbf T$ is a symmetric tensor 
then the supremum in \eqref{eq:inftynorm-1} is obtained with 
$\mathbf y^{(1)}=\cdots = \mathbf y^{(p)}$. Hence, since 
$\mathbf J_{N,p}$ is symmetric by definition,}
\[
\|\mathbf{J}_{N,p}\|_{\infty}=\frac{1}{N}\sup_{\|\mathbf{y}\|=\sqrt{N}}\left|H_{N,p}(\mathbf{y})\right|.
\]

Using Dudley's entropy bound \cite[Theorem 1.3.3]{RFG} it is standard
to show that
\begin{equation}
\mathbb{E}\|\mathbf{J}_{N,p}\|_{\infty}\leq K\sqrt{p},\label{eq:2809-01}
\end{equation}
where $K>0$ is a universal constant. This is a consequence, for example,
of Lemma 19 of \cite{pspinext} and a bound on the canonical metric
of $H_{N,p}(\bs)$ which can be proved similarly to Lemma 20 of \cite{pspinext}.\footnote{There, a bound is computed for the canonical (pseudo) metric of a
modified Hamiltonian $\tilde{H}_{N,p}(T(x))$, where $T(x_{1},...,x_{N-1})=\sqrt{N}(x_{1},...,x_{N-1},\sqrt{1-\sum x_{i}^{2}})$.
Here a similar, but simpler, bound is needed for the metric $d(x,y)=\left(\mathbb{E}(H_{N,p}(T(x))-H_{N,p}(T(y)))^{2}\right)^{1/2}$.
Following the general argument of Lemma 20 of \cite{pspinext} and
using the estimate the Taylor expansion between (7.26) and (7.27)
of the same paper, it is straightforward to show that $d^{2}(x,y)\leq NpC\|x-y\|$
for some constant $C$ independent of $p$.} The event in (\ref{eq:0407-01}) is contained in $\left\{ \|\mathbf{J}_{N,p}\|_{\infty}\geq2K\sqrt{p}\right\} $.
Thus, by the Borell-TIS inequality \cite{Borell,TIS} (see also \cite[Theorem 2.1.1]{RFG}),
its probability is bounded from above by
\begin{equation}
\mathbb{P}\left\{ \|\mathbf{J}_{N,p}\|_{\infty}\geq2K\sqrt{p}\right\} \leq e^{-K^{2}pN/2},\label{eq:0808-02}
\end{equation}
as required.

To prove (\ref{eq:0407-01-2}) note that similarly to (\ref{eq:1707-01}),
for $k\leq p$,
\[
\begin{aligned} & \frac{1}{\sqrt{N}}\sum_{j_{1},...,j_{k}=1}^{N}\frac{d}{dx_{j_{1}}}\cdots\frac{d}{dx_{j_{k}}}H_{N,p}(\bx)y_{j_{1}}^{(1)}\cdots y_{j_{k}}^{(k)}\\
 & =N^{-p/2}\frac{p!}{(p-k)!}\sum_{j_{1},...,j_{k}=1}^{N}\sum_{i_{1},...,i_{p-k}=1}^{N}J_{\{i_{1},...,i_{p-k},j_{1},...,j_{k}\}}^{(p)}x_{i_{1}}\cdots x_{i_{p-k}}y_{j_{1}}^{(1)}\cdots y_{j_{k}}^{(k)}\\
 & \leq N^{-(p-1)/2}\frac{p!}{(p-k)!}\|\mathbf{J}_{N,p}\|_{\infty}\|\bx\|^{p-k},
\end{aligned}
and
\]
and proceed as before.
\end{proof}
\begin{cor}
\label{cor:gradbd}For any mixed model $H_{N}\left(\bx\right)$, assuming
that $\varlimsup p^{-1}\log\gamma_{p}<0$, there exist a constant
$c>0$ such that, with $C_{k}=2K\sum_{p\geq k+1}\gamma_{p}p^{3/2}\binom{p-1}{k}$
and $\tilde{C}_{k}=2K\sum_{p\geq k}\gamma_{p}p^{1/2}\binom{p}{k}$
where $K$ is as in Lemma \ref{lem:Lipschitz},
\begin{equation}
\mathbb{P}\left\{ \exists k\geq1,\,\bx,\bx'\in\mathcal{B}_{N}:\,\frac{1}{k!}\|\nabla_{E}^{k}H_{N}(\bx)-\nabla_{E}^{k}H_{N}(\bx')\|_{\infty}\geq C_{k}\frac{\|\bx-\bx'\|}{N^{k/2}}\right\} \leq e^{-cN},\label{eq:1507-01}
\end{equation}
and
\begin{equation}
\mathbb{P}\left\{ \exists k\geq1:\,\frac{1}{k!}\sup_{\bx\in\mathcal{B}_{N}}\|\nabla_{E}^{k}H_{N}(\bx)\|_{\infty}\geq N^{-(k-1)/2}\tilde{C}_{k}\right\} \leq 
e^{-cN}.\label{eq:1507-03}
\end{equation}
\end{cor}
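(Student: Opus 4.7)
The plan is to deduce Corollary \ref{cor:gradbd} from Lemma \ref{lem:Lipschitz} by decomposing $H_N$ into its pure $p$-spin components, applying the lemma to each, and combining via the triangle inequality together with a union bound over $p$.

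Writing $H_N(\bx) = \sum_{p \geq 2} \gamma_p H_{N,p}(\bx)$, linearity of differentiation gives $\nabla_E^k H_N = \sum_{p \geq 2} \gamma_p \nabla_E^k H_{N,p}$. Let $\mathcal{E}_p$ denote the complement of the event in \eqref{eq:0407-01}, so that by Lemma \ref{lem:Lipschitz}, $\mathbb{P}(\mathcal{E}_p^c) \leq e^{-K^2 p N/2}$, and set $\mathcal{E} := \bigcap_{p\geq 2} \mathcal{E}_p$. On $\mathcal{E}$, and for every $k \geq 1$ and $\bx, \bx' \in \mathcal{B}_N$, the triangle inequality gives
\[
\frac{1}{k!}\|\nabla_E^k H_N(\bx) - \nabla_E^k H_N(\bx')\|_\infty \leq \sum_{p \geq 2} \gamma_p \cdot \frac{1}{k!}\|\nabla_E^k H_{N,p}(\bx) - \nabla_E^k H_{N,p}(\bx')\|_\infty.
\]
Only summands with $p \geq k+1$ contribute: for $p < k$ the derivative vanishes, and for $p = k$ the derivative is the constant tensor of disorder coefficients (recall the last sentence of Lemma \ref{lem:Lipschitz}). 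Applying the bound from $\mathcal{E}_p$ to each remaining term yields precisely $C_k \|\bx-\bx'\|/N^{k/2}$, which is the right-hand side of \eqref{eq:1507-01}.

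The probability of the complementary bad event is bounded by a union bound:
\[
\mathbb{P}(\mathcal{E}^c) \leq \sum_{p \geq 2} e^{-K^2 p N/2} \leq \frac{e^{-K^2 N}}{1-e^{-K^2 N/2}} \leq e^{-cN}
\]
for a suitable $c > 0$ and all sufficiently large $N$ (and then for all $N$ by adjusting $c$). The estimate \eqref{eq:1507-03} is obtained in exactly the same way, working on the complement of the event in \eqref{eq:0407-01-2} and summing the individual bound over $p \geq k$ to produce $\tilde{C}_k$. Finiteness of $C_k$ and $\tilde{C}_k$ is guaranteed by the standing hypothesis $\varlimsup p^{-1}\log\gamma_p < 0$: since $\binom{p-1}{k}, \binom{p}{k} \leq p^k$, each series is dominated by $\sum_p \gamma_p p^{k+2}$, which converges.

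There is no substantive obstacle beyond bookkeeping. Lemma \ref{lem:Lipschitz} supplies a uniform-in-$\bx$ estimate for each pure $p$-spin component with probability exponentially close to one in $pN$, and the exponential tail in $p$ is strong enough to make both the union bound over $p$ negligible and the weighted series defining $C_k$ and $\tilde{C}_k$ absolutely convergent; these two roles of the $p$-tail are the only points that require a line of attention.
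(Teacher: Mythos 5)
Your argument is correct and coincides with the paper's own proof: both decompose $H_N$ into its pure $p$-spin components, bound each via Lemma \ref{lem:Lipschitz}, combine by the triangle inequality, and control the bad event by a union bound using $\sum_{p\geq 2}e^{-K^2 pN/2}\leq e^{-cN}$. Your extra remarks on the finiteness of $C_k$ and $\tilde C_k$ are consistent with the standing exponential-decay assumption on $\gamma_p$.
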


\begin{proof}
The bound of (\ref{eq:1507-01}) (respectively, (\ref{eq:1507-03}))
follows by a union bound from (\ref{eq:0407-01}) (\ref{eq:0407-01-2}),
since $\|\nabla_{E}^{k}H_{N}(\bx)\|_{\infty}\leq\sum_{p\geq2}\gamma_{p}\|\nabla_{E}^{k}H_{N,p}(\bx)\|_{\infty}$
and, for large $N$, $\sum_{p\geq2}e^{-K^{2}pN/2}\leq e^{-K^{2}N/2}=:e^{-cN}$. 
\end{proof}
\begin{lem}
\label{lem:LipF}For any finite mixture $\nu(x)=\sum_{p=2}^{p_{0}}\gamma_{p}^{2}x^{p}$,
$F_{N,\beta}=\frac{1}{N}\log Z_{N,\beta}$ is a Lipschitz function
of the Gaussian disorder coefficients $J_{i_{1},...,i_{p}}$ with
Lipschitz constant $\beta\sqrt{\frac{\nu(1)}{N}}$.
\end{lem}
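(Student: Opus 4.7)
The plan is to show that the gradient of $F_{N,\beta}$ with respect to the disorder vector $\mathbf{J} = (J_{i_1,\ldots,i_p}^{(p)})_{p,i_1,\ldots,i_p}$ has Euclidean norm uniformly bounded by $\beta\sqrt{\nu(1)/N}$; since $F_{N,\beta}$ is $C^1$ in $\mathbf{J}$, this yields the claimed Lipschitz bound by integration along line segments.

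First I would differentiate. From \eqref{eq:Hamiltonian},
\[
\frac{\partial H_N(\bs)}{\partial J_{i_1,\ldots,i_p}^{(p)}} = \frac{\gamma_p}{N^{(p-1)/2}}\,\sigma_{i_1}\cdots\sigma_{i_p},
\]
so differentiating under the integral sign in \eqref{eq-16-11-b} and using \eqref{eq-gibbsmeas} gives
\[
\frac{\partial F_{N,\beta}}{\partial J_{i_1,\ldots,i_p}^{(p)}} = -\frac{\beta}{N}\cdot\frac{\gamma_p}{N^{(p-1)/2}}\,\big\langle \sigma_{i_1}\cdots\sigma_{i_p}\big\rangle_{G_{N,\beta}},
\]
where $\langle\cdot\rangle_{G_{N,\beta}}$ denotes expectation under the Gibbs measure.

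Next I would bound the squared norm of the gradient. Summing over indices and using that $\langle f\rangle^2 = \langle\!\langle f(\bs)f(\bs')\rangle\!\rangle$ for independent replicas $\bs,\bs'$ drawn from $G_{N,\beta}$,
\[
\sum_{p=2}^{p_0}\sum_{i_1,\ldots,i_p=1}^{N}\Big(\tfrac{\partial F_{N,\beta}}{\partial J_{i_1,\ldots,i_p}^{(p)}}\Big)^{\!2}
= \frac{\beta^2}{N^2}\sum_{p=2}^{p_0}\frac{\gamma_p^2}{N^{p-1}}\,\Big\langle\!\Big\langle \big(\langle\bs,\bs'\rangle\big)^{p}\Big\rangle\!\Big\rangle_{G_{N,\beta}^{\otimes 2}}.
\]
On $\SN$, $|\langle\bs,\bs'\rangle|\leq N$, hence each inner expectation is bounded by $N^p$, giving
\[
\|\nabla_{\mathbf{J}}F_{N,\beta}\|^2 \leq \frac{\beta^2}{N^2}\sum_{p=2}^{p_0}\gamma_p^2\,N = \frac{\beta^2\,\nu(1)}{N}.
\]

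The argument requires no delicate step: the only subtlety is the Cauchy--Schwarz/replica trick that turns $\sum_{\mathbf{i}}\langle\sigma_{i_1}\cdots\sigma_{i_p}\rangle^2$ into a moment of the overlap, after which the deterministic bound $|\langle\bs,\bs'\rangle|\leq N$ closes the estimate. The finiteness of the mixture guarantees convergence of the sum over $p$; the assumption $\varlimsup p^{-1}\log\gamma_p<0$ from the introduction would in fact permit the same argument for infinite mixtures, but is not needed here.
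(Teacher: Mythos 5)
Your proposal is correct and follows essentially the same route as the paper: differentiate $\frac1N\log Z_{N,\beta}$ in the disorder, recognize the derivatives as Gibbs averages $\langle\sigma_{i_1}\cdots\sigma_{i_p}\rangle$, and bound the Euclidean norm of the gradient by $\beta\sqrt{\nu(1)/N}$. The only (cosmetic) difference is the inner estimate: you bound $\sum_{i_1,\ldots,i_p}\langle\sigma_{i_1}\cdots\sigma_{i_p}\rangle^2$ via the replica identity and $|\langle\bs,\bs'\rangle|\leq N$, while the paper uses Jensen's inequality $\langle\cdot\rangle^{2}\leq\langle(\cdot)^{2}\rangle$ and $\langle\|\bs\|_2^{2p}\rangle=N^{p}$; both yield the same bound $N^p$.
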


\begin{proof}
Write
\[
H_{N}(\mathbf{J},\bs)=\sum_{p=2}^{p_{0}}\gamma_{p}N^{-\frac{p-1}{2}}\sum_{i_{1},...,i_{p}=1}^{N}J_{i_{1},...,i_{p}}\sigma_{i_{1}}\cdots\sigma_{i_{p}},
\]
where $\mathbf{J}=(J_{i_{1},...,i_{p}})$ is the array of all the
disorder coefficients. For any $i_{1},...,i_{k}$, set
\begin{align*}
D_{i_{1},...,i_{k}} & :=\frac{d}{dx_{i_{1},...,i_{p}}}\log\left(\int_{\mathbb{S}^{N-1}(\sqrt{N})}\exp\left\{ -\beta H_{N}(\mathbf{J},\bs)\right\} d\boldsymbol{\sigma}\right)\\
 & =-\beta\gamma_{p}N^{-\frac{p-1}{2}}\cdot\frac{\int_{\mathbb{S}^{N-1}(\sqrt{N})}\sigma_{i_{1}}\cdots\sigma_{i_{p}}\exp\left\{ -\beta H_{N}(\mathbf{J},\bs)\right\} d\boldsymbol{\sigma}}{\int_{\mathbb{S}^{N-1}(\sqrt{N})}\exp\left\{ -\beta H_{N}(\mathbf{J},\bs)\right\} d\boldsymbol{\sigma}}.
\end{align*}
The ratio of integrals in the last equation can be viewed as an expectation
under the Gibbs measure. Denote expectation by this measure by $\left\langle \,\cdot\,\right\rangle $,
so that the ratio is simply $\langle\sigma_{i_{1}}\cdots\sigma_{i_{p}}\rangle$.
We then have 
\begin{equation}
\sum_{p=2}^{p_{0}}\sum_{i_{1},...,i_{p}=1}^{N}\left(D_{i_{1},...,i_{p}}\right)^{2} 
=\sum_{p=2}^{p_{0}}\beta^{2}\gamma_{p}^{2}N^{-(p-1)}\sum_{i_{1},...,i_{p}=1}^{N}\langle\sigma_{i_{1}}\cdots\sigma_{i_{p}}\rangle^{2}
  \leq\sum_{p=2}^{p_{0}}\beta^{2}\gamma_{p}^{2}N^{-(p-1)}\sum_{i_{1},...,i_{p}=1}^{N}\langle(\sigma_{i_{1}}\cdots\sigma_{i_{p}})^{2}\rangle.
\label{eq:0304-1}
\end{equation}
Note that
\[
\sum_{i_{1},...,i_{p}=1}^{N}\langle(\sigma_{i_{1}}\cdots\sigma_{i_{p}})^{2}\rangle=\left\langle \left\Vert \bs\right\Vert _{2}^{2p}\right\rangle =N^{p}.
\]
Therefore, $\frac{1}{N}\log Z_{N,\beta}$ has Lipschitz constant bounded
from above by
\[
\frac{1}{N}\left(\sum_{p=2}^\infty\beta^{2}\gamma_{p}^{2}N\right)^{1/2}=\beta\sqrt{\frac{\nu(1)}{N}}.
\]
\end{proof}
\begin{cor}
\label{cor:concF}For any mixture $\nu(x)=\sum_{p=2}^{\infty}\gamma_{p}^{2}x^{p}$
such that $\varlimsup p^{-1}\log\gamma_{p}<0$ and any $t>0$, the
free energy $F_{N,\beta}=\frac{1}{N}\log Z_{N,\beta}$ satisfies
\begin{equation}
\mathbb{P}\left\{ \Big|F_{N,\beta}-\mathbb{E}F_{N,\beta}\Big|>t\right\} \leq3\exp\left\{ -Nt^{2}/2\beta^{2}\nu(1)\right\} .\label{eq:concF}
\end{equation}
\end{cor}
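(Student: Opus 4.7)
My plan is to derive the bound directly from the Lipschitz estimate of Lemma \ref{lem:LipF} via the standard Gaussian concentration inequality for Lipschitz functions, first for finite mixtures and then by truncation for the general case. For a finite mixture $\nu(x)=\sum_{p=2}^{p_{0}}\gamma_{p}^{2}x^{p}$, Lemma \ref{lem:LipF} asserts that $F_{N,\beta}$, viewed as a function of the finite Gaussian vector $(J_{i_1,\ldots,i_p}^{(p)})$, is Lipschitz with constant $L=\beta\sqrt{\nu(1)/N}$. An application of the classical Gaussian concentration inequality for Lipschitz functions (a direct consequence of Borell--TIS \cite{Borell,TIS}, or of \cite[Theorem 2.1.12]{Matrices}) immediately yields
\[
\mathbb{P}\{|F_{N,\beta}-\mathbb{E}F_{N,\beta}|>t\}\leq 2\exp\{-t^{2}/(2L^{2})\}=2\exp\{-Nt^{2}/(2\beta^{2}\nu(1))\},
\]
which is slightly stronger than (\ref{eq:concF}) and settles the finite case.

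To pass from finite to general mixtures with $\varlimsup p^{-1}\log\gamma_{p}<0$, I would truncate. Set $H_{N}^{(K)}(\bs)=\sum_{p=2}^{K}\gamma_{p}N^{-(p-1)/2}\sum J_{i_{1},\ldots,i_{p}}^{(p)}\sigma_{i_{1}}\cdots\sigma_{i_{p}}$ with associated partition function $Z_{N,\beta}^{(K)}$ and free energy $F_{N,\beta}^{(K)}$. Writing $Z_{N,\beta}=\int e^{-\beta H_{N}^{(K)}}e^{-\beta(H_{N}-H_{N}^{(K)})}d\bs$ and bounding the residual exponential pointwise on $\SN$ gives
\[
|F_{N,\beta}-F_{N,\beta}^{(K)}|\leq\frac{\beta}{N}\sup_{\bs\in\SN}|H_{N}(\bs)-H_{N}^{(K)}(\bs)|.
\]
Applying the estimate $\mathbb{E}\|\mathbf{J}_{N,p}\|_{\infty}\leq K'\sqrt{p}$ from (\ref{eq:2809-01}) to each pure component and summing, one gets $N^{-1}\mathbb{E}\sup_{\bs}|H_{N}-H_{N}^{(K)}|\leq\sum_{p>K}\gamma_{p}K'\sqrt{p}=:\varepsilon_{K}$, which tends to $0$ as $K\to\infty$ uniformly in $N$, by the exponential decay of $\gamma_{p}$. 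Another application of Borell--TIS to the residual centered Gaussian field $H_{N}-H_{N}^{(K)}$ shows that $N^{-1}\sup_{\bs}|H_{N}-H_{N}^{(K)}|$ concentrates around its mean with a Gaussian tail whose variance parameter is $\sum_{p>K}\gamma_{p}^{2}/N\to 0$ (in particular, decays superexponentially in $N$ for $K$ fixed large).

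Given $t>0$, I would fix $K=K(t)$ so that $\beta\varepsilon_{K}<t/6$, split the event $\{|F_{N,\beta}-\mathbb{E}F_{N,\beta}|>t\}$ by the triangle inequality through $F_{N,\beta}^{(K)}$ and $\mathbb{E}F_{N,\beta}^{(K)}$, and bound each of the three resulting events: the middle one by the finite-mixture argument of Step~1 applied to $\nu_{K}(x)=\sum_{p=2}^{K}\gamma_{p}^{2}x^{p}$ (noting $\nu_{K}(1)\leq\nu(1)$), and the other two by the tail bound for $\sup|H_{N}-H_{N}^{(K)}|$. The constant $3$ on the right-hand side of (\ref{eq:concF}) then arises from the union bound over the three pieces; the Borell--TIS tail on the truncation error is much sharper than $\exp\{-Nt^{2}/(2\beta^{2}\nu(1))\}$ once $K$ is chosen large, so the exponent in the main term is preserved. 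The only delicate point in the argument is the bookkeeping of constants in this union bound so as to recover exactly $\nu(1)$ (rather than $\nu_{K}(1)$ with an $o(1)$ loss) in the exponent; this is handled by letting $K=K(t,N)\to\infty$ appropriately with $N$, exploiting that the residual variance is $o(1/N)$.
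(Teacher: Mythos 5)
Your proposal is correct and follows essentially the same route as the paper: the finite-mixture case via the Lipschitz bound of Lemma \ref{lem:LipF} combined with Gaussian concentration, and the general case by truncation, using \eqref{eq:2809-01} together with a Borell--TIS bound on $\sup_{\bs}|H_{N}-H_{N}^{(K)}|$ to control the tail of the mixture, with the prefactor $3$ coming from the three-term triangle-inequality split. The only (cosmetic) difference is in the final bookkeeping: rather than letting the truncation level depend on $N$, the paper fixes $\epsilon>0$, proves the bound with exponent $-N(t-2\epsilon)^{2}/2\beta^{2}\nu(1)$ for a sufficiently large fixed truncation level, and then lets $\epsilon\to0$ since the left-hand side is independent of $\epsilon$.
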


\begin{proof}
For finite mixtures, i.e., such that $\gamma_{p}=0$ for all $p\geq p_{0}$,
the corollary follows from Lemma \ref{lem:LipF} and standard concentration
results (see e.g. \cite[Lemma 2.3.3]{Matrices}), with prefactor $2$ instead
of $3$. For the infinite case we will truncate the mixture.

Let $F_{N,\beta}^{(p)}$ be the partition function corresponding to
the truncated Hamiltonian 
\[
H_{N,\beta}^{(p)}(\bs)=\sum_{k=2}^{p}\gamma_{k}N^{-\frac{k-1}{2}}\sum_{i_{1},...,i_{k}=1}^{N}J_{i_{1},...,i_{k}}\sigma_{i_{1}}\cdots\sigma_{i_{k}},
\]
which we assume to be defined using the same disorder variables $J_{i_{1},...,i_{k}}$
as $H_{N}(\bs)$, defined by corresponding infinite sum. For any $p\geq2$
and $\epsilon>0$, the left-hand
side of (\ref{eq:concF}) is bounded from above by
\begin{equation}
  \mathbb{P}\left\{ \Big|F_{N,\beta}^{(p)}-\mathbb{E}F_{N,\beta}^{(p)}\Big|>(t-2\epsilon)\right\} 
  +\mathbb{P}\left\{ \Big|F_{N,\beta}-F_{N,\beta}^{(p)}\Big|>\epsilon\right\} +\mathbb{P}\left\{ \Big|\mathbb{E}F_{N,\beta}^{(p)}-\mathbb{E}F_{N,\beta}\Big|>\epsilon\right\} .
\label{eq:2809-02}
\end{equation}
For any $\delta$, for large enough $p$, from (\ref{eq:2809-01}),
\[
\mathbb{E}\sup_{\bs}|H_{N,\beta}^{(p)}(\bs)-H_{N,\beta}(\bs)|<N\delta.
\]
Hence, for fixed $\epsilon$, for large enough $p$, the last two
summands above are smaller than the right-hand side of (\ref{eq:concF}),
with prefactor $1$. Thus, from the finite case we have that 
\[
\mathbb{P}\left\{ \Big|F_{N,\beta}-\mathbb{E}F_{N,\beta}>t\right\} \leq3\exp\left\{ -N(t-2\epsilon)^{2}/2\beta^{2}\nu(1)\right\} .
\]
By taking $\epsilon\to0$, we obtain (\ref{eq:concF}).
\end{proof}
\bibliographystyle{plain}
\bibliography{master}

\end{document}